\documentclass{amsart}
\usepackage[english]{babel}     
\usepackage{graphicx}           
\usepackage{amsmath,amssymb}    
\usepackage{amsthm}             
\usepackage{mathtools}          
\usepackage[shortlabels]{enumitem}           
\usepackage{listings}           
\usepackage{todonotes}          
\usepackage[hidelinks]{hyperref}           
\usepackage{subfiles}
\usepackage{tikz-cd}            
\tikzcdset{scale cd/.style={every label/.append style={scale=#1},
		cells={nodes={scale=#1}}}}
\usepackage{tikz}


\usepackage{pgfplots}
\usepgfplotslibrary{fillbetween}
\pgfplotsset{compat=1.16}
 \usepackage{caption}
\captionsetup[figure]{labelfont=sc}
\usepackage{adjustbox}
\usepackage{verbatim}
\usepackage[titletoc]{appendix}
\setlist[enumerate, 1]{label = \roman*), ref = \roman*),font = \textup}

\newtheorem{theorem}{Theorem}[section]
\newtheorem{corollary}[theorem]{Corollary}
\newtheorem{lemma}[theorem]{Lemma}
\newtheorem{proposition}[theorem]{Proposition}

\theoremstyle{definition}
\newtheorem{definition}[theorem]{Definition}
\newtheorem{example}[theorem]{Example}

\theoremstyle{remark}
\newtheorem{remark}[theorem]{Remark}

\numberwithin{equation}{section}

\newcommand{\R}{\mathbb{R}}
\newcommand{\N}{\mathbb{N}}
\newcommand{\Z}{\mathbb{Z}}
\newcommand{\C}{\mathbb{C}}

\newcommand{\FP}{\mathcal{FP}}
\newcommand{\LC}{\mathcal{LC}}
\newcommand{\PL}{\mathcal{PL}}
\newcommand{\SN}{\mathcal{SN}}
\newcommand{\ol}{\overline}
\newcommand\restr[2]{{
		\left.\kern-\nulldelimiterspace 
		#1 
		\vphantom{\big|} 
		\right|_{#2} 
}}
\newcommand\uprestr[3]{{
		\left.\kern-\nulldelimiterspace 
		#1 
		\vphantom{\big|} 
		\right|_{#2}^{#3} 
}}

\newcommand{\beq}[1]{\begin{equation} \label{#1}}
\newcommand{\eeq}{\end{equation}}

\DeclareRobustCommand{\SkipTocEntry}[5]{}

\DeclareMathOperator{\dom}{Dom}

\DeclareMathOperator{\id}{id}
\DeclareMathOperator{\supp}{supp}

\DeclareMathOperator{\End}{End}

\DeclareMathOperator{\Ind}{Ind}
\DeclareMathOperator{\Ad}{Ad}

\DeclareMathOperator{\diam}{diam}

\newcounter{char}
\setcounter{char}{1}
\loop\ifnum\value{char}<27
\expandafter\edef\csname c\Alph{char}\endcsname{\noexpand\mathcal{\Alph{char}}}  
\expandafter\edef\csname d\Alph{char}\endcsname{\noexpand\mathbb{\Alph{char}}}   
\expandafter\edef\csname f\Alph{char}\endcsname{\noexpand\mathfrak{\Alph{char}}} 
\expandafter\edef\csname e\Alph{char}\endcsname{\noexpand\mathsf{\Alph{char}}}   
\expandafter\edef\csname s\Alph{char}\endcsname{\noexpand\mathscr{\Alph{char}}}  
\addtocounter{char}{1}
\repeat


\begin{document}

\title[A partitioned index theorem for noncompact hypersurfaces]{A partitioned manifold index theorem for noncompact hypersurfaces}

\author{Peter Hochs and Thijs de Kok}
\address{Institute for Mathematics, Astrophysics and Particle Physics, Radboud University, \texttt{p.hochs@math.ru.nl}}
\address{Institute for Mathematics, Astrophysics and Particle Physics, Radboud University, \texttt{thijs.dekok@ru.nl}}

\date{\today}

\maketitle
\begin{abstract}
	Roe's partitioned manifold index theorem applies when a complete Riemannian manifold $M$ is cut into two pieces along a compact hypersurface $N$. It states that a version of the index of a Dirac operator on $M$ localized to $N$ equals the index of the corresponding Dirac operator on $N$. This yields obstructions to positive scalar curvature, and implies cobordism invariance of the index of Dirac operators on compact manifolds. We generalize this result to cases where $N$ may be noncompact, under assumptions on the way it is embedded into $M$. This results in an equality between two classes in the $K$-theory of the Roe algebra of $N$. Bunke and Ludewig, and Engel and Wulff, have recently obtained related results based on different approaches. 
\end{abstract}

\setcounter{tocdepth}{1}
\tableofcontents

\section*{Introduction}
\addtocontents{toc}{\SkipTocEntry}
\subsection*{Background}

The classical Atiyah--Singer index of elliptic differential operators on compact manifolds \cite{Atiyah1968indexI, Atiyah1968indexIII} was generalized to noncompact Riemannian manifolds $M$ by Roe  \cite{Roe1996indextheory}. This generalized, higher index takes values in $K_*(C^*(M))$,  the $K$-theory of the \emph{Roe algebra} of $M$. It has been applied to various problems, such as the Novikov conjecture and the study of Riemannian manifolds with positive scalar curvature. (See \cite{HR05a, HR05b, HR05c, WillettYu, Yu98, Yu00} for just a few examples.)

To compare indices of operators or to determine the vanishing of such an index, it is useful to convert the $K$-theory class to a number via a homomorphism $K_*(C^*(M))\rightarrow \C$. This is usually done by considering the pairing of $K$-theory with a cyclic cocycle. The easiest examples of such cyclic cocycles are the tracial functionals on the $C^*$-algebra $C^*(M)$. However, it turns out that if $M$ is a noncompact manifold, the only tracial functional on $C^*(M)$ is the trivial one \cite[Lem. 4.1]{Roe1996indextheory}. Therefore, to get an interesting pairing, a more sophisticated cyclic cocycle must be constructed. In \cite{Roe1988dualtoeplitz}, Roe showed that in the case of an odd-dimensional noncompact manifold, we can construct such cyclic cocycles by considering partitions of $M$ by a compact hypersurface $N$. Moreover, he showed that if the operator $D$ on $M$ under consideration is a Dirac operator, then the number resulting from pairing the index of $D$ with such cyclic cocycle is equal to the Fredholm index of a Dirac operator induced on the compact manifold $N$, which is a topological invariant by the Atiyah--Singer index theorem. This result is now known as Roe's partitioned manifold index theorem. After the first publication of the theorem, it was reformulated by Higson, avoiding the use of cyclic cocycles, and its proof was simplified considerably \cite[Thm. 4.4]{Roe1996indextheory}, \cite{higson1991cobordisminvariance}.


The partitioned manifold index theorem has several immediate implications to the index theory of Dirac operators on both compact and noncompact manifolds. To name a few: it gives several obstructions to the existence of metrics with uniformly positive scalar curvature on noncompact spin manifolds \cite[Thm. 9.1]{Roe1988dualtoeplitz}, \cite[Prop. 3.2]{Roe2016positivecurvature}, and new, more refined obstructions to the existence of metrics with positive scalar curvature on compact spin manifolds \cite[Prop. 4.7]{Roe1996indextheory}; it proves homotopy invariance of the higher signature  of a compact manifold \cite[prop. 4.8]{Roe1996indextheory}; and it shows that indices of Dirac operators on compact manifolds are cobordism-invariant \cite{higson1991cobordisminvariance}. It is therefore interesting to see if the partitioned manifold index theorem can be generalized.

Since its first publication, several generalizations into multiple directions have already been made. For example, in \cite{Zadeh2010indextheory}, a similar theorem is proven for Dirac operators on sections of a bundle of Hilbert $A$-modules and in \cite{Seto2018toeplitz}, a version of the partitioned manifold index theorem for even-dimensional manifolds is proven. In \cite{Schick2018largescale}, a multipartitioned manifold index theorem is proven for spin manifolds of arbitrary dimension with multiple partitioning hypersurfaces that intersect compactly. In \cite{Karami2019relative-partitioned}, a relative partitioned manifold index theorem is proven for a pair of partitioned manifolds that are isometric outside a closed set that intersects the partitioning hypersurface compactly and transversally. In this setting, the hypersurface is not assumed to be compact. In \cite{Ludewig2025large}, a generalized trace formula is proven that unifies Roe's partitioned manifold index theorem with the Kubo trace formula and the Kitaev trace formula, which are formulas from physics describing the quantization of the Hall conductance of a 2-dimensional sample.

Bunke and Ludewig recently obtained a version of the partitioned manifold index theorem  for Callias type Dirac operators, where the partitioning hypersurface $N$ need not be compact, see \cite[Theorem 4.44]{bunke2024coronas}. If one takes the Callias potential to be zero, then \cite[Corollary 4.50]{bunke2024coronas}  becomes a partitioned manifold index theorem for usual Dirac operators, which applies to noncompact hypersurfaces. This result requires weaker assumptions on the coarse structure on $N$ than ours, and yields extra information for Dirac operators that are invertible outside certain classes of subsets. On the other hand, the assumption is made that all data have product structures near $N$. In this paper, we do not make that assumption, and in fact, a large part of the work in Section \ref{section: proof general case} is done to handle the case where the relevant  metrics, Clifford actions etc.\ do not have such a product form.

Finally, in the last weeks of preparing this article, a partitioned manifold index theorem for noncompact hypersurfaces was published by Engel and Wulff \cite[Cor. 7.6]{Engel2025relative}, which follows as a direct consequence of their more general index theorem for multipartitioned manifolds \cite[Thm. 7.5]{Engel2025relative}. Apart from allowing for multipartitioned manifolds, their theorem also seems to accommodate the possibility for multiple connected components naturally. On the other hand, it is based on the cobordism invariance of the coarse index \cite{Wulff2012bordisminvariance}, which is not used in the proof of our main result. On the contrary, cobordism invariance of the coarse index follows as a corollary (Corollary \ref{cor: cobordism invariance of the index}). Moreover, their multipartitioned index theorem applies to spin-Dirac operators, whereas our result applies to more general Dirac operators.

The partitioned manifold indices to which the index of the Dirac operator on $N$ is related, are constructed in very different ways in \cite{bunke2024coronas}, \cite{Engel2025relative} and this paper (see Definition \ref{def: partitioned index} for our construction). A consequence of the partitioned manifold index theorems in \cite{bunke2024coronas}, \cite{Engel2025relative} and Theorem \ref{thm: PMT} in this paper, is that the three constructions yield the same result in cases where the conditions of all three results are satisfied.

\addtocontents{toc}{\SkipTocEntry}
\subsection*{Results}
In this article, we generalize
 Roe's partitioned manifold index theorem to the case where the partitioning hypersurface need not be compact (Theorem \ref{thm: PMT}). Our theorem differs from the theorems proven in \cite{Schick2018largescale} and \cite{Karami2019relative-partitioned} in that we do not use any construction to obtain a compact submanifold of our hypersurface such that the indices are just integers. We fully embrace the possibly noncompact hypersurface. Accordingly, we use the indices defined in the $K$-theory of Roe algebra $C^*(N)$ in full generality and do not appeal to any construction to convert these to integer indices. Our theorem also differs from \cite[Cor. 4.50]{bunke2024coronas} as we do not pose any restrictions on the Dirac operator. Instead, there are only a few restrictions on how the (noncompact) partitioning hypersurface is embedded in the ambient manifold.
Concretely, we prove that if the hypersurface $N\subseteq M$ is embedded in a way such that 
\begin{enumerate}
	\item its intrinsic coarse geometry is equivalent to the coarse geometry it inherits as metric subspace of $M$;
	\item $N$ admits a uniform tubular neighbourhood on which the Riemannian metric of $M$ can be compared with the product metric up to a uniform constant;
\end{enumerate}
then the equality
\begin{equation}
	\label{eq: equality PMIT}
	\Phi(\Ind(D_N)) = \Ind(D;N)
\end{equation} 
holds. In \eqref{eq: equality PMIT}, $\Ind(D_N)$ is the index of the Dirac operator $D_N$ induced on the hypersurface $N$ and $\Ind(D;N)$ is the partitioned index associated to the Dirac operator $D$ on $M$. The map $\Phi$ is a canonically defined map between the different $K$-theory groups in which these indices are defined. For the precise definitions and formulation of the theorem, see Section \ref{section: partitioned mfds and PMT}.

The extra assumptions imposed on how $N$ is embedded in $M$ are necessary to control the coarse geometry of our manifold when we make certain modifications to it. Based on the proofs of the partitioned manifold index theorem in \cite{Roe1988dualtoeplitz} and \cite{higson1991cobordisminvariance}, we first prove the theorem for a product manifold $\R\times N$, which is partitioned by the hypersurface $\{0\}\times N$ (Theorem \ref{thm: PMT for product}). In this step, the use of coarse geometry and its connections to $K$-theory and $K$-homology form an important new ingredient. This machinery allows us to deal with noncompact hypersurfaces, but completely changes the proof of the product case when compared to \cite{higson1991cobordisminvariance}. Then we derive the general case by modifying our original manifold to a product manifold. It is in this last step of modifying a general partitioned manifold to a product manifold, that the assumptions on the embedding of the hypersurface into the ambient manifold are needed to control the change of coarse geometry. If the embedded hypersurface is compact, these are automatically satisfied and we recover the original version of the partitioned manifold index theorem (Corollary \ref{cor: PMT for compact hypersurface}).

As applications of  our main result, we obtain
 an obstruction to the existence of uniformly positive scalar curvature metrics on spin manifolds (Corollary \ref{cor: obstruction UPSC metrics}, a version of \cite[Remark 4.51]{bunke2024coronas} and \cite[Theorem 7.2]{Engel2025relative} with different assumptions) and a generalized notion of cobordism invariance of the index of Dirac operators (Corollary \ref{cor: cobordism invariance of the index}), which was previously already proved in \cite{Wulff2012bordisminvariance} and \cite[Thm. 4.12]{Wulff2018twistedops}.

At the end of this paper, we describe two generalizations of the main result. The first (Theorem \ref{thm: PMT Gamma}) is an equivariant version, in cases where a discrete group $\Gamma$ acts properly and freely on $M$, preserving $N$. The proof of this version is entirely analogous to the proof of the non-equivariant version. An interesting special case (Corollary \ref{cor: PMT for cocompact hypersurface}) is the case where $N/\Gamma$  is compact. Then the conditions of the partitioned manifold index theorem are automatically satisfied. Furthermore, one may apply existing index theorems (such as the one in \cite{CM90}) to compute the pairing of the equivariant version of the left hand side of \eqref{eq: equality PMIT} with suitable cyclic cohomology classes, to obtain numerical obstructions to positive scalar curvature. As a consequence (Corollary \ref{cor higher PMIT}), we find that the higher index of the spin-Dirac operator on any compact, connected hypersurface $N$ of a spin manifold $M$ such that the induced map $\pi_1(N) \to \pi_1(M)$ is injective, is an obstruction to Riemannian metrics on $M$ of uniformly positive scalar curvature.

The second generalization, Theorem \ref{thm: PMT (mult comp)}, applies to cases where $N$ may be disconnected, but has finitely many connected components. We indicate how the proof of the connected case can be modified to obtain this generalization. This extension is relevant for example to cobordism invariance of the coarse index: it is desirable to allow cobordisms between possibly disconnected manifolds.

\addtocontents{toc}{\SkipTocEntry}
\subsection*{Acknowledgments}
We are grateful to Ulrich Bunke and Nigel Higson for helpful comments.
PH was supported by the Netherlands Organization for Scientific Research NWO, through grants OCENW.M.21.176 and OCENW.M.23.063.

\section{Preliminaries}
\label{section: preliminaries}
The first section will be devoted to the necessary preliminaries for the rest of this article. We fix notation and assumptions concerning Dirac operators, and briefly introduce Roe algebras and the higher index. We also review Paschke duality.

\subsection{Gradings, Dirac operators and Clifford actions}
\label{subsec: gradings, Dirac operators and Clifford actions}

In this entire section, $M$ will be a complete Riemannian manifold with a Hermitian bundle $S$ over $M$. If $M$ is \emph{even-dimensional}, we assume that $S$ is graded and if $M$ is \emph{odd-dimensional} we assume no grading. By a grading, we mean a $\Z/2\Z$-grading.

In the context of $K$-homology, we will also use the notion of a multigrading.
\begin{definition}
	\label{def: multigrading}
	\begin{enumerate}
		\item Let $p\in \N_0$. A $p$-\emph{multigraded} Hilbert space is a graded Hilbert space $H$ with $p$ anticommuting, odd, unitary operators $\varepsilon_1, \dots, \varepsilon_p$ such that $\varepsilon_i^2 = -1$ for all $1\leq i\leq p$.
		A $-1$-\emph{multigraded} Hilbert space is defined to be an ungraded Hilbert space.
		\item Let $p\in \N_0$. A $p$-\emph{multigraded} Hermitian vector bundle is a graded Hermitian vector bundle $S$ with $p$ anticommuting, odd, fiberwise unitary morphisms $\varepsilon_1, \dots, \varepsilon_p$ such that $\varepsilon_i^2 = -1$ for all $1\leq i\leq p$.
		A $-1$-\emph{multigraded} Hermitian vector bundle is defined to be an ungraded Hermitian vector bundle.
	\end{enumerate}
\end{definition}

\begin{remark}
	\label{rem: multigraded bundle gives multigraded Hilbert space}
	If $S$ is a $p$-multigraded vector bundle, then the (multi)grading morphisms define maps on $C_c^{\infty}(M;S)$, which extend to unitaries on the Hilbert space $L^2(M;S)$. This gives $L^2(M;S)$ the structure of a $p$-multigraded Hilbert space.
\end{remark}

\begin{definition}
	\label{def: multigraded operators}
	In either of the following cases where
	\begin{enumerate}
		\item $H$ is a $p$-multigraded Hilbert space and $T \in \cB(H)$;
		\item $H$ is a $p$-multigraded Hilbert space and $T$ is an unbounded operator on $H$ for which $\dom(T)$ is an invariant subspace for all multigrading operators;
		\item $S$ is a $p$-multigraded vector bundle and $T\colon C^{\infty}(M;S)\rightarrow C^{\infty}(M;S)$ is a $\C$-linear map. 
	\end{enumerate}
	Then $T$ is \emph{multigraded} if $[T, \varepsilon_i] = 0$ for $i = 1, \ldots, p$, where $\varepsilon_i$ denote the multigrading operators/morphisms.
\end{definition}

%
%
%
%

\begin{definition}
	\label{def: Dirac bundle}
	A \emph{Dirac bundle} is a Hermitian vector bundle $S$ over $M$ with a morphism of real vector bundles 
	\[c\colon T^*M \rightarrow \End(S),\quad\quad (p, \xi)\mapsto c(\xi)\in \End(S_p),\]
	such that $c(\xi)$ is skew-adjoint and satisfies $c(\xi)^2 = -g_p(\xi,\xi)\id_{S_p}$. If $M$ is even-dimensional, we assume that $S$ is graded and that $c(\xi)$ is odd for any $\xi \in T^*_pM$. The action of $T^*M$ on $S$ induced by this morphism is called \emph{the Clifford action} or \emph{Clifford multiplication} and is also denoted by $c(\xi)(s) = \xi\cdot s$ for $s\in S_p$ and $\xi\in T^*_pM$.
\end{definition}


Given a Dirac bundle, we would like to construct a Dirac operator that is associated to it. 

\begin{definition}
	\label{def: Dirac connection}
	A \emph{Dirac connection} on a Dirac bundle $S$ is a Hermitian connection such that
%
%
		for any $\omega \in \Omega^1(M)$, $u \in C^{\infty}(M;S)$ and $X \in \mathfrak{X}(M)$,
		\[
		\nabla_X(\omega\cdot u) = \nabla^{LC}_X\omega \cdot u + \omega \cdot \nabla_X u,
		\]
		where $\nabla^{LC}$ is the connection on $1$-forms induced by the Levi--Civita connection.

	If $S$ is graded, then $\nabla$ is required to be even for the grading. That is, for any $X\in \mathfrak{X}(M) $ the induced map $\nabla_X$ is even for the grading on $C^{\infty}(M;S)$.
\end{definition}

The existence of Dirac connections is guaranteed by \cite[Cor. 3.41]{HeatKernels1992Berline}.

\begin{definition}
	\label{def: associated Dirac operator}
	Let $S$ be a Dirac bundle over $M$ with Dirac connection $\nabla$ and consider its Clifford multiplication as a morphism  $c\colon T^*M\otimes S\rightarrow S$. Then the \emph{Dirac operator associated to $c$ and $\nabla$} is the composition 
	\[\begin{tikzcd}
		{C^{\infty}(M; S)} & {C^{\infty}(M;    T^*M \otimes S)} & {C^{\infty}(M; S).}
		\arrow["\nabla", from=1-1, to=1-2]
		\arrow["{c}", from=1-2, to=1-3]
	\end{tikzcd}\]
\end{definition}
%

Denote by $T^r_M$ the trivial rank $r$ complex vector bundle over $M$. 
\begin{example}
	\label{example: Dirac operator on R}
	The following example is a simple Dirac operator 
	that 
	 will be of great use in the rest of this article. Consider the manifold $M = \R$ with the Euclidean metric.
	Consider the complex vector bundle $T^1_{\R}$ over $\R$ with the canonical Hermitian metric.
	Then the vector bundle morphism
	\begin{align}
		\label{eq: clifford action on R}
		c\colon\ \R\times\R &\rightarrow \End(T^1_{\R}),\notag\\
		(x, v)&\mapsto [(x, \lambda)\mapsto (x, iv\lambda)],
	\end{align} 
	defines a Clifford action of $\R$ on $T^1_{\R}$. The connection $\nabla = d$ on $T^1_{\R}$
	is a Dirac connection for this action. 
	The associated Dirac operator is $D = i\frac{d}{dx}$.
	Alternatively, we could have taken $-c$ as Clifford action and obtain $-D$ as Dirac operator.
\end{example}

\begin{example}[{\cite[~p. 314]{Higson2000Khomology}}]
	\label{example: Dirac operator induced Dirac class}
	There is yet another natural Dirac operator on $\R$. With the same notation as in Example \ref{example: Dirac operator on R}, consider the algebra bundle $S_{\R} = \R\times\C_1$, where $\C_1$ is the complex Clifford algebra on one generator $e_1$. The map $c\colon \R\times \R\rightarrow \End(S_{\R})$ given by $\frac{d}{dx}\mapsto e_1$ defines a Clifford action on $S_{\R}$ via left multiplication.  After identifying $S_{\R}$ with $T^2_{\R}$ via the frame $(1, e_1)$ and considering the corresponding Hermitian metric, the flat connection on $S_{\R}$ defined by declaring the frame $(1, e_1)$ to be parallel is a Dirac connection. The induced Dirac operator is given by
	\begin{equation*}
		D_{\R} = \begin{pmatrix}
			0&-\frac{d}{dx}\\
			\frac{d}{dx}&0
		\end{pmatrix}.
	\end{equation*}
	This Dirac operator does come with more structure, namely a $1$-multigrading. Define 
	\begin{equation*}
		\gamma_{\R} = \begin{pmatrix}
			1&0\\
			0&-1
		\end{pmatrix}
		\quad\text{ and }\quad 
		\varepsilon_1 =\begin{pmatrix}
			0&-1\\
			1&0
		\end{pmatrix},
	\end{equation*}
	it is easy to check that this indeed provides $D_{\R}$ with the structure of a $1$-multigraded Dirac operator, using $\gamma_{\R}$ as grading and $\varepsilon_1$ as multigrading. 
\end{example}


\subsection{Roe algebras and related algebras}
\label{subsec: Roe algebras}

In this section the Roe algebra and some related $C^*$-algebras will be introduced. These will play a prominent role in this article as the indices of differential operators are elements of their $K$-theory. 
In this entire section, $(X, d_X)$ will be a second-countable metric space having the Heine--Borel property---that is, subsets are compact if and only if they are closed and bounded. In particular, such an $X$ is also separable, Hausdorff and locally compact. For more information on coarse geometry, the associated $C^*$-algebras and the properties of the $K$-theory of these $C^*$-algebras, we refer to \cite[Ch. 6]{Higson2000Khomology}, \cite[Ch. 2--3]{Roe1996indextheory}.


If $A,B\subseteq X$, put $d_X(A, B):= \inf\{d_X(a, b)\colon a\in A,\ b\in B\}$. Moreover, if $Z\subseteq X$ is a subset, then we define 
\[
B_r(Z) = \bigcup_{z\in Z}B_r(z) = \{x \in X\colon \text{there exists a $z \in Z$ such that $d_X(x, z) < r$}\}. 
\]
Note that for any $r, \delta >0$ we have that $\ol{B_r(Z)} \subseteq B_{r+\delta}(Z)$. 

\begin{definition}
	\label{def: nondegenerate representation}
	Let $A$ be a $C^*$-algebra and let $\rho\colon A\rightarrow \cB(H)$ be a representation of $A$ on some Hilbert space $H$. Then $\rho$ is said to be \emph{nondegenerate} if $\rho(A)H\subseteq H$ is dense.
\end{definition}

\begin{definition}
	\label{def: operator properties}
	Let $\rho\colon C_0(X) \rightarrow \cB(H)$ be a nondegenerate representation of $C_0(X)$ on a separable Hilbert space $H$. Then:
	\begin{enumerate}
		\item $T \in \cB(H)$ is \emph{locally compact} if for any $f \in C_0(X)$ the operators $\rho(f)T$ and $T\rho(f)$ are compact. Denote the set of locally compact operators in $\cB(H)$ by $\LC(X;H)$.
		\item $T \in \cB(H)$ is \emph{pseudolocal} if for any $f \in C_0(X)$ the operator $[T, \rho(f)]$ is compact. Denote the set of pseudolocal operators in $\cB(H)$ by $\PL(X;H)$.
		\item $T \in \cB(H)$ has \emph{finite propagation} if there exists a $P>0$ such that for any $f,g \in C_0(X)$ with $d_X(\supp(f), \supp(g)) > P$ the operator $\rho(f)T\rho(g)$ equals 0. Denote the set of operators with finite propagation in $\cB(H)$ by $\FP(X;H)$.
		\item Let $Z\subseteq X$ be a subset. Then $T \in \cB(H)$ is \emph{supported near $Z$} if there exists an $R>0$ such that for any $f \in C_0(X)$ with $d_X(\supp(f), Z)> R$ both $\rho(f)T$ and $T\rho(f)$ equal 0. Denote the set of operators in $\cB(H)$ that are supported near $Z$ by $\SN_Z(X;H)$.
	\end{enumerate}
\end{definition}

\begin{remark}\label{rem: FP and SN properties extend to bounded Borel functions}
In the definitions of finite propagation and being supported near $Z$, one may equivalently replace the representation $\rho$ by  its unique extension $\tilde{\rho}\colon B^{\infty}(X)\rightarrow \cB(H)$ to the bounded Borel functions (e.g. via the associated spectral measure, see for example \cite[Thm. IX.1.14]{conway1990functionalanalysis}). (Usually, we will consider the representation by multiplication operators of $B^{\infty}(M)$ on the Hilbert space $L^2(M;S)$, where $S$ is a Hermitian vector bundle over the Riemannian manifold $M$, and one may also use $L^{\infty}(M)$ instead.)
\end{remark}
%
%

\begin{proposition}[{\cite[Prop. 6.3.7]{Higson2000Khomology}}, {\cite[p. 155]{Higson2000Khomology}}]
	\label{prop: property induced sets are *algebras}
	Let $\rho
	\colon C_0(X) \rightarrow \cB(H)$ be a nondegenerate representation of $C_0(X)$ on a separable Hilbert space $H$ and let $Z \subseteq X$ be a subset. Then $\LC(X;H)$, $\PL(X;H)$, $\FP(X;H)$ and $\SN_Z(X;H)$ are $*$-subalgebras of $\cB(H)$. Moreover, 
	\[
	\SN_Z(X;H)\cap \LC(X;H) \cap \FP(X;H) \subseteq  \LC(X;H) \cap \FP(X;H)\subseteq \FP(X;H)
	\] 
	are inclusions of 2-sided $*$-ideals.
\end{proposition}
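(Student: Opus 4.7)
The plan is a direct verification from the definitions, organized around one reusable localization lemma together with mechanical algebraic bookkeeping. The localization lemma reads: if $T \in \FP(X;H)$ has propagation $\leq P$ and $f \in C_c(X)$, then since $\supp f$ is compact and $X$ has the Heine--Borel property, the closed neighbourhood $\overline{B_{P+1}(\supp f)}$ is compact; choosing $h \in C_c(X)$ with $h \equiv 1$ on this neighbourhood yields
\[
\rho(f) T = \rho(f) T \rho(h), \qquad T \rho(f) = \rho(h) T \rho(f).
\]
The verification uses that $\rho$ is nondegenerate and that $\rho(C_0(X))$ is commutative: for any $k \in C_0(X)$, $(1-h)k \in C_0(X)$ has support at distance $> P$ from $\supp f$, so $\rho(f) T \rho((1-h)k) = 0$, giving $\rho(f) T \rho(k) = \rho(f) T \rho(h) \rho(k)$, and nondegeneracy concludes. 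Density of $C_c(X)$ in $C_0(X)$ extends all subsequent statements to general $f \in C_0(X)$.

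For the $*$-subalgebra claims, sums, scalar multiples, and adjoints are immediate from the identities $(\rho(f) T)^* = T^* \rho(\bar f)$ and $[T, \rho(f)]^* = -[T^*, \rho(\bar f)]$, together with the fact that compactness and vanishing are preserved under adjoints. For products: in $\LC$ write $\rho(f) TS = (\rho(f) T) S$ and $TS \rho(f) = T (S \rho(f))$, each being compact times bounded; in $\PL$ use the Leibniz identity $[TS, \rho(f)] = T [S, \rho(f)] + [T, \rho(f)] S$, each summand being a product of a bounded and a compact operator; in $\FP$ propagations add under composition, as one sees by picking $f, g \in C_c(X)$ with $d_X(\supp f, \supp g) > P + Q$, then applying the localization lemma to $S$ to produce a cutoff $h \in C_c(X)$ with $S \rho(g) = \rho(h) S \rho(g)$ and $d_X(\supp f, \supp h) > P$, forcing $\rho(f) T \rho(h) = 0$. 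The $\SN_Z$ case is the same argument with the distance-to-$Z$ parameter in place of the distance to $\supp g$.

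For the ideal inclusions, products already lie in $\FP$ by the subalgebra step, so only local compactness (respectively, being supported near $Z$) must be checked. Let $T \in \LC \cap \FP$ and $S \in \FP$: then $ST \rho(f) = S(T \rho(f))$ is bounded times compact, and for $\rho(f) ST$ the localization lemma supplies $h \in C_c(X)$ with $\rho(f) S = \rho(f) S \rho(h)$, so $\rho(f) ST = (\rho(f) S)(\rho(h) T)$ is bounded times compact. The first inclusion is entirely analogous: if $T$ is supported near $Z$ with parameter $R$ and $S$ has propagation $\leq P$, then $ST$ and $TS$ are supported near $Z$ with parameter $R + P$ by the same cutoff argument. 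I expect the only real obstacle to be notational, namely tracking the radii $P, Q, R$ and the inflated neighbourhoods, and performing the $C_c$-approximation so that the cutoffs $h$ live in $C_c(X)$; this last point is exactly where the Heine--Borel hypothesis is essential.
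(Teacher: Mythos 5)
The paper does not prove this proposition itself; it cites Higson--Roe's book, where the argument is exactly the direct verification you give, organized around Urysohn cutoffs. So your proposal is correct and follows essentially the same route as the cited reference. Two small points worth tightening up in a full write-up. First, in the product step for $\FP$ the stated gap $d_X(\supp f,\supp g)>P+Q$ is not quite enough: the cutoff $h$ from your localization lemma has support in a slightly fattened neighbourhood of $\supp g$ (roughly $\overline{B_{Q+1+\delta}(\supp g)}$), so to force $d_X(\supp f,\supp h)>P$ you should start from $d_X(\supp f,\supp g)>P+Q+c$ for some absolute slack $c$; this changes nothing essential, and you flag it as ``notational,'' but the constants as written don't literally cohere. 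Second, in each of the two ideal inclusions you only verify one of the two products $ST$ and $TS$; the other is handled by the mirrored argument (for $\LC\cap\FP\trianglelefteq\FP$, use $\rho(f)(TS)=(\rho(f)T)S$ and $(TS)\rho(f)=(T\rho(h))(S\rho(f))$ with the cutoff $h$ produced from $S\in\FP$), and similarly for $\SN_Z$. With those two adjustments the proof is complete.
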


\begin{definition}
	\label{def: (localized) Roe algebra}
	Let $Z \subseteq X$ be a subset. Then we have the \emph{Roe algebra} 
	\[
	C^*(X; H) := \ol{\LC(X; H)\cap\FP(X; H)},
	\] 
	the \emph{localized Roe algebra}
	\[
	C^*(Z\subseteq X; H) := \ol{\LC(X; H)\cap\FP(X; H)\cap\SN_Z(X; H)}
	\] 
	and 
	\[
	D^*(X; H):= \ol{\PL(X; H)\cap\FP(X; H)}.
	\]
	When no confusion is possible, we will write $C^*(X)$, $D^*(X)$ and $C^*(Z\subseteq X)$ and thus leave out the chosen representation.
\end{definition}

\begin{remark}
	\label{rem: D^*, FP and PL are unital}
	Note that the algebras $\FP(X)$, $\PL(X)$ and $D^*(X)$ are always unital, with the unit being the identity $I\in \cB(H)$.
\end{remark}

\begin{corollary}[{\cite[p.~22]{Roe1996indextheory}}, {\cite[p.~2]{Roe2016positivecurvature}}]
	\label{cor: Roe C^* algebra is C^* alg}
	$C^*(X; H)$, $D^*(X; H)$ and $C^*(Z\subseteq X; H)$ are $C^*$-subalgebras of $\cB(H)$ and
	\[
	C^*(Z\subseteq X; H)\subseteq C^*(X; H)\subseteq D^*(X; H)
	\]
	are inclusions of $2$-sided $*$-ideals.
\end{corollary}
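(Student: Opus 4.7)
The plan is to deduce the corollary from Proposition \ref{prop: property induced sets are *algebras} essentially by taking norm closures, using one small additional observation about the inclusion $\LC(X;H) \subseteq \PL(X;H)$.

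First I would record the general principle: if $\cA \subseteq \cB(H)$ is a $*$-subalgebra, then its norm closure $\ol{\cA}$ is a $C^*$-subalgebra of $\cB(H)$, because multiplication, addition, and involution are norm-continuous. Applying this to the $*$-algebras $\LC(X;H)\cap\FP(X;H)$, $\PL(X;H)\cap\FP(X;H)$, and $\SN_Z(X;H)\cap\LC(X;H)\cap\FP(X;H)$ identified in Proposition \ref{prop: property induced sets are *algebras} immediately yields that $C^*(X;H)$, $D^*(X;H)$, and $C^*(Z\subseteq X;H)$ are $C^*$-subalgebras of $\cB(H)$.

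Next I would establish the two inclusions on the level of the underlying $*$-algebras. The inclusion $\SN_Z\cap\LC\cap\FP \subseteq \LC\cap\FP$ is obvious from the definition, and passes to closures to give $C^*(Z\subseteq X;H)\subseteq C^*(X;H)$. For the second inclusion, the key observation is that every locally compact operator is automatically pseudolocal: if $T\rho(f)$ and $\rho(f)T$ are compact for every $f \in C_0(X)$, then so is their difference $[T,\rho(f)]$. Hence $\LC(X;H)\subseteq \PL(X;H)$, so $\LC\cap\FP\subseteq \PL\cap\FP$, and taking closures gives $C^*(X;H)\subseteq D^*(X;H)$.

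Finally, for the ideal property I would use the standard fact that the norm closure of a two-sided $*$-ideal in a $*$-subalgebra of $\cB(H)$ is a two-sided $*$-ideal in the norm closure of the ambient algebra; this follows by taking limits in the relations $ax \in J$ and $xa \in J$ for approximating sequences, using the continuity of multiplication. Proposition \ref{prop: property induced sets are *algebras} tells us $\SN_Z\cap\LC\cap\FP$ is a two-sided $*$-ideal of $\LC\cap\FP$, which on passing to closures gives the ideal statement for $C^*(Z\subseteq X;H)\subseteq C^*(X;H)$. For the pair $C^*(X;H)\subseteq D^*(X;H)$, the subtlety is that Proposition \ref{prop: property induced sets are *algebras} only directly asserts that $\LC\cap\FP$ is an ideal in $\FP$, whereas $D^*(X;H)$ is the closure of $\PL\cap\FP$; however, since $\PL\cap\FP\subseteq \FP$ and an ideal in a larger algebra is an ideal in any intermediate $*$-subalgebra containing it, $\LC\cap\FP$ is also a two-sided $*$-ideal of $\PL\cap\FP$, and the closure argument applies. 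I do not anticipate any real obstacle here; the entire argument is a routine closure argument, and the only point requiring a moment's thought is the transfer of the ideal property from $\FP$ to $\PL\cap\FP$ via the inclusion $\LC\subseteq\PL$.
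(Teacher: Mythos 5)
Your proposal is correct and follows the standard route, which is exactly what Roe does in the cited references (the paper itself does not reprint a proof but defers to those sources). The two small points you single out — that $\LC\subseteq\PL$, so $\LC\cap\FP\subseteq\PL\cap\FP$, and that the ideal property of $\LC\cap\FP$ in $\FP$ restricts to the intermediate subalgebra $\PL\cap\FP$ — are precisely the observations needed to bridge Proposition \ref{prop: property induced sets are *algebras} to the corollary, and your closure argument (norm-continuity of the algebra operations, and the fact that the closure of a two-sided $*$-ideal is a two-sided $*$-ideal in the closure of the ambient algebra) is the correct and standard way to pass from the dense $*$-subalgebras to the $C^*$-completions.
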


\begin{theorem}[{\cite[Prop. 10.5.6]{Higson2000Khomology}} {\cite[Lemma 2.1]{Roe2016positivecurvature}}]
	\label{thm: func calc thm for C^*M}
	Let $M$ be a connected, complete Riemannian manifold and let $D$ be a symmetric elliptic differential operator on $S$ with finite propagation speed. If $\varphi \in C_0(\R)$, then $\varphi(D) \in C^*(M)$.
\end{theorem}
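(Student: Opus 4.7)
The plan is to exploit two dense subclasses of $C_0(\R)$ on which the two conditions defining $C^*(M)$ can be verified separately, then pass to the closure. Finite propagation speed of $D$ together with completeness of $M$ implies (by Chernoff's theorem) that $D$ is essentially self-adjoint on $C_c^\infty(M;S)$, so $\varphi(D)$ is well-defined via the Borel functional calculus for any bounded Borel $\varphi$, and the assignment $\varphi \mapsto \varphi(D)$ is a $*$-homomorphism $C_0(\R)\to \cB(L^2(M;S))$ of norm $\leq 1$. Since $C^*(M)$ is closed in $\cB(L^2(M;S))$, it suffices to verify the claim on a dense subset of $C_0(\R)$.

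The first step is to handle finite propagation. Let $\cA\subset C_0(\R)$ consist of those $\varphi$ whose Fourier transform $\hat\varphi$ lies in $C_c^\infty(\R)$; this subspace is dense in $C_0(\R)$. For $\varphi\in\cA$, the Fourier inversion formula and Stone's theorem give, in the strong operator topology,
\[
\varphi(D) \;=\; \frac{1}{\sqrt{2\pi}}\int_\R \hat\varphi(t)\, e^{itD}\,dt.
\]
By the definition of finite propagation speed $c$, the unitary $e^{itD}$ has propagation at most $c|t|$, so $\rho(f)e^{itD}\rho(g)=0$ whenever $d_M(\supp f,\supp g)>c|t|$. Integrating against $\hat\varphi$, which is supported in some fixed compact interval $[-T,T]$, shows that $\varphi(D)\in\FP(M)$ with propagation at most $cT$.

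The second step is local compactness. Here I would use the alternative dense set generated by resolvents: the $C^*$-algebra generated by $(\lambda\pm i)^{-1}$ is all of $C_0(\R)$ (Stone--Weierstrass), so by continuity of $\varphi\mapsto\varphi(D)$ and the fact that $\LC(M)$ is a $C^*$-subalgebra, it suffices to show that $(D\pm i)^{-1}\in\LC(M)$. For $f\in C_c(M)$, pick a precompact open $U\subset M$ containing $\supp f$. By elliptic regularity for $D$ (which is a local statement, so unaffected by noncompactness of $M$), $(D\pm i)^{-1}$ sends $L^2(M;S)$ into $H^1_{\mathrm{loc}}(M;S)$, and composing with $\rho(f)$ produces sections supported in $\overline{U}$ with $H^1$-norm controlled by the $L^2$-norm of the input. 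Rellich's theorem then yields compactness of $\rho(f)(D\pm i)^{-1}$, and taking adjoints handles $(D\pm i)^{-1}\rho(f)$.

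Combining the two steps: for $\varphi\in\cA$ we have $\varphi(D)\in\FP(M)\cap\LC(M)\subset C^*(M)$; density of $\cA$ in $C_0(\R)$ and closedness of $C^*(M)$ finish the proof. I expect the main subtlety to be the local compactness argument: one has to be a little careful that the elliptic regularity estimates used to feed Rellich are genuinely local, which is where completeness and finite propagation speed matter only through essential self-adjointness, but not through any global elliptic theory.
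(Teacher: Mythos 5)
The paper does not prove this statement; it cites it directly from \cite{Higson2000Khomology} and \cite{Roe2016positivecurvature}, and your argument is precisely the standard proof given in those references: the Fourier-inversion/wave-operator identity $\varphi(D)=\tfrac{1}{\sqrt{2\pi}}\int\hat\varphi(t)e^{itD}\,dt$ together with unit propagation of $e^{itD}$ handles finite propagation for the dense class of $\varphi$ with $\hat\varphi\in C_c^\infty$, elliptic regularity and Rellich give local compactness of the resolvents $(D\pm i)^{-1}$ and hence of $\varphi(D)$ for all $\varphi\in C_0(\R)$, and intersecting the two on the dense class and passing to norm closures finishes the proof. The argument is correct; one minor point worth stating explicitly is that since $\LC(M)$ is a norm-closed $*$-subalgebra of $\cB(H)$, generation of $C_0(\R)$ by the resolvent functions really does propagate local compactness from $(D\pm i)^{-1}$ to all of $\varphi(D)$, $\varphi\in C_0(\R)$.
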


\begin{theorem}[{\cite[Lem. 2.1]{Roe2016positivecurvature}}]
	\label{thm: func calc thm for D^*M}
	Let $M$ be a connected, complete Riemannian manifold and let $D$ be a symmetric elliptic differential operator on $S$ with finite propagation speed. If $\varphi \in C_b(\R)$ with finite limits at $\pm\infty$, then $\varphi(D) \in D^*(M)$.
\end{theorem}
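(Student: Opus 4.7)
The plan is to reduce, via a Stone–Weierstrass argument in $C_b(\R)$, to showing $\chi_0(D) \in D^*(M)$ for a single fixed chopping function $\chi_0$, and then to handle that case by directly verifying pseudolocality through a resolvent integral and obtaining finite-propagation approximations via Fourier mollification.

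First, I would consider
\[
\cA := \{\varphi \in C_b(\R)\colon \varphi(D) \in D^*(M)\}.
\]
This is a norm-closed unital $*$-subalgebra of $C_b(\R)$: norm-closedness follows from $\|\varphi_n(D) - \varphi(D)\|_{\mathrm{op}} \leq \|\varphi_n - \varphi\|_\infty$ (continuous functional calculus for the essentially self-adjoint $D$) together with the norm-closedness of $D^*(M) \subseteq \cB(H)$ from Corollary \ref{cor: Roe C^* algebra is C^* alg}; unitality follows from Remark \ref{rem: D^*, FP and PL are unital}. Theorem \ref{thm: func calc thm for C^*M} combined with the ideal inclusion $C^*(M) \subseteq D^*(M)$ of Corollary \ref{cor: Roe C^* algebra is C^* alg} shows $C_0(\R) \subseteq \cA$. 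The subalgebra of $C_b(\R)$ of functions with finite limits at $\pm\infty$ is $C^*$-isomorphic to $C([-\infty,\infty])$, and by Stone–Weierstrass is generated (as a unital $C^*$-algebra) by $C_0(\R)$ together with any one continuous function separating the two points at infinity. Hence the theorem reduces to exhibiting a single chopping function $\chi_0$ with $\chi_0(D) \in D^*(M)$.

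I would take $\chi_0(x) = x/\sqrt{1+x^2}$, so $\chi_0(D) = D(1+D^2)^{-1/2}$, and first prove pseudolocality: for $f \in C_c(M)$,
\[
[f, \chi_0(D)] = [f, D](1+D^2)^{-1/2} + D[f, (1+D^2)^{-1/2}].
\]
Then I would use the integral representation $(1+D^2)^{-1/2} = \tfrac{2}{\pi}\int_0^\infty (1+t^2+D^2)^{-1}\,dt$ together with
\[
[f, (1+t^2+D^2)^{-1}] = -(1+t^2+D^2)^{-1}\bigl([f,D]D + D[f,D]\bigr)(1+t^2+D^2)^{-1}
\]
to express everything in terms of $(1+t^2+D^2)^{-1}$ and $D(1+t^2+D^2)^{-1}$, both of which lie in $C^*(M)$ by Theorem \ref{thm: func calc thm for C^*M} (since $x \mapsto (1+t^2+x^2)^{-1}$ and $x \mapsto x(1+t^2+x^2)^{-1}$ lie in $C_0(\R)$), together with $[f, D]$, a bounded multiplication operator with compact spatial support contained in $\supp(f)$. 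Multiplying a locally compact operator by an operator with compact spatial support produces a compact operator, and the bounds $\|D(1+t^2+D^2)^{-1}\| \leq (2\sqrt{1+t^2})^{-1}$ and $\|D^2(1+t^2+D^2)^{-1}\| \leq 1$ make the $t$-integral absolutely convergent in operator norm, so $[f, \chi_0(D)]$ is a norm-convergent integral of compact operators, hence compact.

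Finally, I would approximate $\chi_0$ uniformly on $\R$ by $\chi_R := \phi_R * \chi_0$, where $\phi_R$ is an $L^1$ approximate identity whose Fourier transform is compactly supported. Then $\widehat{\chi_R} = \hat{\phi_R}\cdot\hat{\chi_0}$ is a compactly supported tempered distribution, so $\chi_R(D) = \int \widehat{\chi_R}(t)\,e^{itD}\,dt$ has finite propagation by the finite propagation speed of $D$ (finite support in $t$ combined with propagation $O(|t|)$ of $e^{itD}$). Since $\chi_R$ and $\chi_0$ share the same limits $\pm 1$ at $\pm\infty$, $\chi_R - \chi_0 \in C_0(\R)$, hence $\chi_R(D) - \chi_0(D) \in C^*(M) \subseteq \PL$; combined with the pseudolocality of $\chi_0(D)$ this gives $\chi_R(D) \in \PL \cap \FP$, while $\chi_R(D) \to \chi_0(D)$ in operator norm, placing $\chi_0(D) \in \overline{\PL \cap \FP} = D^*(M)$. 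The main obstacle is the pseudolocality step: one must carefully justify the integral representation of $(1+D^2)^{-1/2}$ and the interchange of commutator with integral, and arrange that each integrand is genuinely compact (not just locally compact), for which the compact spatial support of $[f,D]$ and the ideal structure $C^*(M) \triangleleft D^*(M)$ are essential.
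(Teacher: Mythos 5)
The paper does not prove this statement: it is cited directly to \cite[Lem.~2.1]{Roe2016positivecurvature}, so there is no internal proof to compare against. What you give is a self-contained re-derivation, and it is correct. Your reduction via Stone--Weierstrass on $C([-\infty,\infty])$ to a single separating function $\chi_0$, the verification that $\{\varphi \in C_b(\R): \varphi(D) \in D^*(M)\}$ is a closed unital $*$-subalgebra containing $C_0(\R)$, the pseudolocality of $D(1+D^2)^{-1/2}$ via the integral representation $(1+D^2)^{-1/2} = \tfrac{2}{\pi}\int_0^\infty (1+t^2+D^2)^{-1}\,dt$ combined with the compact spatial support of $[f,D]$ and local compactness of the resolvent factors, and the Fourier mollification $\chi_R = \phi_R * \chi_0$ to produce finite-propagation approximants with $\chi_R - \chi_0 \in C_0(\R)$, all go through. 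This is essentially the standard route that Roe's lemma itself rests on (cf.\ the arguments around \cite[Lem.~10.6.2, Prop.~10.3.5--10.3.7]{Higson2000Khomology}): one perturbs a given function by $C_0(\R)$ to a preferred normalizing function, proves pseudolocality for that one function by a resolvent/Leibniz computation, and obtains finite propagation from compactly supported (distributional) Fourier transforms. Your packaging via Stone--Weierstrass is a slightly cleaner way to organize the reduction than the usual ``subtract a $C_0$ perturbation'' phrasing, but the content is the same. Two small points worth tightening if you write this up in full: the pseudolocality argument should quantify over $f \in C_c^\infty(M)$ (so that $[f,D] = -\sigma_D(df)$ is a genuine bounded, compactly supported bundle endomorphism) and then invoke density of $C_c^\infty(M)$ in $C_0(M)$ and norm-continuity of $f \mapsto [T,\rho(f)]$; and the interchange of the unbounded $D$ with the $t$-integral needs the observation that $(1+D^2)^{-1/2}$ and $f(1+D^2)^{-1/2}$ map into $\dom(D)$ and that the resulting integrands are uniformly bounded and norm-integrable.
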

\begin{remark}
	The assumptions in both theorems imply that the differential operator $D$ is essentially self-adjoint \cite[Prop. 10.2.11]{Higson2000Khomology}, so that the functional calculus is well-defined after passing to its operator-theoretic closure. 
\end{remark}

\begin{corollary}
	\label{cor: func calc thm for unitization C^*M}
	Let $M$ be a connected, complete Riemannian manifold and let $D$ be a symmetric elliptic differential operator on $S$ with finite propagation speed. If $\varphi \in C_b(\R)$ with finite and equal limits at $\pm\infty$,  then $\varphi(D) \in \widetilde{C^*(M)}$, the unitization of $C^*(M)$.
\end{corollary}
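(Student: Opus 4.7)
The plan is to reduce this corollary directly to Theorem \ref{thm: func calc thm for D^*M}, or more precisely to Theorem \ref{thm: func calc thm for C^*M}, by subtracting off the common limit at infinity. Let $L \in \C$ denote the common value $\lim_{x \to +\infty} \varphi(x) = \lim_{x \to -\infty} \varphi(x)$, and set $\psi := \varphi - L$. Then $\psi$ is continuous, bounded, and satisfies $\lim_{x \to \pm\infty}\psi(x) = 0$, hence $\psi \in C_0(\R)$.

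By Theorem \ref{thm: func calc thm for C^*M}, applied to the function $\psi$, we get $\psi(D) \in C^*(M)$. By the standard properties of the bounded Borel functional calculus for the (essentially self-adjoint, by the remark following Theorem \ref{thm: func calc thm for D^*M}) operator $D$, we have the identity $\varphi(D) = \psi(D) + L\cdot I$ in $\cB(L^2(M;S))$.

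The unitization $\widetilde{C^*(M)}$ may be identified with $C^*(M) + \C\cdot I \subseteq \cB(L^2(M;S))$: if $C^*(M)$ is already unital (which happens only when $M$ is compact), then this sum equals $C^*(M)$; otherwise $I \notin C^*(M)$ and the identification is the usual one. In either case, $\varphi(D) = \psi(D) + L\cdot I \in \widetilde{C^*(M)}$, which proves the corollary. There is no real obstacle here; the whole content is the observation that the hypothesis on equal limits is exactly what allows us to translate $\varphi$ into a function in $C_0(\R)$ by a constant, which corresponds to adding a scalar multiple of the identity at the operator level.
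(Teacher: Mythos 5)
Your proof is correct and matches the paper's argument: both subtract the common limit $L$ to obtain a function in $C_0(\R)$, apply Theorem \ref{thm: func calc thm for C^*M}, and conclude by adding back $L\cdot I$ in the unitization. You have simply spelled out the identification $\widetilde{C^*(M)} \cong C^*(M) + \C\cdot I$ more explicitly than the paper does.
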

\begin{proof}
	This follows directly from Theorem \ref{thm: func calc thm for C^*M} as there exists a $\lambda \in \C$ such that $\varphi-\lambda \in C_0(\R)$.
\end{proof}

\subsection{\texorpdfstring{$K$}{TEX}-theory of Roe algebras and its functorial properties}
\label{subsubsec: K-theory of Roe algebras and functorial properties}

In this section, several useful results about the $K$-theory of the Roe algebra will be discussed. In particular, its functorial properties and its invariance under the chosen representation will be of great use.

\begin{definition}[{\cite[p.~9]{Roe1996indextheory}}, {\cite[Def. 6.1.14, 12.4.2]{Higson2000Khomology}}]
	\label{def: coarse maps and being close}
	Let $(X, d_X)$ and $(Y, d_Y)$ be metric spaces. A (not necessarily continuous) Borel measurable function $q\colon X\rightarrow Y$ is a \emph{coarse map} if it has the following two properties:
	\begin{enumerate}
		\item (Uniform expansiveness) For any $R>0$ there exists an $S>0$ such that for any $x, x' \in X$ \[d_X(x, x') \leq R \implies d_Y(q(x), q(x')) \leq S.\]
		\item (Metric boundedness) For any bounded $B \subseteq Y$, its inverse image $q^{-1}(B) \subseteq X$ is bounded.
	\end{enumerate}
	A \emph{uniform} map from $X$ to $Y$ is a continuous coarse map. 
	Two coarse maps $q_0, q_1 \in \mathcal{C}(X, Y)$ are \emph{close}, denoted by $q_0 \simeq q_1$, if there exists a $K>0$ such that $d_Y(q_0(x), q_1(x)) \leq K$ for all $x \in X$. Two metric spaces $(X, d_X)$ and $(Y, d_Y)$ are  \emph{coarsely equivalent} if there exist coarse maps $q\colon X \rightarrow Y$ and $r\colon Y \rightarrow X$ such that $r\circ q \simeq \id_X$ and $q\circ r\simeq \id_Y$ and are \emph{uniformly equivalent} if there exists a homeomorphism $q\colon X\rightarrow Y$ such that both $q$ and $q^{-1}$ are coarse maps.
\end{definition}

\begin{definition}
	\label{def: (very) ample representations}
	Let $A$ be a $C^*$-algebra and let $\rho\colon A\rightarrow \cB(H)$ be a representation of $A$ on a Hilbert space $H$. Then the representation $\rho$ is \emph{ample} if it is nondegenerate and if $\rho(a)$ is a noncompact operator for each nonzero $a \in A$. The representation $\rho$ is \emph{very ample} if it is unitarily equivalent to a direct sum of countably infinitely many copies of a fixed ample representation. 
\end{definition}

\begin{example}
	\label{ex: multiplication representation is ample}
	Let $M$ be a Riemannian manifold of dimension greater than 0 and let $S$ be a Hermitian vector bundle over $M$. Denote by $\rho\colon C_0(M)\rightarrow \cB(L^2(M; S))$ the representation by multiplication operators. Then $\rho$ is ample. By tensoring with $l^2(\N)$, and extending $\rho$ by the trivial representation on $l^2(\N)$, we obtain a very ample representation.
\end{example}


If $S, T \in \cB(H)$, then we write $S\sim T$ if $S-T$ is a compact operator.
 
\begin{definition}
	\label{def: (uniformly) covering isometry}  
	Let $q\colon X \rightarrow Y$ be a coarse map and let $\rho_X$ and $\rho_Y$ be nondegenerate representations of $C_0(X)$ and $C_0(Y)$ on Hilbert spaces $H_X$ and $H_Y$, respectively.
	An isometry $V\colon H_X\rightarrow H_Y$ \emph{covers} $q$ if there exists an $R>0$ such that for all $f\in C_0(Y)$ and $g \in C_0(X)$ with $d_Y(\supp(f), q(\supp(g)))>R$ the operator $\rho_Y(f)V\rho_X(g)$ equals 0.  
	
	If $q\colon X\rightarrow Y$ is a uniform map, then an isometry $V\colon H_X\rightarrow H_Y$ \emph{uniformly covers} $q$ if it covers $q$ as a coarse map and if 
	\[
	V^*\rho_Y(f)V \sim \rho_X(f\circ q)
	\]
	for all $f \in C_0(Y)$. 
\end{definition}
\begin{remark}
	In the definition of a uniformly covering isometry, we used that if $q\colon X\rightarrow Y$ is a uniform map between metric spaces with the Heine--Borel property, then metric boundedness of $q$ implies that $q$ is proper, and therefore $f\circ q \in C_0(X)$ for all $f \in C_0(Y)$.
\end{remark} 

If $H_1$ and $H_2$ are two Hilbert spaces and $V$ is an isometry from $H_1$ to $H_2$, then we define the $*$-homomorphism $\Ad_V\colon \cB(H_1)\rightarrow \cB(H_2)$ by $T\mapsto VTV^*$.

\begin{definition}
	\label{def: induced map by coarse map}
	Let $X$ and $Y$ be metric spaces and let $\rho_X$ and $\rho_Y$ be ample representations of $C_0(X)$ and $C_0(Y)$ on Hilbert spaces $H_X$ and $H_Y$, respectively. Let $q\colon X \rightarrow Y$ be a coarse map and let $V$ be any isometry covering $q$. Then we denote by
	\beq{eq funct Roe}
	q_*:= K_p(\Ad_V)\colon K_p(C^*(X;H_X))\rightarrow K_p(C^*(Y;H_Y))
	\eeq
	the induced map on $K$-theory. 
	
	Similarly, let $\rho_X$ and $\rho_Y$ be very ample representations and $q\colon X \rightarrow Y$ a uniform map. Let $V$ be any isometry uniformly covering $q$. Then we denote by
	\beq{eq funct Dstar}
	q_*:= K_p(\Ad_V)\colon K_p(D^*(X;H_X))\rightarrow K_p(D^*(Y;H_Y))
	\eeq
	the induced map on $K$-theory. 
\end{definition}
For the facts that $K_p(\Ad_V)$ maps $C^*(X;H_X)$ into $C^*(Y;H_Y)$ and $D^*(X;H_X)$ into $D^*(Y;H_Y)$ in the context of this definition, see \cite[Lem. 6.3.11, 12.4.4]{Higson2000Khomology}. 
For existence of covering isometries and independence of the maps \eqref{eq funct Roe} and \eqref{eq funct Dstar} of the choice of such an isometry, see {\cite[Prop. 6.3.12]{Higson2000Khomology}}, {\cite[Lem. 12.4.4]{Higson2000Khomology}}. 

By  \cite[Lem. 6.3.17]{Higson2000Khomology}, the constructions in Definition 
\ref{def: induced map by coarse map} are functorial. By  \cite[Prop. 6.3.16]{Higson2000Khomology}, coarse maps that are close induce the same maps on $K$-theory. These two facts have the following consequence.
%
%
%
\begin{corollary}[{\cite[Cor. 6.3.13]{Higson2000Khomology}} {\cite[p. 357--358]{Higson2000Khomology}}]
	\label{cor: K thoery independent of chosen representation}
	Let $X$ and $Y$ be metric spaces and let $\rho_X$ and $\rho_Y$ be ample representations of $C_0(X)$ and $C_0(Y)$ on Hilbert spaces $H_X$ and $H_Y$, respectively. 
	Then a coarse equivalence $q\colon X\rightarrow Y$  induces an isomorphism 
	\[
	q_*\colon K_p(C^*(X;H_X))\xrightarrow{\sim} K_p(C^*(Y;H_Y)).
	\]
	Similarly, if the representations are very ample and $q$ is a uniform equivalence, then it induces an isomorphism
	\[
	q_*\colon K_p(D^*(X;H_X))\xrightarrow{\sim} K_p(D^*(Y;H_Y)).
	\]
	In particular, taking $q = \id_X$ it follows that up to canonical isomorphism,  $K_p(C^*(X;H_X))$ does not depend on the choice of ample representation and $K_p(D^*(X;H_X))$ does not depend on the choice of very ample representation.
\end{corollary}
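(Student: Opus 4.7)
The plan is to combine the two previously quoted facts, functoriality of $q \mapsto q_*$ \cite[Lem.~6.3.17]{Higson2000Khomology} and the invariance of $q_*$ under the equivalence relation of closeness \cite[Prop.~6.3.16]{Higson2000Khomology}, in the standard way used to deduce that any equivalence of objects that is inverted up to a weaker equivalence relation becomes a genuine isomorphism after applying the functor.

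Concretely, suppose that $q\colon X \to Y$ is a coarse equivalence and let $r\colon Y \to X$ be a coarse map with $r \circ q \simeq \id_X$ and $q \circ r \simeq \id_Y$. Pick an isometry $V$ covering $q$ and an isometry $W$ covering $r$; then $WV$ covers $r \circ q$, so by functoriality
\[
r_* \circ q_* = (r \circ q)_* \colon K_p(C^*(X;H_X)) \to K_p(C^*(X;H_X)).
\]
Since $r \circ q \simeq \id_X$, closeness invariance gives $(r\circ q)_* = (\id_X)_*$, and since the identity isometry $I_{H_X}$ clearly covers $\id_X$, functoriality again gives $(\id_X)_* = \id_{K_p(C^*(X;H_X))}$. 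The symmetric argument shows $q_* \circ r_* = \id_{K_p(C^*(Y;H_Y))}$, so $q_*$ is an isomorphism with inverse $r_*$. The argument for $D^*$ is identical, using that a uniform equivalence has a uniform inverse up to closeness, very ample representations, and uniformly covering isometries, which is precisely the setup of \eqref{eq funct Dstar}.

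For the final ``in particular'' statement, let $\rho_X$ and $\rho_X'$ be two ample representations of $C_0(X)$ on Hilbert spaces $H_X$ and $H_X'$. The identity $\id_X\colon X \to X$ is trivially a coarse equivalence (with itself as coarse inverse), so Definition \ref{def: induced map by coarse map}, applied with the source representation $\rho_X$ and target representation $\rho_X'$, produces a map
\[
(\id_X)_* \colon K_p(C^*(X;H_X)) \to K_p(C^*(X;H_X')),
\]
and by the first part of the corollary this is an isomorphism. The case of very ample representations and $D^*$ is the same. The only point worth checking carefully is that when functoriality is invoked for the composition $r \circ q$, the isometry $WV$ indeed covers the composition with a propagation bound that depends only on those of $V$ and $W$ and the expansiveness constants of $q,r$; this is the step packaged inside \cite[Lem.~6.3.17]{Higson2000Khomology}, so I expect no genuine obstacle.
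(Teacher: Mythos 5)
Your argument is correct and is precisely the one the paper implicitly appeals to: the text preceding the corollary states that functoriality of $q\mapsto q_*$ and invariance under closeness "have the following consequence," and the corollary is cited to Higson--Roe without further proof. Your write-up simply spells out that standard deduction (a map inverted up to closeness becomes invertible after applying a functor that is closeness-invariant), so there is no discrepancy in approach.
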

\begin{remark}
	Accordingly, if the specific representation used to define $C^*(X;H_X)$ or $D^*(X;H_X)$ is clear from the context, the Hilbert space $H_X$ will be dropped from the notation. 
\end{remark}

\begin{example}
	\label{ex: closed n-ball is coarsely equivalent}
	Let $(X, d_X)$ be a separable metric space with the Heine--Borel property and let $Y\subseteq X$ be a closed subspace, endowed with the subspace metric. Then for any $n\in \N$ the inclusion $Y \hookrightarrow \ol{B_n(Y)}=:Y_n$ is a coarse equivalence.
\end{example}

\begin{proposition}[{\cite[Prop. 6.4.7]{Higson2000Khomology}}]
	\label{prop: localized roe alg isomorphic to roe alg of subset}
	Let $X$ be a metric space and $Y\subseteq X$ a closed subspace. Then there is a unique isomorphism
	\[
	K_p(C^*(Y)) \xrightarrow{\sim} K_p(C^*(Y\subseteq X)) 
	\]
	such that for any $n \in \N$ the following diagram of isomorphisms commutes
	\begin{equation}
		\label{diag: commuting diagram localized iso}
		\begin{tikzcd}
			& {K_p(C^*(Y_n))} \\
			{K_p(C^*(Y))} && {K_p(C^*(Y\subseteq X))}
			\arrow["\sim", sloped, from=1-2, to=2-1]
			\arrow["\sim", sloped, from=1-2, to=2-3]
			\arrow["\sim", from=2-1, to=2-3]
		\end{tikzcd}
	\end{equation}
\end{proposition}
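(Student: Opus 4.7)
My plan is to exhibit the desired isomorphism as a composition
\[
K_p(C^*(Y)) \xrightarrow{(j_n)_*} K_p(C^*(Y_n)) \xrightarrow{\iota_n} K_p(C^*(Y_n\subseteq X)) = K_p(C^*(Y\subseteq X)),
\]
where $j_n\colon Y\hookrightarrow Y_n$ is the inclusion, $\iota_n$ is a canonical isomorphism still to be built, and the equality on the right reflects that the supported-near condition is insensitive to bounded thickenings. The first step is to verify this equality directly from Definition \ref{def: operator properties}: since $Y\subseteq Y_n$ and every point of $Y_n$ lies within distance $n$ of $Y$, one readily checks that $T\in \SN_Y(X;H_X)$ if and only if $T\in\SN_{Y_n}(X;H_X)$, with the respective supporting radii differing by at most $n+1$. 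Combined with the other defining conditions, this yields $C^*(Y\subseteq X;H_X) = C^*(Y_n\subseteq X;H_X)$ as $C^*$-subalgebras of $\cB(H_X)$.

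The main step is constructing $\iota_n$. Fix an ample representation $\rho_n\colon C_0(Y_n)\to\cB(H_n)$ and let $V_n\colon H_n\to H_X$ be any isometry covering the inclusion $Y_n\hookrightarrow X$, whose existence is standard (see the references after Definition \ref{def: induced map by coarse map}). Since $V_n$ covers this inclusion, $\Ad_{V_n}$ sends $\LC\cap\FP$ on $H_n$ into $\LC\cap\FP\cap\SN_{Y_n}$ on $H_X$, thereby defining a $*$-homomorphism $C^*(Y_n;H_n)\to C^*(Y_n\subseteq X;H_X)$. To see the resulting $K$-theory map $\iota_n$ is an isomorphism, I would construct a $K$-theoretic quasi-inverse by compression: the projection $V_nV_n^*\in\cB(H_X)$ cuts out a subspace on which the representation of $C_0(Y_n)$ obtained by pulling $\rho_X$ back through $V_n$ is ample, so by the representation-independence result of Corollary \ref{cor: K thoery independent of chosen representation} the compressed algebra computes $K_p(C^*(Y_n))$, with $T\mapsto V_n^*TV_n$ inverting $\Ad_{V_n}$ at the level of $K$-theory.

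Composing $\iota_n$ with the isomorphism $(j_n)_*\colon K_p(C^*(Y))\xrightarrow{\sim} K_p(C^*(Y_n))$ furnished by Corollary \ref{cor: K thoery independent of chosen representation} applied to the coarse equivalence of Example \ref{ex: closed n-ball is coarsely equivalent} gives a candidate $\Phi_n := \iota_n\circ (j_n)_*$, which by construction makes \eqref{diag: commuting diagram localized iso} commute for that $n$. Functoriality of both constructions under the coarse equivalences $Y\hookrightarrow Y_n\hookrightarrow Y_{n+1}\hookrightarrow X$, stated between Definition \ref{def: induced map by coarse map} and Corollary \ref{cor: K thoery independent of chosen representation}, then forces $\Phi_n = \Phi_{n+1}$, so a single isomorphism $\Phi$ satisfies \eqref{diag: commuting diagram localized iso} for every $n$. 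Uniqueness is automatic: since $(j_n)_*$ is invertible, the commutativity equation $\iota_n = \Phi\circ (j_n)_*$ for any one $n$ already determines $\Phi$.

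The main obstacle will be the ampleness check in the compression step for $\iota_n$, namely that the restriction-and-compression of $\rho_X$ to $V_nH_n$ really defines an ample representation of $C_0(Y_n)$. This is not automatic from ampleness of $\rho_X$ on $C_0(X)$; it requires exploiting the freedom in choosing $V_n$, for instance by arranging that $V_nV_n^*$ dominate modulo compacts a spectral projection of the extended representation $\tilde\rho_X$ associated with $Y_n$ as in Remark \ref{rem: FP and SN properties extend to bounded Borel functions}. Once this technical point is settled, the rest of the argument is a bookkeeping exercise in functoriality.
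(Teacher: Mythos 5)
The paper does not prove this proposition --- it is cited directly to Higson and Roe, \cite[Prop.~6.4.7]{Higson2000Khomology} --- so there is no proof in the text to compare against; I will assess your sketch on its own terms.

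Your opening step, the observation that $\SN_Y(X;H_X)=\SN_{Y_n}(X;H_X)$ with supporting radii differing by at most $n+1$ and hence $C^*(Y\subseteq X)=C^*(Y_n\subseteq X)$, is correct, and so are the closing bookkeeping steps: the independence of $\Phi_n$ in $n$ follows from functoriality and closeness-invariance, and uniqueness is immediate because $(j_n)_*$ is invertible. The difficulty is concentrated in the middle step, the claim that $K_p(\Ad_{V_n})\colon K_p(C^*(Y_n;H_n))\to K_p(C^*(Y_n\subseteq X;H_X))$ is an isomorphism, and there I see a genuine gap. First, notice that this step is \emph{exactly the statement of the proposition itself with $Y$ replaced by $Y_n$} (using your equality $C^*(Y_n\subseteq X)=C^*(Y\subseteq X)$): since $Y_n$ is again a closed subspace, nothing about the problem has been made easier by thickening, so the reduction to $Y_n$ does not buy you anything for this step. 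Second, and more concretely, the compression $T\mapsto V_n^*TV_n$ is not a $*$-homomorphism (it fails multiplicativity whenever $V_nV_n^*\neq 1$), so it does not by itself define a map on $K$-theory, let alone a quasi-inverse; and Corollary~\ref{cor: K thoery independent of chosen representation} applies to Roe algebras formed from ample representations of $C_0(Y_n)$, not to localized Roe algebras such as $C^*(Y_n\subseteq X;H_X)$, which are built from the representation of $C_0(X)$ on $H_X$ and so are not literally of that form. The ampleness issue you flag is part of the real obstruction, but it is not the whole of it.

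To close the gap one has to do roughly what Higson and Roe do: either (a) show that $V_nC^*(Y_n;H_n)V_n^*$ is a full hereditary $C^*$-subalgebra of $C^*(Y_n\subseteq X;H_X)$, so that the corner inclusion (and hence $\Ad_{V_n}$) induces an isomorphism on $K$-theory --- hereditariness needs the fact that $V_n^*TV_n$ lands in $D^*(Y_n;H_n)$ for $T\in D^*$, and fullness is where one exploits the freedom in choosing $V_n$, as you begin to hint; or (b) exhibit $C^*(Y\subseteq X;H_X)$ as (the closure of a nested union of) Roe algebras $C^*(Y_m;H^m)$ for suitable subspaces $H^m=\tilde\rho_X(\chi_{Y_m})H_X$ carrying representations of $C_0(Y_m)$, verify that these representations are ample, and run a direct-limit argument. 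Either route is substantive and is precisely what the citation to Higson--Roe is standing in for; as written, the proposal restates the hard part rather than proving it.
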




%

\subsection{Suspension in \texorpdfstring{$K$}{TEX}-homology}

In the following, let $A$ be a separable $C^*$-algebra. We denote the analytic $K$-homology \cite{Higson2000Khomology} of $A$ in degree $-p$ by $K^{-p}(A)$. (This involves the notion of $p$-multigradings, see Definition \ref{def: multigrading}.)

We will make use of the suspension isomorphism in $K$-homology. 
%
To construct the suspension isomorphism, a few remarks need to be made. First, it can be shown that if $A$ is a $C^*$-algebra and $[x] \in K^{-p}(A)$, then we can always choose to represent $[x]$ by a Fredholm module $x = (H, \rho, F)$ for which the representation $\rho$ is nondegenerate \cite[Lem. 8.3.8]{Higson2000Khomology}. Moreover, recall that if $\rho\colon A \rightarrow \cB(H)$ is a nondegenerate representation of $A$ on $H$ and $B$ is any $C^*$-algebra containing $A$ as ideal, then $\rho$ extends to a unique representation of $B$ on $H$ \cite[p. 41]{Higson2000Khomology}. Finally, any tensor product of $C^*$-algebras that will be used will be the spatial tensor product (but usually, if not always, at least one of the tensored $C^*$-algebras is Abelian and hence nuclear anyway). 

Let $(H, \rho, F)$ be a nondegenerate $(p+1)$-multigraded Fredholm module over the $C^*$-algebra $C_0(-1, 1)\otimes A$. Denote by $\rho'$ the unique extension of $\rho$ to $C([-1, 1])\otimes \tilde{A}$ and let $X_0 = \rho'(\id\otimes 1) \in \cB(H)$. Define 
\begin{equation}
	\label{eq: suspension operator}
	X = \gamma\varepsilon_1X_0,
\end{equation}
where $\gamma$ denotes the grading operator and $\varepsilon_1$ the first multigrading operator. Also note that via the extension of $\rho$ to $C([-1, 1])\otimes \tilde{A}$, the Hilbert space $H$ can be equipped with a representation $\rho_A$ of $A$ via $\rho_A(a) = \rho'(1\otimes a)$.

It can be shown that the triple $(H, \rho_A, V)$, where 
\begin{equation}
	\label{eq: schrodinger operator}
	V = X + (1 - X^2)^{\frac{1}{2}}F,
\end{equation}
with the multigrading operators $(\varepsilon_2, \dots, \varepsilon_{p+1})$ is a $p$-multigraded Fredholm module over $A$ \cite[p. 255]{Higson2000Khomology}. Moreover, the induced class $[H, \rho_A, V] \in K^{-p}(A)$ does not depend on the chosen nondegenerate representative \cite[p. 256]{Higson2000Khomology}.

\begin{definition}
	\label{def: suspension}
	Let $A$ be a $C^*$-algebra and let $x=(H, \rho, F)$ be a nondegenerate Fredholm module over $C_0(-1, 1)\otimes A$. The \emph{suspension} of $[x]$ is defined as $s([x]) = [H, \rho_A, V]$, where $\rho_A$ is the induced representation of $A$ on $H$ and $V$ is the operator in Equation \eqref{eq: schrodinger operator}.
\end{definition}

\begin{proposition}[{\cite[Thm. 9.5.2]{Higson2000Khomology}}]
	\label{prop: suspension is isomorphism}
	The resulting map 
	\[
	s\colon K^{-p-1}(C_0(-1, 1)\otimes A) \rightarrow K^{-p}(A)
	\]
	is an isomorphism.
\end{proposition}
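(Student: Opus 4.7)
The plan is to establish the isomorphism along the lines of \cite[Thm.~9.5.2]{Higson2000Khomology}: compare the suspension map $s$ with the connecting homomorphism of a short exact sequence whose middle term has vanishing $K$-homology. First I would verify that \eqref{eq: schrodinger operator} does define a $p$-multigraded Fredholm module $(H, \rho_A, V)$ over $A$ with multigradings $(\varepsilon_2, \dots, \varepsilon_{p+1})$. Concretely: $V$ is self-adjoint, $V^2 - 1$ is locally compact, $[V, \rho_A(a)]$ is compact for every $a \in A$, $V$ anticommutes with the grading $\gamma$, and $V$ commutes with $\varepsilon_2, \dots, \varepsilon_{p+1}$. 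Each of these reduces to the anticommutation relations $\{X, \gamma\} = 0 = \{X, \varepsilon_1\}$ and $[X, \varepsilon_i] = 0$ for $i \geq 2$, together with compactness of $[F, X]$, which follows because $X_0 = \rho'(\id\otimes 1)$ can be approximated in the strong operator topology by operators $\rho(\chi \otimes 1)$ with $\chi \in C_0(-1,1)$, and $F$ is pseudolocal for $\rho$ by hypothesis. Independence of the chosen nondegenerate representative is a standard homotopy argument.

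Next, I would consider the short exact sequence of $C^*$-algebras
\[
0 \to C_0((-1,1)) \otimes A \to C_0((-1,1]) \otimes A \xrightarrow{\mathrm{ev}_1} A \to 0,
\]
in which the middle algebra $C_0((-1,1]) \otimes A$ is the cone on $A$ and hence contractible as a $C^*$-algebra. By homotopy invariance of $K$-homology, $K^{-q}(C_0((-1,1]) \otimes A) = 0$ for all $q$, so the resulting six-term exact sequence forces the connecting homomorphism $\partial\colon K^{-p}(A) \to K^{-p-1}(C_0((-1,1)) \otimes A)$ to be an isomorphism.

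The most delicate step, and the one I expect to be the main obstacle, is identifying $s$ with $\partial^{-1}$. Given a class $[(H, \rho_A, F_A)] \in K^{-p}(A)$, I would trace through the definition of $\partial$: extend $\rho_A$ along the quotient map $\mathrm{ev}_1$ to a nondegenerate representation of $C_0((-1,1]) \otimes A$ on a larger Hilbert space, lift $F_A$ compatibly, and read off the outcome as a Fredholm module over $C_0((-1,1)) \otimes A$. The operator $X_0$ serves as the coordinate function that interpolates between the interior of $(-1,1)$ and the endpoint $1$; the role of the combination $X = \gamma \varepsilon_1 X_0$ is to convert this coordinate into an odd, self-adjoint operator compatible with the multigrading, while the Schr\"odinger-type correction $V = X + (1 - X^2)^{1/2} F$ is precisely the normalization that produces a genuine Fredholm operator from the lift. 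Identifying $s$ with $\partial^{-1}$ then reduces to an operator-homotopy argument between Fredholm modules, which can be carried out by a standard rotation-type homotopy connecting $V$ to the Fredholm operator produced by the boundary recipe.
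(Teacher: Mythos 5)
The paper offers no proof of this proposition: it is taken as a black box with the citation to \cite[Thm.\ 9.5.2]{Higson2000Khomology}, so there is no internal argument to compare your sketch against. Your outline --- verify well-definedness of $s$, form the cone extension $0 \to C_0((-1,1))\otimes A \to C_0((-1,1])\otimes A \to A \to 0$ with contractible middle term, and identify $s$ with the connecting homomorphism --- is the natural route and is consistent in shape with how the cited result is established.

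Two points to watch. First, the orientation of the connecting map: because $K$-homology is contravariant, the six-term sequence runs $K^{*}(A)\to K^{*}(\mathrm{cone})\to K^{*}(C_0(-1,1)\otimes A)\xrightarrow{\partial}K^{*\pm1}(A)$, so $\partial$ already points from $K^{-p-1}(C_0(-1,1)\otimes A)$ to $K^{-p}(A)$, the same direction as $s$. What you want to show is $s=\pm\partial$, not $s=\pm\partial^{-1}$; the conclusion is unchanged, but the orientation matters once one chases signs, as the paper does in Lemma \ref{lem: suspension maps Dirac class to -1}. Second, your argument that $[V,\rho_A(a)]$ is compact rests on SOT-approximating $X_0$ by $\rho(\chi\otimes 1)$ with $\chi\in C_0(-1,1)$, but SOT limits do not preserve compactness of commutators, so that step fails as stated. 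The correct route observes that $X$ commutes with $\rho_A(A)$, so $[V,\rho_A(a)]=(1-X^2)^{1/2}[F,\rho_A(a)]$, and then uses that $(1-X^2)^{1/2}\rho_A(a)=\rho\bigl((1-x^2)^{1/2}\otimes a\bigr)$ lands in the image of $\rho$ on $C_0(-1,1)\otimes A$, where pseudolocality of $F$ applies directly. The genuinely substantive step remains the identification $s=\pm\partial$, which you flag as delicate but leave unargued; that is where the real work of the cited theorem sits.
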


\begin{example}[The Dirac class {\cite[Def. 9.5.1]{Higson2000Khomology}}]
	\label{example: Dirac class}
	Consider the Hilbert space $H= L^2([-1, 1])$ with orthonormal basis $e_n(x) = 2^{-\frac{1}{2}}e^{\pi i nx}$, and define the bounded operator $Y\in \cB(H)$ as:
	\begin{equation}
		Ye_n = \begin{cases}
			e_n &\text{ if $n\geq 0$,}\\
			-e_n &\text{ if $n<0$}.
		\end{cases}
	\end{equation}
	The Hilbert space $H\oplus H$ is 1-multigraded by the grading en multigrading operators
	\begin{equation*}
		\gamma = \begin{pmatrix}
			1&0\\
			0&-1
		\end{pmatrix}
		\quad\text{ and }\quad
		\varepsilon = \begin{pmatrix}
			0&-1\\
			1&0
		\end{pmatrix},
	\end{equation*}
	for which the natural representation $\rho$ of $C_0(-1, 1)$ on $H\oplus H$ by multiplication operators is even multigraded. On $H\oplus H$ consider the operator 
	\[
	F = \begin{pmatrix}
		0&-iY\\
		iY&0
	\end{pmatrix},
	\]
	which is obviously odd multigraded and satisfies $F^* = F$ and $F^2 = 1$ (as $Y^2 = 1$). Finally, if $f \in C_0(-1, 1)$, then $[F, \rho(f)]$ is compact, which follows by uniformly approximating $f$ by trigonometric polynomials using the Weierstrass approximation theorem.
		
	It follows that 
	\[
	\left(H\oplus H, \rho, F\right)
	\]
	is a 1-multigraded Fredholm module over $C_0(-1, 1)$ which therefore defines an element $d \in K^{-1}(C_0(-1,1))$.
\end{example}

\begin{definition}
	\label{def: Dirac class}
	The element $d \in K^{-1}(C_0(-1,1))$ constructed in Example \ref{example: Dirac class} is the \emph{Dirac class}.
\end{definition}

The suspension map for $A = \C$ under the identification $C_0(-1,1)\otimes \C \cong C_0(-1,1)$ gives an isomorphism $s\colon K^{-1}(C_0(-1,1)) \rightarrow K^0(\C) \cong \Z$.

\begin{lemma}
	\label{lem: suspension maps Dirac class to -1}
	Let $s$ be the suspension isomorphism for $\C$ and let $d$ be the Dirac class. Under the identification $K^0(\C)\cong \Z$, it follows that 
	\begin{equation}
		\label{eq: suspension sends dirac to -1}
		s(d) = -1.
	\end{equation}
\end{lemma}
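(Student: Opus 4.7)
The plan is to unwind the suspension construction applied to $d$, identify the resulting graded Fredholm module over $\C$ with an explicit Fredholm operator on $L^2([-1,1])$, and compute its index via the classical Toeplitz index formula.

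First, I substitute the data of Example~\ref{example: Dirac class} into the formulas \eqref{eq: suspension operator}--\eqref{eq: schrodinger operator}. A direct calculation gives $\gamma\varepsilon = \begin{pmatrix} 0 & -1 \\ -1 & 0 \end{pmatrix}$. Taking $X_0 = \rho'(\id \otimes 1)$ to be multiplication by $x$ acting diagonally on $H \oplus H$, I get
\[
X = \begin{pmatrix} 0 & -X_0 \\ -X_0 & 0 \end{pmatrix}, \qquad (1-X^2)^{1/2} = \begin{pmatrix} W & 0 \\ 0 & W \end{pmatrix},
\]
where $W$ is multiplication by $\sqrt{1-x^2}$, whence
\[
V = \begin{pmatrix} 0 & -X_0 - iWY \\ -X_0 + iWY & 0 \end{pmatrix}.
\]
Compactness of $V^2 - I$ and $V - V^*$ then follows routinely from the Fredholm module axioms for $d$, using that $\sqrt{1-x^2}$ and $x\sqrt{1-x^2}$ both lie in $C_0(-1,1)$.

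Since $A = \C$ acts trivially, the class $s(d) \in K^0(\C) \cong \Z$ equals the Fredholm index of the off-diagonal block $B := -X_0 + iWY$ of $V$, viewed as a map from the $+1$- to the $-1$-eigenspace of $\gamma$. Set $h(x) = x + i\sqrt{1-x^2}$, so that $h\bar h = 1$. While $h$ and $\bar h$ themselves do not descend to the identified circle $S^1 = [-1,1]/\{-1 \sim 1\}$, the squares $h^2$ and $\bar h^2$ do (both taking value $1$ at $x = \pm 1$), with winding numbers $-1$ and $+1$ respectively as functions $S^1 \to S^1$. Decomposing $L^2([-1,1]) = H_+ \oplus H_-$ via nonnegative and negative Fourier modes so that $Y = \pm I$ on $H_\pm$, I find $B(u_+ + u_-) = -\bar h\, u_+ - h\, u_-$. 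Solving $Bu = 0$ forces $u_- = -\bar h^2 u_+$; the requirement $u_- \in H_-$ then becomes $u_+ \in \ker T_{\bar h^2}$, where $T_\phi := PM_\phi P$ is the Toeplitz operator with symbol $\phi$ on $H_+$ and $P$ is the projection of $L^2$ onto $H_+$. A dual pairing argument likewise identifies $\ker B^*$ with $\ker T_{h^2}$ via the unitary multiplication by $\bar h$.

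The classical Toeplitz index theorem now gives $\dim \ker T_{\bar h^2} = 0$ (since $\bar h^2$ has positive winding, $T_{\bar h^2}$ is injective) and $\dim \ker T_{h^2} = 1$ (since $h^2$ has winding $-1$, $T_{h^2}$ is surjective with one-dimensional kernel). Hence $\Index(B) = 0 - 1 = -1$, which is $s(d)$.

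The principal technical obstacle is that $X_0$ itself is not continuous on the identified circle $S^1$ (it jumps from $+1$ to $-1$ at the identified endpoints), so $B$ is not directly a Toeplitz operator with continuous symbol. The resolution is that the vanishing of $W$ at $\pm 1$, combined with the identity $h\bar h = 1$, means that the kernel equations for $B$ and $B^*$ involve only the continuous circle functions $h^2$ and $\bar h^2$, to which the classical Toeplitz index theorem directly applies.
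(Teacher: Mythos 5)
Your proof is correct and takes a genuinely different route from the paper. The paper's argument is a single citation-and-correction paragraph: it observes that Higson and Roe argue $s(d)=1$ in their Lemma 9.5.7, points out a subtle sign error (the operator $W_1(X_g,Y)$ in the proof of their Lemma 9.3.6 is in fact a compact perturbation of the \emph{adjoint} of the operator $W_1(X,Y)$ from Lemma 8.6.12, which flips the index), and concludes the corrected value is $-1$. You instead unwind the suspension construction explicitly: compute $X$, $V$, extract the off-diagonal block $B=-X_0+iWY$, and reduce its index to a Toeplitz index for the circle symbols $h^2$ and $\bar h^2$ (the key observation being that although $h$ does not descend to $S^1=[-1,1]/\!\sim$, both $h^2$ and $\bar h^2$ do). This buys a self-contained computation that makes the origin of the sign transparent (the clockwise winding of $h^2$), whereas the paper's argument requires the reader to trace a chain of lemmas in Higson--Roe and locate the sign error there; the cost is length. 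One small point: deducing the individual dimensions $\dim\ker T_{\bar h^2}=0$ and $\dim\ker T_{h^2}=1$ from the winding numbers uses Coburn's lemma, not only the Toeplitz index formula. This is standard so your step is fine, but you can avoid it entirely: since $T_\phi^*=T_{\bar\phi}$, one has $\coker T_{\bar h^2}\cong\ker T_{h^2}$, whence $\dim\ker B-\dim\ker B^*=\dim\ker T_{\bar h^2}-\dim\ker T_{h^2}=\Index(T_{\bar h^2})=-1$ directly from the index formula.
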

\begin{proof}
In \cite[Lem. 9.5.7]{Higson2000Khomology}, it is argued that $s(d) = 1$.  There is  a sign error in this computation, however. The operator denoted by $W_1(X_g, Y)$ in the proof of  \cite[Lem. 9.3.6]{Higson2000Khomology}
	is not the same as the operator $W_1(X, Y)$ in  \cite[Lem. 8.6.12]{Higson2000Khomology}, but it is a compact perturbation of its adjoint, which changes the index by a minus sign. With this change, the proof of  \cite[Lem. 9.5.7]{Higson2000Khomology} shows that $s(d) = -1$. 
\end{proof}

\subsection{Paschke duality and the coarse index}

\begin{definition}
	\label{def: K homology of a space}
	Let $X$ be a separable metric space with the Heine--Borel property.  The \emph{$K$-homology} of $X$ is defined as $K_p(X) = K^{-p}(C_0(X))$.
\end{definition}

If $X$ is a separable metric space with the Heine--Borel property, then the $K$-homology of $X$ is isomorphic to the $K$-theory of the quotient algebra $Q^*(X):=D^*(X)/C^*(X)$. This is known as \emph{Paschke duality} after Paschke who first proved a related result \cite{Paschke1981Ktheoryofcommutatants}.

\begin{proposition}
	\label{prop: Paschke duality}
	Let $X$ be a separable metric space with the Heine--Borel property and let $\rho\colon C_0(X)\rightarrow \cB(H)$ be an ample representation on a Hilbert space $H$ on which we form the $C^*$-algebras $C^*(X)$ and $D^*(X)$. Then for $p = -1, 0$, there is an isomorphism $K_p(X)\cong K_{1+p}(D^*(X)/C^*(X))$ satisfying the following identities:
	\begin{enumerate}
		\item \emph{For $p=-1$}: If $T \in M_m(D^*(X))$ is such that $q(T) \in M_m(D^*(X)/C^*(X))$ is a projection, then $[H^{\oplus m}, \rho^{\oplus m}, 2T-1] \in K_{-1}(X)$ is the $K$-homology class corresponding to $[q(T)]_0 \in K_0(D^*(X)/C^*(X))$ under Paschke duality.
		\item \emph{For $p=0$}:	If $T \in M_m( D^*(X))$ is such that $q(T) \in M_m(D^*(X)/C^*(X))$ is a unitary, then $\left[H^{\oplus 2m}, \rho^{\oplus 2m}, 
		\begin{pmatrix}
			0&T^*\\
			T&0
		\end{pmatrix}
		\right] \in K_{0}(X)$ is the $K$-homology class corresponding to $[q(T)]_1 \in K_1(D^*(X)/C^*(X))$ under Paschke duality.
	\end{enumerate}
\end{proposition}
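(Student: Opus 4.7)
The plan is to construct mutually inverse isomorphisms at the level of cycles. In one direction, the formulas in (i) and (ii) will define the map $K_{1+p}(D^*(X)/C^*(X)) \to K_p(X)$; in the other, I would compactly perturb a given Fredholm module into one of the prescribed form, using Voiculescu absorption together with a partition-of-unity averaging procedure.

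First I would verify that the recipes in (i) and (ii) produce well-defined $K$-homology classes. Given $T \in M_m(D^*(X))$ with $q(T)$ a projection (case $p = -1$), pseudolocality of $T$ gives $[2T-1, \rho(f)]$ compact for $f \in C_0(X)$, while $(2T-1)^2 - 1 = 4(T^2 - T)$ lies in $M_m(C^*(X))$, so $\rho(f)((2T-1)^2 - 1)$ is compact because elements of $C^*(X)$ are locally compact (Proposition \ref{prop: property induced sets are *algebras}). The self-adjointness-mod-compacts condition is handled similarly, and an analogous computation, now using the grading and off-diagonal form, handles (ii). Descent to $K$-theory then follows from the standard observation that a Murray--von Neumann equivalence of projections (resp.\ a homotopy of unitaries) modulo $C^*(X)$ lifts to a compact operator homotopy of the associated Fredholm modules, and that stabilization from $M_m$ to $M_{m+1}$ amounts to adding a degenerate cycle.

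For the inverse direction, starting from an arbitrary nondegenerate cycle $(H', \rho', F')$ for $K_p(X)$, I would apply a Voiculescu-type absorption theorem to replace it with an equivalent cycle $(H, \rho, F)$ realized on the fixed ample representation used to define $D^*(X)$ and $C^*(X)$. I would then modify $F$ by a compact perturbation to an operator $F''$ of finite propagation: choose a locally finite cover $\{U_i\}$ of $X$ by uniformly bounded Borel sets (available because $X$ has the Heine--Borel property) with a subordinate partition of unity $\{\varphi_i\}$, and set $F'' := \sum_i \rho(\varphi_i^{1/2}) F \rho(\varphi_i^{1/2})$. Pseudolocality of $F$ ensures $\rho(f)(F'' - F)$ is compact for every $f \in C_0(X)$, so $F'' \in D^*(X)$. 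After a further self-adjoint adjustment yielding $(F'')^2 - 1 \in C^*(X)$, the element $T := (F''+1)/2$ has image a projection $q(T) \in D^*(X)/C^*(X)$, and $[q(T)]_0$ represents the inverse image in $K_0(D^*(X)/C^*(X))$. The off-diagonal analogue of this construction, combined with the even grading, handles case $p=0$.

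The hardest part will be showing that the two composites are the identity and that all auxiliary choices (cover, partition of unity, compact perturbations, absorption) disappear after passing to $K$-theory. Both directions reduce to the same core fact: any two self-adjoint pseudolocal finite-propagation liftings of the same $K$-homology class differ by an element of $C^*(X)$, yielding a straight-line homotopy of projections (resp.\ unitaries) in $D^*(X)/C^*(X)$. Combined with the naturality of Voiculescu absorption, this produces the canonical Paschke duality isomorphism with the compatibility properties asserted in the proposition.
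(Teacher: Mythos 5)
The paper does not prove this proposition directly: the entire proof is a reduction to cited results in Higson--Roe (Exercise 4.10.9, Remarks 5.4.2 and 5.4.4, Theorem 8.4.3, and Lemma 12.3.2), with the structure of the isomorphism---a three-step composite $\alpha_3 \circ \alpha_2 \circ \alpha_1^{-1}$, distinguishing compact $X$ from noncompact $X$---spelled out in the subsequent remark. Your proposal instead attempts a self-contained proof from scratch. That is a genuinely different route, and the partition-of-unity smoothing $F'' = \sum_i \rho(\varphi_i^{1/2}) F \rho(\varphi_i^{1/2})$ correctly captures the idea behind Higson--Roe's comparison of $\PL(X)/\LC(X)$ with $D^*(X)/C^*(X)$ (the content of their Lemma 12.3.2); in fact $(F'')^2 - 1$ does land in $C^*(X)$ because $(F'')^2 - 1$ has finite propagation from one representation and is locally compact from the decomposition $F''(F''-F) + (F''-F)F + (F^2 - 1)$ together with the fact that $\LC$ is an ideal in $\PL$.

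However, there are two genuine gaps. First, the asserted \emph{core fact}---that any two self-adjoint pseudolocal finite-propagation liftings of the same $K$-homology class differ by an element of $C^*(X)$---is false as stated and does not prove what you need. Two Fredholm modules representing the same class need not even live on the same Hilbert space, and after Voiculescu absorption and stabilization they are homotopic or unitarily equivalent, not equal modulo $C^*(X)$. What one must actually show is that the associated projections (resp.\ unitaries) in $D^*(X)/C^*(X)$ are stably homotopic, and this requires tracking the operator homotopies and unitary equivalences through the partition-of-unity construction, which your proposal does not do. Second, you do not address the dichotomy between compact and noncompact $X$: for compact $X$ the representation $\rho$ does not extend to an ample representation of $\widetilde{C_0(X)}$ (it is not even injective), and one must pass to $\rho \oplus 0$ on $H \oplus H$ and extract the upper-left corner. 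The map $\alpha_2$ in the paper's Remark~\ref{rem: Paschke duality} encodes exactly this and your construction silently assumes the noncompact case. Both points would need to be repaired before the argument closes.
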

\begin{proof}
	A proof of the fact that the Paschke duality map is an isomorphism follows from combining the following results in \cite{Higson2000Khomology}: Exercise 4.10.9; Remark 5.4.2 and 5.4.4; Theorem 8.4.3 and Lemma 12.3.2 (part $b)$ is precisely Paschke duality).
\end{proof}

\begin{remark}
	\label{rem: Paschke duality}
	It should be noted that in general there are $4$ ``different'' Paschke duality maps employing the isomorphism. Of course the isomorphisms are different for $p = -1$ and $p = 0$, but also for compact and noncompact $X$. The reason for this is that for compact $X$ the $C^*$-algebra $C_0(X)$ is already unital such that an ample representation of $C_0(X)$ does not extend to an ample representation of $\widetilde{C_0(X)}$ (it is not even injective).
	
	In general, any of the $4$ Paschke duality maps can be decomposed into the following maps:
	\begin{equation}\label{eq: Paschke alpha j}
    \begin{tikzcd}
		{K_{1+p}(\PL(\widetilde{C_0(X)}))} & {K_{1+p}(\PL(X)/\LC(X))} \\
		{K_p(X)} & {K_{1+p}(D^*(X)/C^*(X))}
		\arrow["{\alpha_2}", from=1-1, to=1-2]
		\arrow["{\alpha_1}"', from=1-1, to=2-1]
		\arrow[dotted, from=2-1, to=2-2]
		\arrow["{\alpha_3}"', from=2-2, to=1-2]
	\end{tikzcd}
    \end{equation}
	
	In the above diagram, all maps $\alpha_i$ are isomorphisms and $\PL(\widetilde{C_0(X)})$ are the pseudolocal operators with respect to an ample representation of the unitization $\widetilde{C_0(X)}$. If $X$ is noncompact, then the unique extension of $\rho$ can be used as an ample representation. However, if $X$ is compact the unique extension of the representation $\rho' = \rho\oplus 0\colon C_0(X)\rightarrow \cB(H\oplus H)$ is used \cite[p. 128]{Higson2000Khomology}. With this in place, the map $\alpha_1$ is the map from \cite[Thm. 8.4.3]{Higson2000Khomology} and $\alpha_3$ is the map on $K$-theory induced by the natural map on these quotients that arises from the inclusion $D^*(X)\hookrightarrow \PL(X)$ \cite[Lem. 12.3.2]{Higson2000Khomology}.
	
	The map $\alpha_2$ depends on the compactness of $X$. If $X$ is noncompact, then it follows that $\PL(\widetilde{C_0(X)}) = \PL(X)$ (with respect to the just specified representation) and $\alpha_2$ is just the map induced by the quotient map. If $X$ is compact then $\PL(\widetilde{C_0(X)}) \subseteq \cB(H\oplus H)$ and $\alpha_2$ is the map induced by the homomorphism 
	\[
	\begin{pmatrix}
		a&*\\
		*&*
	\end{pmatrix}\mapsto q(a),
	\] 
	where $q\colon \PL(X)\rightarrow \PL(X)/\LC(X)$ is the quotient map (this is well-defined as $\LC(X) = \cK(H)$ is this case and the anti-diagonal terms of elements in $\PL(\widetilde{C_0(X)})$ are necessarily compact \cite[p. 128]{Higson2000Khomology}).
\end{remark}

\begin{definition}
	\label{def: assembly map}
	Let $X$ be a separable metric space with the Heine--Borel property. The \emph{coarse assembly maps} associated to $X$ are the maps 
	\[
	A_p\colon K_p(X)\rightarrow K_p(C^*(X)) 
	\]
	defined as the composition 
	\[\begin{tikzcd}
		{K_p(X)} & {K_{1+p}(D^*(X)/C^*(X))} & {K_p(C^*(X))}
		\arrow["\sim", from=1-1, to=1-2]
		\arrow["\partial", from=1-2, to=1-3]
	\end{tikzcd}\]
	where the first map is Paschke duality and  $\partial$ is the boundary map from the 6-term exact associated to the inclusion of the ideal $C^*(X)\hookrightarrow D^*(X)$. The $K$-theory indices should be interpreted as elements of $\Z/2\Z$---that is, for $p= 0, -1$, the assembly map $A_p$ takes the following forms, respectively:
	\[
	A_0\colon K_0(X)\rightarrow K_0(C^*(X)) \qquad \text{ and } \qquad A_{-1}\colon K_{-1}(X)\rightarrow K_1(C^*(X)).
	\]
\end{definition}
Recall that we assumed that $S$ was graded when $M$ is even-dimensional and that no grading on $S$ was assumed when $M$ is odd-dimensional. Let $\chi\colon \R\rightarrow [-1, 1]$ be a \emph{normalizing function}. That is, $\chi$ is smooth and odd, $\chi(x)>0$ for $x >0$ and $\lim_{x\rightarrow \pm\infty}\chi(x) = \pm 1$. Recall that if $D$ is a symmetric, elliptic differential operator on $S$ with finite propagation speed, then $D$ defines a $K$-homology class $[D]_p \in K_p(M)$, where $p = 0$ when $S$ is graded and $p = -1$ otherwise. This $K$-homology class is given by 
\begin{equation}
	\label{eq: Khom class D}
	[D]_p = \left[L^2(M;S), \rho, \chi(D)\right],
\end{equation}
which indeed defines a $K$-homology class by Theorems \ref{thm: func calc thm for C^*M} and \ref{thm: func calc thm for D^*M}, see also \cite[Thm. 10.6.5]{Higson2000Khomology}.

\begin{definition}
	\label{def: index D}
	Let $M$ be a complete Riemannian manifold and let $S$ be a Hermitian vector bundle over $M$. For any symmetric elliptic differential operator $D$ on $S$ with finite propagation speed, we define its \emph{coarse index} to be the element $A_p[D]_{p}\in K_p(C^*(M))$, where $p = -1$ if $M$ is odd-dimensional and $p = 0$ if $M$ is even-dimensional.
\end{definition}
\begin{remark}
 Later (see Proposition \ref{prop: Roe homomorphism}), it will be shown that for odd-dimensional manifolds the coarse index defined above is nothing else than the $K_1$-class determined by the Cayley transform of the differential operator $D$.
\end{remark}

%
%
%

\section{Partitioned manifolds, the generalized partitioned manifold index theorem and its consequences}
\label{section: partitioned mfds and PMT}

In this section, we will first introduce partitioned manifolds and the partitioned index that we can associate to an elliptic, symmetric differential operator with finite propagation speed on such partitioned manifold. In Section \ref{subsec: Roe homomorphism}, we will introduce the Roe homomorphism, which relates the partitioned index of an operator to its original index. Then in Section \ref{subsec: generalized pmt} our generalized version of the partitioned manifold index theorem will be stated. We will conclude in Section \ref{subsec: corollaries to pmt} by giving some immediate corollaries to our version of the partitioned manifold index theorem. This section is based on \cite{higson1991cobordisminvariance} and \cite[Ch. 4]{Roe1996indextheory}, but most of the concepts and objects are adapted to be compatible with our new setting, which relies more heavily on $K$-theory to allow for noncompact hypersurfaces.

\subsection{Partitioned manifolds}
\label{subsec: partitioned manifolds}
In this subsection, we will discuss the concept of a partitioned manifold and some of its properties that we will need. The following definition is a variation of \cite[Def. 4.2]{Roe1996indextheory} as we \emph{do not} assume compactness of the hypersurface.

\begin{definition}
	\label{def: partitioning hypersurface and partition}
	Let $M$ be a manifold. An embedded submanifold $N \subseteq M$ of codimension $1$ is said to be a \emph{partitioning hypersurface} if we can write $M = M_+\cup M_-$, where $M_{\pm} \subseteq M$ are submanifolds with boundary such that $N = M_+\cap M_- = \partial M_- = \partial M_+$. The triple $(N, M_+, M_-)$ is the \emph{partition} of $M$, i.e.\ this is a partitioning hypersurface with a corresponding choice of labeling $M_+$ and $M_-$. 
\end{definition}
 
The following lemma gives some easy topological consequences of the definition of a partitioned manifold. 

\begin{lemma}
	\label{lem: topological properties of partitions}
	Let $(N, M_+, M_-)$ be a partition of the manifold $M$. Then the following are true:
	\begin{enumerate}
		\item $N , M_{\pm} \subseteq M$ are topologically closed.
		\item $M_{\pm} = N\cup M_{\pm}^{\circ}$ where the union is disjoint.
		\item $M = M_-^{\circ}\cup N\cup M_+^{\circ}$ where the union is disjoint.
		\item $M\setminus N = M_-^{\circ} \cup M_+^{\circ}$ is disconnected.
	\end{enumerate}
\end{lemma}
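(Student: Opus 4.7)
The approach is definition-chasing guided by the local structure of a codimension-$0$ submanifold with boundary: around any $p\in M_+$ there is a chart $(U,\varphi)$ of $M$ for which $\varphi(U\cap M_+)$ is either all of $\varphi(U)$ (at interior points of $M_+$) or a closed half-space $\varphi(U)\cap\{x_n\geq 0\}$ (at points of $\partial M_+=N$), and symmetrically for $M_-$. I would prove (ii) first, use it to get (i), then assemble (iii) set-theoretically, and read off (iv) from openness.

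\emph{Step 1 (proving (ii)).} The local chart description shows at once that every $p\in M_+$ belongs to exactly one of $M_+^{\circ}$ and $\partial M_+=N$, so $M_+=M_+^{\circ}\sqcup N$, and analogously for $M_-$. The same local picture shows that $M_+^{\circ}$ and $M_-^{\circ}$ are open in $M$, since interior points admit a full $M$-neighbourhood contained in $M_{\pm}\setminus N$. (Here the manifold-theoretic interior coincides with the topological interior in $M$ because $M_{\pm}$ have the same dimension as $M$; this is the only place the codimension-$0$ nature of $M_\pm$ enters, and it is the only minor subtlety.)

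\emph{Step 2 (proving (i)).} I would show $M\setminus M_+=M_-^{\circ}$. For the forward inclusion, if $p\notin M_+$ then $p\in M_-$ from $M=M_+\cup M_-$, and $p\notin N$ since $N\subseteq M_+$, so $p\in M_-^{\circ}$ by (ii). For the reverse, if $p\in M_-^{\circ}$ and also $p\in M_+$, then $p\in M_+\cap M_-=N$, contradicting (ii). Hence $M\setminus M_+=M_-^{\circ}$ is open, so $M_+$ is closed; symmetrically $M_-$ is closed, and then $N=M_+\cap M_-$ is closed as well.

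\emph{Steps 3 and 4 (proving (iii) and (iv)).} Combining (ii) with $M=M_+\cup M_-$ gives $M=M_+^{\circ}\cup N\cup M_-^{\circ}$, and pairwise disjointness reduces to $M_+^{\circ}\cap N=\emptyset=M_-^{\circ}\cap N$ from (ii) together with $M_+^{\circ}\cap M_-^{\circ}=\emptyset$ from Step 2; this proves (iii). Then (iv) is immediate: by (iii), $M\setminus N=M_+^{\circ}\sqcup M_-^{\circ}$, and both pieces are open in $M$ by Step 1, so whenever both are nonempty this is a disconnection. There is no real obstacle here; the whole lemma is a bookkeeping consequence of the manifold-with-boundary definition, and no new analytic or geometric input is required.
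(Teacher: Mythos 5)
Your proof is correct and relies on the same key fact as the paper's: that for a codimension-$0$ embedded submanifold with boundary the manifold interior agrees with the topological interior in $M$ (which you state in your parenthetical and the paper cites as \cite[Prop. 5.46]{Lee2012Smooth}). The remaining steps are the same set-theoretic bookkeeping the paper dismisses as ``easy consequences,'' just spelled out more explicitly.
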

\begin{proof}
	By \cite[Prop. 5.46]{Lee2012Smooth} it follows that the manifold interior and -boundary of $M_{\pm}$ agree with the topological interior and -boundary as subset of $M$. It follows that $N$ is closed and $M_{\pm}^{\circ}$ are open. This also implies that $N\cap M_{\pm}^{\circ} = \emptyset$. The other statements are easy consequences.
\end{proof}

	Let $(N, M_+, M_-)$ be a partition of the Riemannian manifold $(M, g)$. Then there exists a unit normal vector field $n$ on $N$ pointing into $M_+$. Moreover, if $M$ is oriented, then $N$ is orientable and $N$ has a unique orientation such that for any oriented orthonormal frame $(e_1, \ldots, e_d)$ of $TN$ the frame $(n, e_1, \ldots, e_d)$ is an oriented orthonormal frame for $\restr{TM}{N}$ with $\dim(M) = d+1$.
%

If $N$ is a connected partitioning hypersurface of a manifold $M$ with Riemannian metric $g$, then there are two ways to give $N$ the structure of a metric space. In the first place, $N$ can be equipped with the distance function $\restr{d_M}{N}$ it inherits as subset of $M$. Alternatively, $N$ can also be equipped with the Riemannian distance $d_{N}$ that is induced by the metric $g|_N$.

\begin{lemma}
	\label{lem: metric props hypersurfaces}
	Let $N \subseteq M$ be a topologically closed, connected hypersurface in a complete manifold $M$. Then $N$ is a complete Riemannian manifold. Moreover, the identity map 
	\[\id_N\colon (N, d_{N})\rightarrow (N, \restr{d_M}{N})\]
	is a uniform map. 
\end{lemma}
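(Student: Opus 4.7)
The plan is to derive both assertions from a single key observation: every $d_M|_N$-bounded subset of $N$ has compact closure in $N$. Once this is in place, completeness of $(N,d_N)$ and the uniform-map property of $\id_N$ will both follow essentially formally.

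To set things up, I would first record the trivial inequality $d_M|_N \leq d_N$: any piecewise-smooth curve in $N$ is also a curve in $M$ of the same Riemannian length, so the infimum over $M$-paths is bounded above by the infimum over $N$-paths (connectedness of $N$ ensures $d_N$ is finite). This inequality immediately shows $\id_N$ is $1$-Lipschitz, hence continuous, and uniformly expansive with $S = R$.

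The main step is metric boundedness of $\id_N$. Let $B \subseteq N$ be bounded with respect to $d_M|_N$, so $B \subseteq B_R^{d_M}(x_0)$ for some $x_0 \in N$ and $R > 0$. Completeness of $M$ together with the Hopf--Rinow theorem yield that $\overline{B_R^{d_M}(x_0)}$ is compact in $M$. Since $N$ is topologically closed in $M$, the set $\overline{B_R^{d_M}(x_0)} \cap N$ is compact; note that the subspace topology on $N$, the manifold topology, and the topology induced by $d_N$ all coincide. The function $d_N(\,\cdot\,, x_0)$ is continuous for this topology, hence attains a finite maximum on this compact set, so $B$ is $d_N$-bounded. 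Combined with the previous paragraph this completes the proof that $\id_N$ is a uniform map.

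For completeness of $(N, d_N)$, I would take a $d_N$-Cauchy sequence $(x_n)$ in $N$. Such a sequence is $d_N$-bounded, hence $d_M|_N$-bounded via $d_M \leq d_N$, and by the argument above the set $\{x_n\}$ has compact closure in $N$ with respect to the manifold topology. Extracting a subsequence converging in $(N, d_N)$ and invoking the Cauchy property to upgrade this to convergence of the full sequence then finishes the proof. There is no substantial obstacle here; the only thing to be careful about is that $d_N$-boundedness cannot be read off directly from $d_M|_N$-boundedness without invoking Hopf--Rinow plus closedness of $N$, which is exactly why the argument is organized around that compactness step.
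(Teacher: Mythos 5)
Your proof is correct and follows essentially the same route as the paper's: it rests on the observation that $d_M|_N$-bounded subsets of $N$ have compact closure (via Hopf--Rinow and closedness of $N$ in $M$), which is what underlies both metric boundedness of $\id_N$ and completeness of $(N,d_N)$. The paper's own proof compresses this into the remark that the rest ``immediately follows'' from $d_N$ and $d_M|_N$ inducing the same topology, so your version is a welcome unpacking of the same compactness argument that the paper leaves implicit.
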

\begin{proof}
	As any curve through $N$ is automatically a curve through $M$. It follows that $\id_N$ is a Lipschitz continuous map and therefore it is uniformly expansive. The rest of the result now immediately follows from the fact that $d_N$ and $\restr{d_M}{N}$ induce the same topology on $N$.
%
%
\end{proof}

Finally, let us introduce the following notion of two hypersurfaces or partitions being ``near'' each other. This is a variation of the notion of cobordism that is introduced in \cite[p. 440]{higson1991cobordisminvariance} and \cite[p. 30]{Roe1996indextheory} that is adapted to our new setting. Recall that if $A\subseteq C$ and $B\subseteq C$ are two subsets of some set $C$, then we denote their \emph{symmetric difference} by 
\[
A\Delta B = (A\cup B)\setminus (A\cap B).
\]

\begin{definition}
	\label{def: near hypersurfaces}
	Let $(N, M_+, M_-)$ and $(N', M'_+, M'_-)$ be two partitions of $M$.
	\begin{enumerate} 
		\item We say that $N'$ is \emph{near} $N$ if there exists an $R>0$ such that $N'\subseteq B_R(N)$.
		\item We say that $(N', M'_+, M'_-)$ is \emph{near} $(N, M_+, M_-)$ if there exist an $R>0$ such that $M_{\pm}\Delta M_{\pm}'\subseteq B_R(N)$. 
	\end{enumerate}
\end{definition}

\begin{example}
	\label{ex: compact hypersurface is near all other hypersurfaces}
	Let $N, N'\subseteq M$ be two hypersurfaces and suppose that $N$ is compact. Then $N$ is near $N'$. Indeed, let $p\in N$ be arbitrary and choose $R>0$ such that $p \in B_R(N')$. It follows that $N\subseteq B_{R+\diam(N)}(N')$. Thus a compact hypersurface is near any other hypersurface.
\end{example}

\begin{remark}
	\label{rem: nearness is not equivalence relation}
	The relations of ``being near'' on hypersurfaces in and partitions of $M$ in Definition \ref{def: near hypersurfaces} are not equivalence relations. They are obviously reflexive and it can also be shown that they are transitive. For hypersurfaces, this follows easily; if $N\subseteq B_{R}(N')$ and $N'\subseteq B_{R'}(N'')$, then $N \subseteq B_{R}(B_{R'}(N'')) = B_{R+R'}(N'')$. For partitions this is a little more work. Let $(N, M_+, M_-)$, $(N', M'_+, M'_-)$ and $(N'', M''_+, M''_-)$ be partitions such that $M_{\pm}\Delta M'_{\pm} \subseteq B_R(N')$ and
	$M'_{\pm}\Delta M''_{\pm} \subseteq B_{R'}(N'')$. Then it follows that
	\begin{align*}
		M_{\pm}\Delta M''_{\pm} &\subseteq (M_{\pm}\Delta M'_{\pm}) \cup (M'_{\pm}\Delta M''_{\pm}) \subseteq B_R(N') \cup B_{R'}(N'')\\
		&\subseteq B_R(B_{R'}(N'')) \cup B_{R'}(N'') = B_{R+R'}(N''),
	\end{align*}
	where the third inclusion follows from Lemma \ref{lem: near partitions have near hypersurfaces}. We conclude that $(N, M_+, M_-)$ is near $(N'', M''_+, M''_-)$.
	
	However, the nearness relations are not symmetric. Consider $\R^2$ as Riemannian manifold with the Euclidean metric. The partitions of $\R^2$ given by 
	\[
	N = \{1\}\times \R \qquad \text{and} \qquad N' = \{(x, y)\in \R^2\colon y = \mp\frac{1}{x}\pm 1\}
	\]
	form a counterexample. 
\end{remark}

We write $\chi_A$ for the indicator function of a set $A$.
\begin{lemma}
	\label{lem: near partitions have near hypersurfaces}
	Let $(N, M_+, M_-)$ and  $(N', M_+', M_-')$ be two partitions of $M$ such that  $(N', M_+', M_-')$ is near $(N, M_+, M_-)$. Then $N'$ is near $N$ and $\chi_{M_+}-\chi_{M_+'}$ is supported near $N$. In particular, if $R>0$ is such that $M_{\pm}\Delta M_{\pm}'\subseteq B_R(N)$, then $N' \subseteq B_R(N)$ and $\supp(\chi_{M_{\pm}}-\chi_{M_{\pm}'}) \subseteq \ol{B_{R}(N)}$.
\end{lemma}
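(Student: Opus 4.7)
The plan is to fix $R>0$ with $M_{\pm}\Delta M_{\pm}' \subseteq B_R(N)$ and then argue both statements by elementary set-theoretic manipulation, relying only on the identities $N' = M_+' \cap M_-'$ and $N = M_+\cap M_-$ provided by Lemma \ref{lem: topological properties of partitions}.

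For the inclusion $N' \subseteq B_R(N)$, I would argue by contradiction: suppose $p \in N'$ and $p \notin B_R(N)$. Since $B_R(N)$ contains both $M_+\Delta M_+'$ and $M_-\Delta M_-'$, the point $p$ lies in neither symmetric difference. Now $p \in N' = M_+' \cap M_-'$, so $p \in M_+'$ and $p \in M_-'$. Combined with $p \notin M_+\Delta M_+'$ this forces $p \in M_+$, and similarly $p \in M_-$. Hence $p \in M_+ \cap M_- = N \subseteq B_R(N)$, contradicting our assumption.

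For the statement about $\chi_{M_+}-\chi_{M_+'}$, I would observe that this function takes values in $\{-1,0,1\}$ and is nonzero precisely on the symmetric difference $M_+\Delta M_+'$ (a point lies in exactly one of $M_+, M_+'$ if and only if it lies in the symmetric difference). Therefore
\[
\supp(\chi_{M_+}-\chi_{M_+'}) = \ol{M_+\Delta M_+'} \subseteq \ol{B_R(N)},
\]
and the argument for $\chi_{M_-}-\chi_{M_-'}$ is identical. Since $\ol{B_R(N)}$ is contained in any $B_{R+\delta}(N)$ with $\delta>0$ by the remark preceding Definition \ref{def: nondegenerate representation}, this also shows that $\chi_{M_\pm}-\chi_{M_\pm'}$ is supported near $N$ in the informal sense used in the statement.

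There is no real obstacle here; the only care needed is to invoke the correct identity $N' = M_+'\cap M_-'$ (rather than, say, working with the topological boundary) so that membership in $N'$ directly yields membership in both $M_+'$ and $M_-'$, which is what drives the contradiction argument.
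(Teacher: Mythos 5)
Your argument is correct and essentially identical to the paper's own proof: both reduce the first claim to the observation that a point of $N'=M_+'\cap M_-'$ lying outside both symmetric differences must lie in $N$ (you phrase it as a contradiction, the paper as a direct case split), and both handle the characteristic-function claim by noting $|\chi_{M_\pm}-\chi_{M_\pm'}| = \chi_{M_\pm\Delta M_\pm'}$.
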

\begin{proof}
	Take $R>0$ such that $M_{\pm}\Delta M_{\pm}'\subseteq B_R(N)$ and let $p \in N' = M_+'\cap M_-'$ be arbitrary. Suppose that $p \notin M_+\Delta M_+'$ and $p \notin M_-\Delta M_-'$, then 
	\[
	p \in M_+\cap M_+'\cap M_-\cap M_-' = N\cap N' \subseteq B_R(N).
	\] 
	If either $p \in M_+\Delta M_+'$ or $p \in M_-\Delta M_-'$, then $p \in B_R(N)$ trivially.
	
	For the last statement, observe that 
	\begin{equation*}
		|\chi_{M_{\pm}}-\chi_{M_{\pm}'}|(p) = 
		\begin{cases}
			1 &\text{ if $p \in M_{\pm}\setminus M_{\pm}' \cup M_{\pm}'\setminus M_{\pm}$,}\\
			0 &\text{ if $p \notin M_{\pm}\setminus M_{\pm}' \cup M_{\pm}'\setminus M_{\pm}$,}
		\end{cases}
	\end{equation*}
	which is equal to $\chi_{M_{\pm}\Delta M_{\pm}'}(p)$. So $\supp(\chi_{M_{\pm}}-\chi_{M_{\pm}'}) \subseteq \ol{M_{\pm}\Delta M_{\pm}'} \subseteq \ol{B_R(N)}$.
\end{proof}

\subsection{The partitioned index}
\label{subsec: the partitioned index}
In his proof of Roe's partitioned manifold index theorem, Higson constructs a ``partitioned index'' $\Ind(D;N)$, where $N\subseteq M$ is a compact partitioning hypersurface \cite[p. 440]{higson1991cobordisminvariance}. In this subsection, our goal is to show that this partitioned index can be constructed in a similar way when the partitioning hypersurface is possibly noncompact. Again, $M$ will always be a complete Riemannian manifold with a Hermitian vector bundle $S$, and $D$ will be an elliptic, symmetric operator on $S$ that has finite propagation speed, such that $D$ is an essentially self-adjoint operator on $L^2(M;S)$. We will denote the unique self-adjoint extension also by $D$.
%

Suppose that we have a fixed partitioning hypersurface $N \subseteq M$. We write $Q^*(N\subseteq M):= D^*(M)/ C^*(N\subseteq M)$. If $S, T \in D^*(M)$, we write $S \sim T $ if $S-T \in C^*(N\subseteq M)$, i.e.\ when $S$ and $T$ represent the same equivalence class in $Q^*(N\subseteq M)$.  The following result is based on \cite[Thm. 1.2]{higson1991cobordisminvariance}.

\begin{lemma}
	\label{lem: Higsons 1.2 general}
	Let $\theta \in B^{\infty}(M)$ be such that $\supp(\theta) \subseteq B_R(N)$ for some $R>0$. Then $\rho(\theta)T \sim 0$ for any $T \in C^*(M)$.
\end{lemma}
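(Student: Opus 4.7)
The conclusion $\rho(\theta)T\sim 0$ means $\rho(\theta)T\in C^*(N\subseteq M)$. Since this ideal is by definition the closed linear span of $\LC(M)\cap \FP(M)\cap \SN_N(M)$, and since $\rho(\theta)$ is bounded so that $S\mapsto \rho(\theta)S$ is norm-continuous, it suffices by density to verify the claim for a dense subset of $C^*(M)$, namely for $T\in \LC(M)\cap \FP(M)$. So my plan is to pick such a $T$ with propagation $\le P$, say, and show that $\rho(\theta)T$ lies in all three of $\LC(M)$, $\FP(M)$, and $\SN_N(M)$, then take limits.

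First I would check local compactness. For any $f\in C_0(M)$, the operator $\rho(f)$ commutes with $\rho(\theta)$ (both are multiplication operators coming from the extension of $\rho$ to bounded Borel functions as in Remark \ref{rem: FP and SN properties extend to bounded Borel functions}), so $\rho(f)\rho(\theta)T=\rho(\theta)\rho(f)T$ is a bounded operator times a compact one, hence compact; likewise $\rho(\theta)T\rho(f)$ is compact because $T\rho(f)$ is. Next, finite propagation of $\rho(\theta)T$ is inherited from that of $T$: if $\rho(f)T\rho(g)=0$ whenever $d_M(\supp f,\supp g)>P$, then $\rho(f)\rho(\theta)T\rho(g)=\rho(\theta)\rho(f)T\rho(g)=0$ under the same hypothesis.

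The one place where the hypothesis on $\theta$ really enters is the support-near-$N$ condition. I would choose $R'=R+P+1$ and take any $f\in C_0(M)$ with $d_M(\supp f,N)>R'$. On the one side, $\supp(f)\cap\supp(\theta)\subseteq \supp(f)\cap \overline{B_R(N)}=\emptyset$, so $\rho(f)\rho(\theta)=\rho(f\theta)=0$, giving $\rho(f)\rho(\theta)T=0$. On the other side, every point of $\supp(\theta)$ lies within distance $R$ of $N$ while every point of $\supp(f)$ lies at distance $>R+P+1$ from $N$, hence $d_M(\supp\theta,\supp f)>P+1>P$; applying Remark \ref{rem: FP and SN properties extend to bounded Borel functions} to the finite-propagation property of $T$ with bounded Borel ``cutoff functions'' $\theta$ and $f$ gives $\rho(\theta)T\rho(f)=0$. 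This shows $\rho(\theta)T\in \SN_N(M)$.

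The combined conclusion is that $\rho(\theta)T\in \LC(M)\cap\FP(M)\cap \SN_N(M)\subseteq C^*(N\subseteq M)$ for $T$ in the dense subalgebra $\LC(M)\cap \FP(M)$ of $C^*(M)$, and then the norm-continuity argument in the first paragraph extends it to all $T\in C^*(M)$. The only mildly subtle point in the above is the legitimacy of using $\rho(\theta)$ inside the definitions of finite propagation and of $\SN_N$---that is exactly what Remark \ref{rem: FP and SN properties extend to bounded Borel functions} allows, so there is no real obstacle.
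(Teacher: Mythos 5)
Your proof is correct and takes essentially the same approach as the paper: reduce to $T \in \LC(M)\cap\FP(M)$ by norm density, verify that $\rho(\theta)T$ lies in $\LC(M)\cap\FP(M)\cap\SN_N(M)$ (using the disjointness of $\supp f$ and $\supp \theta$ on one side and finite propagation of $T$ on the other for the $\SN_N$ step), and pass to the limit. The only cosmetic difference is that you verify the $\LC$ and $\FP$ memberships by hand, whereas the paper invokes Proposition~\ref{prop: property induced sets are *algebras} for those two.
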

\begin{proof}
	By definition of $C^*(M)$, there exists a sequence $\{T_n\}_{n\in\N}$ such that $T_n \rightarrow T$ in norm and $T_n \in \LC(M)\cap\FP(M)$. 
We have	 $\rho(\theta)T_n \in \LC(M)\cap\FP(M)$ by Proposition \ref{prop: property induced sets are *algebras}. For each $n \in \N$, choose $P_n>0$ such that for any $g, h \in B^{\infty}(M)$ with $d_M(\supp(g), \supp(h))>P_n$ the operator $\rho(g)T_n\rho(h)$ equals 0. If $f \in B^{\infty}(M)$ is such that $d_M(\supp(f), N)> R+P_n+1$, then $d_M(\supp(f), \supp(\theta))>P_n$ and therefore
	\[
	\rho(f)\rho(\theta)T_n = \rho(\theta)T_n\rho(f) = 0, 
	\]
	which implies that $\rho(\theta)T_n \in  \LC(M)\cap\FP(M)\cap\SN_N(M)$. Since $\rho(\theta)T_n \rightarrow \rho(\theta)T$, it follows that $\rho(\theta)T \in C^*(N\subseteq M)$.
\end{proof}

\begin{corollary}
	\label{cor: Higsons 1.2}
	Let $\theta \in B^{\infty}(M)$ be such that $\supp(\theta) \subseteq B_R(N)$ for some $R>0$. Then $\rho(\theta)(D\pm i)^{-1} \sim 0$.
\end{corollary}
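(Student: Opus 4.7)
The plan is simply to reduce this statement to Lemma \ref{lem: Higsons 1.2 general} by exhibiting $(D\pm i)^{-1}$ as an element of the Roe algebra $C^*(M)$.

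Consider the function $\varphi_{\pm}(x) = (x\pm i)^{-1}$ on $\R$. Since $|\varphi_{\pm}(x)| \leq (x^2+1)^{-1/2}$, it is clear that $\varphi_{\pm} \in C_0(\R)$. As $D$ is a symmetric elliptic differential operator on $S$ with finite propagation speed (and in particular essentially self-adjoint, as remarked after Theorem \ref{thm: func calc thm for D^*M}), Theorem \ref{thm: func calc thm for C^*M} applies and gives $(D\pm i)^{-1} = \varphi_{\pm}(D) \in C^*(M)$.

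Applying Lemma \ref{lem: Higsons 1.2 general} with $T = (D\pm i)^{-1}$ yields $\rho(\theta)(D\pm i)^{-1} \sim 0$, which is exactly what we wanted. There is no real obstacle here; the corollary is essentially a one-line consequence of the preceding lemma combined with the functional-calculus theorem for the Roe algebra.
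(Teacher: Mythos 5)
Your proof is correct and takes essentially the same route as the paper: you observe $\varphi_{\pm}(x) = (x\pm i)^{-1} \in C_0(\R)$, invoke Theorem \ref{thm: func calc thm for C^*M} to get $(D\pm i)^{-1} \in C^*(M)$, and then apply Lemma \ref{lem: Higsons 1.2 general}.
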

\begin{proof}
	 Since $\varphi_{\pm}(x) = (x\pm i)^{-1} \in C_0(\R)$, it follows from Theorem \ref{thm: func calc thm for C^*M} that $(D\pm i)^{-1} \in C^*(M)$. Therefore, the result follows directly from Lemma \ref{lem: Higsons 1.2 general}
\end{proof}

We also have the following generalization of \cite[Lem. 1.3]{higson1991cobordisminvariance}.

\begin{lemma}
	\label{lem: Higsons 1.3}
	Let $\phi \in C^{\infty}(M)$ be such that $\phi$ and $\|d\phi\|$ are uniformly bounded and suppose there exists an $R>0$ such that $\phi$ is locally constant on $M\setminus B_R(N)$. Then $[(D\pm i)^{-1}, \rho(\phi)] \sim 0$.
\end{lemma}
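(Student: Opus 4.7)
The plan is to reduce the commutator $[(D\pm i)^{-1}, \rho(\phi)]$ to a commutator with $D$ itself, which is a zeroth-order multiplication operator supported near $N$, and then invoke Corollary \ref{cor: Higsons 1.2} together with the ideal structure of Corollary \ref{cor: Roe C^* algebra is C^* alg}. Concretely, I will use the standard algebraic identity
\[
[(D\pm i)^{-1}, \rho(\phi)] = -(D\pm i)^{-1}\,[D, \rho(\phi)]\,(D\pm i)^{-1}.
\]
To justify this on $L^2(M;S)$, I first check that $\rho(\phi)$ preserves $\dom(D)$: for $u\in \dom(D)$, the formal Leibniz rule $D(\rho(\phi)u)=\rho(\phi)Du+[D,\rho(\phi)]u$ makes sense because $[D,\rho(\phi)]$ acts as bounded multiplication by a bundle endomorphism (essentially the principal symbol of $D$ applied to $d\phi$), which is uniformly bounded since $\|d\phi\|$ is. Multiplying the identity above on the left and right by $(D\pm i)$ on $\dom(D)$ then yields the formula on all of $L^2(M;S)$.

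Next, observe that, being a bounded bundle-endomorphism multiplication, $[D,\rho(\phi)]$ has zero propagation and commutes exactly with $\rho(f)$ for all $f\in C_0(M)$, so it lies in $\PL(M)\cap\FP(M)\subseteq D^*(M)$. Since $\phi$ is locally constant on $M\setminus B_R(N)$, the form $d\phi$, and hence $[D,\rho(\phi)]$, is supported inside $\overline{B_R(N)}$. Choose a real-valued $\theta\in B^\infty(M)$ with $\theta=1$ on $\overline{B_R(N)}$ and $\supp(\theta)\subseteq B_{R+1}(N)$. Then $\rho(\theta)[D,\rho(\phi)]=[D,\rho(\phi)]$, so
\[
[(D\pm i)^{-1},\rho(\phi)] = -(D\pm i)^{-1}\rho(\theta)\,[D,\rho(\phi)]\,(D\pm i)^{-1}.
\]

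By Corollary \ref{cor: Higsons 1.2} applied to $\theta$, $\rho(\theta)(D\mp i)^{-1}\in C^*(N\subseteq M)$, and taking adjoints---using that $\theta$ is real and $C^*(N\subseteq M)$ is closed under $*$---gives $(D\pm i)^{-1}\rho(\theta)\in C^*(N\subseteq M)$. The remaining factor $[D,\rho(\phi)](D\pm i)^{-1}$ lies in $D^*(M)\cdot C^*(M)\subseteq C^*(M)\subseteq D^*(M)$ by Theorem \ref{thm: func calc thm for C^*M}. Because $C^*(N\subseteq M)$ is a two-sided ideal of $D^*(M)$ by Corollary \ref{cor: Roe C^* algebra is C^* alg}, the full product is in $C^*(N\subseteq M)$, i.e.\ $[(D\pm i)^{-1},\rho(\phi)]\sim 0$.

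The only mildly technical step is the domain manipulation needed to validate the commutator identity; once that is in place the argument is purely bookkeeping with the ideal structure and the already-proven Corollary \ref{cor: Higsons 1.2}, so I expect no essential obstacle.
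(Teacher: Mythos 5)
Your proof is correct and follows essentially the same route as the paper: reduce to the resolvent identity $[(D\pm i)^{-1},\rho(\phi)]=-(D\pm i)^{-1}[D,\rho(\phi)](D\pm i)^{-1}$ (after verifying $\rho(\phi)$ preserves the domain of the self-adjoint closure), note that $[D,\rho(\phi)]=\sigma_D(d\phi)$ is a bounded endomorphism supported near $N$, insert a cutoff $\theta$ supported near $N$ with $\rho(\theta)[D,\rho(\phi)]=[D,\rho(\phi)]$, apply Corollary \ref{cor: Higsons 1.2} (via the adjoint) to place $(D\pm i)^{-1}\rho(\theta)$ in $C^*(N\subseteq M)$, and conclude using the ideal property. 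The paper uses a partition of unity where you use a single cutoff $\theta$, but this is a cosmetic difference.
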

\begin{proof}
	Note that $\rho(\phi)(\dom(D^*))\subseteq \dom(D^*)$. Indeed, take $u \in \dom(D^*)$ so that the functional $s \mapsto \langle Ds, u\rangle$ is bounded on $C_c^{\infty}(M;S)$. Then it follows that
	\begin{align*}
		|\langle Ds, \phi u\rangle| &= |\langle \phi Ds, u\rangle| \leq |\langle D(\phi s), u \rangle| + |\langle [D, \rho(\phi)]s, u\rangle| \\
		&\leq \|Du\|\|\phi\|_{\infty}\|s\| + \|u\|\|[D, \rho(\phi)]\|\|s\| = C\|s\|,
	\end{align*}
	where $\|[D, \rho(\phi)]\| < \infty$ as $D$ has finite propagation speed and we assumed $\|d\phi\|$ to be bounded on $B_R(N)$ (note that $[D, \rho(\phi)]u(p) = \sigma_D(p, d\phi_p)u(p)$). From this, we conclude that the operator $[D, \rho(\phi)]$ is well-defined on $\dom(D^*)$. Thus, it follows that
	\begin{align*}
		[(D\pm i)^{-1}, \rho(\phi)] &= (D\pm i)^{-1}\rho(\phi) -\rho(\phi)(D\pm i)^{-1} \\
		&=(D\pm i)^{-1}\left[\rho(\phi) -(D\pm i)\rho(\phi)(D\pm i)^{-1}\right] \\
		&=(D\pm i)^{-1}\left[\rho(\phi)(D\pm i) -(D\pm i)\rho(\phi)\right](D\pm i)^{-1} \\
		&=(D\pm i)^{-1}[\rho(\phi),D](D\pm i)^{-1}, 
	\end{align*}
	where for the second equality we used that $\rho(\phi)(\dom(D^*))\subseteq \dom(D^*)$ to conclude that 
	\[
	(D\pm i)^{-1}(D\pm i)\rho(\phi)(D\pm i)^{-1} =\rho(\phi)(D\pm i)^{-1}.
	\]
	
	Since $[D, \rho(\phi)] = \sigma_D(d\phi)$ is a vector bundle morphism of $S$ (it is a smooth section of $\End(S)$) such that $[D, \rho(\phi)](p) = \sigma_D(p, d\phi_p) = 0$ on $M\setminus B_R(N)$, it follows that $[D, \rho(\phi)] \in \SN_N(M)$. Suppose that $\{\psi, 1-\psi\}$ is a partition of unity subordinate to the open cover $\{B_{2R}(N), M\setminus\ol{B_R(N)}\}$ of $M$. Then $\rho(\psi)[D, \rho(\phi)] = [D, \rho(\phi)]$ and thus $ [(D\pm i)^{-1}, \rho(\phi)]=(D\pm i)^{-1}\rho(\psi)[\rho(\phi),D](D\pm i)^{-1}$. By Corollary \ref{cor: Higsons 1.2}, we conclude that $(D\pm i)^{-1}\rho(\psi) \in C^*(N\subseteq M)$. Moreover, by Theorem \ref{thm: func calc thm for C^*M} and the fact that $[D, \rho(\phi)] \in D^*(M)$, it follows that $[D, \rho(\phi)](D\pm i)^{-1} \in C^*(M)$. Since $C^*(N\subseteq M) \subseteq C^*(M)$ is an ideal, we conclude that $[(D\pm i)^{-1}, \rho(\phi)] \in  C^*(N\subseteq M)$.
\end{proof}

\begin{definition}
	\label{def: simple hypersurface}
	A partitioning hypersurface $N \subseteq M$ is \emph{simple} if there exists an $R>0$ and a $\phi_+ \in C^{\infty}(M)$ such that $\phi_+$ and $\|d\phi_+\|$ are uniformly bounded and such that $\restr{\phi_+}{M\setminus B_R(N)} = \restr{\chi_{M_+}}{M\setminus B_R(N)}$.
\end{definition}
\begin{remark}
Every compact partitioning hypersurface is simple; see Remark \ref{rem: hypersurfaces as in PMT are simple} and Corollary \ref{cor: PMT for compact hypersurface}. In fact, it may be true that every partitioning hypersurface is simple; we have not found any counterexamples. In any case, Remark \ref{rem: hypersurfaces as in PMT are simple} ensures that this has no influence on our main result; the additional assumptions we require to hold imply that the hypersurface is simple anyway.
\end{remark}

If $A$ is a unital $C^*$-algebra,  then we denote the $K$-theory class of a unitary $U \in M_{\infty}(A)$ by $[U]_1 \in K_1(A)$, and the class of a projection $P \in M_{\infty}(A)$ by $[P]_0 \in K_0(A)$. Denote by $\partial$ the index map induced by the inclusion of the ideal $C^*(N \subseteq M) \subset D^*(M)$. Then the following result is a generalization of \cite[Lem. 1.4]{higson1991cobordisminvariance}.

\begin{proposition}
	\label{prop: Higson's 1.4}
	Let $N$ be a simple partitioning hypersurface and let $\phi_+$ be as in Definition \ref{def: simple hypersurface}. Put $\phi_- = 1-\phi_+$ and let $U$ be the Cayley transform of $D$. Then:
	\begin{enumerate}
		\item The operators $U_{\pm}= \rho(\phi_{\mp}) + \rho(\phi_{\pm})U$ are elements of $D^*(M)$ which represent unitary equivalence classes in $Q^*(N\subseteq M)$.
		\item $\partial[q_N(U_+)]_1 = -\partial[q_N(U_-)]_1$.
		\item $\partial[q_N(U_+)]_1$ is independent of the choice of $\phi_+$.
	\end{enumerate}
\end{proposition}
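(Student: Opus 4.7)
The plan is to work throughout with the Cayley transform $U = (D-i)(D+i)^{-1} = 1 - 2i(D+i)^{-1}$. By Theorem \ref{thm: func calc thm for C^*M}, $(D+i)^{-1} \in C^*(M)$, so $U - 1 \in C^*(M)$ and in particular $U, U^* \in D^*(M)$; moreover $U$ is unitary in $\cB(L^2(M;S))$. The operators $\rho(\phi_\pm)$ have zero propagation and commute with $\rho(C_0(M))$, so they lie in $\PL(M) \cap \FP(M) \subseteq D^*(M)$, and consequently $U_\pm \in D^*(M)$. All subsequent manipulations rest on two inputs modulo $C^*(N\subseteq M)$: (a) Lemma \ref{lem: Higsons 1.3} applied to $\phi_\pm$ gives $[U, \rho(\phi_\pm)] \sim 0$ and $[U^*, \rho(\phi_\pm)] \sim 0$; (b) Lemma \ref{lem: Higsons 1.2 general} gives $\rho(\theta) T \sim 0$ whenever $\theta$ is supported in some $B_R(N)$ and $T \in C^*(M)$. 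The key geometric observation is that $\phi_+ \phi_-$ is itself supported in $B_R(N)$, because outside that ball $\phi_\pm = \chi_{M_\pm}$ and $\chi_{M_+}\chi_{M_-} = \chi_N \equiv 0$ there.

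For (i), I would expand
\[
U_+ U_+^* = \rho(\phi_-^2 + \phi_+^2) + \rho(\phi_-)U^*\rho(\phi_+) + \rho(\phi_+) U \rho(\phi_-),
\]
use $\phi_+ + \phi_- = 1$ to rewrite $\phi_+^2 + \phi_-^2 = 1 - 2\phi_+\phi_-$, and apply (a) to commute $U$ and $U^*$ past the multiplication operators. This collapses $U_+ U_+^* - I$ to $\rho(\phi_+\phi_-)(U - 1) + \rho(\phi_+\phi_-)(U^* - 1)$, which vanishes modulo $C^*(N\subseteq M)$ by (b). The identity $U_+^* U_+ \sim I$, and the corresponding statements for $U_-$, follow by the same calculation.

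For (ii), an analogous expansion of $U_+ U_-$ combined with (a) reduces the product to $U + \rho(\phi_+\phi_-)(U - 1)^2$, which is $\sim U$ by (b) since $(U-1)^2 \in C^*(M)$. Because $U$ lifts through the quotient $D^*(M) \twoheadrightarrow Q^*(N\subseteq M)$ as an honest unitary, one has $\partial[q_N(U)]_1 = 0$. Multiplicativity of $K_1$ and additivity of $\partial$ then give $\partial[q_N(U_+)]_1 + \partial[q_N(U_-)]_1 = 0$.

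For (iii), if $\phi_+^{(1)}, \phi_+^{(2)}$ are two admissible choices, then $\psi = \phi_+^{(1)} - \phi_+^{(2)}$ is supported in some $B_R(N)$, and a direct subtraction gives $U_+^{(1)} - U_+^{(2)} = \rho(\psi)(U - 1)$, which lies in $C^*(N\subseteq M)$ by (b). Hence $q_N(U_+^{(1)}) = q_N(U_+^{(2)})$ and the $K_1$-classes agree. The main technical point I expect to confront is mostly bookkeeping: verifying that every cross term arising in the expansions for (i) and (ii) can be put into the template $\rho(\theta) T$ with $\theta$ supported in $B_R(N)$ and $T \in C^*(M)$, so that Lemmas \ref{lem: Higsons 1.2 general} and \ref{lem: Higsons 1.3} can be applied without leftover terms.
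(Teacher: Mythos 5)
Your proposal is correct and follows essentially the same route as the paper: use Lemma \ref{lem: Higsons 1.3} to commute $U,U^*$ past $\rho(\phi_\pm)$ modulo $C^*(N\subseteq M)$, use Lemma \ref{lem: Higsons 1.2 general} (via $\supp(\phi_+\phi_-)\subseteq B_R(N)$ and $U-1\in C^*(M)$) to kill the remaining cross terms, and for (ii) conclude from $q_N(U_+)q_N(U_-)=q_N(U)$ that $\partial[q_N(U_+)]_1+\partial[q_N(U_-)]_1=\partial\circ K_1(q_N)[U]_1=0$. The only difference is cosmetic: you factor the error terms explicitly as $\rho(\phi_+\phi_-)(U-1)+\rho(\phi_+\phi_-)(U^*-1)$ in (i) and $\rho(\phi_+\phi_-)(U-1)^2$ in (ii), whereas the paper leaves the cancellation in expanded form, but both computations are the same.
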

\begin{proof}
	\begin{enumerate}
		\item It is obvious that $U_{\pm} \in D^*(M)$ as $U \in D^*(M)$ by Theorem \ref{thm: func calc thm for D^*M} and $\rho(\phi_{\pm})\in D^*(M)$. 
		Writing $U = 1 - 2i(D+i)^{-1}$ and using Lemma \ref{lem: Higsons 1.3}, we see that
		\begin{equation}
			\label{eq: equivalence 1}
			[\rho(\phi_{\pm}), U] = [\rho(\phi_{\pm}),  1 - 2i(D+i)^{-1}] = -2i[ \rho(\phi_{\pm}), (D+i)^{-1}] \sim 0.
		\end{equation}
		Moreover, it follows from Corollary \ref{cor: Higsons 1.2} that
		\begin{align}
			\label{eq: equivalence 2}
			\rho(\phi_+)\rho(\phi_-)(U - 1) &= \rho(\phi_+)\rho(\phi_-)(-2i(D+i)^{-1})\nonumber\\
			 &= -2i\rho(\phi_+\phi_-)(D+i)^{-1} \sim 0.
		\end{align}
		Using the equivalences in \eqref{eq: equivalence 1} and \eqref{eq: equivalence 2}, we see that
		\begin{align*}
			U_{\pm}U^*_{\pm} &= (\rho(\phi_{\mp}) + \rho(\phi_{\pm})U)(\rho(\phi_{\mp}) + \rho(\phi_{\pm})U)^* \\
			&= (\rho(\phi_{\mp}) + \rho(\phi_{\pm})U)(\rho(\phi_{\mp}) + U^*\rho(\phi_{\pm})) \\
			&= \rho(\phi_{\mp}^2) +\rho(\phi_{\pm}^2) + \rho(\phi_{\pm})U\rho(\phi_{\mp}) + \rho(\phi_{\mp})U^*\rho(\phi_{\pm}) \\
			&\sim \rho(\phi_{\mp}^2) +\rho(\phi_{\pm}^2) + \rho(\phi_{\pm})\rho(\phi_{\mp})U + \rho(\phi_{\mp})\rho(\phi_{\pm})U^* \\
			&\sim \rho(\phi_{\mp}^2) +\rho(\phi_{\pm}^2) + \rho(\phi_{\pm})\rho(\phi_{\mp}) + \rho(\phi_{\mp})\rho(\phi_{\pm}) = 1.
		\end{align*}
		Similarly, we find that $U^*_{\pm}U_{\pm} \sim 1$. It follows that $q_N(U_{\pm}) \in Q^*(N\subseteq M)$ are indeed unitaries.
		\item Again using the equivalences in \eqref{eq: equivalence 1} and \eqref{eq: equivalence 2}, we see that 
		\begin{align*}
			U_+U_- &= (\rho(\phi_-)+\rho(\phi_+)U)(\rho(\phi_+)+\rho(\phi_-)U) \\
			&= \rho(\phi_-)\rho(\phi_+) + \rho(\phi_+)U\rho(\phi_+) + \rho(\phi_-)\rho(\phi_-)U + \rho(\phi_+)U\rho(\phi_-)U \\
			&\sim \rho(\phi_-)\rho(\phi_+)U +\rho(\phi_+)\rho(\phi_+)U + \rho(\phi_-)\rho(\phi_-)U + \rho(\phi_+)\rho(\phi_-)U = U
		\end{align*}
		It follows that $q_N(U_+)q_N(U_-) = q_N(U)$ and thus  
		\begin{align*}
			\partial([q_N(U_+)]_1) + \partial([q_N(U_-)]_1) &= \partial([q_N(U_+)]_1+[q_N(U_-)]_1) = \partial([q_N(U_+)q_N(U_-)]_1) \\
			&= \partial([q_N(U)]_1) = \partial\circ K_1(q_N)([U]_1) = 0,
		\end{align*}
		where the last equality follows since $\partial\circ K_1(q_N) = 0$. So indeed $\partial([q_N(U_+)]_1) = -\partial([q_N(U_-)]_1)$.
		\item Suppose $\phi_+'$ is a second function as in Definition \ref{def: simple hypersurface} and put $\phi_-' = 1 - \phi_+'$. This gives an operator $U'_+ = \rho(\phi_-')+\rho(\phi_+')U$. We claim that $U_+ \sim U'_+$. Indeed, since there exist $R> 0$ and $R' > 0$ such that $\restr{\phi_+}{M\setminus B_R(N)} = \restr{\chi_{M_+}}{M\setminus B_R(N)}$ and $\restr{\phi'_+}{M\setminus B_{R'}(N)} = \restr{\chi_{M_+}}{M\setminus B_{R'}(N)}$, it follows that $\phi_+$ and $\phi'_+$ are equal on $M\setminus B_{R_{\max}}(N)$ with $R_{\max} = \max(R, R')$. So $\supp(\phi_+ - \phi_+') \subseteq \ol{B_{R_{\max}}(N)} \subseteq B_{R_{\max}+1}(N)$. So, it follows from Corollary \ref{cor: Higsons 1.2} that
		\begin{align*}
			U_+ - U_+' &= -\rho(\phi_+ - \phi_+')+\rho(\phi_+ - \phi_+')U = \rho(\phi_+ - \phi_+')(U-1) \\
			&= -2i\rho(\phi_+ - \phi_+')(D+i)^{-1} \sim 0.
		\end{align*}
		We conclude that $\partial[q_N(U_+)]_1 = \partial[q_N(U_+')]_1$, which therefore does not depend on the choice of $\phi_+$. \qedhere
	\end{enumerate}
\end{proof}

Proposition \ref{prop: Higson's 1.4} allows us to give the following generalized definition of the partitioned index introduced in \cite[p. 440]{higson1991cobordisminvariance}.

\begin{definition}
	\label{def: partitioned index}
	Let $N$ be a simple partitioning hypersurface of $M$. We define the \emph{partitioned index} of $D$ to be the element 
	\[
	\Ind(D;N):= \partial[q_N(U_+)]_1 \in K_0(C^*(N \subseteq M)).
	\]
\end{definition}

Note that if $(N', M_+', M_-')$ is a second partition of $M$ that is near the original one, then by Lemma \ref{lem: near partitions have near hypersurfaces} it follows that $N'$ is near $N$. This implies that $C^*(N'\subseteq M)\subseteq C^*(N\subseteq M)$. Denote the resulting inclusion map by $i$. The following result shows that the partitioned index is well-behaved with respect to this way of comparing partitioning hypersurfaces.

\begin{lemma}\label{lem near part index}
	 Suppose that $N'$ is a second simple partitioning hypersurface of $M$ such that $(N', M_+', M_-')$ is near $(N, M_+, M_-)$. Then
	\begin{equation}
		\label{eq: naturality of partitioned index}
		\Ind(D;N) = K_0(i)( \Ind(D;N')).
	\end{equation}
\end{lemma}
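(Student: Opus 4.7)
The plan is to combine naturality of the boundary map in the six-term exact sequence with the flexibility afforded by Proposition \ref{prop: Higson's 1.4}(iii). The inclusion $i\colon C^*(N'\subseteq M)\hookrightarrow C^*(N\subseteq M)$ extends to a morphism of short exact sequences
\[
\begin{tikzcd}
0 \arrow[r] & C^*(N'\subseteq M) \arrow[r] \arrow[d, "i"] & D^*(M) \arrow[r] \arrow[d, equal] & Q^*(N'\subseteq M) \arrow[r] \arrow[d, "j"] & 0 \\
0 \arrow[r] & C^*(N\subseteq M) \arrow[r] & D^*(M) \arrow[r] & Q^*(N\subseteq M) \arrow[r] & 0,
\end{tikzcd}
\]
where $j$ is the induced map on quotients. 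Writing $\partial'$ for the boundary map of the top row, naturality of the index map gives $K_0(i)\circ\partial' = \partial\circ K_1(j)$. Consequently, for any $\phi'_+$ as in Definition \ref{def: simple hypersurface} for $N'$ and the associated operator $U'_+ = \rho(\phi'_-) + \rho(\phi'_+)U$,
\[
K_0(i)(\Ind(D;N')) = K_0(i)(\partial'[q_{N'}(U'_+)]_1) = \partial[q_N(U'_+)]_1.
\]
The lemma therefore reduces to showing the equality $[q_N(U'_+)]_1 = [q_N(U_+)]_1$ in $K_1(Q^*(N\subseteq M))$ for suitably chosen $\phi_+$ and $\phi'_+$.

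Next, I would mimic the argument in the proof of Proposition \ref{prop: Higson's 1.4}(iii) to establish that $U_+ - U'_+ \in C^*(N\subseteq M)$. The same manipulation using $U = 1 - 2i(D+i)^{-1}$ gives
\[
U_+ - U'_+ = -\rho(\phi_+ - \phi'_+) + \rho(\phi_+ - \phi'_+)U = -2i\,\rho(\phi_+ - \phi'_+)(D+i)^{-1}.
\]
Since $(D+i)^{-1}\in C^*(M)$ by Theorem \ref{thm: func calc thm for C^*M}, Lemma \ref{lem: Higsons 1.2 general} reduces the problem to showing that $\supp(\phi_+ - \phi'_+)\subseteq B_{R''}(N)$ for some $R'' > 0$.

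This final containment, which is the main (though essentially bookkeeping) step, is where the nearness hypothesis enters. By Definition \ref{def: simple hypersurface}, $\phi_+$ agrees with $\chi_{M_+}$ outside some $B_R(N)$, and $\phi'_+$ agrees with $\chi_{M'_+}$ outside some $B_{R'}(N')$. Lemma \ref{lem: near partitions have near hypersurfaces} applied to the nearness of $(N,M_+,M_-)$ and $(N',M'_+,M'_-)$ yields an $R_0 > 0$ with $N'\subseteq B_{R_0}(N)$ and $\supp(\chi_{M_+}-\chi_{M'_+})\subseteq \overline{B_{R_0}(N)}$. By the triangle inequality $B_{R'}(N')\subseteq B_{R'+R_0}(N)$, so outside $B_{\max(R,\,R'+R_0)}(N)$ all three of $\phi_+-\chi_{M_+}$, $\phi'_+-\chi_{M'_+}$, and $\chi_{M_+}-\chi_{M'_+}$ vanish, whence $\phi_+ - \phi'_+ = 0$ there. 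This establishes the required support condition and completes the proof.
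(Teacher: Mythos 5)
Your proof is correct and takes essentially the same route as the paper: both rest on naturality of the index map applied to the morphism of short exact sequences induced by $i$. The only (cosmetic) difference is that the paper observes that a single $\phi_+$ chosen for $N'$ already works for $N$, so that $U_+$ is literally the same operator in both rows; you instead use separate $\phi_+$, $\phi'_+$ and then show $U_+ - U'_+ \in C^*(N\subseteq M)$ via Lemma \ref{lem: Higsons 1.2 general} and the nearness hypothesis, which amounts to the same support bookkeeping.
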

\begin{proof}
	Choose a $\phi_+$ as in Definition \ref{def: simple hypersurface} that works for $N'$. Then this $\phi_+$ also works for $N$. 	
	This means both partitioned indices $\Ind(D;N)$ and $\Ind(D;N')$ can be computed using the same operator $U_+ = \rho(\phi_-) + \rho(\phi_+)U$. Let $\partial$ be the index map in the $6$-term exact sequence associated with $N$ and $\partial'$ the index map in the $6$-term exact sequence associated with $N'$. Then $\Ind(D;N) = \partial([q_N(U_+)]_1)$ and $\Ind(D;N') = \partial'([q_{N'}(U_+)]_1)$. 
	
	Note that the two short exact sequences associated with $N$ and $N'$ fit together in the following commutative diagram:
	\[\begin{tikzcd}
		0 & {C^*(N\subseteq M)} & {D^*(M)} & {Q^*(N\subseteq M)} & 0 \\
		0 & {C^*(N'\subseteq M)} & {D^*(M)} & {Q^*(N'\subseteq M)} & 0
		\arrow[from=1-1, to=1-2]
		\arrow["{i_N}", hook, from=1-2, to=1-3]
		\arrow["{q_N}", from=1-3, to=1-4]
		\arrow[from=1-4, to=1-5]
		\arrow[from=2-1, to=2-2]
		\arrow["i"', hook, from=2-2, to=1-2]
		\arrow["{i_{N'}}", hook, from=2-2, to=2-3]
		\arrow[equals, from=2-3, to=1-3]
		\arrow["{q_{N'}}", from=2-3, to=2-4]
		\arrow["{\exists ! \alpha_{N',N}}"', dotted, from=2-4, to=1-4]
		\arrow[from=2-4, to=2-5]
	\end{tikzcd}\] 
	Therefore, we conclude that
	\begin{align*}
		K_0(i)(\Ind(D; N')) &= K_0(i)\circ\partial'([q_{N'}(U_+)]_1) = \partial\circ K_1(\alpha_{N',N})([q_{N'}(U_+)]_1) \\
		&= \partial([\alpha_{N',N}\circ q_{N'}(U_+)]_1) = \partial([q_N(U_+)]_1) = \Ind(D;N),
	\end{align*}
	where the second equality follows by naturality of the index map.
\end{proof}
\begin{corollary}
	\label{cor: mutually near hypersurfaces induce same partitioned index}
	Let  $(N, M_+, M_-)$ and $(N', M_+', M_-')$ be mutually near simple partitions of $M$---that is, $(N, M_+, M_-)$ is near $(N', M_+', M_-')$ and $(N', M_+', M_-')$ is near $(N, M_+, M_-)$. Then $\Ind(D; N) = \Ind(D; N')$.
\end{corollary}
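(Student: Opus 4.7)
The strategy is to invoke Lemma \ref{lem near part index} and observe that mutual nearness forces the two localized Roe algebras, in which the partitioned indices a priori live, to coincide on the nose as $C^*$-subalgebras of $\cB(L^2(M;S))$. Once that is established, the inclusion map appearing in Lemma \ref{lem near part index} is literally the identity, and the equality of partitioned indices falls out of a single application of that lemma.

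First, I would unpack the mutual nearness at the level of algebras. By the remark preceding Lemma \ref{lem near part index}, the nearness of $(N', M_+', M_-')$ to $(N, M_+, M_-)$ gives the inclusion $C^*(N' \subseteq M) \subseteq C^*(N \subseteq M)$. Applying the same remark with the roles of $N$ and $N'$ interchanged, the nearness of $(N, M_+, M_-)$ to $(N', M_+', M_-')$ gives the reverse inclusion $C^*(N \subseteq M) \subseteq C^*(N' \subseteq M)$. Combining the two yields the equality $C^*(N \subseteq M) = C^*(N' \subseteq M)$, so in particular the inclusion $i : C^*(N' \subseteq M) \hookrightarrow C^*(N \subseteq M)$ featuring in \eqref{eq: naturality of partitioned index} is the identity endomorphism of this common algebra, and $K_0(i)$ is the identity on $K_0(C^*(N \subseteq M))$.

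Second, I would apply Lemma \ref{lem near part index} to the hypothesis that $(N', M_+', M_-')$ is near $(N, M_+, M_-)$ to obtain
\[
\Ind(D; N) = K_0(i)\bigl(\Ind(D; N')\bigr).
\]
By the first step, $K_0(i)$ is the identity map on the group in which both classes lie, and the conclusion $\Ind(D; N) = \Ind(D; N')$ is immediate.

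There is no real obstacle: once the symmetry of the algebra inclusion under mutual nearness is noted, the statement reduces to a one-line application of Lemma \ref{lem near part index}. The only mildly subtle point is the observation that the two localized Roe algebras agree as subsets of $\cB(L^2(M;S))$, rather than merely being isomorphic via some coarse equivalence; this is what makes $K_0(i)$ literally an identity map rather than a nontrivial isomorphism that would still have to be accounted for.
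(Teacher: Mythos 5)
Your proof is correct and matches the approach the paper intends: mutual nearness gives the inclusion of localized Roe algebras in both directions, so $C^*(N \subseteq M) = C^*(N' \subseteq M)$ as subalgebras of $\cB(L^2(M;S))$, making $i$ the identity, and the conclusion follows from a single application of Lemma \ref{lem near part index}. The observation you flag as the "mildly subtle point" — that the inclusions are set-theoretic equalities rather than abstract isomorphisms — is indeed exactly what makes the corollary immediate.
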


\subsection{The Roe homomorphism}
\label{subsec: Roe homomorphism}
A natural question to ask is what relation there is between the partitioned index $\Ind(D; N)$
and the index $\Ind(D)$ of the differential operator $D$. In the spirit of Roe's own formulation of the partitioned manifold index theorem in \cite[Thm. 4.4]{Roe1996indextheory}, we will show that there exists a map 
\[\varphi_N\colon K_1(C^*(M)) \rightarrow K_0(C^*(N\subseteq M)),\] 
such that $\varphi_N(\Ind(D)) = \Ind(D;N)$, see Proposition \ref{prop: Roe homomorphism}. In particular, $\Ind(D;N)=0$ if $\Ind(D) = 0$.

Let $P_+:= \rho(\chi_{M_+})$ be the projection in $\cB(L^2(M;S))$ obtained by multiplying with the characteristic function of $M_+$. Consider the following variation of Lemma \ref{lem: Higsons 1.3}, which is based on \cite[Lem. 4.3]{Roe1996indextheory}.

\begin{lemma}
	\label{lem: commutator with projection lands in localized algebra}
	Let $T \in C^*(M)$. Then $[T, P_+] \in C^*(N \subseteq M)$. 
\end{lemma}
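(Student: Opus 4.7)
The plan is to prove the lemma in two stages. First, by density of $\LC(M) \cap \FP(M)$ in $C^*(M)$ and the fact that $T \mapsto [T,P_+]$ is norm-continuous (since $\|P_+\|\le 1$), it suffices to show that $[T_0,P_+] \in C^*(N\subseteq M)$ for every $T_0 \in \LC(M) \cap \FP(M)$. The claim that does all the work will be that for such a $T_0$, the commutator $[T_0,P_+]$ already lies in the dense subalgebra $\LC(M) \cap \FP(M) \cap \SN_N(M)$, so the result then follows by norm-continuity and the definition of $C^*(N\subseteq M)$.

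To verify the three properties, fix $T_0 \in \LC(M)\cap \FP(M)$ with propagation constant $P>0$. For \emph{finite propagation}, extend the finite-propagation criterion to bounded Borel functions via Remark \ref{rem: FP and SN properties extend to bounded Borel functions}: for Borel $f,g$ with $d_M(\supp f,\supp g)>P$, both $\rho(f)T_0 P_+\rho(g) = \rho(f)T_0\rho(\chi_{M_+}g)$ and $\rho(f)P_+T_0\rho(g) = \rho(\chi_{M_+}f)T_0\rho(g)$ vanish because $\supp(\chi_{M_+}g)\subseteq \supp g$ and similarly for $f$, so $[T_0,P_+]$ has propagation at most $P$. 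For \emph{local compactness}, given $f\in C_0(M)$, the operator $\rho(f)T_0 P_+$ is compact because $\rho(f)T_0$ is compact and $P_+$ is bounded; for $\rho(f)P_+T_0 = \rho(\chi_{M_+}f)T_0$, approximate $f$ in sup-norm by $f_n\in C_c(M)$, write $\chi_{M_+}f_n = \chi_{M_+}f_n\cdot h$ for some $h\in C_c(M)$ equal to $1$ on $\supp f_n$, and conclude that $\rho(\chi_{M_+}f_n)T_0 = \rho(\chi_{M_+}f_n)\rho(h)T_0$ is compact since $\rho(h)T_0$ is, then pass to the norm limit. An analogous argument handles $[T_0,P_+]\rho(f)$.

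The remaining, and main, point is \emph{support near $N$}. Choose any $R>P$ and take $f\in C_0(M)$ with $d_M(\supp f,N)>R$. Then $\supp f \subseteq M_+^\circ \cup M_-^\circ$, and setting $f_\pm := f\chi_{M_\pm^\circ}$ we get bounded Borel functions with $f = f_+ + f_-$ and $\chi_{M_+}f = f_+$ (since $f$ vanishes on a neighbourhood of $N$). Because $M_+$ and any point in $M_-^\circ \setminus B_R(N)$ are separated in the intrinsic metric of $M$ by a distance exceeding $R$ (every continuous path from $M_-^\circ$ to $M_+$ crosses $N$), the extended finite propagation property gives $\rho(f_-)T_0\rho(\chi_{M_+}) = 0$ and $\rho(f_+)T_0\rho(\chi_{M_-}) = 0$. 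Therefore
\begin{align*}
\rho(f)T_0 P_+ &= \rho(f_+)T_0\rho(\chi_{M_+}) = \rho(f_+)T_0 - \rho(f_+)T_0\rho(\chi_{M_-}) = \rho(f_+)T_0,\\
\rho(f)P_+ T_0 &= \rho(f_+)T_0,
\end{align*}
so $\rho(f)[T_0,P_+] = 0$, and an identical argument gives $[T_0,P_+]\rho(f)=0$. Hence $[T_0,P_+]\in \SN_N(M)$ with parameter $R$, completing the proof.

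The main obstacle is this last step: one has to translate the Borel multiplication operator $P_+$ into finite-propagation language and exploit the fact that the two open half-manifolds $M_\pm^\circ$ are geodesically separated by $N$. Once that geometric input is combined with the Borel-valued version of finite propagation from Remark \ref{rem: FP and SN properties extend to bounded Borel functions}, the algebraic cancellations that eliminate $\rho(f)[T_0,P_+]$ fall out cleanly.
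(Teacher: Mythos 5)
Your proof is correct and follows essentially the same strategy as the paper: reduce by density to $T_0 \in \LC(M)\cap\FP(M)$, split $f$ into pieces $f_\pm$ supported in $M_\pm^\circ$, and exploit that every path from $M_\mp^\circ$ to $M_\pm$ must cross $N$, so the geodesic separation exceeds the propagation constant and the offending products vanish by the Borel-valued finite-propagation criterion. The only differences are cosmetic: the paper gets $[T_0,P_+]\in\LC\cap\FP$ for free from Proposition \ref{prop: property induced sets are *algebras} (the ideal property) rather than verifying each directly, uses $d_M(\supp f,N)>2R$ in place of your $R>P$, and organizes the cancellation as $\rho(1-\chi_{M_+})T_0\rho(f_+)-\rho(\chi_{M_+})T_0\rho(f_-)$ rather than showing both $\rho(f)T_0P_+$ and $\rho(f)P_+T_0$ equal $\rho(f_+)T_0$.
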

\begin{proof}
	Suppose that $T \in \FP(M)\cap\LC(M)$. Since $P_+ \in \FP(M)$, it follows by Proposition \ref{prop: property induced sets are *algebras} that $[T, P_+] \in \FP(M)\cap\LC(M)$. We show that $[T, P_+] \in \SN_N(M)$. Choose $R>0$ such that for all $g, h \in C_0(M)$ with $d_M(\supp(g), \supp(h))>R$ the operator $\rho(g)T\rho(h)$ equals 0 and suppose that $f \in C_0(M)$ is such that $d_M(\supp(f), N) > 2R$. We claim that $[T, P_+]\rho(f) = 0$. For this, write $f_+ = \chi_{M_+}f$ and $f_- = f - f_+$. It follows that
	\begin{align*}
		[T, P_+]\rho(f) &= TP_+\rho(f) - P_+T\rho(f)= T\rho(f_+) - P_+T\rho(f_+) - P_+T\rho(f_-) \\
		&= \rho(1 - \chi_{M_+})T\rho(f_+) - \rho(\chi_{M_+})T\rho(f_-).
	\end{align*}
	%
	%
	Now $d_M(\supp(1 - \chi_{M_+}), \supp(f_+) ) \geq 2R>R$ 
	 and thus $\rho(1 - \chi_{M_+})T\rho(f_+) = 0$ by Remark \ref{rem: FP and SN properties extend to bounded Borel functions}. Similarly, $\rho(\chi_{M_+})T\rho(f_-) = 0$,
so $[T, P_+]\rho(f) = 0$. By an analogous argument, 
$
		\rho(f)[T, P_+] 
		= 0
$. 
	Thus $[T, P_+]$ is supported within a distance $2R$ from $N$ and therefore $[T, P_+] \in \SN_N(M)$. We conclude that $[T, P_+] \in \FP(M)\cap\LC(M)\cap\SN_N(M) \subseteq C^*(N\subseteq M)$. The general result now follows since $\FP(M)\cap\LC(M) \subseteq C^*(M)$ is dense. 
\end{proof}

\begin{lemma}
	\label{lem: construction gamma}
	Let $q\colon C^*(M) \rightarrow C^*(M)/C^*(N\subseteq M)$ denote the quotient map. Then the map $\gamma\colon C^*(M) \rightarrow C^*(M)/C^*(N\subseteq M)$ defined by $\gamma(T) = q(P_+TP_+) $ is a $*$-homomorphism.
\end{lemma}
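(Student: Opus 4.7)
The plan is to verify the three defining properties of a $*$-homomorphism: well-definedness as a map into $C^*(M)/C^*(N\subseteq M)$, compatibility with the involution, and multiplicativity. Linearity and norm-boundedness of $T \mapsto P_+TP_+$ are immediate.

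First I would check well-definedness, which requires $P_+TP_+ \in C^*(M)$ whenever $T\in C^*(M)$. For $T$ in the dense $*$-subalgebra $\LC(M)\cap\FP(M)$, finite propagation is preserved because $P_+=\rho(\chi_{M_+})$ is a multiplication operator of propagation zero (using Remark \ref{rem: FP and SN properties extend to bounded Borel functions} to handle the characteristic function). For local compactness, since the multiplication operators $\rho(f)$ and $P_+$ commute for $f\in C_0(M)$, one has
\[
\rho(f)\,P_+TP_+ \;=\; P_+\bigl(\rho(f)T\bigr)P_+,
\]
which is compact as a product of a compact and a bounded operator; a symmetric argument handles $P_+TP_+\rho(f)$. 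Continuity of $T\mapsto P_+TP_+$ then extends this to all of $C^*(M)$.

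Compatibility with the involution, $\gamma(T^*) = \gamma(T)^*$, is immediate from $P_+^*=P_+$. So the real content is multiplicativity. For $S,T\in C^*(M)$ I would compute in $C^*(M)$, using $P_+(I-P_+)=0$,
\[
P_+STP_+ - P_+SP_+TP_+ \;=\; P_+S(I-P_+)TP_+ \;=\; -P_+[S,P_+]\,TP_+.
\]
By Lemma \ref{lem: commutator with projection lands in localized algebra}, $[S,P_+]\in C^*(N\subseteq M)$. Since $P_+\in D^*(M)$ (it has propagation zero and trivially commutes with every $\rho(f)$, hence is pseudolocal) and $T\in C^*(M)\subseteq D^*(M)$, and $C^*(N\subseteq M)$ is a two-sided ideal in $D^*(M)$ (Corollary \ref{cor: Roe C^* algebra is C^* alg}), the product $P_+[S,P_+]TP_+$ lies in $C^*(N\subseteq M)$. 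Applying $q$ then yields $\gamma(ST)=\gamma(S)\gamma(T)$.

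The main obstacle I expect is the well-definedness step: $P_+$ itself is not in $C^*(M)$, so it is not a priori clear that conjugating by $P_+$ preserves $C^*(M)$, and one genuinely needs the commutation of multiplication operators together with the density of $\LC(M)\cap\FP(M)$ in $C^*(M)$ to see it. Once this is in hand, the multiplicativity follows from a one-line commutator identity combined with the ideal property of $C^*(N\subseteq M)$ inside $D^*(M)$.
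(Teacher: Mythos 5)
Your proof is correct and its multiplicativity argument is essentially the computation the paper alludes to (the commutator identity $P_+STP_+ - P_+SP_+TP_+ = -P_+[S,P_+]TP_+$ plus Lemma \ref{lem: commutator with projection lands in localized algebra} and the ideal property of $C^*(N\subseteq M)$ in $D^*(M)$). One small remark on the well-definedness step, which you flag as the main obstacle: the paper disposes of it in one line without any density or approximation argument, by observing that $P_+\in D^*(M)$ and that $C^*(M)\subseteq D^*(M)$ is a two-sided ideal (Corollary \ref{cor: Roe C^* algebra is C^* alg}), so $P_+TP_+\in C^*(M)$ immediately for any $T\in C^*(M)$; your direct verification on $\LC(M)\cap\FP(M)$ is a valid unpacking of that fact but longer than necessary.
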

\begin{proof}
	Note that $P_+TP_+ \in C^*(M)$ as $P_+ \in D^*(M)$
	and $C^*(M)\subseteq D^*(M)$ is an ideal. It follows that $q(P_+TP_+)\in C^*(M)/C^*(N\subseteq M)$ is a well-defined class in the quotient. An easy computation using Lemma \ref{lem: commutator with projection lands in localized algebra} directly shows that $\gamma$ is a homomorphism.
\end{proof}

To construct the map $\varphi_N$, consider the following extension of $C^*$-algebras:
\begin{equation}
	\label{eq: C* extension for Roe homomorphism}
	\begin{tikzcd}
		0 & {C^*(N\subseteq M)} & {C^*(M)} & {C^*(M)/C^*(N\subseteq M)} & 0.
		\arrow[from=1-1, to=1-2]
		\arrow[hook, from=1-2, to=1-3]
		\arrow["{q}", from=1-3, to=1-4]
		\arrow[from=1-4, to=1-5]
	\end{tikzcd}
\end{equation}
As a variation of the map constructed in \cite[p. 29]{Roe1996indextheory}, we give the following definition.

\begin{definition}
	\label{Def: Roe homomorphism}
	Let $(N, M_+, M_-)$ be a partition of the manifold $M$. The \emph{Roe homomorphism} associated to this partition is the map 
	\[
	\varphi_N\colon K_1(C^*(M))\xrightarrow{K_1(\gamma)} K_1(C^*(M)/C^*(N\subseteq M)) \xrightarrow{\partial} K_0(C^*(N\subseteq M)),
	\]
	where $\gamma$ is the $*$-homomorphism from Lemma \ref{lem: construction gamma} and $\partial$ is the index map associated to the extension in \eqref{eq: C* extension for Roe homomorphism}.
\end{definition}

\begin{proposition}
	\label{prop: Roe homomorphism}
	Let $(N, M_+, M_-)$ be a simple partition of the manifold $M$ and let $\varphi_N$ be its associated Roe homomorphism. Let $D$ be an elliptic symmetric differential operator with finite propagation speed and let $U$ be its Cayley transform. Let $\chi$ be any normalizing function. Then the following hold:
	\begin{enumerate}
		\item \label{item: ind as cayley trafo} $\Ind(D) = [-\exp(\pi i\chi(D))]_1 = [U]_1$.
		\item \label{item: partitioned index as image roe homom} $\varphi_N(\Ind(D)) = \Ind(D;N)$.
		\item \label{item: compatibilty Roe homom} For any other simple partitioning hypersurface $N'$ of $M$ such that $(N', M_+', M_-')$ is near $(N, M_+, M_-)$, the associated Roe homomorphisms are related via 
		\begin{equation}
			\label{eq: naturality of Roe homomorphism}
			\varphi_N = K_0(i)\circ\varphi_{N'},
		\end{equation}
		where $i$ denotes the inclusion map $i\colon C^*(N'\subseteq M)\hookrightarrow C^*(N \subseteq M)$.
	\end{enumerate}
\end{proposition}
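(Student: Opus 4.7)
The plan is to prove the three parts in order, using Paschke duality for (i), a chain of equalities modulo $C^*(N\subseteq M)$ combined with naturality of the index map for (ii), and a short naturality argument for (iii).

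For \ref{item: ind as cayley trafo}, I would unwind the definition $\Ind(D) = A_{-1}[D]_{-1}$ directly. Since $\chi(D) \in D^*(M)$ by Theorem \ref{thm: func calc thm for D^*M}, setting $T = (1+\chi(D))/2$ gives $2T - 1 = \chi(D)$, and $q(T)$ is a projection in $D^*(M)/C^*(M)$ because $T^2 - T = (\chi(D)^2 - 1)/4 \in C^*(M)$ by Theorem \ref{thm: func calc thm for C^*M}. By Proposition \ref{prop: Paschke duality}(i) the Paschke dual of $[D]_{-1}$ is $[q(T)]_0$, and the exponential map sends this to $[\exp(2\pi i T)]_1 = [-\exp(\pi i\chi(D))]_1$, an element of $K_1(\widetilde{C^*(M)})$ by Corollary \ref{cor: func calc thm for unitization C^*M}. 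To identify this with $[U]_1$, I would note that $\psi(x) = (x-i)/(x+i)$ and $-\exp(\pi i\chi(x))$ both lie in $C_b(\R)$, take values in $S^1$, and have common limit $1$ at $\pm\infty$; viewed as maps from the two-point compactification of $\R$ to $S^1$, both have winding number $+1$, so they are homotopic through such unitaries. Transporting this homotopy via functional calculus (applied to $D$) yields $[U]_1 = [-\exp(\pi i\chi(D))]_1$ in $K_1(\widetilde{C^*(M)})$.

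For \ref{item: partitioned index as image roe homom}, the strategy is to rewrite $K_1(\gamma)([U]_1)$ explicitly and then invoke naturality. Writing $U = 1 + (U-1)$ with $U - 1 = -2i(D+i)^{-1} \in C^*(M)$, the extension $\tilde\gamma$ to unitizations gives $\tilde\gamma(U) = 1 + q(P_+(U-1)P_+)$. Using $P_+ P_- = \rho(\chi_N) = 0$, I would compute $P_+(U-1)P_- = -2iP_+(D+i)^{-1}P_- = -2i[P_+,(D+i)^{-1}]P_-$, which lies in $C^*(N\subseteq M)$ by Lemma \ref{lem: commutator with projection lands in localized algebra}; this gives $\tilde\gamma(U) = q(P_- + P_+ U)$ in the unitization of $C^*(M)/C^*(N\subseteq M)$. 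Next I would apply naturality of the index map to the morphism of short exact sequences $0 \to C^*(N\subseteq M) \to C^*(M) \to C^*(M)/C^*(N\subseteq M) \to 0$ into $0 \to C^*(N\subseteq M) \to D^*(M) \to Q^*(N\subseteq M) \to 0$, given by identity, inclusion, and the induced quotient map. This yields $\partial_\gamma[q(P_- + P_+ U)]_1 = \partial'[q_N(P_- + P_+ U)]_1$. Finally, the difference $U_+ - (P_- + P_+ U) = (\rho(\phi_+) - P_+)(U-1)$ factors a multiplication by $\phi_+ - \chi_{M_+}$, which is supported in $\ol{B_R(N)}$ by simplicity; since $U - 1 \in C^*(M)$, Lemma \ref{lem: Higsons 1.2 general} places the difference in $C^*(N\subseteq M)$, so $q_N(P_- + P_+ U) = q_N(U_+)$ and $\varphi_N(\Ind(D)) = \partial'[q_N(U_+)]_1 = \Ind(D;N)$.

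For \ref{item: compatibilty Roe homom}, Lemma \ref{lem: near partitions have near hypersurfaces} applied to the nearness hypothesis gives $C^*(N'\subseteq M) \subseteq C^*(N\subseteq M)$, hence a morphism of short exact sequences with identity on the middle term $C^*(M)$ and induced quotient map $j\colon C^*(M)/C^*(N'\subseteq M) \to C^*(M)/C^*(N\subseteq M)$. Naturality of the index map yields $K_0(i)\circ\partial_{\gamma_{N'}} = \partial_{\gamma_N}\circ K_1(j)$, so $\varphi_N = K_0(i)\circ\varphi_{N'}$ reduces to the identity $j\circ\gamma_{N'} = \gamma_N$ of $*$-homomorphisms. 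For $T\in C^*(M)$ I would decompose $P'_+TP'_+ - P_+TP_+ = (P'_+ - P_+)TP'_+ + P_+T(P'_+ - P_+)$; by Lemma \ref{lem: near partitions have near hypersurfaces} the multiplication operator $P'_+ - P_+ = \rho(\chi_{M'_+} - \chi_{M_+})$ has support in $\ol{B_R(N)}$, and Lemma \ref{lem: Higsons 1.2 general} together with the ideal property (Corollary \ref{cor: Roe C^* algebra is C^* alg}) places both terms in $C^*(N\subseteq M)$, finishing the proof.

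The main obstacle will be the bookkeeping in \ref{item: partitioned index as image roe homom}: the two partitioned indices $\partial_\gamma$ and $\partial'$ live over different extensions (one with $C^*(M)$ in the middle, the other with $D^*(M)$), so one must check that the three closely related operators $U_+ \in D^*(M)$, $P_- + P_+ U \in \widetilde{C^*(M)}$, and $P_- + P_+ U P_+ \in \widetilde{C^*(M)}$ all represent the same class modulo $C^*(N\subseteq M)$ in the appropriate quotients. Once these congruences are established, naturality of the index map handles both (ii) and (iii) mechanically, and (i) is essentially the standard identification of the coarse index with the Cayley transform class.
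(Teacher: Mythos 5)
Your proof is correct, and parts (ii) and (iii) follow essentially the same route as the paper: the same rewriting of $\tilde\gamma(U)$ by discarding $P_+(U-1)P_-$ (equivalently, the paper's use of $[U,P_+]\in C^*(N\subseteq M)$), the same reduction $U_+ \sim P_-+P_+U$ via $(\rho(\phi_+)-P_+)(U-1)\in C^*(N\subseteq M)$, naturality of the index map across the morphism of extensions $0\to C^*(N\subseteq M)\to C^*(M)\to C^*(M)/C^*(N\subseteq M)\to 0\hookrightarrow 0\to C^*(N\subseteq M)\to D^*(M)\to Q^*(N\subseteq M)\to 0$, and for (iii) the identical decomposition $P'_+TP'_+-P_+TP_+=(P'_+-P_+)TP'_++P_+T(P'_+-P_+)$ combined with Lemmas \ref{lem: near partitions have near hypersurfaces} and \ref{lem: Higsons 1.2 general}. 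Where you differ from the paper is in establishing $[-\exp(\pi i\chi(D))]_1=[U]_1$ in part (i). The paper exploits independence of $\chi$, specializes to $\theta(x)=\tfrac{2}{\pi}\arctan(x)$, and proves the pointwise identity $-e^{\pi i\theta(x)}=(x-i)/(x+i)$ by a short trigonometric computation, so that the two operators are literally equal. You instead argue topologically: both functions are $S^1$-valued with the common limit $1$ at $\pm\infty$, both have winding number $+1$ on the two-point compactification of $\R$, so they are homotopic rel endpoints through $S^1$-valued functions with finite equal limits, and continuity of the functional calculus turns this into a norm-continuous path of unitaries in $\widetilde{C^*(M)}$ (Corollary \ref{cor: func calc thm for unitization C^*M}). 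Both approaches are sound. The paper's explicit computation is more elementary and produces an exact operator identity that is then reused in the proofs of Propositions \ref{prop: Higson's 1.4} and \ref{prop: Roe homomorphism}; your homotopy argument is more conceptual and works for an arbitrary normalizing function directly, at the cost of invoking $\pi_1(S^1)\cong\Z$ and checking the winding number (which is $+1$ for every normalizing function, since $\pi\chi+\pi$ increases in total by $2\pi$ as $x$ runs from $-\infty$ to $+\infty$).
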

\begin{proof}
	\begin{enumerate}
		\item By Definition \ref{def: index D} and Theorem \ref{prop: Paschke duality}, it follows that $\Ind(D) = \partial\left(\left[\frac{1 + q(\chi(D))}{2}\right]_0\right) \in K_1(C^*(M))$, where $\partial$ is the exponential map associated with the short exact sequence that arises from the inclusion $C^*(M) \hookrightarrow D^*(M)$. We can explicitly calculate the action of the exponential map, using \cite[Remark 4.9.3]{Higson2000Khomology}. Since the projection $\frac{1 + q(\chi(D))}{2} \in Q^*(M)$ lifts to the self-adjoint element $\frac{1 + \chi(D)}{2} \in D^*(M)$, it follows that
		\begin{align*}
			\Ind(D) &= \left[\exp\left(2\pi i\left(\frac{1 + \chi(D)}{2}\right)\right)\right]_1 = \left[-\exp\left(\pi i\chi(D)\right)\right]_1.
		\end{align*}
		Note that this makes sense, as $x\mapsto -\exp\left(\pi i\chi(x)\right)$ has limits equal to $1$ at $\pm\infty$, so that functional calculus applied to this function indeed gives a unitary in $\widetilde{C^*(M)}$ by Corollary \ref{cor: func calc thm for unitization C^*M}. By construction, the element $\Ind(D)$ does not depend on the chosen normalizing function $\chi$. Indeed, if $\chi'$ is a second normalizing function, then $\chi- \chi' \in C_0(\R)$ so that $\chi(D) - \chi(D') \in C^*(M)$  by Theorem \ref{thm: func calc thm for C^*M}. Therefore $q(\chi(D)) = q(\chi'(D))$.
		
		Consider the normalizing function $\theta(x) = \frac{2}{\pi}\arctan(x)$.
		Note that for nonzero $x$, we have $\arctan(\frac{1}{x}) = \pm\frac{\pi}{2}-\arctan(x)$, where the sign is equal to the sign of $x$. So for nonzero $x$, it follows that
		\begin{align}
			\label{eq: Cayley transform as exponential}
			-\exp(\pi i \theta(x)) &=  -\exp(2i\arctan(x)) = -\exp\left(2i\left(\pm\frac{\pi}{2}-\arctan\left(\frac{1}{x}\right)\right)\right) \notag\\
			&= \exp\left(-2i\arctan\left(\frac{1}{x}\right)\right) = \exp\left(i\arctan\left(-\frac{1}{x} \right)-i\arctan\left(\frac{1}{x}\right)\right)\notag\\
			&= \frac{x - i}{x + i},
		\end{align}
		where the last equality follows as the two terms in the exponential are precisely the arguments of the numerator and denominator. It is easy to see that Equation \eqref{eq: Cayley transform as exponential} also holds when $x = 0$ as both sides evaluate to $-1$. Therefore, choosing $\theta$ as normalizing function gives 
		\begin{equation}
			\label{eq: Ind(D) = [U]}
			\Ind(D) = \left[-\exp\left(\pi i\theta(D)\right)\right]_1 = \left[\frac{D - i}{D + i}\right]_1= [U]_1.
		\end{equation}
		\item By definition of $\varphi_N$ and by part \ref{item: ind as cayley trafo} of the theorem, it follows that $\varphi_N(\Ind(D)) = \partial\circ  K_1(\gamma)([U]_1)$. Note that since $\gamma$ is a $*$-homomorphism between nonunital $C^*$-algebras, its action on $K$-theory will be given by the action of the corresponding unitized homomorphism.
		Define $P_- = 1-P_+$, choose $\phi_+$ as in Definition \ref{def: simple hypersurface}
		and consider $U_+ = \rho(\phi_-) +\rho(\phi_+)U$. Note that $U-1 = -2i(D+i)^{-1} \in C^*(M)$ and therefore $[U, P_+] = [U-1, P_+] \in C^*(N\subseteq M)$ by Lemma \ref{lem: commutator with projection lands in localized algebra}. It follows that 
		\begin{align*}
			P_- + P_+UP_+ &\sim  P_- + P_+P_+U =  P_- + P_+U \\
			&= \rho(\phi_-) +\rho(\phi_+)U + (P_--\rho(\phi_-)) + (P_+-\rho(\phi_+))U \\
			&= U_+ + (P_+-\rho(\phi_+))(U-1) \sim U_+,
		\end{align*}
		where the last equivalence follows from Lemma \ref{lem: Higsons 1.2 general}.  So $q_N(P_- + P_+UP_+ ) = q_N(U_+)$. Consider the following commutative diagram:
		\[\begin{tikzcd}
			0 & {C^*(N\subseteq M)} & {D^*(M)} & {Q^*(N\subseteq M)} & 0 \\
			0 & {C^*(N\subseteq M)} & {C^*(M)} & {C^*(M)/C^*(N\subseteq M)} & 0
			\arrow[from=1-1, to=1-2]
			\arrow[hook, from=1-2, to=1-3]
			\arrow["{q_N}", from=1-3, to=1-4]
			\arrow[from=1-4, to=1-5]
			\arrow[from=2-1, to=2-2]
			\arrow[equals, from=2-2, to=1-2]
			\arrow[hook, from=2-2, to=2-3]
			\arrow[hook, from=2-3, to=1-3]
			\arrow["q", from=2-3, to=2-4]
			\arrow["{\exists ! \beta}"', dotted, from=2-4, to=1-4]
			\arrow[from=2-4, to=2-5]
		\end{tikzcd}\]
		Denote the index map associated to the upper short exact sequence by $\partial^D$ and the index map associated to the lower short exact sequence by $\partial^C$. Then by naturality of the index map and by writing $U = (U-1) + 1$ with $U-1 = -2i(D+i)^{-1} \in C^*(M)$, we conclude that
		\begin{align*}
			\label{eq: Roe map maps index to partitioned index}
			&\varphi_N(\Ind(D))\\ 
			&\qquad= \partial^C\circ K_1(\gamma)([U]_1) = \partial^C([q(P_+(U-1)P_+) + 1]_1) \\ 
			&\qquad=\partial^D\circ K_1(\beta)([q(P_+(U-1)P_+) + 1]_1) =\partial^D([q_N(P_+(U-1)P_+) + 1]_1)\\
			&\qquad=\partial^D([q_N(P_- + P_+UP_+)]_1)= \partial^D([q_N(U_+)]_1) = \Ind(D;N).
		\end{align*}
		\item Consider the following commutative diagram:
		\[\begin{tikzcd}
			0 & {C^*(N\subseteq M)} & {C^*(M)} & {C^*(M)/C^*(N\subseteq M)} & 0 \\
			0 & {C^*(N'\subseteq M)} & {C^*(M)} & {C^*(M)/C^*(N'\subseteq M)} & 0
			\arrow[from=1-1, to=1-2]
			\arrow[hook, from=1-2, to=1-3]
			\arrow["q", from=1-3, to=1-4]
			\arrow[from=1-4, to=1-5]
			\arrow[from=2-1, to=2-2]
			\arrow["i"', hook, from=2-2, to=1-2]
			\arrow[hook, from=2-2, to=2-3]
			\arrow[equals, from=2-3, to=1-3]
			\arrow["{q'}", from=2-3, to=2-4]
			\arrow["{\exists ! \eta_{N', N}}"', dotted, from=2-4, to=1-4]
			\arrow[from=2-4, to=2-5]
		\end{tikzcd}\]
		Let $\partial$ and $\partial'$ denote the index maps of associated $6$-term exact sequences of the upper and lower row, respectively. Denote by $\gamma'$ the $*$-homomorphism from $C^*(M)$ to $C^*(M)/C^*(N'\subseteq M)$ obtained by first compressing with $P_+' = \rho(\chi_{M_+'})$ and then applying $q'$. Note that for any $T \in C^*(M)$, we have
		\begin{align*}
			\eta_{N',N}\circ\gamma'(T) &= \eta_{N',N}\circ q'(P_+'TP_+') = q(P_+'TP_+') \\
			&= q(P_+TP_+) + q(P_+'TP_+' -P_+TP_+ ) \\
			&= q(P_+TP_+) + q((P_+'-P_+)TP_+' +P_+T(P_+'-P_+ )) \\
			&= q(P_+TP_+) + 0 = \gamma(T),
		\end{align*}
		where the second-to-last equality follows from the fact that $\supp(\chi_{M_+}-\chi_{M_+'}) \subseteq \ol{B_{R}(N)}$ by Lemma \ref{lem: near partitions have near hypersurfaces} and thus $(P_+'-P_+)TP_+' +P_+T(P_+'-P_+ ) \in C^*(N\subseteq M)$ by Lemma \ref{lem: Higsons 1.2 general}. So $\eta_{N',N}\circ\gamma'=\gamma$.
		It follows that 
		\begin{align*}
			K_0(i)\circ \varphi_{N'} &=  K_0(i)\circ \partial' \circ  K_1(\gamma') = \partial \circ K_1(\eta_{N',N}) \circ K_1(\gamma') \\
			&= \partial \circ K_1(\eta_{N',N} \circ \gamma')  = \partial \circ K_1(\gamma) = \varphi_N.\qedhere
		\end{align*}
	\end{enumerate}
\end{proof}

\subsection{The generalized partitioned manifold index theorem}
\label{subsec: generalized pmt}
Let $(M, g)$ be a oriented complete Riemannian manifold of odd dimension at least 3 with a connected partitioning hypersurface $N\subseteq M$ and partition $(N, M_+, M_-)$. Recall from Lemma \ref{lem: metric props hypersurfaces} that $N$ is complete when equipped with the induced metric $\restr{g}{N}$. Also, recall from 
Subsection \ref{subsec: partitioned manifolds}
that $N$ admits a unit normal vector field $n$ pointing into $M_+$. This $n$ induces an orientation on $N$ such that $n$ completes oriented frames for $TN$ to oriented frames for $\restr{TM}{N}$. Let $S$ be a Dirac bundle over $M$ with Clifford action $c\colon TM \rightarrow \End(S)$ and a Dirac connection $\nabla$ and let $D$ be the associated Dirac operator. Let $d = \dim(N)$, which is even. We will make the simplifying assumption that for every oriented orthonormal frame $(e_0, \ldots, e_d)$ of $T_pM$, we have that 
\begin{equation}
	\label{eq: orientation convention Clifford action}
	i^{1+d/2}c(e_0)\cdot\ldots\cdot c(e_d) = \id_{S_p}.
\end{equation}
This is not really a restriction, but makes the computations easier, see \cite[p. 440--441]{higson1991cobordisminvariance}.

On $N$, we consider the bundle $S_N = \restr{S}{N}$. This bundle has a Hermitian metric $h_N$ by pulling back the Hermitian metric $h$ to $S_N$. Consider the vector bundle morphism $c_N\colon TN\rightarrow \End(S_N)$ defined as the composition
\[
TN\xrightarrow{d(i_N)}\restr{TM}{N}\xrightarrow{c|_N}\restr{\End(S)}{N} \cong \End(S_N).
\]
Note that if $(p, X) \in TN$ and $s \in S_p$ so that also $s \in (S_N)_p$, then 
\begin{equation*}
	c_N(p, X)s = c\circ d(i_N)(p, X)s.
\end{equation*}
From this, it is clear that the vector bundle morphism $c_N$ still has the necessary properties to be a Clifford action, except that there is no grading yet. Since this Clifford action arises by identifying the tangent vectors of $N$ just as tangent vectors on $M$ via the inclusion, we will also denote $c_N$ by $c$ when this is convenient. Finally, choose a Dirac connection $\nabla^N$ on $S_N$, which exists by \cite[Cor. 3.41]{HeatKernels1992Berline} and define 
\begin{equation}
	\label{eq: grading induced on restricted bundle}
	\gamma_N = ic(n),
\end{equation}
which is a smooth vector bundle endomorphism of $S_N$.
\begin{remark}
	\label{rem: alternative expression grading}
	Note that due to Equation \eqref{eq: orientation convention Clifford action}, this definition of $\gamma_N$ agrees with the definition used in \cite{higson1991cobordisminvariance} where $\gamma_N$ is defined as
\[
	\gamma_N = i^{d/2}c(e_1)\cdot\ldots\cdot c(e_d),
\]
	where $(e_1,\ldots, e_d)$ is a local oriented orthonormal frame of $TN$.
\end{remark}
The following fact follows from the definitions.
\begin{lemma}
	\label{lem: grading operator}
	The endomorphism $\gamma_N$ is a self-adjoint involution. Moreover, when $S_N$ is endowed with the grading determined by $\gamma_N$, the Clifford action $c_N$ is odd and any Dirac connection on $S_N$ is even.
\end{lemma}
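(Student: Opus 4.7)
The plan is to verify the three assertions in turn, all of which reduce to elementary Clifford-algebra manipulations using the defining relations of a Dirac bundle, together with the fact that $n$ has unit length and is orthogonal to $TN$.

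First I would show that $\gamma_N$ is a self-adjoint involution. The Dirac bundle axioms say that $c(\xi)$ is skew-adjoint and satisfies $c(\xi)^2 = -g(\xi,\xi)\,\id$. Taking adjoints therefore gives $\gamma_N^* = (ic(n))^* = -i\,c(n)^* = ic(n) = \gamma_N$, while $\gamma_N^2 = -c(n)^2 = g(n,n)\,\id_{S_N} = \id_{S_N}$ since $n$ is a unit vector field. Smoothness of $\gamma_N$ is clear, since $n$ and $c$ are both smooth. Hence the $\pm 1$-eigenbundle decomposition of $\gamma_N$ is a smooth $\Z/2\Z$-grading of $S_N$.

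Next I would verify that $c_N$ is odd for this grading. Polarizing $c(\xi)^2 = -g(\xi,\xi)\,\id$ yields the usual Clifford anticommutation relation $c(\xi)c(\eta) + c(\eta)c(\xi) = -2g(\xi,\eta)\,\id$. Any $X \in T_pN$ is orthogonal to $n$ inside $T_pM$, so this specializes to $c(n)c(X) + c(X)c(n) = 0$, and multiplying by $i$ gives $\gamma_N c_N(X) + c_N(X)\gamma_N = 0$, which is exactly the oddness of $c_N(X)$.

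The final claim, that any Dirac connection on $S_N$ is even, is essentially built into the definition. The two previous steps show that $(S_N, c_N, \gamma_N)$ satisfies all requirements of a graded Dirac bundle over the even-dimensional manifold $N$, and Definition \ref{def: Dirac connection} explicitly imposes the evenness condition on Dirac connections over graded bundles; existence of such a connection is guaranteed by \cite[Cor. 3.41]{HeatKernels1992Berline}. No step here presents a genuine obstacle; the lemma simply assembles the Clifford-algebraic data that turn the restricted bundle $S_N$ into a graded Dirac bundle on $N$, so that the induced Dirac operator $D_N$ considered in the sequel is well defined.
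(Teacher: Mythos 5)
Your verification of the first two assertions is correct: skew-adjointness and the Clifford relation for $c(n)$ together with $g(n,n)=1$ give that $\gamma_N = ic(n)$ is a self-adjoint involution, and the anticommutation $c(n)c(X) + c(X)c(n) = -2g(n,X)\id = 0$ for $X \in TN$ gives the oddness of $c_N$. Since the paper only asserts that the lemma ``follows from the definitions,'' these two steps are the expected ones.

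The third claim is where there is a genuine gap: the evenness of Dirac connections on $S_N$ is \emph{not} built into the definition. Recall that in the text preceding the lemma, a Dirac connection $\nabla^N$ is chosen on $S_N$ \emph{before} $\gamma_N$ is introduced; at that stage $S_N$ is ungraded (being the restriction of the ungraded bundle $S$ over the odd-dimensional $M$), so the only constraint on $\nabla^N$ is the Leibniz rule of Definition \ref{def: Dirac connection}. The lemma asserts the nontrivial fact that the Leibniz rule alone forces $\nabla^N$ to commute with $\gamma_N$; your argument merely observes that the \emph{definition} of a Dirac connection on a graded bundle demands evenness, which does not show that a previously chosen Leibniz-compatible connection actually has this property, nor could it, since $\gamma_N = ic(n)$ involves the normal direction $n$ to which the Leibniz rule for $\nabla^N$ on $S_N$ does not directly apply.

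To close the gap, use the alternative expression from Remark \ref{rem: alternative expression grading}: $\gamma_N = i^{d/2} c(e_1)\cdots c(e_d)$ for a local oriented orthonormal frame $(e_1,\dots,e_d)$ of $TN$. This writes $\gamma_N$ in terms of the Clifford action of $TN$, so the Leibniz rule gives
\[
\nabla^N_X(\gamma_N u) = i^{d/2}\sum_{j=1}^{d} c(e_1)\cdots c(\nabla^{LC}_X e_j)\cdots c(e_d)\,u \;+\; \gamma_N\,\nabla^N_X u.
\]
Writing $\nabla^{LC}_X e_j = \sum_k \Gamma^k_j(X)\,e_k$ with $\Gamma^k_j = -\Gamma^j_k$ (metric compatibility of $\nabla^{LC}$ and orthonormality of the frame), the diagonal terms $k=j$ vanish, and for $k \neq j$ the contributions of the pairs $(j,k)$ and $(k,j)$ cancel after moving $c(e_k)$ into its natural position using the anticommutation relations and $c(e_k)^2 = -1$. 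Hence the first sum is zero, $[\nabla^N_X, \gamma_N] = 0$, and $\nabla^N$ is even.
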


Let $\cN N \subset TM|_N$ be the normal bundle to $N$.
A \emph{uniform tubular neighbourhood} of $N$ is a diffeomorphic image under the exponential map of a set of the form
\[
\{(p, X)\in \cN N\colon g_p(X, X) < \varepsilon\},
\]
for some $\varepsilon>0$.  
Note that in the case where $N$ is a partitioning hypersurface and therefore comes with a unit normal vector field $n$, a uniform tubular neighbourhood $U$ of $N$ with radius $\varepsilon$ (if it exists) is diffeomorphic to $(-\varepsilon, \varepsilon)\times N$ via the diffeomorphism
\begin{equation}
	\label{eq: tub nbhd map}
	F\colon (-\varepsilon, \varepsilon)\times N \rightarrow U, \qquad (r, p)\mapsto \exp_p(rn_p).
\end{equation}

Let $D_N$ denote the Dirac operator on $S_N$ associated to the induced Clifford action $c_N$ and the chosen Dirac connection $\nabla^N$. 
Since $N$ is connected and is therefore naturally a metric space $(N, d_{N})$ using the induced Riemannian metric on $N$, it follows that $D_{N}$ has an index $\Ind(D_{N}) \in K_0(C^*(N))$ according to Definition \ref{def: index D}. 	Define $\Phi$ to be the composition
\begin{equation}
	\label{diag: identification maps}
	\begin{tikzcd}
		{K_0(C^*(N, d_{N}))} & {K_0(C^*(N, \restr{d_M}{N}))} & {K_0(C^*(N\subseteq M)),}
		\arrow["{(\id_N)_*}", from=1-1, to=1-2]
		\arrow["\sim", from=1-2, to=1-3]
	\end{tikzcd}
\end{equation}
where the first map is well-defined by Lemma \ref{lem: metric props hypersurfaces} and the second map is the isomorphism from Proposition \ref{prop: localized roe alg isomorphic to roe alg of subset}.

Our generalized version of the partitioned manifold index theorem is the following.
\begin{theorem}[Partitioned manifold index theorem]
	\label{thm: PMT}
	Let $M$ be an oriented, complete Riemannian manifold of odd dimension at least 3, partitioned by a connected hypersurface $N$. Let $S$ be a Dirac bundle over $M$ with Dirac connection $\nabla$ and let $D$ be the associated Dirac operator on $S$. Let $D_N$ be the Dirac operator on $S_N$ associated to the restricted Clifford action and an arbitrary Dirac connection on $S_N$. Suppose moreover that the following hold:
	\begin{enumerate}
		\item The map $\id_{N}\colon (N, d_{N})\rightarrow (N, \restr{d_M}{N})$ is a uniform equivalence.
		\item The submanifold $N\subseteq M$ has a uniform tubular neighbourhood $U$ of radius $\varepsilon >0$. 
		\item Let $F\colon (-\varepsilon, \varepsilon)\times N \rightarrow U$ be defined as in \eqref{eq: tub nbhd map} and let $g_{(-\varepsilon, \varepsilon)\times N}$ be the product metric on the manifold $(-\varepsilon, \varepsilon)\times N$. Then there exists a constant $C \geq 1$ such that on $U \cong (-\varepsilon, \varepsilon)\times N$ the estimates
		\begin{equation}
			\label{eq: estimate on Riemannian metric}
			\frac{1}{C}g_{(-\varepsilon, \varepsilon)\times N}\leq F^*(\restr{g_M}{U}) \leq Cg_{(-\varepsilon, \varepsilon)\times N}
		\end{equation} 
		hold as quadratic forms on vectors.
	\end{enumerate}
	
	Then
	\[
	\Phi(\Ind(D_N)) = \Ind(D;N),
	\]
	with $\Phi$ defined as in \eqref{diag: identification maps}.
\end{theorem}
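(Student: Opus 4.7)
The plan is to follow the two-stage strategy sketched in the introduction. First I would prove the theorem in the model case $M = \R \times N$ equipped with the product metric and the canonical product Dirac data, partitioned by $\{0\} \times N$ (the content of the auxiliary Theorem \ref{thm: PMT for product}). Then I would deduce the general case by modifying $(g, S, c, \nabla)$ inside the uniform tubular neighbourhood $U$ so that, after modification, $U$ becomes isometric to $(-\varepsilon, \varepsilon) \times N$ carrying the product Dirac data, while leaving everything outside $U$ unchanged.

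For the product case, I would exploit the fact that $S$ splits as the external tensor product of the rank-two Clifford bundle $S_\R$ of Example \ref{example: Dirac operator induced Dirac class} with the restricted bundle $S_N$, and that the Dirac operator decomposes correspondingly into a sum involving $D_\R$ and $D_N$. On the K-homological side, this identifies $[D]$ with the external product of the Dirac class $d \in K^{-1}(C_0(-1,1))$ of Definition \ref{def: Dirac class} (suitably extended along $\R$) with $[D_N]$. Using Paschke duality together with the isomorphism $K_0(C^*(N \subseteq \R \times N)) \cong K_0(C^*(N))$ from Proposition \ref{prop: localized roe alg isomorphic to roe alg of subset}, I would identify the Roe homomorphism $\varphi_N$ of Definition \ref{Def: Roe homomorphism} with the suspension map $s$ of Definition \ref{def: suspension} applied slotwise on the external product. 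Combining this identification with Proposition \ref{prop: Roe homomorphism}\ref{item: partitioned index as image roe homom} and Lemma \ref{lem: suspension maps Dirac class to -1} then yields the model case; any sign produced by the suspension of the Dirac class is absorbed by the orientation convention \eqref{eq: orientation convention Clifford action} together with the definition of $\gamma_N$ in \eqref{eq: grading induced on restricted bundle}.

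For the reduction step, I would construct modified data $(S', c', \nabla', D')$ on $M$ that agree with the original data outside $U$ but coincide on $U \cong (-\varepsilon, \varepsilon) \times N$ with the pulled-back product data. The uniform tubular neighbourhood hypothesis (ii) makes this construction possible, the comparison estimate (iii) guarantees that the modification is a bounded perturbation preserving the coarse structure of $M$, and the uniform equivalence (i) ensures that $\Phi$ is insensitive to the change of ambient metric. Using Lemma \ref{lem: Higsons 1.2 general} applied to the resolvent difference $(D - i)^{-1} - (D' - i)^{-1}$, which is supported in $U \subseteq B_\varepsilon(N)$, I would then show that the Cayley transforms of $D$ and $D'$ differ by an element of $C^*(N \subseteq M)$. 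It follows that $\Ind(D;N) = \Ind(D';N)$, while a much easier argument using that the K-homology class of $D_N$ does not depend on the chosen Dirac connection shows that $\Phi(\Ind(D_N)) = \Phi(\Ind(D_N'))$, reducing the theorem to the already-established model case.

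The main obstacle I anticipate is the analytic content of the reduction: the operators $D$ and $D'$ live on different bundles $S$ and $S'$ that only agree outside $U$, so one must produce a genuine bundle isomorphism on the overlap and interpolate controllably across $\partial U$ to arrange that $D - D'$ is a first-order differential operator supported in $U$ to which Lemma \ref{lem: Higsons 1.2 general} together with the resolvent identity can be applied. A subsidiary but delicate point is matching the grading $\gamma_N = ic(n)$ coming from the ambient Clifford action with the grading inherited from the product model on $(-\varepsilon, \varepsilon) \times N$; the orientation convention \eqref{eq: orientation convention Clifford action} and Remark \ref{rem: alternative expression grading} are designed precisely to make these two gradings coincide, so that the sign bookkeeping, both here and in the suspension step of the product case, should ultimately close up into the asserted equality.
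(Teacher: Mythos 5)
Your two-stage plan matches the paper's overall architecture (product case, then reduction), and your sketch of the product case — external product with the Dirac class $d$, suspension, Paschke duality identifying $\varphi_N$ with $s$ — is in the same spirit as the paper's Section \ref{section: proof special case}, though you elide the considerable analytic work (Propositions \ref{prop: alternative class of D_N}, \ref{prop: relating K theory classes} and the homotopies in Lemmas \ref{lem: relating K theory classes 1}--\ref{lem: relating K theory classes 3}) needed to actually carry it out. The reduction step, however, has a genuine gap.

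Modifying $(g, S, c, \nabla)$ inside $U$ so that the tubular neighbourhood carries product data produces a manifold $(M, g', S', D')$ that still equals the original $M$ outside $U$. Your Lemma-\ref{lem: Higsons 1.2 general}-based argument plausibly gives $\Ind(D;N) = \Ind(D';N)$, and certainly $\Phi(\Ind(D_N))$ is unchanged, but this does \emph{not} ``reduce the theorem to the already-established model case,'' because $(M, D')$ is not $(\R\times N, D_{\R\times N})$: the ends $M_\pm \setminus U$ are still arbitrary. The model case Theorem \ref{thm: PMT for product} computes $\Ind(D_{\R\times N}; N_0)$ inside $K_0(C^*(N \subseteq \R\times N))$; you would still need to relate $\Ind(D'; N) \in K_0(C^*(N \subseteq M))$ to that, and these live over \emph{different} ambient manifolds with different Roe algebras, different distance functions, and non-isomorphic Hilbert spaces. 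Bridging them is precisely the content the paper supplies in Section \ref{section: proof general case}: it constructs the transition manifold $M'$ (gluing $M_+$ to the half-cylinder $\R_{\leq 0}\times N$), develops the notion of uniform half-isomorphism (Definition \ref{def: half iso}), and proves that the induced transition maps both intertwine $\Phi$ (Proposition \ref{prop: transition diagram commutes}) and preserve the partitioned index (Proposition \ref{prop: iso on localized Roe algs preserves partitioned index}). This is where hypotheses (i)--(iii) are really used — not to absorb a bounded perturbation, but to prove the key uniform equivalences $\id\colon (M_+, d_M|_{M_+}) \to (M_+, d_{M'}|_{M_+})$ and $\id\colon (\R_{\leq 0}\times N, d_{M'}) \to (\R_{\leq 0}\times N, d_{\R\times N})$ (Propositions \ref{prop: uniform equivalence 1} and \ref{prop: uniform equivalence 2}), without which the transition maps do not exist. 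Your sketch does not address how to compare partitioned indices across different ambient manifolds, and that comparison is the hard part of the general case.
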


The proof of this theorem will be given in Sections \ref{section: proof special case} and \ref{section: proof general case}.

\begin{remark}
	\label{rem: hypersurfaces as in PMT are simple}
	If $N\subseteq M$ is a partitioning hypersurface that satisfies all three requirements of Theorem \ref{thm: PMT}, then it is automatically a simple hypersurface. Indeed, let $\phi\colon (-\varepsilon, \varepsilon)\rightarrow \R$ be a smooth function such that $\phi(x) = 0$ for all $x \leq -\varepsilon/2$ and $\phi(x) = 1$ for all $x \geq \varepsilon/2$. Define 
	\[
	\phi_+(p) =\begin{cases}
		0 & \text{ if $p \in M_-\setminus U$,}\\
		1 & \text{ if $p \in M_+\setminus U$,}\\
		\phi\circ p_{\R}\circ F^{-1}(p) & \text{ if $p \in U$.}
	\end{cases}
	\]
	Then $\phi_+$ is a smooth, bounded function that agrees with the indicator function $\chi_{M_+}$ on $M\setminus B_{\varepsilon}(N)$ because $U
	\subseteq B_{\varepsilon}(N)$. It is left to show that $\|d\phi_+\|$ is bounded. Obviously $d\phi_+ = 0$ outside $U$. To estimate $\|d\phi_+\|$ on $U$, denote $m = \sup_{x \in (-\varepsilon, \varepsilon)}|\phi'(x)|$, which is finite as $\phi'$ is compactly supported.
	
	Let $p \in U$ and $v\in T_pM$ be arbitrary. Let $t\mapsto c(t)$ be any curve in $U$ such that $c(0) = p$ and $c'(0) = v$. It follows that we can write $F^{-1}\circ c = (c_{\R}, c_N)$ as curve through $(-\varepsilon, \varepsilon)\times N$. A simple computation now gives 
	\begin{align*}
		|d\phi_+(v)| &= \left|\restr{\frac{d}{dt}}{t = 0}\phi_+\circ c(t)\right| =\left|\restr{\frac{d}{dt}}{t = 0}\phi \circ c_{\R}(t)\right| \leq m|c'_{\R}(0)|\\
		& \leq m\|d(F^{-1})_p(v)\|_{g_{(-\varepsilon, \varepsilon)\times N}} \leq m\sqrt{C}\|d(F^{-1})_p(v)\|_{F^*g_M}\\
		&= m\sqrt{C}\|v\|_{g_M},
	\end{align*}
	from which we conclude that $\|d\phi_+\|\leq m\sqrt{C}$. So $\phi_+$ has all the properties as in Definition \ref{def: simple hypersurface} and therefore $N$ is a simple hypersurface.
\end{remark}

\subsection{Corollaries to the generalized partitioned manifold index theorem}
\label{subsec: corollaries to pmt}
In this subsection, we give several corollaries of the partitioned manifold index theorem. First of all, we show that the original partitioned manifold index theorem for connected, compact partitioning hypersurfaces \cite[Thm. 4.4]{Roe1996indextheory} is a special case of our Theorem \ref{thm: PMT}.

\begin{corollary}
	\label{cor: PMT for compact hypersurface}
	Let $M$ be an oriented, complete Riemannian manifold of odd dimension at least 3, partitioned by a connected compact hypersurface $N$. Let $S$ be a Dirac bundle over $M$ with Dirac connection $\nabla$ and let $D$ be the associated Dirac operator on $S$. Let $D_N$ be the Dirac operator on $S_N$ associated to the restricted Clifford action and an arbitrary Dirac connection on $S_N$. Then
	$\Ind(D_N) = \Ind(D;N)$ after identifying both $K$-theory groups with $\Z$.
\end{corollary}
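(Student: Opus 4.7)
The plan is to deduce this corollary directly from Theorem \ref{thm: PMT} by verifying that each of its three hypotheses is automatically satisfied when $N$ is compact, and then tracking the identifications that turn the map $\Phi$ into the identity on $\Z$.

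For the first hypothesis, I would observe that on a compact manifold $N$ the intrinsic Riemannian distance $d_N$ and the restricted distance $d_M|_N$ both induce the manifold topology, so $\id_N$ is a homeomorphism between bounded metric spaces. Since both spaces are bounded, uniform expansiveness is trivial in both directions (take any $S \geq \max(\operatorname{diam}(N, d_N), \operatorname{diam}(N, d_M|_N))$), and metric properness is automatic because compact spaces have only bounded subsets whose inverse images are closed hence compact. Hence $\id_N$ is a uniform equivalence. For the second hypothesis, the classical tubular neighbourhood theorem applied to the compact embedded submanifold $N \subseteq M$ produces a tubular neighbourhood of some radius $\varepsilon > 0$, which is automatically uniform because $N$ is compact. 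For the third hypothesis, I would note that on the diffeomorphism $F \colon (-\varepsilon,\varepsilon) \times N \to U$ the two smooth Riemannian metrics $F^*(g_M|_U)$ and $g_{(-\varepsilon,\varepsilon) \times N}$ agree along $\{0\} \times N$ (the exponential map sends $\partial_r$ to the unit normal $n$ and is the identity on $T_pN$). Since the set of ratios $F^*(g_M|_U)_{(r,p)}(v,v)/g_{(-\varepsilon,\varepsilon)\times N}(v,v)$ is a continuous positive function on the unit sphere bundle over a compact base $\{0\} \times N$, by compactness and continuity one can shrink $\varepsilon$ if necessary and choose $C \geq 1$ so that the sandwich estimate \eqref{eq: estimate on Riemannian metric} holds throughout $U$.

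Once the three hypotheses are verified, Theorem \ref{thm: PMT} gives
\[
\Phi(\Ind(D_N)) = \Ind(D;N).
\]
It remains to identify both $K$-theory groups with $\Z$ and check that $\Phi$ becomes the identity. Since $N$ is compact, $C^*(N)$ is Morita equivalent to $\C$ (in fact, for an ample representation, $C^*(N) \cong \cK(H_N)$), so $K_0(C^*(N, d_N)) \cong \Z$ under the standard Fredholm-index identification, and $\Ind(D_N)$ is the usual Atiyah--Singer integer. Likewise $K_0(C^*(N, d_M|_N)) \cong \Z$ by the same argument, and Proposition \ref{prop: localized roe alg isomorphic to roe alg of subset} gives $K_0(C^*(N \subseteq M)) \cong K_0(C^*(N, d_M|_N)) \cong \Z$.

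The composition $\Phi$ in \eqref{diag: identification maps} is built from $(\id_N)_*$ and the isomorphism of Proposition \ref{prop: localized roe alg isomorphic to roe alg of subset}, both of which are functorial and compatible with the canonical $\Z$-identifications: the map $(\id_N)_*$ is induced by an isometry covering the identity, which respects the rank/trace computation used to identify $K_0$ of the compacts with $\Z$, and the second map is canonical in $n$ in the diagram \eqref{diag: commuting diagram localized iso}. Therefore $\Phi$ corresponds to $\id_{\Z}$, and the equality $\Phi(\Ind(D_N)) = \Ind(D;N)$ becomes $\Ind(D_N) = \Ind(D;N)$. The only mildly delicate point is the verification of the metric sandwich estimate, but the compactness of $N$ reduces it to a standard continuity argument.
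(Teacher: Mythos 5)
Your proposal is correct and follows essentially the same route as the paper: verify the three hypotheses of Theorem \ref{thm: PMT} using compactness of $N$ (boundedness gives the uniform equivalence for free, the standard tubular neighbourhood theorem gives the uniform tubular neighbourhood, and compactness of the unit sphere bundle together with the fact that the two metrics agree along $\{0\}\times N$ gives the sandwich estimate after shrinking $\varepsilon$), and then apply the theorem. The paper phrases this as the trivial-group case of its equivariant corollary for cocompact hypersurfaces, but the verifications are identical. Your closing paragraph — that $\Phi$ intertwines the canonical rank identifications $K_0 \cong \Z$, because $(\id_N)_*$ is conjugation by an isometry and the second map is induced by the corner inclusion $T \mapsto \bigl(\begin{smallmatrix} T & 0 \\ 0 & 0 \end{smallmatrix}\bigr)$ — is a detail the paper leaves implicit, and stating it is a small but genuine improvement in completeness.
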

\begin{proof}
This is the special case of Corollary \ref{cor: PMT for cocompact hypersurface} below where $\Gamma$ is the trivial group.
\end{proof}

A second easy corollary is a cobordism invariance result on the coarse index of Dirac operators. To this end, the following notion will be introduced.

\begin{definition}
	\label{def: cobordism of partitions}
	Let $N$ and $N'$ be two even-dimensional complete oriented connected manifolds with Dirac operators $D_N$ and $D_{N'}$. Then $(N, D_N)$ and $(N', D_{N'})$ are \emph{cobordant} if there exists a manifold $M$ and embeddings $N\hookrightarrow M$ and $N'\hookrightarrow M$ as partitioning hypersurfaces such that the resulting partitions are mutually near and for both embeddings all assumption in Theorem \ref{thm: PMT} are satisfied. 
\end{definition}

\begin{corollary}
	\label{cor: cobordism invariance of the index}
	Let $(N, D_N)$ and $(N', D_{N'})$ be cobordant. Then 
	\[
	\Ind(D_N) = \Phi^{-1}\circ \Phi'(\Ind(D_{N'})),
	\]
	where $\Phi$ and $\Phi'$ are the maps as in \eqref{diag: identification maps} associated to $N\subseteq M$ and $N'\subseteq M$ as partitioned manifolds, respectively. In particular, if one of the indices vanishes, so does the other.
\end{corollary}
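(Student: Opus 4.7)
The plan is to derive this corollary as a direct application of the partitioned manifold index theorem combined with the nearness result from Corollary \ref{cor: mutually near hypersurfaces induce same partitioned index}. By the definition of cobordism, we are given a common ambient manifold $M$, a common Dirac bundle $S$ with associated Dirac operator $D$ on $M$, and two partitioning hypersurfaces $N$ and $N'$, each satisfying all three hypotheses of Theorem \ref{thm: PMT}, whose partitions $(N, M_+, M_-)$ and $(N', M_+', M_-')$ are mutually near. By Remark \ref{rem: hypersurfaces as in PMT are simple}, both partitions are automatically simple, so the partitioned indices $\Ind(D; N)$ and $\Ind(D; N')$ are defined.

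First I would apply Theorem \ref{thm: PMT} separately to the embeddings $N \hookrightarrow M$ and $N' \hookrightarrow M$, which yields
\[
\Phi(\Ind(D_N)) = \Ind(D; N) \quad\text{and}\quad \Phi'(\Ind(D_{N'})) = \Ind(D; N').
\]
Next I would observe that mutual nearness of the partitions gives $N' \subseteq B_R(N)$ and $N \subseteq B_{R'}(N')$ for suitable $R, R' > 0$, which forces the equality of localized Roe algebras $C^*(N \subseteq M) = C^*(N' \subseteq M)$ as subalgebras of $\cB(L^2(M;S))$. In particular both partitioned indices are elements of the same $K$-theory group, and Corollary \ref{cor: mutually near hypersurfaces induce same partitioned index} (itself a consequence of Lemma \ref{lem near part index} applied in both directions) yields $\Ind(D; N) = \Ind(D; N')$.

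Finally, I would combine these identities to obtain $\Phi(\Ind(D_N)) = \Phi'(\Ind(D_{N'}))$. Since the coarse equivalence hypothesis guarantees via Corollary \ref{cor: K thoery independent of chosen representation} that $(\id_N)_*$ is an isomorphism, and Proposition \ref{prop: localized roe alg isomorphic to roe alg of subset} supplies the second isomorphism in the composition defining $\Phi$, the map $\Phi$ is invertible. Applying $\Phi^{-1}$ gives the claimed equality
\[
\Ind(D_N) = \Phi^{-1} \circ \Phi'(\Ind(D_{N'})).
\]
The last assertion about simultaneous vanishing is then immediate, since $\Phi^{-1} \circ \Phi'$ is an isomorphism between the relevant $K$-theory groups.

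There is no real obstacle here: the entire content has been absorbed into Theorem \ref{thm: PMT} and the nearness machinery of Section \ref{subsec: partitioned manifolds}. The only minor point worth spelling out in the write-up is the observation that mutual nearness promotes the one-sided inclusion of localized Roe algebras in Lemma \ref{lem near part index} to an equality, so that the two images $\Phi(\Ind(D_N))$ and $\Phi'(\Ind(D_{N'}))$ genuinely lie in the same group and can be compared without inserting any further functorial map.
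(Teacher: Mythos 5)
Your proof is correct and follows exactly the paper's intended route: the paper's own proof is the one-liner "This follows from Corollary \ref{cor: mutually near hypersurfaces induce same partitioned index} and Theorem \ref{thm: PMT}," and your write-up is simply that argument spelled out in full, including the observation that mutual nearness forces $C^*(N \subseteq M) = C^*(N' \subseteq M)$ so the two partitioned indices are directly comparable, and that $\Phi$ is invertible because both of its constituents are isomorphisms.
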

\begin{proof}
	This follows from Corollary \ref{cor: mutually near hypersurfaces induce same partitioned index} and Theorem \ref{thm: PMT}.
\end{proof}

We also obtain a new result on the (non)existence of metrics with uniformly positive scalar curvature.

\begin{corollary}
	\label{cor: obstruction UPSC metrics}
	Let $M$ be an odd-dimensional complete spin manifold and let $D_M$ be the associated spin-Dirac operator. If there exists a partitioning hypersurface $N\subseteq M$ that satisfies all conditions in Theorem \ref{thm: PMT} such that the induced Dirac operator has nonzero index, then the metric on $M$ cannot have uniformly positive scalar curvature.
\end{corollary}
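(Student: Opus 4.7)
The strategy is contraposition: assume $M$ carries a metric of uniformly positive scalar curvature, and deduce $\Ind(D_N)=0$, contradicting the hypothesis. The first step is to invoke the Lichnerowicz formula $D_M^2 = \nabla^*\nabla + \kappa/4$ on the spinor bundle, where $\kappa$ is the scalar curvature. Uniform positivity gives a constant $c>0$ with $\kappa\geq c$ pointwise, hence $D_M^2 \geq c/4$ as a quadratic form on $C_c^\infty(M;S)$. Consequently, the essentially self-adjoint closure of $D_M$ has a spectral gap around $0$ of radius at least $\sqrt{c}/2$.

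The second step is to show that this gap forces $\Ind(D_M)=0$ in $K_1(C^*(M))$. By part \ref{item: ind as cayley trafo} of Proposition \ref{prop: Roe homomorphism}, $\Ind(D_M) = [-\exp(\pi i \chi(D_M))]_1$ for any normalizing function $\chi$, viewed as a unitary in $\widetilde{C^*(M)}$ via Corollary \ref{cor: func calc thm for unitization C^*M}. Exploiting the spectral gap, I would pick $\chi$ with $\chi(x)=\operatorname{sgn}(x)$ for $|x|\geq \sqrt{c}/2$; then $\chi(D_M)$ is a self-adjoint unitary with spectrum $\{-1,1\}$, so $-\exp(\pi i\chi(D_M)) = I$, which is the trivial class in $K_1(\widetilde{C^*(M)})$. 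Hence $\Ind(D_M)=0$.

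The third step is to propagate the vanishing to $\Ind(D_N)$. Part \ref{item: partitioned index as image roe homom} of Proposition \ref{prop: Roe homomorphism} yields $\Ind(D_M;N)=\varphi_N(\Ind(D_M))=0$, and Theorem \ref{thm: PMT} then gives $\Phi(\Ind(D_N))=0$. The map $\Phi$ of diagram \eqref{diag: identification maps} factors as $(\id_N)_*$ followed by the isomorphism from Proposition \ref{prop: localized roe alg isomorphic to roe alg of subset}, and $(\id_N)_*$ is itself an isomorphism by Corollary \ref{cor: K thoery independent of chosen representation}, since assumption (i) of Theorem \ref{thm: PMT} makes $\id_N\colon (N,d_N)\to (N,d_M|_N)$ a uniform equivalence (in particular a coarse equivalence). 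Thus $\Phi$ is injective, $\Ind(D_N)=0$, and the corollary follows by contraposition.

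I expect the subtlest point to be the spectral-gap argument of the second step: one must justify that $\chi$ can be chosen bounded with $\chi(D_M)^2=I$, that $-\exp(\pi i\chi(D_M))$ lies in $\widetilde{C^*(M)}$, and that it represents $0\in K_1$. All other ingredients are bookkeeping with Proposition \ref{prop: Roe homomorphism}, Theorem \ref{thm: PMT}, and the functoriality results already established in the preliminaries.
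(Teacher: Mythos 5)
Your proof is correct and takes the same overall route as the paper: apply Theorem \ref{thm: PMT} and Proposition \ref{prop: Roe homomorphism} to reduce the statement to vanishing of $\Ind(D_M)$ under uniformly positive scalar curvature. The only difference is that the paper simply cites the Lichnerowicz/spectral-gap vanishing result [Prop.~12.3.7, Higson--Roe], whereas you re-derive it explicitly by choosing a normalizing function $\chi$ equal to $\sgn$ on the spectrum of $D_M$ so that $-\exp(\pi i\chi(D_M))=I$; both are valid, and your injectivity argument for $\Phi$ (via Corollary \ref{cor: K thoery independent of chosen representation}) is the justification the paper leaves implicit when it writes $\Phi^{-1}$.
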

\begin{proof}
Let $\Phi$ be as in \eqref{diag: identification maps} and let $\varphi_N$ be the Roe homomorphism. Then 
	\[
	\Ind(D_N) = \Phi^{-1}(\Ind(D_M;N)) = \Phi^{-1}(\varphi_N(\Ind(D_M))).
	\]
	Thus if $\Ind(D_N)\neq 0$, then $\Ind(D_M)\neq 0$, so the metric on $M$ does not have uniformly positive scalar curvature \cite[Prop. 12.3.7]{Higson2000Khomology}. 
\end{proof}

\section{Proof of a special case: product manifolds}
\label{section: proof special case}

In this section, we will prove a version of the partitioned manifold index theorem, Theorem \ref{thm: PMT},  for a product manifold $\R\times N$ with partitioning hypersurface $\{0\}\times N$.

\subsection{Outline of the proof}
\label{subsec: overview of proof}
Let $(N, g_N)$ be a connected, complete, oriented, even-dimensional Riemannian manifold equipped with a Hermitian vector bundle $S_N$ with metric $h_N$ and grading $\gamma_N$, Clifford action $c_N\colon TN \rightarrow \End(S_N)$ and Dirac connection $\nabla^N$. Denote the associated Dirac operator on $S_N$ by $D_N$. On $\R$, considered as a Riemannian manifold with the Euclidean metric, consider the Dirac bundle and Dirac connection as in Example \ref{example: Dirac operator on R}, but where the Clifford action acts as 
\begin{align}
	c\colon\ \R\times\R &\rightarrow \End(T^1_{\R}),\notag\\
	(x, v)&\mapsto [(x, \lambda)\mapsto (x, -iv\lambda)].
\end{align} 
The associated Dirac operator is $-i\frac{d}{dx}$. Denote by $p_{\R}\colon \R\times N\rightarrow \R$ the projection onto $\R$ and by $p_N\colon \R\times N\rightarrow N$ the projection onto $N$. 

Consider $M:=\R\times N$ as an oriented Riemannian manifold with the product metric and orientation, equipped with the vector bundle $S = p_{\R}^*T^1_{\R}\otimes p_N^*S_N \cong p_N^*S_N$. This vector bundle obtains the structure of a Dirac bundle if we consider the pulled-back Hermitian metric and if we extend the Clifford action $c_N$ of $TN$ to $T(\R\times N)$ by letting the unit vector $\frac{\partial}{\partial x}$ act by $-i\gamma_N$. Consider the pulled-back connection $p_N^*\nabla^N$, which is again a Dirac connection. Denote the associated Dirac operator on $S$ by $D$. Under the canonical identification $L^2(\R\times N; S)\cong L^2(\R, T^1_{\R})\otimes L^2(N;S_N)$, the operator $D$ is given by \cite[p. 441]{higson1991cobordisminvariance}
\[
D = 1\otimes D_N - i\frac{d}{dx}\otimes \gamma_N.
\] 

Consider $N\subseteq M$ as an embedded submanifold via inclusion at $0$. Then $M$ naturally has the structure of a partitioned manifold with partitioning hypersurface $N$, if we set $M_- = (-\infty, 0]\times N$ and $M_+ = [0, \infty)\times N$. Since $\restr{d_M}{N} = d_N$, it follows that the map $\Phi$ in \eqref{diag: identification maps} reduces to 
\[\begin{tikzcd}
	{K_0(C^*(N, d_{N}))} & {K_0(C^*(N, \restr{d_M}{N}))} & {K_0(C^*(N\subseteq M))}.
	\arrow[equals, from=1-1, to=1-2]
	\arrow["\sim", from=1-2, to=1-3]
\end{tikzcd}
\]
 We always identify $N$ with the subset $\{0\}\times N$ of $\R\times N$.

%
%

The goal of this section is to prove the following theorem.
\begin{theorem}
	\label{thm: PMT for product}
	In the setting above, the following equality holds:
	\[
	\Phi(\Ind(D_N)) = \Ind(D, N) \in K_0(C^*(N\subseteq M)).
	\]
\end{theorem}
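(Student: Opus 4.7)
The plan is to interpret both sides of the claimed equality via Paschke duality and then exploit the product structure of $M$ together with the suspension isomorphism in $K$-homology. First I would rewrite $\Ind(D_N)\in K_0(C^*(N))$ as the assembly of the $K$-homology class $[D_N]\in K_0(N)$, combining Definition \ref{def: index D} with Proposition \ref{prop: Paschke duality}, and then transport this class to $K_0(C^*(N\subseteq M))$ via the canonical isomorphism of Proposition \ref{prop: localized roe alg isomorphic to roe alg of subset}, so as to obtain a clean, $K$-homological description of $\Phi(\Ind(D_N))$. For the right-hand side, I would interpret $\Ind(D;N) = \partial[q_N(U_+)]_1$ in similarly $K$-homological terms using the Paschke-type duality associated with the quotient $Q^*(N\subseteq M) = D^*(M)/C^*(N\subseteq M)$, so that both sides live at a level where the product structure can be used.

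The crux is to exploit that under a diffeomorphism $\R \cong (-1,1)$, for example the one coming from $\theta(x) = \tfrac{2}{\pi}\arctan(x)$, one has an identification $C_0(M) \cong C_0(-1,1)\otimes C_0(N)$, so that the suspension isomorphism $s\colon K^{-1}(C_0(-1,1)\otimes C_0(N)) \to K^0(C_0(N))$ of Proposition \ref{prop: suspension is isomorphism} is available. Using the decomposition $D = 1\otimes D_N - i\frac{d}{dx}\otimes \gamma_N$ and the fact that $D_N$ and $\gamma_N$ anticommute, so that $D^2 = 1\otimes D_N^2 - \frac{d^2}{dx^2}\otimes 1$, the Cayley transform $U$ and hence $U_+$ can be analyzed by joint functional calculus of these two anticommuting self-adjoint operators. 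I expect the class of $U_+$ modulo $C^*(N\subseteq M)$ to factor as a tensor product involving the Dirac class $d \in K^{-1}(C_0(-1,1))$ of Definition \ref{def: Dirac class} and the $K$-homology class of $D_N$. Applying $s$ should then recover $[D_N]$ up to the sign $s(d)=-1$ of Lemma \ref{lem: suspension maps Dirac class to -1}, a sign that is cancelled precisely by the choice made for the Clifford action on $\R$ at the beginning of Subsection \ref{subsec: overview of proof}, yielding the desired equality.

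The hard part will be constructing the commutative diagram that links the partitioned index construction, which a priori lives purely at the $C^*$-algebra level via the boundary map $\partial\colon K_1(Q^*(N\subseteq M))\to K_0(C^*(N\subseteq M))$, with the suspension isomorphism, which lives at the $K$-homology level. This requires identifying $K_1(Q^*(N\subseteq M))$ with a $K$-homology group to which the suspension map applies, and verifying that this identification intertwines the index map with assembly. Additional care is needed because $C^*(N\subseteq M)$ only sees operators supported near $N$, so the cutoff $\phi_+$ entering the definition of $U_+$ must be chosen compatibly with the product structure and with the identification $\R\cong(-1,1)$; sign conventions need to be tracked carefully throughout, in particular to reconcile the difference between the Cayley transform used in the definition of $U_+$ and the Schrödinger-type operator \eqref{eq: schrodinger operator} used in the construction of $s$.
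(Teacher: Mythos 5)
You correctly identify the strategic ingredients --- Paschke duality, the suspension isomorphism in $K$-homology, the Dirac class $d$, and the sign $s(d) = -1$ --- which are precisely the ones the paper uses. However, the crucial link between $[D_N]$ and the class $[q_N(U_+)]_1$ defining $\Ind(D;N)$ is left as an ``expectation'' rather than established, and what you expect is not quite what happens. In the paper's argument, the factoring is carried out for the $K$-homology class $[D_N]$ via the external product $[D_N] \times [D_{\R}]$ and the suspension, and one computes that $-[D_N]$ is represented by the Fredholm module built from the Schr\"odinger-type operator $T_+ = i\hat{\alpha} + (1-\hat{\alpha}^2)^{1/2}\chi(D)$, where $\hat{\alpha}$ is multiplication by a truncated linear function of the $\R$-coordinate. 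It is then shown that $q(T_+)$ is unitary in $D^*(N,M;S)/C^*(N\subseteq M;S)$, and a chain of three explicit homotopies in $K_1(D^*(M;S)/C^*(N\subseteq M;S))$ connects $[q(U_+)]_1$ (Cayley transform form) through $q(i\sin(\tfrac{\pi}{2}\theta(D))+\hat{\alpha}\cos(\tfrac{\pi}{2}\theta(D)))$ and $q(i\theta(D)+\hat{\alpha}(1-\theta(D)^2)^{1/2})$ to $-[q(T_+)]_1$. Neither the ``joint functional calculus of anticommuting operators'' idea nor the assertion that ``$U_+$ modulo $C^*(N\subseteq M)$ factors as a tensor product'' produces these homotopies, and the central commutative diagram that transports the Paschke-dual class of $[D_N]$ from $K_1(Q^*(N;S))$ over to $K_0(C^*(N\subseteq M;S))$ (Diagram \eqref{diag: the central diagram} and Diagram \eqref{diag: relating C^* extensions}) also has to be constructed and shown to commute. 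You correctly flag this as ``the hard part,'' but the bulk of the paper's technical work --- the alternative representation of $[D_N]$, the operator $T_+$, and the three homotopy lemmas --- lies exactly there and is not addressed in the proposal.
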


We first introduce some new notation that is needed to give an overview of the proof. Denote by $\rho_N\colon C_0(N)\rightarrow \cB(L^2(N;S_N))$ the ample representation by multiplication operators and denote by $\rho_M\colon B^{\infty}(M)\rightarrow \cB(L^2(M;S_M))$ the representation by multiplication operators of the bounded Borel functions on $M$. Consider the very ample representation $\tilde{\rho}_N = 1\otimes\rho_N\colon C_0(N) \rightarrow L^2(\R, T^1_{\R})\otimes L^2(N;S_N)\cong L^2(M; S)$. 
%
%
In this way, all of the algebras $C^*(N)$, $D^*(N)$ etc can be realized inside either $\cB(L^2(N; S_N))$ or $\cB(L^2(M; S))$. To keep track of which representation is used, the relevant Hermitian bundle will always be included in the notation.

Choose any unit vector $e\in L^2(\R, T^1_{\R})$ and let $W\colon L^2(N;S_N)\rightarrow L^2(M; S)$ be the isometry given by 
\[
W\colon \sigma\mapsto e\otimes \sigma.
\]
This map uniformly covers the identity, so the maps $\Ad_W$ and $\ol{\Ad_W}$ in Diagram \eqref{diag: the central diagram} are well-defined. Moreover, it can be shown (see Corollary \ref{cor: localization algebra inside Roe algebra}) that $C^*(N\subseteq M; S)\subseteq  C^*(N; S)$. Denote the resulting inclusion map by $i$. Then the following diagram can be constructed:

\begin{equation}
	\label{diag: the central diagram}
	\begin{tikzcd}
		& {K_1(Q^*(N; S_N))} & {K_0(C^*(N;S_N))} \\
		{K_0(N)} &&& {K_0(C^*(N\subseteq M;S))} \\
		& {K_1(Q^*(N;S))} & {K_0(C^*(N;S))}
		\arrow["\partial", from=1-2, to=1-3]
		\arrow["{K_1(\ol{\Ad_W})}"{description}, from=1-2, to=3-2]
		\arrow["\Phi", from=1-3, to=2-4]
		\arrow["{K_0(\Ad_W)}"{description}, from=1-3, to=3-3]
		\arrow["\sim", sloped, from=2-1, to=1-2]
		\arrow["\sim"', sloped, from=2-1, to=3-2]
		\arrow["{K_0(i)}", from=2-4, to=3-3]
		\arrow["\partial", from=3-2, to=3-3]
	\end{tikzcd}
\end{equation}

\begin{proposition}
	\label{prop: diagram commutes}
	Diagram \eqref{diag: the central diagram} commutes.
\end{proposition}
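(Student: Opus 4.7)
The plan is to verify commutativity of Diagram~\eqref{diag: the central diagram} by decomposing it into three sub-regions: the left triangle of the two Paschke duality maps, the central square of the boundary maps from the six-term exact sequences, and the right triangle involving $\Phi$ and the inclusion $i$.

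For the central square, I would first verify that $W$ uniformly covers $\id_N$: direct computation gives $W^*\tilde{\rho}_N(f)W = \rho_N(f)$ exactly, and $\tilde{\rho}_N(f)W\rho_N(g)=0$ whenever $\supp(f)\cap\supp(g)=\emptyset$. Hence $\Ad_W$ restricts to compatible $\ast$-homomorphisms between the $C^*$- and $D^*$-algebras of $N$ in the two representations, and descends to $\overline{\Ad_W}$ on the quotients. This yields a morphism of the short exact sequence
\[
0 \to C^*(N;S_N) \to D^*(N;S_N) \to Q^*(N;S_N) \to 0
\]
into its counterpart with $S_N$ replaced by $S$. The central square then commutes by naturality of the index map.

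For the right triangle, I would assume $e$ has compact support (which is permitted, and is what makes $\Ad_W$ land in the localized Roe algebra). Then for $T \in \FP(N;S_N)\cap \LC(N;S_N)$, the operator $\Ad_W(T)=P_e\otimes T$ (with $P_e$ the rank-one projection onto $\C e$) is supported within distance $R':=\sup\{|x|:x\in\supp(e)\}$ of $N$, and its propagation is controlled by that of $T$ plus $\diam(\supp e)$. Thus $\Ad_W$ factors as $i\circ\tilde{\Phi}$ for a $\ast$-homomorphism $\tilde{\Phi}\colon C^*(N;S_N)\to C^*(N\subseteq M;S)$. It then remains to identify $K_0(\tilde{\Phi})$ with the abstract $\Phi$ from Proposition~\ref{prop: localized roe alg isomorphic to roe alg of subset}; since $W$ also covers each inclusion $N\hookrightarrow N_n\subseteq M$ as a coarse map, $K_0(\tilde{\Phi})$ fits into the characterizing diagram~\eqref{diag: commuting diagram localized iso}, and uniqueness forces $K_0(\tilde{\Phi})=\Phi$.

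Finally, for the left triangle, I would invoke naturality of Paschke duality under covering isometries. By Corollary~\ref{cor: K thoery independent of chosen representation} and the construction underlying Proposition~\ref{prop: Paschke duality}, any isometry uniformly covering $\id_N$ between two ample representations of $C_0(N)$ induces canonical isomorphisms on the relevant $K$-groups that intertwine the two Paschke maps out of $K_0(N)$. In the present setting this canonical isomorphism on $K_1(Q^*(N;\cdot))$ is precisely $K_1(\overline{\Ad_W})$, so the left triangle commutes. The main obstacle will be the identification of the abstract $\Phi$ with $K_0(\tilde{\Phi})$ in the previous paragraph, since one must carefully match the coarse-equivalence recipes from Proposition~\ref{prop: localized roe alg isomorphic to roe alg of subset} in both the intrinsic picture on $N$ and the extrinsic picture inside $M$.
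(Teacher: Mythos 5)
Your proposal follows the same decomposition into left triangle, middle square, and right triangle that the paper uses, and the middle square argument (naturality of the index map under the morphism of short exact sequences induced by $\Ad_W$) is identical. For the right triangle, your factoring observation — taking $e$ compactly supported so that $\Ad_W$ itself lands in $C^*(N\subseteq M;S)$, then pinning down $K_0(\tilde\Phi)=\Phi$ via the uniqueness clause of Proposition~\ref{prop: localized roe alg isomorphic to roe alg of subset} — is a clean reorganization of what the paper does more explicitly by writing down the diagram \eqref{diag: diagram for inductive limit} with $Y_k=[-k,k]\times N$, $q_k(r,p)=p$, and the inclusion isometry $i_H$. Both reduce to checking the characterizing diagram, so the content is the same; your version is marginally tidier conceptually.

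The one soft spot is the left triangle. You invoke ``naturality of Paschke duality under covering isometries'' and cite Corollary~\ref{cor: K thoery independent of chosen representation}, but that corollary only concerns $K_*(C^*(X))$ and $K_*(D^*(X))$, not the quotient $Q^*$ or the Paschke isomorphism $K_p(X)\cong K_{1+p}(Q^*(X))$ itself, so it does not directly yield the intertwining you assert. What is actually needed — and what the paper supplies in one line — is a concrete check: if $T\in D^*(N;S_N)$ represents the Paschke dual of a class in $K_0(N)$, then $K_1(\overline{\Ad_W})([q(T)]_1) = [q(WTW^*+(1-WW^*))]_1$ (the $1-WW^*$ correction because $\overline{\Ad_W}$ is non-unital), and the Fredholm module built from $WTW^*+(1-WW^*)$ on $L^2(M;S)$ decomposes as the module carried by $T$ on $\operatorname{ran}(W)$ plus a degenerate module on $\operatorname{ran}(W)^\perp$. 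Splitting off that degenerate summand is the actual argument. Your proposal would need this (elementary but nontrivial) verification to close the gap; you also flag the wrong step as the ``main obstacle,'' since the identification $K_0(\tilde\Phi)=\Phi$ really is forced by the uniqueness in Proposition~\ref{prop: localized roe alg isomorphic to roe alg of subset}, whereas the Paschke naturality is where the honest work sits.
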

\begin{proof}
	Commutativity of the left triangle follows by writing out the action of $\ol{\Ad_W}$ and splitting off a degenerate Fredholm module and commutativity of the middle square is just naturality of the boundary map.
	
	It is left to show that the right triangle commutes. Recall that the map $\Phi$ is defined via an inductive limit. Denote $Y_k := \ol{B_k(N)} = [-k, k]\times N\subseteq M$. By Example \ref{ex: closed n-ball is coarsely equivalent}, the inclusion 
	\[
	\left(N, d_N = \restr{d_M}{N}\right) \hookrightarrow \left(Y_k, \restr{d_M}{Y_k}\right)
	\]
	is a coarse equivalence. Let $q_k$ be the coarse inverse given by $(r, p)\mapsto p$.
	
	Denote the inclusion of $C^*(Y_k;S)$ (computed using $L^2(Y_k; S)\subseteq L^2(M; S)$) into the localized Roe algebra by
	\begin{align*}
		j_k\colon\ C^*(Y_k; S)&\hookrightarrow C^*(N\subseteq M; S)\\
		T&\mapsto \begin{pmatrix}
			T&0\\
			0&0
		\end{pmatrix}
	\end{align*}
	with respect to the decomposition $L^2(M;S) = L^2(Y_k; S)\oplus L^2(Y_k; S)^{\perp}$. We will first prove that for all $k \in \N$, the following diagram commutes:
	\begin{equation}
		\label{diag: diagram for inductive limit}
		\begin{tikzcd}
			{K_0(C^*(N; S_N))} & {K_0(C^*(Y_k;S))} \\
			{K_0(C^*(N; S))} & {K_0(C^*(N\subseteq M; S))}
			\arrow["{\id_*}", from=1-1, to=2-1]
			\arrow["{(q_k)_*}"', from=1-2, to=1-1]
			\arrow["{K_0(j_k)}", from=1-2, to=2-2]
			\arrow["{K_0(i)}", from=2-2, to=2-1]
		\end{tikzcd}
	\end{equation}
	Indeed, let $i_H\colon L^2(Y_k;S)\hookrightarrow L^2(M;S)$ be the inclusion of Hilbert spaces. Then  $i\circ j_k(T) = \Ad_{i_H}(T)$. It follows from the definitions that 
 $i_H$ is an isometry covering $q_k$. 
So $K_0(i\circ j_k) = K_0(\Ad_{i_H}) = (q_k)_*$ and therefore Diagram \eqref{diag: diagram for inductive limit} commutes.
	
	It follows by Proposition \ref{prop: localized roe alg isomorphic to roe alg of subset} that $\Phi$ completes Diagram \eqref{diag: diagram for inductive limit} as follows
	
	\[\begin{tikzcd}
		{K_0(C^*(N; S_N))} & {K_0(C^*(Y_k;S))} \\
		{K_0(C^*(N; S))} & {K_0(C^*(N\subseteq M; S))}
		\arrow["{\id_*}", from=1-1, to=2-1]
		\arrow["\Phi", dotted, from=1-1, to=2-2]
		\arrow["\sim"', from=1-2, to=1-1]
		\arrow["\sim", sloped, from=1-2, to=2-2]
		\arrow["{K_0(i)}", from=2-2, to=2-1]
	\end{tikzcd}\]
	
	The result now follows as the lower triangle is precisely the right triangle in Diagram \eqref{diag: the central diagram}.
\end{proof}

The proof of the partitioned manifold index theorem for the product case is based on the commutativity of Diagram \eqref{diag: the central diagram}. The starting point is the fact that the upper path of the diagram computes $\Phi(\Ind(D_N))$, when evaluated at $[D_N] \in K_0(N)$. The rest of the proof is then divided into two main steps:
\begin{enumerate}
	\item First, we will construct an alternative representation of the class $[D_N]$ that is of the right shape to compute its image under the lower Paschke duality map in Diagram \eqref{diag: the central diagram} using Proposition \ref{prop: Paschke duality}. This will be done in Section \ref{sect: alternative representation of D_N class}.
	\item Secondly, having found a suitable representation of the class $[D_N]$ and having computed its image under Paschke duality, we will relate the resulting class in $K$-theory to the $K$-theory class $[q_N(U_+)]_1$ used to define $\Ind(D;N)$ in Definition \ref{def: partitioned index}. This will be done in Section \ref{sect: relating ind(D_N) to ind(D;N)}. To do this efficiently, several properties of  and relations between the (localized) Roe algebras of $M$ and of $N$ (realized in $\cB(L^2(M;S))$) will be useful. These will be derived in Section \ref{sect: properties and relations of subalgebras}.
\end{enumerate}

\subsection{Useful properties of the \texorpdfstring{$C^*$}{TEX}-algebras}
\label{sect: properties and relations of subalgebras}
It will be a central theme in the proof of Theorem \ref{thm: PMT for product} that both of the $C^*$-algebras $C_0(M)$ and $C_0(N)$ can be represented on the Hilbert space $L^2(M;S)$. For $C_0(M)$ this is done using the usual representation by multiplication operators, and for $C_0(N)$ this is done using the representation $\tilde{\rho}_N$ defined in Section \ref{subsec: overview of proof}. Note that $p_N^*f$ is usually never an element of $C_0(M)$ (unless $f = 0$) but still $p_N^*f \in C_b(M)$, which may be identified with the multiplier algebra of $C_0(M)$.

This implies that the $*$-algebras of locally compact, pseudolocal and finite propagation operators on $L^2(M; S)$ can be formed with respect to either the representation $\tilde \rho_N$ of $C_0(N)$ or the representation $\rho_M$ of $C_0(M)$. These will be denoted by $\LC(X;S)$, $\PL(X; S)$ and $\FP(X;S)$ for $X = N$ or $M$, depending on which representation is used. This gives rise to the different $C^*$-algebras $C^*(X;S)$, $D^*(X;S)$ and $Q^*(X;S)= D^*(X;S)/C^*(X;S)$, some of which appear in Diagram \eqref{diag: the central diagram}. Our goal in this section is to derive properties and relations between these algebras that will be used in the proof of the partitioned manifold index theorem.

\begin{lemma}
	\label{lem: initial properties of the algebras}
	The following inclusions hold:
	\begin{enumerate}
		\item \label{item: prop algebras 1} $\FP(M;S) \subseteq \FP(N;S)$.
		\item \label{item: prop algebras 2} $\LC(M;S)\cap\SN_N(M;S)\subseteq \LC(N;S)$.
		\item \label{item: prop algebras 3} $\PL(M;S)\cap\SN_N(M;S)\subseteq \PL(N;S)$.
	\end{enumerate}
\end{lemma}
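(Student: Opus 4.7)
The unifying observation is that the representation $\tilde\rho_N$ of $C_0(N)$ factors through the extension of $\rho_M$ to bounded Borel functions: for $f \in C_0(N)$, the operator $\tilde\rho_N(f)$ is exactly multiplication on $L^2(M;S)$ by the pulled-back function $p_N^* f \in C_b(M)$. This lets us compare the two sets of defining conditions by translating statements about supports in $N$ to statements about cylinders $\R \times (\cdot) \subseteq M$, and, conversely, using the support-near-$N$ condition to convert products $\tilde\rho_N(f)\cdot(\text{cutoff})$ into genuine $C_0(M)$-multiplication operators.

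For \ref{item: prop algebras 1}, suppose $T \in \FP(M;S)$ has propagation $P$. The plan is to take the same constant $P$ for $\FP(N;S)$: given $f,g \in C_0(N)$ with $d_N(\supp f, \supp g) > P$, the supports of $p_N^* f$ and $p_N^* g$ are the cylinders $\R \times \supp f$ and $\R \times \supp g$, and the product-metric estimate
\[
d_M\bigl(\R \times \supp f,\ \R \times \supp g\bigr) \geq d_N(\supp f, \supp g) > P
\]
forces $\rho_M(p_N^* f)\, T\, \rho_M(p_N^* g) = 0$ by Remark \ref{rem: FP and SN properties extend to bounded Borel functions}. Since $\tilde\rho_N(\cdot) = \rho_M(p_N^*(\cdot))$, this is the required vanishing.

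For \ref{item: prop algebras 2} and \ref{item: prop algebras 3}, the key device is a smooth cutoff $\psi \in C_c(\R)$ with $\psi \equiv 1$ on $[-R-1, R+1]$, where $R > 0$ is chosen so that $T$ is supported within distance $R$ of $N$ in the $\SN_N(M;S)$ sense. Since $(1 - p_\R^*\psi)$ vanishes on $[-R-1, R+1]\times N$, its support is at distance $> R$ from $N$, so $\rho_M(p_\R^*\psi)\,T = T = T\,\rho_M(p_\R^*\psi)$ (again by Remark \ref{rem: FP and SN properties extend to bounded Borel functions}). Now for any $f \in C_0(N)$, the product $p_N^* f \cdot p_\R^* \psi$ lies in $C_0(M)$, because $\psi$ is compactly supported in $\R$ and $f \in C_0(N)$. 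Hence for \ref{item: prop algebras 2},
\[
\tilde\rho_N(f)\,T = \rho_M(p_N^* f)\rho_M(p_\R^*\psi)\,T = \rho_M\bigl(p_N^* f \cdot p_\R^* \psi\bigr)\,T
\]
is compact because $T$ is locally compact, and the same computation from the right shows that $T\,\tilde\rho_N(f)$ is compact. For \ref{item: prop algebras 3}, the identical trick on both sides gives
\[
[T,\tilde\rho_N(f)] = [T,\rho_M(p_N^* f \cdot p_\R^* \psi)],
\]
which is compact because $T$ is pseudolocal and $p_N^* f \cdot p_\R^* \psi \in C_0(M)$.

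The only slightly delicate point is justifying that compactly-supported-in-$\R$ Borel cutoffs can be used to replace $T$ by $\rho_M(p_\R^*\psi)T$, i.e.\ that $\SN_N$ extends to bounded Borel multipliers; this is exactly Remark \ref{rem: FP and SN properties extend to bounded Borel functions}, so no new work is needed. I do not anticipate any substantial obstacle in this lemma: the content is the two-line computation that produces a $C_0(M)$-function from a $C_0(N)$-function by localizing in the $\R$-direction, which is only possible because of the $\SN_N$ assumption in \ref{item: prop algebras 2}--\ref{item: prop algebras 3}.
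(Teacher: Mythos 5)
Your proof is correct and is essentially the same argument as the paper's. For part \ref{item: prop algebras 1}, the paper just asserts it ``follows from the definitions'' while you unfold the product-metric estimate $d_M(\R\times A, \R\times B)\geq d_N(A,B)$; for parts \ref{item: prop algebras 2}--\ref{item: prop algebras 3}, the paper chooses exactly the same cutoff $\psi$ (as a partition-of-unity member), decomposing $p_N^*f'$ into a $C_0(M)$ piece and a far-away piece that annihilates $T$, whereas you first absorb $\rho_M(p_\R^*\psi)$ into $T$ and then observe $p_N^*f\cdot p_\R^*\psi\in C_0(M)$---the two phrasings are dual to each other and amount to the identical computation.
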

\begin{proof}
	\begin{enumerate}
		\item This follows from the definitions.
		\item Let $T \in \LC(M;S)\cap \SN_N(M;S)$. Then there exists an $R >0$ such that for any $f\in B^{\infty}(M)$ with $d_M(\supp(f), \{0\}\times N)> R$, the operators $\rho_M(f)T$ and $T\rho_M(f)$ equal 0. Choose a partition of unity $\{\psi, 1-\psi\}$ subordinate to the open cover 
		\[
		\{(-R-2, R+2), \R\setminus[-R-1, R+1]\}
		\] of $\R$. For any $f'\in C_0(N)$, it follows that
		\[
		p_N^*f' = (p_{\R}^*\psi)(p_N^*f') + (p_{\R}^*(1-\psi))(p_N^*f').
		\]
		Note that the first term is an element of $C_0(M)$ and the second is supported within $\supp(p_{\R}^*(1-\psi)) = (\R\setminus [-R-1, R+1])\times N$. Therefore,
		\[
		\tilde{\rho}_N(f')T =\rho_M((p_{\R}^*\psi)(p_N^*f') + (p_{\R}^*(1-\psi))(p_N^*f'))T = \rho_M((p_{\R}^*\psi)(p_N^*f'))T, 
		\]
		which is compact as $T \in \LC(M;S)$. Similarly it follows that $T\tilde{\rho}_N(f')$ is compact and therefore that $T \in \LC(N;S)$.	
		\item Let $T \in \PL(M;S)\cap\SN_N(M;S)$ and let $R$ and $\psi$ be as in the proof of part \ref{item: prop algebras 2}. For all $f'\in C_0(N)$, it then follows that 
		\begin{align*}
			[\tilde{\rho}_N(f'), T]&= [\rho_M((p_{\R}^*\psi)(p_N^*f'))+\rho_M((p_{\R}^*(1-\psi))(p_N^*f')), T]\\
			&=[\rho_M((p_{\R}^*\psi)(p_N^*f')), T]+[\rho_M((p_{\R}^*(1-\psi))(p_N^*f')), T] \\
			&=[\rho_M((p_{\R}^*\psi)(p_N^*f')), T]+0,
		\end{align*}
		which is compact as $T \in \PL(M;S)$ and $\rho_M((p_{\R}^*\psi)(p_N^*f') \in C_0(M)$. Therefore $T \in \PL(N;S)$.\qedhere
	\end{enumerate}
\end{proof}

\begin{corollary}
	\label{cor: localization algebra inside Roe algebra}
	$C^*(N\subseteq M; S)$ is a $C^*$-subalgebra of $C^*(N; S)$.
%
\end{corollary}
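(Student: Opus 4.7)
My plan is to prove this inclusion as an essentially immediate consequence of the preceding Lemma \ref{lem: initial properties of the algebras}. Both algebras are by definition norm-closures of certain $*$-subalgebras of $\cB(L^2(M;S))$, so it is enough to show that the dense $*$-subalgebra defining $C^*(N \subseteq M; S)$ is contained in the dense $*$-subalgebra defining $C^*(N;S)$; the inclusion of $C^*$-algebras then follows by taking norm-closures inside $\cB(L^2(M;S))$.

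Concretely, I would pick an arbitrary
\[
T \in \LC(M;S) \cap \FP(M;S) \cap \SN_N(M;S),
\]
the $*$-subalgebra whose closure defines $C^*(N \subseteq M; S)$. Parts \ref{item: prop algebras 1} and \ref{item: prop algebras 2} of Lemma \ref{lem: initial properties of the algebras} give, respectively, $T \in \FP(N;S)$ and $T \in \LC(N;S)$, so
\[
T \in \LC(N;S) \cap \FP(N;S),
\]
which is the dense $*$-subalgebra of $C^*(N;S)$. Taking norm-closures yields $C^*(N\subseteq M; S) \subseteq C^*(N;S)$, and since both are $C^*$-subalgebras of $\cB(L^2(M;S))$, the inclusion realizes $C^*(N\subseteq M;S)$ as a $C^*$-subalgebra of $C^*(N;S)$.

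There is no genuine obstacle here; the only subtlety worth flagging is that the two algebras are defined with respect to different representations ($\rho_M$ versus $\tilde\rho_N$) on the same Hilbert space $L^2(M;S)$, which is exactly why part \ref{item: prop algebras 2} of the lemma is needed: the support-near-$N$ condition with respect to $\rho_M$ is what allows one to pass from local compactness in the $M$-sense to local compactness in the $N$-sense. Once the lemma is in hand, the argument is a one-line chain of set-theoretic inclusions followed by taking closures.
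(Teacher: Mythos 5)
Your proof is correct and matches the paper's own argument essentially verbatim: both apply parts \ref{item: prop algebras 1} and \ref{item: prop algebras 2} of Lemma \ref{lem: initial properties of the algebras} to obtain the inclusion $\FP(M;S)\cap\LC(M;S)\cap\SN_N(M;S) \subseteq \FP(N;S)\cap \LC(N;S)$ of dense $*$-subalgebras and then pass to norm-closures. Your closing remark about the two different representations $\rho_M$ and $\tilde\rho_N$ on the same Hilbert space correctly identifies why part \ref{item: prop algebras 2} is the nontrivial ingredient.
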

\begin{proof}
	By combining parts \ref{item: prop algebras 1} and \ref{item: prop algebras 2} of Lemma \ref{lem: initial properties of the algebras}, we see that 
	\[
	\FP(M;S)\cap\LC(M;S)\cap\SN_N(M;S) \subseteq \FP(N;S)\cap \LC(N;S).
	\]
	So we conclude by taking closures.
\end{proof}

\begin{lemma}
	\label{lem: properties of algebras using C_0(R) functions}
	Let $g \in C_0(\R)$ and $T \in D^*(M;S)$. Then the following hold:
	\begin{enumerate}
		\item \label{item: prop D^* 1} $\rho_M(p_{\R}^*g)T, T\rho_M(p_{\R}^*g) \in D^*(N;S)$.
		\item \label{item: prop D^* 2} $[\rho_M(p_{\R}^*g), T] \in C^*(N\subseteq M;S)$.
	\end{enumerate}
\end{lemma}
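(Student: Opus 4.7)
The plan is to prove both statements on the dense subalgebras $\PL(M;S)\cap \FP(M;S) \subseteq D^*(M;S)$ and $C_c(\R) \subseteq C_0(\R)$ first, and then extend to the general case by continuity. So fix $T \in \PL(M;S) \cap \FP(M;S)$ with finite propagation $P$, and pick an approximating sequence $g_n \in C_c(\R)$ with $\|g_n - g\|_\infty \to 0$ and $\supp(g_n) \subseteq [-A_n, A_n]$. The multiplication operator $\rho_M(p_\R^* g_n)$ has propagation zero, is pseudolocal (multiplication operators commute), and satisfies $\supp(p_\R^* g_n) \subseteq [-A_n, A_n] \times N \subseteq B_{A_n}(N)$, so it belongs to $\SN_N(M;S) \cap \PL(M;S) \cap \FP(M;S)$. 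Since $\SN_N \cap \FP$ is stable under one-sided multiplication by elements of $\FP$ (using Remark \ref{rem: FP and SN properties extend to bounded Borel functions} to cut off supports by a Borel indicator function), both products $\rho_M(p_\R^* g_n) T$ and $T \rho_M(p_\R^* g_n)$ lie in $\SN_N(M;S) \cap \PL(M;S) \cap \FP(M;S)$.

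For part \ref{item: prop D^* 1}, the two inclusions of Lemma \ref{lem: initial properties of the algebras} immediately imply that these products sit inside $\PL(N;S) \cap \FP(N;S)$, and hence in $D^*(N;S)$. Passing to the norm limit in $n$, and then over an approximating net $T_k \to T$ from $\PL(M;S) \cap \FP(M;S)$, concludes that $\rho_M(p_\R^* g)T$ and $T\rho_M(p_\R^* g)$ lie in $D^*(N;S)$ for arbitrary $T \in D^*(M;S)$ and $g \in C_0(\R)$.

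For part \ref{item: prop D^* 2}, the additional step is to show that the commutator $[\rho_M(p_\R^* g_n), T]$ is locally compact. The key identity is that for any $f \in C_0(M)$,
\begin{equation*}
\rho_M(f)\bigl[T, \rho_M(p_\R^* g_n)\bigr] = \bigl[\rho_M(f), T\bigr]\rho_M(p_\R^* g_n) + \bigl[T, \rho_M(f \cdot p_\R^* g_n)\bigr],
\end{equation*}
and both terms on the right are compact: the first because $T$ is pseudolocal and $f \in C_0(M)$; the second because $f \cdot p_\R^* g_n \in C_0(M)$ (as $f \in C_0(M)$) and again $T$ is pseudolocal. A symmetric rearrangement handles $[T, \rho_M(p_\R^* g_n)] \rho_M(f)$. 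Combined with the $\SN_N \cap \FP$ membership already established, this places the commutator in $\LC(M;S) \cap \FP(M;S) \cap \SN_N(M;S)$, whose closure is exactly $C^*(N \subseteq M; S)$. Letting $n\to\infty$ and approximating $T$ then gives $[\rho_M(p_\R^* g), T] \in C^*(N \subseteq M; S)$.

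The main obstacle is essentially bookkeeping: the function $p_\R^* g$ is not in $C_0(M)$ (it is constant in the $N$ direction), so one cannot invoke pseudolocality of $T$ directly against $\rho_M(p_\R^* g)$. The reason the argument nevertheless works is that $g \in C_0(\R)$ forces the multiplier $p_\R^* g$ to be supported near $N$ in a norm-approximate sense, which is precisely what Roe algebra localization near $N$ is designed to capture; cutting off $g$ to $g_n \in C_c(\R)$ makes this concrete and the ideal structure of $\SN_N \cap \FP$ then does all the work.
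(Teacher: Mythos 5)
Your proof is correct and takes essentially the same route as the paper: reduce to $g \in C_c(\R)$ and $T \in \FP(M;S)\cap\PL(M;S)$, place $\rho_M(p_\R^*g)$ in $\FP\cap\PL\cap\SN_N$, push through Lemma \ref{lem: initial properties of the algebras} for part \ref{item: prop D^* 1}, and for part \ref{item: prop D^* 2} establish local compactness by pairing pseudolocality of $T$ against $C_0(M)$ functions obtained by multiplying $f$ with the multiplier $p_\R^*g$. The only cosmetic difference is the specific commutator identity used to organize the local compactness check — the paper moves $T$ past $\rho_M((p_\R^*g)f)$ directly, you use a Leibniz-type splitting — but both amount to two applications of pseudolocality and are equivalent.
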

\begin{proof}
	\begin{enumerate}
		\item Let $T \in \FP(M;S)\cap\PL(M;S)$ and let $g \in C_c(\R)$. Then \[\rho_M(p_{\R}^*g) \in \FP(M;S)\cap\PL(M;S)\cap\SN_N(M;S).\] 
		By Proposition \ref{prop: property induced sets are *algebras}, it follows that
		\[
		\rho_M(p_{\R}^*g)T, T\rho_M(p_{\R}^*g) \in \FP(M;S)\cap\PL(M;S)\cap\SN_N(M;S),
		\] 
		and by combining parts \ref{item: prop algebras 1} and \ref{item: prop algebras 3} from Lemma \ref{lem: initial properties of the algebras}, we see that 
		\[
		\rho_M(p_{\R}^*g)T, T\rho_M(p_{\R}^*g) \in \FP(N;S)\cap\PL(N;S) \subseteq D^*(N;S).
		\] 
		The result for general $T$ and $g$ follows by an approximation argument. 
		\item Let $T \in \FP(M;S)\cap\PL(M;S)$ and let $g \in C_c(\R)$. By the proof of part \ref{item: prop D^* 1} it immediately follows that
		\[
		[\rho_M(p_{\R}^*g), T] \in \FP(M;S)\cap\SN_N(M;S).
		\]
		Therefore only local compactness must be proved. For this, write $X\sim Y$ if $X$ and $Y$ differ by a compact operator. Let $f \in C_0(M)$ be arbitrary. Recall that $p_\R^*g \in C_b(M)$, which is the multiplier algebra of $C_0(M)$. Therefore, it follows that
		\begin{align*}
			[\rho_M(p_{\R}^*g), T]\rho_M(f) &= (\rho_M(p_{\R}^*g)T-T\rho_M(p_{\R}^*g))\rho_M(f) \\
			&= \rho_M(p_{\R}^*g)T\rho_M(f)-T\rho_M((p_{\R}^*g)f) \\
			&\sim \rho_M(p_{\R}^*g)T\rho_M(f)-\rho_M((p_{\R}^*g)f)T\\
			&= \rho_M(p_{\R}^*g)[T, \rho_M(f)] \sim 0,
		\end{align*}
		and similarly $\rho_M(f)[\rho_M(p_{\R}^*g), T]$ is compact. So indeed
		\[
		[\rho_M(p_{\R}^*g), T] \in \FP(M;S)\cap\LC(M;S)\cap\SN_N(M;S) \subseteq C^*(N\subseteq M;S).
		\]
		The result for general $T$ and $g$ follows by an approximation argument.\qedhere
	\end{enumerate}
\end{proof}

Also, consider the following variation of part \ref{item: prop D^* 2} of Lemma \ref{lem: properties of algebras using C_0(R) functions}. 

\begin{lemma}
	\label{lem: commutator of D* with locally constant in localized algebra}
	Let $g \in C(\R)$ such that there exists an $R>0$ such that $g$ is locally constant on $\R\setminus (-R,R)$ and let $T \in D^*(M;S)$. Then $[\rho_M(p_{\R}^*g), T] \in C^*(N\subseteq M;S)$.
\end{lemma}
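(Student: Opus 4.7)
My plan is to verify directly that $C := [\rho_M(p_\R^* g), T]$ lies in each of the three defining subalgebras whose intersection closes to $C^*(N\subseteq M;S)$. By density of $\FP(M;S)\cap\PL(M;S)$ in $D^*(M;S)$ and norm-continuity of $T \mapsto C$, it suffices to treat $T \in \FP(M;S)\cap\PL(M;S)$ of some finite propagation $P$, since $C^*(N\subseteq M;S)$ is norm-closed. The continuity of $g$ together with local constancy on $\R\setminus(-R,R)$ forces the existence of constants $c_\pm$ with $g \equiv c_+$ on $[R,\infty)$ and $g \equiv c_-$ on $(-\infty,-R]$, and this is the feature that will drive the entire proof.

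Membership $C \in \FP(M;S)$ is immediate from Proposition~\ref{prop: property induced sets are *algebras}, since $\rho_M(p_\R^* g)$ has zero propagation. For $C \in \LC(M;S)$, given $f \in C_0(M)$, I would rearrange
\[
C\rho_M(f) = \rho_M(p_\R^* g)[T,\rho_M(f)] - [T,\rho_M((p_\R^* g)f)],
\]
observing that $(p_\R^* g)f \in C_0(M)$ because $p_\R^* g \in C_b(M)$; both commutators are then compact by pseudolocality of $T$, and a symmetric argument handles $\rho_M(f)C$.

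The main obstacle is the $\SN_N$ condition, where the geometry of the product structure and the finite propagation of $T$ must be combined. For $f \in C_0(M)$ with $d_M(\supp f, N) > R+P$, I would use that the product metric on $M=\R\times N$ gives $B_{R+P}(N) = (-(R+P),R+P)\times N$, so $f$ splits as $f = f_+ + f_-$ via the bounded Borel multipliers $\chi_{\{x>0\}}$ and $\chi_{\{x<0\}}$, with $\supp f_+ \subseteq [R+P,\infty)\times N$ and $\supp f_- \subseteq (-\infty,-(R+P)]\times N$. The key observation is twofold: first, on $\supp f_\pm$ the function $p_\R^* g$ equals the constant $c_\pm$, so $\rho_M((p_\R^* g)f) = c_+\rho_M(f_+) + c_-\rho_M(f_-)$; second, because $T$ has propagation $\leq P$, the range of $T\rho_M(f_+)$ consists of sections supported in $[R,\infty)\times N$ and that of $T\rho_M(f_-)$ in $(-\infty,-R]\times N$, where $g$ is again the corresponding constant, so $\rho_M(p_\R^* g)T\rho_M(f_\pm) = c_\pm T\rho_M(f_\pm)$. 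The two contributions cancel and $C\rho_M(f)=0$; a symmetric computation yields $\rho_M(f)C=0$. Thus $C \in \SN_N(M;S)$, and combining with the first two memberships gives $C \in C^*(N\subseteq M;S)$; the density reduction then extends the conclusion to all $T \in D^*(M;S)$.
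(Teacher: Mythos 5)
Your proof is correct, and it follows the same strategy as the paper: reduce to $T$ of finite propagation $P$, establish $\FP$ and $\LC$ by standard manipulations (the $\LC$ step is essentially the paper's own reduction to Lemma~\ref{lem: properties of algebras using C_0(R) functions}), and then exploit both the propagation bound on $T$ and the fact that $g$ is constant on each half-line $\pm[R,\infty)$ to kill the commutator far from $N$. The only organizational difference in the $\SN_N$ step is that the paper sandwiches the commutator between operators $\hat\psi_0, \hat\psi_\pm$ arising from a smooth partition of unity on $\R$, whereas you split the test function $f$ into two Borel pieces $f_\pm$ and then track the support of $T\rho_M(f_\pm)$ directly using the explicit form of the product metric (justified by Remark~\ref{rem: FP and SN properties extend to bounded Borel functions}). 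Both variants rest on exactly the same geometric facts, and your decomposition of the test function is, if anything, a slightly more direct way to see the cancellation.
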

\begin{proof}
	Let $T \in \FP(M;S)\cap\PL(M;S)$. It follows trivially that $[\rho_M(p_{\R}^*g), T] \in \FP(M;S)$, being the commutator of two operators with finite propagation. Also the exact same argument as in part \ref{item: prop D^* 2} of Lemma \ref{lem: properties of algebras using C_0(R) functions} shows that $[\rho_M(p_{\R}^*g), T] \in \LC(M;S)$. It is left to show that $[\rho_M(p_{\R}^*g), T]$ is supported near $N$.
	
	Choose $P>0$ such that for any $f_1, f_2 \in B^{\infty}(M)$ with $d_M(\supp(f_1), \supp(f_2))>P$, the operator $\rho_M(f_1)T\rho_M(T_2)$ equals 0. Choose a partition of unity $\{\psi_0, \psi_+, \psi_-\}$ subordinate to the open cover 
	\[
	\{(-R-2, R+2), (R+1, \infty), (-\infty, -R-1)\}
	\]
	of $\R$. For the sake of readability, the operators $\rho_M(p_{\R}^*\psi_*)$ will be written as $\hat{\psi}_*$ for $* \in \{0, +, -\}$. It is clear that $\hat{\psi}_0+\hat{\psi}_-+\hat{\psi}_+ = 1$. Let $f \in C_0(M)$ be any function such that $d_M(\supp(f), N)>P+R+2$. It follows that $fp_{\R}^*\psi_0 = 0$ and that $d_M(\supp(fp_{\R}^*\psi_{\pm}),\supp(1-p_{\R}^*\psi_{\pm}))>P$. Therefore,
	\begin{align*}
		[\rho_M(p_{\R}^*g), T]\rho_M(f)&= (\hat{\psi}_0+\hat{\psi}_-+\hat{\psi}_+)[\rho_M(p_{\R}^*g), T](\hat{\psi}_0+\hat{\psi}_-+\hat{\psi}_+)\rho_M(f)\\
		&= \hat{\psi}_-[\rho_M(p_{\R}^*g), T]\hat{\psi}_-\rho_M(f) +\hat{\psi}_+[\rho_M(p_{\R}^*g), T]\hat{\psi}_+\rho_M(f).
	\end{align*}
	Since $g$ is locally constant on $\R\setminus (-R,R)$, there exist constants $c_{\pm}$ such that $\restr{g}{(-\infty, -R]} = c_-$ and $\restr{g}{[R, \infty)} = c_+$. It follows that $g\psi_{\pm} = c_{\pm}\psi_{\pm}$ and therefore
	\begin{align*}
		\hat{\psi}_{\pm}[\rho_M(p_{\R}^*g), T]\hat{\psi}_{\pm}\rho_M(f) &= \hat{\psi}_{\pm}(\rho_M(p_{\R}^*g)T-T\rho_M(p_{\R}^*g))\hat{\psi}_{\pm}\rho_M(f)\\
		&= \hat{\psi}_{\pm}\rho_M(p_{\R}^*g)T\hat{\psi}_{\pm}\rho_M(f)-\hat{\psi}_{\pm}T\rho_M(p_{\R}^*g)\hat{\psi}_{\pm}\rho_M(f) \\
		&=c_{\pm}(\hat{\psi}_{\pm}T\hat{\psi}_{\pm}\rho_M(f)-\hat{\psi}_{\pm}T\hat{\psi}_{\pm}\rho_M(f)) = 0,
	\end{align*}
	from which follows that $[\rho_M(p_{\R}^*g), T]\rho_M(f) = 0$. Similarly (or by considering adjoints) it follows that $\rho_M(f)[\rho_M(p_{\R}^*g), T] = 0$ and therefore $[\rho_M(p_{\R}^*g), T] \in \SN_N(M;S)$. We conclude that $[\rho_M(p_{\R}^*g), T] \in C^*(N\subseteq M;S)$. For general $T$ the result follows by an approximation argument. 
\end{proof}

\subsection{An alternative representation of the class \texorpdfstring{$[D_N]$}{TEX}}
\label{sect: alternative representation of D_N class}

\begin{lemma}
	\label{lem: dirac class is positive off diagonal class}
	Consider $\R$ with the Hermitian bundle $T^2_{\R}$ equipped with the canonical grading $\gamma_{\R}$ induced by viewing $T^2_{\R}$ as $T^1_{\R}\oplus T^1_{\R}$. 
	Then \[
	D_- = \begin{pmatrix}
		0&- i\frac{d}{dx}\\
		- i\frac{d}{dx}&0
	\end{pmatrix}
	\] is a Dirac operator on $T^2_{\R}$, which is $1$-multigraded by the operator 
	\[
	\varepsilon_{-} = \begin{pmatrix}
		0&- i\\
		- i&0
	\end{pmatrix}.
	\] Moreover, $[D_{-}] = [D_{\R}] \in K_1(\R)$, where $D_{\R}$ denotes the Dirac operator from Example \ref{example: Dirac operator induced Dirac class}.
\end{lemma}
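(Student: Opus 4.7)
First I would check that $D_-$ is the Dirac operator of a Dirac bundle structure on $T^2_{\R}$. Endow $T^2_{\R}$ with the grading $\gamma_{\R} = \diag(1,-1)$ coming from the splitting $T^2_{\R} = T^1_{\R} \oplus T^1_{\R}$, and define Clifford multiplication by
\[
c_-(\xi) = \xi \begin{pmatrix} 0 & -i \\ -i & 0 \end{pmatrix}.
\]
A direct matrix computation shows that $c_-(\xi)$ is skew-adjoint, anticommutes with $\gamma_{\R}$, and satisfies $c_-(\xi)^2 = -\xi^2 \id$. With the flat connection $\nabla = d$ (which is trivially a Dirac connection, since the Christoffel symbols vanish), the composition $c_- \circ \nabla$ equals $D_-$. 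An analogous matrix check verifies that $\varepsilon_-$ is unitary, squares to $-1$, anticommutes with $\gamma_{\R}$, and commutes with $D_-$, so that $\varepsilon_-$ is a valid $1$-multigrading for $D_-$.

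The heart of the lemma is the equality $[D_-] = [D_{\R}]$. The plan is to exhibit an explicit even unitary intertwiner between the $1$-multigraded Fredholm modules representing these two classes. The right candidate is the constant diagonal unitary $U = \diag(1, -i)$ on $L^2(\R; T^2_{\R}) \cong L^2(\R) \oplus L^2(\R)$. Short matrix computations give
\[
U D_{\R} U^* = D_-, \qquad U \varepsilon_1 U^* = \varepsilon_-, \qquad U \gamma_{\R} U^* = \gamma_{\R}.
\]
Since $U$ is a scalar-diagonal multiplication operator, it also commutes with the representation $\rho$ of $C_0(\R)$ by multiplication operators. Borel functional calculus then yields $U \chi(D_{\R}) U^* = \chi(D_-)$ for any normalizing function $\chi$, so $U$ realizes a unitary equivalence of the $1$-multigraded Fredholm modules $(L^2(\R;T^2_\R), \rho, \chi(D_\R), \gamma_\R, \varepsilon_1)$ and $(L^2(\R;T^2_\R), \rho, \chi(D_-), \gamma_\R, \varepsilon_-)$, hence $[D_-] = [D_{\R}]$ in $K_1(\R)$.

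The only nonmechanical step is finding $U$. The guiding observation is that conjugation by $\diag(u_1, u_2)$ sends an off-diagonal matrix $\bigl(\begin{smallmatrix} 0 & a \\ b & 0 \end{smallmatrix}\bigr)$ to $\bigl(\begin{smallmatrix} 0 & u_1 \bar u_2 a \\ u_2 \bar u_1 b & 0 \end{smallmatrix}\bigr)$; the two requirements of sending $\varepsilon_1$ to $\varepsilon_-$ and $D_{\R}$ to $D_-$ both collapse to the single condition $u_1 \bar u_2 = i$, solved by $U = \diag(1, -i)$. Everything else reduces to routine $2 \times 2$ bookkeeping.
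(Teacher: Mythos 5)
Your proof is correct and essentially identical to the paper's: same Clifford action, same flat connection (the direct sum of two copies of $d$), same verification of the multigrading relations, and the same strategy of exhibiting an explicit even intertwining unitary. The only cosmetic difference is that your $U = \operatorname{diag}(1,-i)$ is the adjoint of the paper's $U = \operatorname{diag}(1,i)$, which just corresponds to conjugating in the opposite direction; both establish the same unitary equivalence of $1$-multigraded Fredholm modules.
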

\begin{proof}
	$D_{-}$ is the Dirac operator associated to the Clifford action $c_{-}\colon T\R\rightarrow \End(T^2_{\R})$ defined by $\frac{d}{dx}\mapsto \begin{pmatrix}
		0&- i\\
		- i&0
	\end{pmatrix}$ 
	and the Dirac connection on $T^2_{\R}$ which is defined as the direct sum of two copies of the connection on $T^1_{\R}$ from Example \ref{example: Dirac operator on R}. It is trivial to show that $\{D_{-}, \gamma_{\R}\} = 0$ and that $[D_{-}, \varepsilon_{-}] = 0$, so that $D_{-}$ is a $1$-multigraded operator. It follows easily that $[D_{-}] = [D_{\R}]$ as these Fredholm modules are unitarily equivalent via the unitary map $U \in \cB(L^2(\R, T^2_{\R}))$ given by the matrix
	$\begin{pmatrix}
		1&0\\
		0&i
	\end{pmatrix}$.	
%
%
\end{proof}

\begin{proposition}
	\label{prop: alternative class of D_N}
	Let $\alpha\colon \R\rightarrow [-1,1]$ be the bounded continuous function
	\begin{equation*}
		\alpha(x) = \begin{cases}
			-1 &\text{ if $x \leq -1$},\\
			x&\text{ if $x \in [-1, 1]$},\\
			1 &\text{ if $x \geq 1$},
		\end{cases}
	\end{equation*} and denote the operator on $L^2(M;S)$ given by  $\rho_M(p_{\R}^*\alpha)$ by $\hat{\alpha}$. Write 
	\begin{equation}
		\label{eq: T_pm}
		T_{\pm} := \pm i\hat{\alpha} + (1-\hat{\alpha}^2)^{\frac{1}{2}}\chi(D),
	\end{equation}
	for some normalizing function $\chi$. Then
	\begin{equation}
	[D_N]_0 =
		-\left[L^2(M;S)^{\oplus 2},\tilde{\rho}_N^{\oplus 2}, \begin{pmatrix}
			0&T_-\\
			T_+&0
		\end{pmatrix} \right] \quad  \in K_0(N).
	\end{equation}	
\end{proposition}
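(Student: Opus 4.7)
The plan is to realize the right-hand side as the suspension of a $1$-multigraded Fredholm module over $C_0(-1,1)\otimes C_0(N)$, and then to compute that suspension using $s(d) = -1$ (Lemma \ref{lem: suspension maps Dirac class to -1}) together with multiplicativity of the suspension with respect to the external product.

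Concretely, I would equip $H_0 := L^2(M;S)^{\oplus 2}$ with the grading $\gamma = \begin{pmatrix}1&0\\0&-1\end{pmatrix}$ and the $1$-multigrading $\varepsilon_1 = \begin{pmatrix}0&-i\\-i&0\end{pmatrix}$, so that $\gamma\varepsilon_1 = \begin{pmatrix}0&-i\\i&0\end{pmatrix}$. Represent $C_0(-1,1)\otimes C_0(N)$ on $H_0$ by
\[
\rho(f\otimes g) = \rho_M(p_{\R}^*f\cdot p_N^*g)\otimes I_{\C^2}
\]
(extending $f$ by zero outside $(-1,1)$), and set $F = \begin{pmatrix}0 & \chi(D) \\ \chi(D) & 0\end{pmatrix}$ for $\chi$ a normalizing function. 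The triple $(H_0,\rho,F)$ is then a $1$-multigraded Fredholm module: self-adjointness, oddness for $\gamma$, and commutation with $\varepsilon_1$ are immediate; $\rho(a)(F^2-I)$ is compact because $\chi^2-1\in C_0(\R)$ yields $\chi(D)^2-I\in C^*(M;S)$ by Theorem \ref{thm: func calc thm for C^*M}; and $[F,\rho(a)]$ is compact by pseudolocality of $\chi(D)\in D^*(M;S)$ from Theorem \ref{thm: func calc thm for D^*M}.

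Next I would apply the suspension construction of Definition \ref{def: suspension}. Although $\rho$ is degenerate on the part of $H_0$ lying over $\R\setminus(-1,1)$, the choice $X_0 := \hat{\alpha}$ is forced since $\alpha$ extends the identity function on $(-1,1)$ continuously to a bounded function on $\R$ compatible with the ambient $C_b(M)$-multiplication on $L^2(M;S)$. Then $X = \gamma\varepsilon_1 X_0 = \begin{pmatrix}0&-i\hat{\alpha}\\i\hat{\alpha}&0\end{pmatrix}$, and since $X^2 = I\otimes\hat{\alpha}^2$, a direct matrix computation yields
\[
V := X + (1-X^2)^{1/2}F = \begin{pmatrix} 0 & T_- \\ T_+ & 0 \end{pmatrix},
\]
while the suspended representation $\rho_A(a) = \rho'(1\otimes a) = \tilde{\rho}_N(a)\otimes I_{\C^2}$ coincides with $\tilde{\rho}_N^{\oplus 2}(a)$. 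Hence $s[(H_0,\rho,F)]$ is precisely the class appearing on the right of the proposition.

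It then remains to identify $[(H_0,\rho,F)] = d\times[D_N]$ in $K^{-1}(C_0(-1,1)\otimes C_0(N))$. Once this is established, multiplicativity of $s$ together with $s(d) = -1$ gives $s[(H_0,\rho,F)] = s(d)\cdot[D_N] = -[D_N]$, which rearranges to the desired identity. For the identification, I would use the tensor decomposition $L^2(M;S)\cong L^2(\R)\otimes L^2(N;S_N)$ together with $D = 1\otimes D_N - i\frac{d}{dx}\otimes\gamma_N$ to build a multigraded operator homotopy from $F$ to a standard external-product representative of $d\times[D_N]$, in the spirit of Lemma \ref{lem: dirac class is positive off diagonal class}; the summand on $L^2((\R\setminus(-1,1))\times N;S)^{\oplus 2}$ is degenerate over $C_0(-1,1)\otimes C_0(N)$ and contributes trivially to K-homology. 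The main obstacle is this operator homotopy: because $\chi(D)$ does not split into commuting $\R$- and $N$-contributions (the Clifford multiplication by $\gamma_N$ couples the two directions), maintaining the multigraded Fredholm module conditions throughout the homotopy will require a careful rotation argument.
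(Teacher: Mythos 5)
Your overall strategy coincides with the paper's: realize the right-hand side as a suspension, reduce to $s(d) = -1$ via multiplicativity of the suspension with the external product, and identify the pre-suspension class as $d\times[D_N]$. However, there are two genuine gaps.

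First, the suspension step is not legitimate as written. Definition~\ref{def: suspension} requires a \emph{nondegenerate} Fredholm module, and your module $(H_0,\rho,F)$ is degenerate (the representation vanishes on the part of $L^2(M;S)$ over $|x|\geq 1$). Declaring that ``$X_0 := \hat{\alpha}$ is forced'' does not fix this; the suspension operator $X_0 = \rho'(\id\otimes 1)$ is only defined once one has passed to a nondegenerate representative, and then $X_0 = z$ acts only on the restricted Hilbert space $L$. To recover the advertised module on all of $L^2(M;S)^{\oplus 2}$ with $T_{\pm} = \pm i\hat\alpha + (1-\hat\alpha^2)^{1/2}\chi(D)$, you must show that the difference between your proposed module and the actually-suspended nondegenerate module decomposes into a degenerate summand plus a locally compact perturbation. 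This is exactly what the paper's Lemma~\ref{lem: alternative class 2} does, via the off-diagonal estimates $(1-P)T_{\pm}P = 0$ and $PT_{\pm}(1-P) = P[(1-\hat\alpha^2)^{1/2},\chi(D)](1-P) \in C^*(N;S)$.

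Second, and more seriously, the identification $[(H_0,\rho,F)] = d\times[D_N]$ does not need an operator homotopy, and the ``rotation argument'' you flag as the main obstacle is a red herring. The operator $D = 1\otimes D_N - i\frac{d}{dx}\otimes\gamma_N$ is already, up to an explicit unitary reshuffling of tensor factors, the external product Dirac operator $D_N\times D_-$ (where $D_-$ is as in Lemma~\ref{lem: dirac class is positive off diagonal class}). The paper's Lemma~\ref{lem: alternative class 1} constructs this unitary explicitly by splitting $L^2(N;S_N)$ into $\gamma_N$-eigenspaces, and then the identification
\[
d\times[D_N] = j^*[D_\R]\times[D_N] = (j\otimes 1)^*\tau^*([D_N]\times[D_-]) = (j\otimes 1)^*\tau^*[D_N\times D_-]
\]
follows from the sharp-product theorem for Dirac operators (\cite[Prop.~10.8.8]{Higson2000Khomology}) and $[D_-] = [D_\R]$, with no homotopy required. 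Your worry that ``$\chi(D)$ does not split into commuting $\R$- and $N$-contributions'' is irrelevant: it is $D$ itself, not $\chi(D)$, that must be recognized as a product Dirac operator, and it is a product Dirac operator by construction. With these two repairs your argument becomes essentially the paper's.
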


The proof of Proposition \ref{prop: alternative class of D_N} will be divided into several lemmas. On the 1-multigraded Hilbert space $L^2(N, S_N)\otimes L^2(\R, T^2_{\R})$ with grading $\gamma = \gamma_N\otimes \gamma_{\R}$ and multigrading $\varepsilon_1 = 1\otimes \varepsilon_-$, consider the operator $D_N\times D_- = D_N\otimes 1 + \gamma_N\otimes D_-$. For $r=1,2$, let  $\rho_{\R}^r\colon C_0(\R)\rightarrow \cB(L^2(\R, T^r_{\R}))$ be the standard representation by multiplication operators on $T^r_{\R}$. 
%
%
Denote by $j\colon C_0(-1, 1)\rightarrow C_0(\R)$ the inclusion of $C^*$-algebras and by $\tau\colon C_0(\R)\otimes C_0(N)\rightarrow C_0(N)\otimes C_0(\R)$ the map that flips the tensor factors. For readability, we also define $\cH:= L^2(\R, T^1_{\R})\otimes L^2(N, S_N)$.

\begin{lemma}
	\label{lem: alternative class 1}
	Let $\chi$ be a normalizing function. Then
	\[
	[D_N\times D_-] = (\tau^{-1})^*\left[\cH^{\oplus 2}, (\rho^1_{\R}\otimes \rho_N)^{\oplus 2}, \begin{pmatrix}
		0&\chi(D)\\
		\chi(D)&0
	\end{pmatrix} \right]
	\]
	as elements of $K^{-1}(C_0(N)\otimes C_0(\R))$.
\end{lemma}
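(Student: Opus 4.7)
My plan rests on recognizing that both sides encode the same Kasparov external product, but packaged differently, and then interpolating between the two packagings.

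First I would identify the right-hand side. The Fredholm module $(\cH^{\oplus 2}, (\rho^1_\R \otimes \rho_N)^{\oplus 2}, \begin{pmatrix} 0 & \chi(D) \\ \chi(D) & 0 \end{pmatrix})$ is nothing other than the standard ``doubling'' of the ungraded Fredholm module $(\cH, \rho^1_\R \otimes \rho_N, \chi(D))$ representing $[D] \in K^{-1}(C_0(\R) \otimes C_0(N)) \cong K_1(M)$. The pullback by $\tau^{-1}$ merely reshuffles the tensor factors so that the class is viewed in $K^{-1}(C_0(N) \otimes C_0(\R))$.

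Next I would transform the left-hand side. Apply the tensor-flip unitary $U: L^2(N;S_N) \otimes L^2(\R, T^2_\R) \to L^2(\R, T^2_\R) \otimes L^2(N;S_N) \cong \cH^{\oplus 2}$, where the second identification uses $L^2(\R, T^2_\R) = L^2(\R, T^1_\R)^{\oplus 2}$. Under $U$ the representation $\rho_N \otimes \rho^2_\R$ becomes $(\rho^1_\R \otimes \rho_N)^{\oplus 2}$ precomposed with $\tau^{-1}$, the grading $\gamma_N \otimes \gamma_\R$ and multigrading $1 \otimes \varepsilon_-$ transform into the corresponding block data on $\cH^{\oplus 2}$, and $D_N \times D_- = D_N \otimes 1 + \gamma_N \otimes D_-$ becomes the block operator
\[
\tilde D = A \otimes I_{\C^2} + B \otimes \sigma_x = \begin{pmatrix} 1 \otimes D_N & -i(d/dx) \otimes \gamma_N \\ -i(d/dx) \otimes \gamma_N & 1 \otimes D_N \end{pmatrix},
\]
where $A = 1 \otimes D_N$, $B = -i(d/dx) \otimes \gamma_N$, so that $A + B = D$. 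The crucial algebraic fact I would extract is that $\{A, B\} = 0$ (thanks to $\{D_N, \gamma_N\} = 0$), whence $\tilde D^2 = (A^2 + B^2) \otimes I = D^2 \otimes I$, matching the square of $\begin{pmatrix} 0 & D \\ D & 0 \end{pmatrix}$.

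The heart of the proof is then to connect $\chi(\tilde D)$ to $\begin{pmatrix} 0 & \chi(D) \\ \chi(D) & 0 \end{pmatrix} = \chi\bigl(\begin{smallmatrix} 0 & D \\ D & 0 \end{smallmatrix}\bigr)$ through a continuous path of $1$-multigraded Fredholm modules. A natural candidate is the family
\[
X_\theta = A \otimes (\cos\theta \cdot I + \sin\theta \cdot \sigma_x) + B \otimes \sigma_x, \qquad \theta \in [0, \tfrac{\pi}{2}],
\]
for which $X_0 = \tilde D$ and $X_{\pi/2} = (A+B) \otimes \sigma_x = D \otimes \sigma_x$. Using $\{A, B\} = 0$ and $\sigma_x^2 = I$, one computes $X_\theta^2 = D^2 \otimes I + \sin(2\theta)\, A^2 \otimes \sigma_x$, which is self-adjoint and non-negative (since $A^2 \leq D^2$), so $1 + X_\theta^2 \geq 1 + D^2$, making $\chi(X_\theta) = X_\theta(1+X_\theta^2)^{-1/2}$ well defined and norm-continuous in $\theta$. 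Functional calculus then yields $[\chi(\tilde D)] = [\chi(X_{\pi/2})] = \bigl[\begin{smallmatrix} 0 & \chi(D) \\ \chi(D) & 0 \end{smallmatrix}\bigr]$.

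The main obstacle I anticipate is verifying that the entire path $\theta \mapsto (\cH^{\oplus 2}, \text{rep}, \chi(X_\theta))$ stays inside the Fredholm modules: oddness with respect to the grading, commutation with the multigrading $-i\sigma_x$, and compactness of commutators with the representation—all uniformly in $\theta$. This requires a careful bookkeeping of how the grading $\gamma_N \otimes \gamma_\R$ transforms under $U$ and matches the natural grading on $\cH^{\oplus 2}$, possibly after an auxiliary unitary $\text{diag}(1, \gamma_\cH)$ to align conventions. A more structural alternative is to invoke Kasparov's external product formula directly: the class $[D_N \times D_-]$ is the Kasparov product $[D_N] \hat\otimes [D_-]$, which by Lemma \ref{lem: dirac class is positive off diagonal class} equals $[D_N] \hat\otimes [D_\R]$, and the standard identity ``external product of Dirac operators equals Dirac of the product'' then completes the identification, modulo the flip $\tau$.
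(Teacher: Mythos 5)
Your strategy of ``tensor flip plus homotopy'' has a genuine gap: the proposed path $X_\theta$ leaves the class of graded Fredholm modules. Writing $\cH^{\oplus 2} = \cH\otimes\C^2$, the grading induced by the naive tensor flip on $\cH^{\oplus 2}$ is $\gamma_N\otimes\sigma_z$ (where I abuse notation and let $\gamma_N$ denote $1\otimes\gamma_N$ on $\cH = L^2(\R;T^1_\R)\otimes L^2(N;S_N)$), whereas the target module on the right-hand side of the lemma carries the grading $1\otimes\sigma_z$. Your operator $A = 1\otimes D_N$ is \emph{odd} for $\gamma_N$ (since $D_N$ is a graded Dirac operator), and $\sigma_x$ is odd for $\sigma_z$; hence the summand $\sin\theta\, A\otimes\sigma_x$ appearing in $X_\theta$ is an \emph{even} operator for $\gamma_N\otimes\sigma_z$. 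So for $\theta\in(0,\pi/2)$ the operator $X_\theta$ has a non-zero even part and cannot serve as the operator of a Fredholm module, regardless of how one handles the representations and compactness issues.

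The auxiliary unitary $\diag(1,\gamma_\cH)$ you offer to ``align conventions'' does not fix this: a direct check shows $\diag(1,\gamma_N)(\gamma_N\otimes\sigma_z)\diag(1,\gamma_N) = \gamma_N\otimes\sigma_z$, i.e.\ it commutes with the grading you are trying to change. The correction you actually need is the block permutation $W = \begin{pmatrix}P_+ & P_-\\ P_- & P_+\end{pmatrix}$, where $P_\pm$ are the spectral projections of $\gamma_N$ acting in the $L^2(N;S_N)$ factor of $\cH$. A computation shows $W(\gamma_N\otimes\sigma_z)W = 1\otimes\sigma_z$ and, using that $A$ is odd and $B = -i\,d/dx\otimes\gamma_N$ is even for $\gamma_N$, that $W\tilde{D}W = \begin{pmatrix}0 & A+B\\ A+B & 0\end{pmatrix} = \begin{pmatrix}0 & D\\ D & 0\end{pmatrix}$. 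So once the grading is corrected properly, the homotopy becomes unnecessary --- and indeed the paper's unitary $U$ is precisely the composition of the tensor flip with this $W$, which is why the paper does not need any interpolation argument: the single unitary $U$ conjugates $D_N\times D_-$, the grading, the multigrading, and the representation into the required form all at once.

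Two further points. The inequality $1+X_\theta^2\geq 1+D^2$ you invoke for norm continuity is false: on the $-1$ eigenspace of $\sigma_x$ one has $X_\theta^2 = D^2 - \sin(2\theta)A^2 \leq D^2$, so a resolvent-continuity argument would be needed instead (not that it matters, once the grading issue removes the homotopy). Your fallback suggestion---invoking Kasparov's external product formula---is closer in spirit to the paper, which does use the equality $[D_N\times D_-]=[D_N]\times[D_-]$ as an input in the \emph{proof of} Proposition \ref{prop: alternative class of D_N}; but that formula alone does not produce the explicit ``positive off-diagonal'' representative $\begin{pmatrix}0 & \chi(D)\\ \chi(D) & 0\end{pmatrix}$ that the rest of the argument relies on, which is exactly the content of this lemma.
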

\begin{proof}
		Consider the unitary map 
	\begin{align*}
		U\colon L^2(N, S_N)\otimes L^2(\R, T^2_{\R}) &\rightarrow \cH\oplus \cH \\
		\sigma \otimes (f_+, f_-)&\mapsto (f_+\otimes \sigma_+ + f_-\otimes \sigma_-, f_+\otimes \sigma_-+f_-\otimes \sigma_+),
	\end{align*}
	where $\sigma_{\pm}$ are the homogeneous components of $\sigma$ such that $\gamma_N(\sigma_{\pm}) = \pm\sigma_{\pm}$. Endow the Hilbert space  $\cH\oplus \cH$ with a 1-multigrading where the grading operator $\delta$ and the multigrading operator $\eta_1$ are defined by
	\begin{equation*}
		\delta = \begin{pmatrix}
			1&0\\
			0&-1
		\end{pmatrix}\quad\text{ and }\quad
		\eta_1 = \begin{pmatrix}
			0&-i\\
			-i&0
		\end{pmatrix}.
	\end{equation*}
	
Easy computations show that
%
	$U$ is a 1-multigraded unitary operator. A very similar computation shows that
	\[
	((\rho^1_{\R}\otimes \rho_N)\circ\tau^{-1})^{\oplus 2} U - U(\rho_N\otimes\rho_{\R}^2) = 0,
	\] 
	so that the representation $((\rho^1_{\R}\otimes \rho_N)\circ\tau^{-1})^{\oplus 2}$ is unitarily equivalent to $\rho_N\otimes\rho_{\R}^2$.
	
	Finally, the Dirac operator $D_N\times D_-$ is unitarily equivalent to the Dirac operator 
	\[
	\begin{pmatrix}
		0&D\\
		D&0
	\end{pmatrix}
	\]
	on $\cH\oplus \cH$ via the unitary $U$. Indeed, for smooth and compactly supported $\sigma$ and $f_{\pm}$, writing out definitions shows that
%
 \[
	\begin{pmatrix}
		0&D\\
		D&0
	\end{pmatrix}U(\sigma\otimes (f_+, f_-)) = U(D_N\times D_-(\sigma\otimes (f_+, f_-))).
	\]
	Therefore
	\begin{align*}
		[D_N\times D_-] &= \left[\cH^{\oplus 2}, ((\rho^1_{\R}\otimes \rho_N)\circ\tau^{-1})^{\oplus 2},\chi \begin{pmatrix}
			0&D\\
			D&0
		\end{pmatrix} \right] \\
		&= (\tau^{-1})^*\left[\cH^{\oplus 2}, (\rho^1_{\R}\otimes \rho_N)^{\oplus 2}, \begin{pmatrix}
			0&\chi(D)\\
			\chi(D)&0
		\end{pmatrix} \right]\in K^{-1}(C_0(N)\otimes C_0(\R)).
	\end{align*}
   The last equality is due to the fact that $\chi$ is odd. 
\end{proof}

\begin{lemma}
	\label{lem: alternative class 2} 
	We have 
	\begin{equation*}
		s((j\otimes 1)^*\tau^*[D_N\times D_-])=
		\left[L^2(M;S)^{\oplus 2},\tilde{\rho}_N^{\oplus 2}, \begin{pmatrix}
			0&T_-\\
			T_+&0
		\end{pmatrix} \right] \in K_0(N).
	\end{equation*}
\end{lemma}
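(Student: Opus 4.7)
My plan is to apply the suspension formula from Definition \ref{def: suspension} to the Fredholm module supplied by Lemma \ref{lem: alternative class 1}, after pulling back by $j \otimes 1$. The core computation: with $X_0 = \hat\alpha^{\oplus 2}$ (the image of $\id \otimes 1 \in C([-1,1]) \otimes \widetilde{C_0(N)}$ under the multiplication representation extended to bounded continuous functions), $\gamma \varepsilon_1 = \delta \eta_1 = \bigl(\begin{smallmatrix} 0 & -i \\ i & 0 \end{smallmatrix}\bigr)$, and $(1-X^2)^{1/2} = (1-\hat\alpha^2)^{1/2} I_2$, the formula yields
\[
V := X + (1-X^2)^{1/2} F = \begin{pmatrix} 0 & T_- \\ T_+ & 0 \end{pmatrix},
\]
matching the Fredholm operator in the claimed identity.

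The principal obstacle is that after pulling back by $(j \otimes 1)^*$ the representation is no longer nondegenerate, so one cannot apply the suspension formula naively. It is nondegenerate on $\cH_1^{\oplus 2} := (L^2((-1,1)) \otimes L^2(N;S_N))^{\oplus 2}$ and zero on the orthogonal complement $\cH_2^{\oplus 2}$; let $P_i$ be the corresponding projections. I first replace $F$ by the block-diagonal $F' = P_1 F P_1 + P_2 F P_2$. Since $\rho(a) P_2 = 0$ for every $a \in C_0(-1,1) \otimes C_0(N)$, one has $\rho(a)(F-F') = [\rho(a), F] P_2$, which is compact by pseudolocality of $\chi(D)$; the dual side is handled symmetrically. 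The resulting block-diagonal module splits as the nondegenerate summand $(\cH_1^{\oplus 2}, \rho|_{\cH_1}, P_1 F P_1)$ plus a Fredholm module on $\cH_2^{\oplus 2}$ with zero representation, the latter being trivial in $K$-homology.

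Applying Definition \ref{def: suspension} to the nondegenerate summand, the extension sends $\id \otimes 1$ to multiplication by the coordinate $x$ (which agrees with $\hat\alpha$ on $\cH_1$), and the resulting operator $V_1$ on $\cH_1^{\oplus 2}$ coincides with the restriction $V|_{\cH_1^{\oplus 2}}$: since $(1-\hat\alpha^2)^{1/2}$ vanishes on $\cH_2$, one has $(1-\hat\alpha^2)^{1/2} F|_{\cH_1} = (1-\hat\alpha^2)^{1/2}(P_1 F P_1)|_{\cH_1}$. It remains to identify this class with $[(\cH^{\oplus 2}, \tilde\rho_N^{\oplus 2}, V)]$. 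Decomposed with respect to $\cH_1^{\oplus 2} \oplus \cH_2^{\oplus 2}$, the operator $V$ is upper-triangular with diagonal blocks $V_1$ and $X|_{\cH_2^{\oplus 2}}$ and off-diagonal block $(1-X^2)^{1/2} F P_2$. On $\cH_2$ we have $\hat\alpha = \pm 1$, so $X|_{\cH_2^{\oplus 2}}$ is a self-adjoint involution commuting with $\tilde\rho_N|_{\cH_2}$, making $(\cH_2^{\oplus 2}, \tilde\rho_N|_{\cH_2}^{\oplus 2}, X|_{\cH_2^{\oplus 2}})$ a degenerate Fredholm module over $C_0(N)$; adding it to $V_1$ yields the block-diagonal part of $V$. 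Finally, the off-diagonal term is a compact perturbation relative to $\tilde\rho_N^{\oplus 2}$: for $a \in C_0(N)$, $\tilde\rho_N(a)(1-X^2)^{1/2} = \rho_M(c)$ with $c = p_N^* a \cdot p_\R^*(1-\alpha^2)^{1/2} \in C_0(M)$, and $\rho_M(c) P_2 = 0$ forces $\tilde\rho_N(a)(V - V_{\mathrm{diag}}) = [\rho_M(c), F] P_2$, compact by pseudolocality; the dual side invokes Lemma \ref{lem: commutator of D* with locally constant in localized algebra} applied to $\alpha$ to control $(1-X^2)^{1/2}[F, \tilde\rho_N(a)]$. Combining these steps identifies the suspension class with the right-hand side of the lemma.
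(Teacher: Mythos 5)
Your proof follows essentially the same route as the paper's: compress $F$ to the nondegenerate summand $\cH_1^{\oplus 2} = L^{\oplus 2}$ (the paper cites \cite[Lem.~8.3.8]{Higson2000Khomology} for this; you verify the compact perturbation directly, which amounts to the same thing), apply the suspension formula there, and then identify the result with the claimed Fredholm module by adding a degenerate module on $\cH_2^{\oplus 2}$ and showing that the cross terms with respect to the $\cH_1 \oplus \cH_2$ decomposition are locally compact for $\tilde\rho_N$.

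There is, however, one citation that does not work as stated: to handle $(1-X^2)^{1/2}[F,\tilde\rho_N(a)]$ you invoke Lemma \ref{lem: commutator of D* with locally constant in localized algebra} applied to $\alpha$, but that lemma only produces $[\hat\alpha,\chi(D)]\in C^*(N\subseteq M;S)$, i.e.\ it controls commutators with $\hat\alpha$ (a function of the $\R$-coordinate), not commutators with $\tilde\rho_N(a)$ (a function of the $N$-coordinate). The correct tool is Lemma \ref{lem: properties of algebras using C_0(R) functions}: either part (i), since $(1-\alpha^2)^{1/2}\in C_0(\R)$ gives $(1-\hat\alpha^2)^{1/2}\chi(D)\in D^*(N;S)$, hence pseudolocal for $\tilde\rho_N$, and then $[(1-\hat\alpha^2)^{1/2}\chi(D),\tilde\rho_N(a)]=(1-\hat\alpha^2)^{1/2}[\chi(D),\tilde\rho_N(a)]$ is compact because the two multiplication operators commute; or part (ii), which is what the paper uses, giving $[(1-\hat\alpha^2)^{1/2},\chi(D)]\in C^*(N\subseteq M;S)$ so that one can commute $(1-\hat\alpha^2)^{1/2}$ past $F$ and then use $\rho_M(c)P_2=0$ as on the other side. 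With that correction your argument is sound and coincides in substance with the paper's.
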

\begin{proof}
	By Lemma \ref{lem: alternative class 1}, it follows that
	\begin{equation*}
		(j\otimes 1)^*\tau^*[D_N\times D_-] = \left[\cH^{\oplus 2}, (\rho^1_{\R}\circ j\otimes \rho_N)^{\oplus 2},\begin{pmatrix}
			0&\chi(D)\\
			\chi(D)&0
		\end{pmatrix} \right]
	\end{equation*}
	as element of $K^{-1}(C_0(-1,1)\otimes C_0(N))$. To compute the suspension of this element in $K$-homology, it should be represented by a nondegenerate Fredholm module. For this, denote by $L\subseteq \cH$ the closed subspace $L^2((-1, 1); T^1_{(-1,1)})\otimes L^2(N;S_N)$ and denote by $P$ the orthogonal projection of $\cH$ onto $L$. The representation 
\[
\rho^1_{\R}\circ j\otimes \rho_N\colon C_0(-1,1) \otimes C_0(N) \rightarrow \cB(L)
\]
is nondegenerate.	
	It follows by \cite[Lem. 8.3.8]{Higson2000Khomology} that 
	\begin{align}
		\label{eq: nondeg Fredholm module}
		&\left[\cH^{\oplus 2}, (\rho^1_{\R}\circ j\otimes \rho_N)^{\oplus 2}, \begin{pmatrix}
			0&\chi(D)\\
			\chi(D)&0
		\end{pmatrix} \right] \notag \\
		&\qquad=\left[L^{\oplus 2}, (\rho^1_{\R}\circ j\otimes \rho_N)^{\oplus 2}, \begin{pmatrix}
			0&P\chi(D)P\\
			P\chi(D)P&0
		\end{pmatrix} \right],
	\end{align}
	where the latter is a nondegenerate Fredholm module. By nondegeneracy, the representation $\rho^1_{\R}\circ j\otimes \rho_N$ extends uniquely to any $C^*$-algebra containing $C_0(-1, 1)\otimes C_0(N)$ as an ideal (see \cite[~p.255]{Higson2000Khomology}), such as $C([-1, 1])\otimes \widetilde{C_0(N)}$.
	
	Write $z:=(\rho^1_{\R}\circ j\otimes \rho_N)(\id_{(-1, 1)}\otimes 1)$ (using the extended representation). It follows that the suspension of the Fredholm module in \eqref{eq: nondeg Fredholm module} equals
	\beq{eq: D_N representation 1}
		\left[L^{\oplus 2}, \uprestr{\tilde{\rho}_N}{L}{\oplus 2}, \begin{pmatrix}
			0&-iz + (1-z^2)^{\frac{1}{2}}P\chi(D)P\\
			iz+(1-z^2)^{\frac{1}{2}}P\chi(D)P&0
		\end{pmatrix}\right].
		\eeq
	
	Finally, note that $(\rho^1_{\R}\otimes \rho_N)(\alpha\otimes 1)$ is unitarily equivalent to the operator $\hat{\alpha}$ on $L^2(M;S)$ under the isomorphism $L^2(M;S)\cong L^2(\R; T^1_{\R})\otimes L^2(N;S_N)$. This operator will be denoted by $\hat{\alpha}$ too. Using the graded unitary map
	\[
	L^2(M;S)^{\oplus 2}\cong \cH^{\oplus 2}\xrightarrow{\sim} L^{\oplus 2}\oplus (L^{\bot})^{\oplus 2},
	\]
	we see that
	\begin{align}
		\label{eq: intermediate Fredholm module 1}
		\left[L^2(M;S)^{\oplus 2}, \tilde{\rho}_N^{\oplus 2}, \begin{pmatrix}
			0 & T_-\\
			T_+ & 0
		\end{pmatrix}\right] =\left[L^{\oplus 2}\oplus (L^{\perp})^{\oplus 2}, \uprestr{\tilde{\rho}_N}{L}{\oplus 2}\oplus \uprestr{\tilde{\rho}_N}{L^{\perp}}{\oplus 2}, \cM\right],
	\end{align}
	where 
	\[
	\cM = 	\begin{pmatrix}
		0&PT_-P&0&PT_-(1-P)\\
		PT_+P&0&PT_+(1-P)&0\\
		0&(1-P)T_-P&0&(1-P)T_-(1-P)\\
		(1-P)T_+P&0&(1-P)T_+(1-P)&0
	\end{pmatrix}.
	\]
	To simplify the above expression, it will be shown that the anti-diagonal terms of $\cM$ form a locally compact operator. First note that since $\hat{\alpha}$ is local, we have that
	\[
	P\hat{\alpha}(1-P) = (1-P)\hat{\alpha}P = 0.
	\]
	So
\begin{equation}\label{eq: off diagonal is locally compact 1}
		(1-P)T_{\pm}P = (1-P)(1-\hat{\alpha}^2)^{\frac{1}{2}}\chi(D)P
= 0,
\end{equation}
	and 
	\begin{align}
		\label{eq: off diagonal is locally compact 2}
		PT_{\pm}(1-P) &= P(1-\hat{\alpha}^2)^{\frac{1}{2}}\chi(D)(1-P) \notag\\
		&=P[(1-\hat{\alpha}^2)^{\frac{1}{2}}, \chi(D)](1-P) + P\chi(D)(1-\hat{\alpha}^2)^{\frac{1}{2}}(1-P)\notag\\
		&= P[(1-\hat{\alpha}^2)^{\frac{1}{2}}, \chi(D)](1-P) +0.
	\end{align}
	
	Since $[(1-\hat{\alpha}^2)^{\frac{1}{2}}, \chi(D)] \in C^*(N\subseteq M;S)$ by part \ref{item: prop D^* 1} of Lemma \ref{lem: properties of algebras using C_0(R) functions} and  $1-P, P \in D^*(M;S)$, it follows that $PT_{\pm}(1-P) \in  C^*(N\subseteq M;S) \subseteq C^*(N;S)$. It follows from \eqref{eq: off diagonal is locally compact 1} and \eqref{eq: off diagonal is locally compact 2} that the anti-diagonal in $\cM$ indeed is locally compact for the representation $\uprestr{\tilde{\rho}_N}{L}{\oplus 2}\oplus \uprestr{\tilde{\rho}_N}{L^{\perp}}{\oplus 2}$ and thus $\cM$ can be replaced by 
	\[
	\cM' = 	\begin{pmatrix}
		0&PT_-P&0&0\\
		PT_+P&0&0&0\\
		0&0&0&(1-P)T_-(1-P)\\
		0&0&(1-P)T_+(1-P)&0
	\end{pmatrix}
	\]
	without changing the $K$-homology class. It follows that the right hand side of  \eqref{eq: intermediate Fredholm module 1} equals
	\begin{multline}\label{eq: intermediate Fredholm module 3}
		\left[L^{\oplus 2}, \uprestr{\tilde{\rho}_N}{L}{\oplus 2}, \begin{pmatrix}
			0&PT_-P\\
			PT_+P&0
		\end{pmatrix}\right] 
		\\
		+ \left[(L^{\perp})^{\oplus 2}, \uprestr{\tilde{\rho}_N}{L^{\perp}}{\oplus 2}, \begin{pmatrix}
			0&(1-P)T_-(1-P)\\
			(1-P)T_+(1-P)&0
		\end{pmatrix}\right] 
		\end{multline}
		We have
			\[
	PT_{\pm}P = \pm iz+(1-z^2)^{\frac{1}{2}}P\chi(D)P.
	\]
So the first term in \eqref{eq: intermediate Fredholm module 3} equals		$s((j\otimes 1)^*\tau^*[D_N\times D_-])$. Furthermore, write 
%
%
%
%
%
%
%
%
%
%
%
	$L^{\perp} = \cH_{\leq -1}\oplus \cH_{\geq 1}$, with
	\begin{align*}
		\cH_{\leq -1} &= L^2((-\infty, -1]; \restr{T^1_{\R}}{(-\infty, -1]})\otimes L^2(N;S_N), \text{ and}\\
		\cH_{\geq 1} &= L^2([1,\infty); \restr{T^1_{\R}}{[1,\infty)})\otimes L^2(N;S_N).
	\end{align*}
Then
	\[
	(1-P)T_{\pm}(1-P)= \pm i (1-P)\hat{\alpha}(1-P) = \pm i\begin{pmatrix}
		-1&0\\
		0&1
	\end{pmatrix}.
	\]
	This shows that second term in \eqref{eq: intermediate Fredholm module 3}
	 is given by the degenerate module
	\[
	\left[(\cH_{\leq -1}\oplus \cH_{\geq 1})^{\oplus 2}, \uprestr{\tilde{\rho}_N}{L^{\perp}}{\oplus 2}, \begin{pmatrix}
		0&0&i&0\\
		0&0&0&-i\\
		-i&0&0&0\\
		0&i&0&0
	\end{pmatrix}\right],
	\]
	and hence zero. We conclude that \eqref{eq: intermediate Fredholm module 3} equals 	$s((j\otimes 1)^*\tau^*[D_N\times D_-])$. 
\end{proof}

\begin{proof}[Proof of Proposition \ref{prop: alternative class of D_N}]
	 Let $d \in K_1(C_0(-1,1))$ be the Dirac class as defined in Definition \ref{def: Dirac class}. By combining \cite[Prop. 9.2.13, 9.2.14, 10.8.8]{Higson2000Khomology}, \cite[Ex. 10.9.7]{Higson2000Khomology}, \cite[Thm. 9.5.2, 10.7.3]{Higson2000Khomology} and taking into account the sign error in \cite{Higson2000Khomology} that was resolved in Lemma \ref{lem: suspension maps Dirac class to -1}, we conclude that 
	\begin{align*}
		-[D_N] &= s(d\times [D_N]) = s(j^*[D_{\R}]\times [D_N]) = s((j\otimes 1)^*\tau^*([D_N]\times [D_{\R}])) \\
		&=s((j\otimes 1)^*\tau^*([D_N]\times [D_-])) = s((j\otimes 1)^*\tau^*[D_N\times D_-]) \\
		& =\left[L^2(M;S)^{\oplus 2},\tilde{\rho}_N^{\oplus 2}, \begin{pmatrix}
			0&T_-\\
			T_+&0
		\end{pmatrix} \right], 		
	\end{align*}
	where the last equality follows from Lemma \ref{lem: alternative class 2}.
\end{proof}


Recall the definition of the operators $T_{\pm}$ in \eqref{eq: T_pm}. Moreover, write $D^*(N, M;S):= D^*(M;S)\cap D^*(N;S)$. Then $D^*(N, M;S)$ contains $C^*(N\subseteq M;S)$ as 
\[
C^*(N\subseteq M;S) \subseteq C^*(N;S)\cap C^*(M;S) \subseteq D^*(N, M;S),
\]
where the first inclusion follows as $C^*(N\subseteq M;S)\subseteq C^*(N;S)$ by Corollary \ref{cor: localization algebra inside Roe algebra} and $C^*(N\subseteq M;S)\subseteq C^*(M;S)$ trivially. Therefore, since $C^*(N\subseteq M;S)\subseteq D^*(M;S)$ is an ideal, it follows that $C^*(N\subseteq M;S)\subseteq D^*(N, M;S)$ is an ideal. Let $q$ be the corresponding quotient map.
\begin{lemma}
	\label{lem: T in intersection and has unitary quotient}
	The operator $T_+$ is an element of $D^*(N, M;S)$ and $q(T_+)$ is unitary as element of $D^*(N, M;S)/C^*(N\subseteq M;S)$.
\end{lemma}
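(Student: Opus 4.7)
The plan is to verify the two assertions—membership in $D^*(N,M;S)$ and unitarity of $q(T_+)$ in the quotient—separately, making full use of the compatibility results developed in Section \ref{sect: properties and relations of subalgebras} for operators on $L^2(M;S)$ that are built out of functions of $D$ and of pullbacks along $p_\R$.

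For membership, I would decompose $T_+ = i\hat\alpha + (1-\hat\alpha^2)^{1/2}\chi(D)$. The multiplication operator $\hat\alpha = \rho_M(p_\R^*\alpha)$ has zero propagation in both the $M$- and $N$-metrics and commutes with every multiplication operator from either representation, so it lies in $D^*(M;S) \cap D^*(N;S)$ trivially. For the second summand, observe that $g := \sqrt{1-\alpha^2}$ is supported in $[-1,1]$ and hence belongs to $C_c(\R) \subseteq C_0(\R)$, while $\chi(D) \in D^*(M;S)$ by Theorem \ref{thm: func calc thm for D^*M}. Then part (i) of Lemma \ref{lem: properties of algebras using C_0(R) functions} applied with $T = \chi(D)$ and this $g$ shows that $\rho_M(p_\R^*g)\chi(D) = (1-\hat\alpha^2)^{1/2}\chi(D)$ lies in $D^*(N;S)$, and it is obviously in $D^*(M;S)$ as a product of two elements of $D^*(M;S)$.

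To prove $q(T_+)$ is unitary, I would compute $T_+T_+^*$ and $T_+^*T_+$ modulo $C^*(N\subseteq M;S)$. The crucial fact is that $[\hat\alpha,\chi(D)]$ lies in $C^*(N\subseteq M;S)$ by Lemma \ref{lem: commutator of D* with locally constant in localized algebra} (since $\alpha$ is locally constant on $\R\setminus(-1,1)$), and $[(1-\hat\alpha^2)^{1/2},\chi(D)]$ lies there as well by part (ii) of Lemma \ref{lem: properties of algebras using C_0(R) functions}. Writing $T_+^* = -i\hat\alpha + \chi(D)(1-\hat\alpha^2)^{1/2}$ and expanding, the two cross terms
\[
i\hat\alpha\chi(D)(1-\hat\alpha^2)^{1/2} - i(1-\hat\alpha^2)^{1/2}\chi(D)\hat\alpha
\]
become, modulo the ideal, $i\chi(D)\bigl(\hat\alpha(1-\hat\alpha^2)^{1/2}-(1-\hat\alpha^2)^{1/2}\hat\alpha\bigr) = 0$ because $\hat\alpha$ and $(1-\hat\alpha^2)^{1/2}$ commute as multiplication operators. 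Similarly the diagonal $(1-\hat\alpha^2)^{1/2}\chi(D)^2(1-\hat\alpha^2)^{1/2}$ reduces to $(1-\hat\alpha^2)\chi(D)^2$, and the identity $\hat\alpha^2 + (1-\hat\alpha^2)\chi(D)^2 = 1 - (1-\hat\alpha^2)(1-\chi(D)^2)$ gives
\[
T_+T_+^* \equiv 1 - (1-\hat\alpha^2)(1-\chi(D)^2) \pmod{C^*(N\subseteq M;S)}.
\]

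To close the argument, I would show that the defect $(1-\hat\alpha^2)(1-\chi(D)^2)$ lies in $C^*(N\subseteq M;S)$. Since $1-\chi^2 \in C_0(\R)$, Theorem \ref{thm: func calc thm for C^*M} gives $1-\chi(D)^2 \in C^*(M;S)$; and $(1-\hat\alpha^2) = \rho_M(p_\R^*(1-\alpha^2))$ is multiplication by a bounded Borel function supported in $[-1,1]\times N \subseteq B_{2}(N)$. Lemma \ref{lem: Higsons 1.2 general} then places the product in $C^*(N\subseteq M;S)$. The computation of $T_+^*T_+$ is entirely analogous. I expect the main obstacle to be not any single estimate but the bookkeeping of commutators in the expansion of $T_+T_+^*$: one must check at each swap that the error lives in the smaller ideal $C^*(N\subseteq M;S)$ rather than merely in $C^*(M;S)$, which is precisely what the ``localized to $N$'' strengthenings gathered in Section \ref{sect: properties and relations of subalgebras} are designed to supply.
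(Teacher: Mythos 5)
Your argument is correct and follows essentially the same route as the paper: decompose $T_+$, observe that $\hat\alpha$ and $(1-\hat\alpha^2)^{1/2}\chi(D)$ land in $D^*(N,M;S)$ via part (i) of Lemma \ref{lem: properties of algebras using C_0(R) functions}, and then verify approximate unitarity by expanding the product and absorbing the error terms into $C^*(N\subseteq M;S)$ via Lemmas \ref{lem: Higsons 1.2 general} and \ref{lem: properties of algebras using C_0(R) functions}. The only cosmetic difference is that you commute $\hat\alpha$ and $(1-\hat\alpha^2)^{1/2}$ past $\chi(D)$ separately (invoking Lemma \ref{lem: commutator of D* with locally constant in localized algebra} for the former), whereas the paper groups the product $\hat\alpha(1-\hat\alpha^2)^{1/2}$ into a single $C_0(\R)$-multiplication operator and needs only part (ii) of Lemma \ref{lem: properties of algebras using C_0(R) functions}; both are valid.
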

\begin{proof}
	The operators $\hat{\alpha}$ and $(1-\hat{\alpha})^{\frac{1}{2}}$ are elements of $D^*(N, M;S)$ as these are multiplication operators. So since $\chi(D) \in D^*(M;S)$ it is immediate that $T_+ \in D^*(M;S)$. By part \ref{item: prop D^* 1} of Lemma \ref{lem: properties of algebras using C_0(R) functions} it follows that 
	$(1-\hat{\alpha})^{\frac{1}{2}}\chi(D)\in D^*(N;S)$ as $(1-\alpha^2)^{\frac{1}{2}} \in C_0(\R)$. It follows that $T_+ \in D^*(N;S)$ and therefore $T_+ \in D^*(N, M;S)$. To show that $q(T_+) \in D^*(N, M;S)/C^*(N\subseteq M;S)$ is unitary, write $X\sim Y$ when $X$ and $Y$ differ by an element in $C^*(N\subseteq M;S)$. We have that
	\begin{align*}
		T_+^*T_+ &= (-i\hat{\alpha} + \chi(D)(1-\hat{\alpha}^2)^{\frac{1}{2}})(i\hat{\alpha} + (1-\hat{\alpha}^2)^{\frac{1}{2}}\chi(D)) \\
		&= \hat{\alpha}^2 + \chi(D)(1-\hat{\alpha}^2)\chi(D) + i(\chi(D)(1-\hat{\alpha}^2)^{\frac{1}{2}}\hat{\alpha} - \hat{\alpha}(1-\hat{\alpha}^2)^{\frac{1}{2}}\chi(D)) \\
		&\sim \hat{\alpha}^2 + \chi(D)(1-\hat{\alpha}^2)\chi(D) \sim \hat{\alpha}^2 + \chi(D)^2(1-\hat{\alpha}^2) \\
		&= (1-\chi(D)^2)(\hat{\alpha}^2-1) + 1\sim 1,
	\end{align*}
	where the equivalences on the third line follow from part \ref{item: prop D^* 2} of Lemma \ref{lem: properties of algebras using C_0(R) functions} and the final equivalence follows from Lemma \ref{lem: Higsons 1.2 general} as $1-\chi(D)^2 \in C^*(M;S)$ and $\alpha^2-1$ is a $C_0(\R)$ function. Similarly, we have that $T_+T_+^*\sim 1$. The result follows.
%
\end{proof}

\begin{corollary}
	\label{cor: quotients of T+ are unitary}
	The class $q(T_+)$ is unitary in the following quotient algebras: $Q^*(N;S)$, $D^*(N, M;S)/C^*(N\subseteq M;S)$ and $D^*(M;S)/C^*(N\subseteq M;S)$ (where by abuse of notation all quotient maps are denoted by $q$).
\end{corollary}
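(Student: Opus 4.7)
The plan is to observe that $q(T_+)$ is already known to be unitary in the quotient $D^*(N,M;S)/C^*(N\subseteq M;S)$ by Lemma \ref{lem: T in intersection and has unitary quotient}, and then to produce $*$-homomorphisms from this quotient into each of the other two quotient algebras that send our $q(T_+)$ to the (correspondingly denoted) $q(T_+)$ in the target. Since every $*$-homomorphism of unital $C^*$-algebras preserves unitarity, the corollary will follow immediately.

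Concretely, for the quotient $D^*(M;S)/C^*(N\subseteq M;S)$: the inclusion $D^*(N,M;S)\hookrightarrow D^*(M;S)$ descends to a $*$-homomorphism of the quotients because $C^*(N\subseteq M;S)$ is an ideal in both (it is contained in and an ideal in $D^*(M;S)$ since it is contained in $C^*(M;S)$, which is an ideal in $D^*(M;S)$). This homomorphism sends the class $q(T_+) \in D^*(N,M;S)/C^*(N\subseteq M;S)$ to the class of $T_+$ in $D^*(M;S)/C^*(N\subseteq M;S)$, and unitarity is preserved.

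For the quotient $Q^*(N;S)=D^*(N;S)/C^*(N;S)$: first use the inclusion $D^*(N,M;S)\hookrightarrow D^*(N;S)$, which descends to a map into $D^*(N;S)/C^*(N\subseteq M;S)$ (using Corollary \ref{cor: localization algebra inside Roe algebra} to justify that $C^*(N\subseteq M;S)$ is an ideal in $D^*(N;S)$, as it is contained in the ideal $C^*(N;S)$). Then compose with the canonical surjection $D^*(N;S)/C^*(N\subseteq M;S) \twoheadrightarrow D^*(N;S)/C^*(N;S) = Q^*(N;S)$ induced by the ideal inclusion $C^*(N\subseteq M;S)\subseteq C^*(N;S)$. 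The composition is a $*$-homomorphism sending $q(T_+)$ to the class of $T_+$ in $Q^*(N;S)$.

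The only thing to check along the way, beyond Lemma \ref{lem: T in intersection and has unitary quotient}, is that $C^*(N\subseteq M;S)$ genuinely sits as an ideal in both $D^*(N;S)$ and $D^*(M;S)$, which was already recorded in the paragraph just preceding Lemma \ref{lem: T in intersection and has unitary quotient}. There is no real obstacle; the corollary is a purely formal consequence, and the proof is essentially one line invoking the preservation of unitarity under $*$-homomorphisms.
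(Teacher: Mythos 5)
Your proof is correct and takes essentially the same route as the paper: both observe that $q(T_+)$ is unitary in $D^*(N,M;S)/C^*(N\subseteq M;S)$ by the preceding lemma, and then push it forward under the unital $*$-homomorphisms into the other two quotients induced by the inclusions $D^*(N,M;S)\hookrightarrow D^*(M;S)$ and $D^*(N,M;S)\hookrightarrow D^*(N;S)$, using that unital $*$-homomorphisms preserve unitarity. The only cosmetic difference is that you factor the map to $Q^*(N;S)$ through the intermediate quotient $D^*(N;S)/C^*(N\subseteq M;S)$, whereas the paper passes directly to $D^*(N;S)/C^*(N;S)$.
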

\begin{proof}
	This follows easily as the inclusions 
	\[
	D^*(N, M;S)\hookrightarrow D^*(M;S)\quad \text{ and }\quad D^*(N, M;S)\hookrightarrow D^*(N;S)
	\]
	are unital $*$-homomorphisms that descend to unital $*$-homomorphisms
	\[
	D^*(N, M;S)/C^*(N\subseteq M;S)\rightarrow D^*(M;S)/C^*(N\subseteq M;S)
	\]
	and
	\[D^*(N, M;S)/C^*(N\subseteq M;S)\rightarrow D^*(N;S)/C^*(N;S)
	\]
	(for the latter this follows from Corollary \ref{cor: localization algebra inside Roe algebra}). It directly follows that these homomorphisms on the quotients map the element $q(T_+) \in D^*(N, M;S)/C^*(N\subseteq M;S)$ to the elements $q(T_+)$ in the other two quotient algebras. Since the former is unitary by Lemma \ref{lem: T in intersection and has unitary quotient}, so are the latter two being images of a unitary element under a unital $*$-homomorphism.
\end{proof}

\begin{corollary}
	\label{cor: image of D_N class under paschke duality}
	The Paschke duality map 
	\[
	K_0(N) \xrightarrow{\sim} K_1(Q^*(N;S))
	\]
	maps the class $[D_N]$ to $-[q(T_+)]_1 \in K_1(Q^*(N;S))$.
\end{corollary}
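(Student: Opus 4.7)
The plan is to reduce the statement to a direct application of the Paschke duality formula in Proposition \ref{prop: Paschke duality}(ii), combined with the representation of $[D_N]$ provided by Proposition \ref{prop: alternative class of D_N}. The key observation will be that in the Fredholm module representing $-[D_N]$, the off-diagonal entry $T_-$ differs from $T_+^*$ only by an element of $C^*(N;S)$, so it can be replaced by $T_+^*$ without changing the $K$-homology class.

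First I would compute directly
\[
T_- - T_+^* = (1-\hat{\alpha}^2)^{1/2}\chi(D) - \chi(D)(1-\hat{\alpha}^2)^{1/2} = \bigl[(1-\hat{\alpha}^2)^{1/2}, \chi(D)\bigr].
\]
Since the function $x \mapsto (1-\alpha(x)^2)^{1/2}$ lies in $C_0(\R)$ and $\chi(D) \in D^*(M;S)$ by Theorem \ref{thm: func calc thm for D^*M}, part (ii) of Lemma \ref{lem: properties of algebras using C_0(R) functions} gives $T_- - T_+^* \in C^*(N \subseteq M;S) \subseteq C^*(N;S)$. I would then use this to conclude that the two Fredholm modules
\[
\left[L^2(M;S)^{\oplus 2}, \tilde{\rho}_N^{\oplus 2}, \begin{pmatrix} 0 & T_- \\ T_+ & 0 \end{pmatrix}\right] \quad \text{and} \quad \left[L^2(M;S)^{\oplus 2}, \tilde{\rho}_N^{\oplus 2}, \begin{pmatrix} 0 & T_+^* \\ T_+ & 0 \end{pmatrix}\right]
\]
define the same class in $K_0(N)$, since their operator difference lies in $C^*(N;S)$, which is precisely the ideal killed when passing from $D^*(N;S)$ to $Q^*(N;S)$ in the Paschke picture.

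Finally, Corollary \ref{cor: quotients of T+ are unitary} guarantees that $q(T_+)$ is unitary in $Q^*(N;S)$, so Proposition \ref{prop: Paschke duality}(ii), applied with $m=1$ and $T = T_+$, identifies the second Fredholm module class above with the Paschke dual of $[q(T_+)]_1 \in K_1(Q^*(N;S))$. Combining this with the equality
\[
[D_N] = -\left[L^2(M;S)^{\oplus 2}, \tilde{\rho}_N^{\oplus 2}, \begin{pmatrix} 0 & T_- \\ T_+ & 0 \end{pmatrix}\right]
\]
from Proposition \ref{prop: alternative class of D_N} then yields that Paschke duality sends $[D_N]$ to $-[q(T_+)]_1$, as desired.

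The one point requiring care is justifying the replacement $T_- \rightsquigarrow T_+^*$ inside the Fredholm module. The justification is essentially formal: in the Paschke presentation, two elements of $D^*(N;S)$ that descend to the same class in $Q^*(N;S)$ determine the same element of $K_0(N)$ under the correspondence of Proposition \ref{prop: Paschke duality}(ii), because that correspondence is well defined at the level of $K_1$-classes of unitaries in $Q^*(N;S)$. Once this is set up, no further analysis is needed; all of the nontrivial $K$-homology content is already packaged in Proposition \ref{prop: alternative class of D_N}, via the suspension computation carried out there.
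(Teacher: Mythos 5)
Your proposal is correct and follows essentially the same route as the paper: compute $T_- - T_+^* = [(1-\hat{\alpha}^2)^{1/2}, \chi(D)]$, use Lemma~\ref{lem: properties of algebras using C_0(R) functions}(ii) and Corollary~\ref{cor: localization algebra inside Roe algebra} to place this in $C^*(N;S)$, replace $T_-$ by $T_+^*$ in the Fredholm module from Proposition~\ref{prop: alternative class of D_N}, and then read off the claim from Proposition~\ref{prop: Paschke duality}(ii) together with Corollary~\ref{cor: quotients of T+ are unitary} (and Lemma~\ref{lem: T in intersection and has unitary quotient} for $T_+ \in D^*(N;S)$). The paper phrases the replacement step as a locally compact perturbation of the Fredholm module operator, which is the cleanest justification, but your framing in terms of the ideal $C^*(N;S)$ being killed in the Paschke picture amounts to the same thing.
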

\begin{proof}
	Note that
    \[
		T_--T_+^* 
		= [(1-\hat{\alpha}^2)^{\frac{1}{2}}, \chi(D)] \in C^*(N\subseteq M;S)\subseteq C^*(N;S),
        \]
	by part \ref{item: prop D^* 2} of Lemma \ref{lem: properties of algebras using C_0(R) functions} and Corollary \ref{cor: localization algebra inside Roe algebra}. It follows that $T_+^*$ is a locally compact perturbation of $T_-$. Therefore, by Proposition \ref{prop: alternative class of D_N}, $[D_N]$ can be represented as
	\[
	-\left[L^2(M;S)^{\oplus 2},\tilde{\rho}_N^{\oplus 2}, \begin{pmatrix}
		0&T_+^*\\
		T_+&0
	\end{pmatrix} \right].
	\]
	We know that $T_+ \in D^*(N;S)$ by Lemma \ref{lem: T in intersection and has unitary quotient} and that $q(T_+) \in Q^*(N;S)$ is unitary by Corollary \ref{cor: quotients of T+ are unitary}, so it follows by Proposition \ref{prop: Paschke duality} that the Paschke duality map maps $[D_N]$ to $-[q(T_+)]_1$.
\end{proof}

\subsection{Relating \texorpdfstring{$\Ind(D_N)$}{TEX} to \texorpdfstring{$\Ind(D;N)$}{TEX}}
\label{sect: relating ind(D_N) to ind(D;N)}

At this point, the $K$-homology class $[D_N]$ is represented as the $K$-theory class $-[q(T_+)]_1$. The rest of the proof of Theorem \ref{thm: PMT for product} will consist of finding the right homotopies relating $-[q(T_+)]_1$ to $[q(U_+)]_1$, which is used to compute $\Ind(D;N)$. These classes live in different $K$-theory groups but they can be related. To this end, consider the following diagram of $C^*$-algebras with short exact rows

\begin{equation}
	\label{diag: relating C^* extensions}
	\begin{tikzcd}
		0 & {C^*(N\subseteq M;S)} & {D^*(M;S)} & {\frac{D^*(M;S)}{C^*(N\subseteq M;S)}} & 0 \\
		0 & {C^*(N\subseteq M;S)} & {D^*(N, M;S)} & {\frac{D^*(N, M;S)}{C^*(N\subseteq M;S)}} & 0 \\
		0 & {C^*(N;S)} & {D^*(N;S)} & {Q^*(N;S)} & 0
		\arrow[from=1-1, to=1-2]
		\arrow[from=1-2, to=1-3]
		\arrow[from=1-3, to=1-4]
		\arrow[from=1-4, to=1-5]
		\arrow[from=2-1, to=2-2]
		\arrow[equals, from=2-2, to=1-2]
		\arrow[from=2-2, to=2-3]
		\arrow["i", hook, from=2-2, to=3-2]
		\arrow["{j_M}"', hook', from=2-3, to=1-3]
		\arrow[from=2-3, to=2-4]
		\arrow["{j_N}", hook, from=2-3, to=3-3]
		\arrow["{\exists!\ol{j_M}}"', dotted, from=2-4, to=1-4]
		\arrow[from=2-4, to=2-5]
		\arrow["{\exists!\ol{j_N}}", dotted, from=2-4, to=3-4]
		\arrow[from=3-1, to=3-2]
		\arrow[from=3-2, to=3-3]
		\arrow[from=3-3, to=3-4]
		\arrow[from=3-4, to=3-5]
	\end{tikzcd}
\end{equation}
where the maps induced on the quotients are precisely the maps that were already used in the proof of Corollary \ref{cor: quotients of T+ are unitary}. 
%
By Corollary \ref{cor: quotients of T+ are unitary} it follows that the class $[q(T_+)]_1 \in K_1(Q^*(N;S))$ lifts under $K_1(\ol{j_N})$ to the class $[q(T_+)]_1$ as element of $K_1(D^*(N, M;S)/C^*(N\subseteq M;S))$ and this latter class  maps to the class $[q(T_+)]_1 \in K_1(D^*(M;S)/C^*(N\subseteq M;S))$, which is precisely the group where $[q(U_+)]_1$ lives.

\begin{proposition}
	\label{prop: relating K theory classes}
	Let $U_+$ be defined as in Proposition \ref{prop: Higson's 1.4}. Then	$[q(T_+)]_1 = -[q(U_+)]_1$ in $K_1(D^*(M;S)/C^*(N\subseteq M;S))$.
\end{proposition}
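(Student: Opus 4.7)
The plan is to establish $[q(T_+)]_1 + [q(U_+)]_1 = 0$ in $K_1(D^*(M;S)/C^*(N\subseteq M;S))$ by explicit manipulations and a homotopy argument.

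First I would replace $U_+$ by a cleaner representative. The linear interpolation between the smooth cutoffs $\phi_\pm$ and the characteristic functions $\chi_{M_\mp}$ gives a norm-continuous path $U_+^t\in D^*(M;S)$ from $U_+$ to $\tilde U_+:=P_-+P_+U$, with $P_\pm:=\rho(\chi_{M_\pm})$. Each intermediate element is unitary modulo $C^*(N\subseteq M;S)$, the required commutator estimates following from Lemmas \ref{lem: Higsons 1.3} and \ref{lem: commutator with projection lands in localized algebra} combined with $U-1\in C^*(M;S)$. Hence $[q(U_+)]_1=[q(\tilde U_+)]_1$, and it suffices to prove $[q(T_+\tilde U_+)]_1 = 0$.

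Next I would expand $T_+\tilde U_+ = (i\hat\alpha + (1-\hat\alpha^2)^{1/2}\chi(D))(P_- + P_+U)$ and simplify modulo $C^*(N\subseteq M;S)$ using three structural identities from Section \ref{sect: properties and relations of subalgebras}: (a) $(1-\hat\alpha^2)^{1/2}(U-1)\in C^*(N\subseteq M;S)$ by Lemma \ref{lem: Higsons 1.2 general}, since $(1-\alpha^2)^{1/2}\in C_c(\R)$ and $U-1\in C^*(M;S)$; (b) $[(1-\hat\alpha^2)^{1/2},\chi(D)]\in C^*(N\subseteq M;S)$ by Lemma \ref{lem: properties of algebras using C_0(R) functions}(ii); and (c) $[\hat\alpha,\chi(D)]\in C^*(N\subseteq M;S)$ by Lemma \ref{lem: commutator of D* with locally constant in localized algebra}, as $\alpha$ is locally constant outside $(-1,1)$. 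Using (a), the $U$-contributions from the second factor become trivial wherever they meet $(1-\hat\alpha^2)^{1/2}$, and the commutator identities (b)--(c) allow one to move $\chi(D)$ past the localization factors at the cost of elements of the ideal. What remains is an explicit expression built only from $\hat\alpha$, $\chi(D)$, and $P_\pm$.

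Finally, I would exhibit a homotopy through unitaries in the quotient from the residual expression to $1$. The scalar rotation $s\mapsto e^{i\pi s/2}P_+ + e^{-i\pi s/2}P_-$ is literally a unitary in $D^*(M;S)$ for each $s\in[0,1]$ and trivializes the ``sign part'' $i(2P_+-1)$ contributed by $T_+$; a coordinated deformation of $\hat\alpha$ toward a function whose support shrinks to $N$, with $\chi(D)$ rotated accordingly, finishes the homotopy. Each stage is kept unitary in the quotient by verifying the same three identities above for the deformed parameters.

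The main obstacle is the bookkeeping: while commutators of $\chi(D)$ with $C_0(\R)$-multipliers and with locally constant multipliers lie in $C^*(N\subseteq M;S)$ by Section \ref{sect: properties and relations of subalgebras}, the commutator $[\chi(D),P_+]$ with the sharp projection does \emph{not} land there in general. The computation must therefore be arranged so that every occurrence of $[\chi(D),P_+]$ is multiplied by the compactly supported factor $(1-\hat\alpha^2)^{1/2}$, thereby converting it into an element of $C^*(N\subseteq M;S)$ via Lemma \ref{lem: Higsons 1.2 general}.
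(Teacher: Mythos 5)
Your reformulation in step 1 is fine, and in fact stronger than you claim: since $(\chi_{M_\pm}-\phi_\pm)$ is supported near $N$ and $U-1\in C^*(M;S)$, Lemma \ref{lem: Higsons 1.2 general} gives $q(U_+)=q(\tilde U_+)$ outright; no homotopy is needed. Step 2 also works in principle: carrying out the simplification (in the product case $\hat\alpha$ and $P_\pm$ commute, and your identities (a)--(c) apply) one finds
\[
T_+\tilde U_+ \;\sim\; T_+ + iP_+(U-1),
\qquad\text{equivalently}\qquad
\bigl(q(T_+)-i\bigr)\bigl(q(\tilde U_+)-1\bigr)=0.
\]
The problem is step 3. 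Writing $a=q(T_+)$, $b=q(\tilde U_+)$ and $x=b-1$, the relation $(a-i)x=0$ forces $a^*x=-ix$, and the natural straight-line path $u_s=a+isx$ from $a=q(T_+)$ to $a+ix=q(T_+\tilde U_+)$ satisfies
\[
u_s^*u_s \;=\; 1+(s-s^2)(x+x^*)\;=\;1-(s-s^2)\bigl(2-b-b^*\bigr)\;\le\;1,
\]
with no lower bound: at $s=\tfrac12$ this vanishes wherever $-1$ lies in the spectrum of $b$. So the path need not stay in the invertibles, and there is no margin to rescue it by polar decomposition. Your alternative idea of ``deforming $\hat\alpha$ toward a function whose support shrinks to $N$'' fares no better: if $\alpha_\varepsilon(x)=\alpha(x/\varepsilon)$, then $\|\hat\alpha_\varepsilon-(2P_+-1)\|=\|\alpha_\varepsilon-\operatorname{sgn}\|_\infty\ge 1$ for every $\varepsilon>0$, so the family does not converge in norm to the sharp-projection endpoint; verifying unitarity of each $q(T_+^{\varepsilon})$ does not make $\varepsilon\mapsto q(T_+^{\varepsilon})$ a homotopy, and the scalar rotation $e^{i\pi s/2}P_++e^{-i\pi s/2}P_-$ cannot absorb this jump.

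The paper's proof of this proposition (Lemmas \ref{lem: relating K theory classes 1}, \ref{lem: relating K theory classes 2}, \ref{lem: relating K theory classes 3}) goes the \emph{other} way: it replaces the sharp $P_\pm$ by the soft cutoffs $\alpha_\pm=\tfrac{1}{2}(1\pm\hat\alpha)$ at the start and then performs all deformations in the spectral variable of $D$, never moving $\hat\alpha$ or the projections. Concretely: first conjugate by $e^{-\frac{\pi}{2}i\theta(D)t}$ to turn $q(U_+)$ into $-q\bigl(i\sin(\tfrac{\pi}{2}\theta(D))+\hat\alpha\cos(\tfrac{\pi}{2}\theta(D))\bigr)$; then linearly deform the pair $(\sin\tfrac{\pi}{2}\theta,\cos\tfrac{\pi}{2}\theta)\rightsquigarrow(\theta,(1-\theta^2)^{1/2})$; finally rotate $R=i\theta(D)+\hat\alpha(1-\theta(D)^2)^{1/2}$ into $iT_+^*$ via $H(t)=\cos(\tfrac{\pi}{2}t)q(R)+\sin(\tfrac{\pi}{2}t)q(iT_+^*)$. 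The decisive point, which your plan misses, is that the cross term in $H(t)^*H(t)$ is \emph{positive}: one computes $R^*T_+^*-T_+R\sim -2i\bigl((1-\hat\alpha^2)^{1/2}+(1-\theta(D)^2)^{1/2}\bigr)$, giving $H(t)^*H(t)\ge 1$ and hence an honest path of invertibles, to which polar decomposition can then be applied. Designing the rotation so that the two operators being blended produce a positive (rather than merely bounded) cross term is the crucial idea that your plan does not supply.
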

The proof of Proposition \ref{prop: relating K theory classes} will be spread over Lemmas \ref{lem: relating K theory classes 1}, \ref{lem: relating K theory classes 2} and \ref{lem: relating K theory classes 3}, which together clearly imply that the equality in Proposition \ref{prop: relating K theory classes} hold. The homotopies used in Lemmas \ref{lem: relating K theory classes 1} and \ref{lem: relating K theory classes 2} are inspired by \cite[Lem. 8.6.12]{Higson2000Khomology} and the homotopy used in Lemma \ref{lem: relating K theory classes 3} is inspired by \cite[Prop. 8.3.16]{Higson2000Khomology}.

Let $\theta(x) = \frac{2}{\pi}\arctan(x)$ and let $\alpha$ be defined as in Proposition \ref{prop: alternative class of D_N}.

\begin{lemma}
	\label{lem: relating K theory classes 1}
	We have
	\[
	[q(U_+)]_1 = \left[q\left(i\sin\left(\frac{\pi}{2}\theta(D)\right) + \hat{\alpha}\cos\left(\frac{\pi}{2}\theta(D)\right)\right)\right]_1
	\]
	as elements of $K_1(D^*(M;S)/C^*(N\subseteq M;S))$.
\end{lemma}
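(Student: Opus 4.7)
The idea is to compute $U_+$ explicitly via half-angle identities for $U = -\exp(i\pi\theta(D))$ (Proposition \ref{prop: Higson's 1.4}(i)), factor out a unitary with trivial $K_1$-class, and recognize what is left as the operator $V := i\sin(\pi\theta(D)/2) + \hat\alpha\cos(\pi\theta(D)/2)$ appearing on the right-hand side of the lemma. No deep homotopy is needed---just one algebraic identity together with two elementary unitary homotopies.

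First I would replace $U_+ = \rho_M(\phi_-) + \rho_M(\phi_+)U$ by the algebraically more convenient operator
\[
U_+' := \tfrac{1-\hat\alpha}{2} + \tfrac{1+\hat\alpha}{2}\,U.
\]
Writing both as $1+(\cdot)(U-1)$ gives $U_+ - U_+' = \bigl(\rho_M(\phi_+) - \tfrac{1+\hat\alpha}{2}\bigr)(U-1)$. Since $\phi_+$ and $(1+\alpha)/2$ both agree with $\chi_{M_+}$ outside a bounded neighbourhood of $N$, the bracketed multiplication operator is supported in some $B_R(N)$; combined with $U-1 = -2i(D+i)^{-1} \in C^*(M;S)$, Lemma \ref{lem: Higsons 1.2 general} gives $U_+ - U_+' \in C^*(N\subseteq M;S)$, so $[q(U_+)]_1 = [q(U_+')]_1$ in $K_1(D^*(M;S)/C^*(N\subseteq M;S))$.

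Setting $s := \sin(\pi\theta(D)/2)$, $c := \cos(\pi\theta(D)/2)$, and $e := \exp(i\pi\theta(D)/2)$, all of which mutually commute as functional calculi of $D$, the half-angle identities $1 + e^{i\varphi} = 2\cos(\varphi/2)e^{i\varphi/2}$ and $e^{-i\varphi/2} = \cos(\varphi/2) - i\sin(\varphi/2)$ yield $U-1 = -2ce$ and $1 = (c-is)e$. Substituting,
\[
U_+' = (c-is)e - (1+\hat\alpha)ce = \bigl[(c-is) - (1+\hat\alpha)c\bigr]e = -(is+\hat\alpha c)\,e = -V\cdot e.
\]
Factoring $e$ out on the right is legitimate even though $\hat\alpha$ does not commute with $e$: only $c$ and $s$ stand between $\hat\alpha$ and $e$, and those do commute with $e$.

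Passing to the quotient gives $[q(U_+)]_1 = [q(-V)]_1 + [q(e)]_1$. The scalar path $t\mapsto e^{i\pi t}I$ lies in the unitary group of $D^*(M;S)$ and connects $I$ to $-I$, yielding $[q(-V)]_1 = [q(V)]_1$. Similarly, the functional-calculus path $t\mapsto \exp(it\pi\theta(D)/2)$ lies in the unitaries of $D^*(M;S)$ for every $t \in [0,1]$ (by Theorem \ref{thm: func calc thm for D^*M}, since $\exp(it\pi\theta(x)/2)$ is bounded with finite limits at $\pm\infty$), is norm-continuous in $t$, and connects $I$ to $e$, so $[q(e)]_1 = 0$. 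Combining gives $[q(U_+)]_1 = [q(V)]_1$, as required. The only genuine subtlety is the verification in the first step that the algebraically natural (but non-smooth) cut-off $(1+\alpha)/2$ differs from any admissible smooth $\phi_+$ only inside a bounded neighbourhood of $N$, so that their difference times $U-1$ lands in $C^*(N\subseteq M;S)$.
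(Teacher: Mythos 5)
Your proof is correct and follows essentially the same strategy as the paper: replace $U_+$ by $U_+' = \tfrac{1-\hat\alpha}{2} + \tfrac{1+\hat\alpha}{2}U$ via Lemma~\ref{lem: Higsons 1.2 general}, exploit $U=-e^{i\pi\theta(D)}$, and use a half-angle decomposition. The only organizational difference is that you establish the exact operator identity $U_+' = -V\,e^{i\pi\theta(D)/2}$ up front and then kill $[q(e^{i\pi\theta(D)/2})]_1$ by the homotopy $t\mapsto e^{it\pi\theta(D)/2}$, whereas the paper's proof runs the equivalent multiplicative homotopy $t\mapsto q(U_+')\,q(e^{-it\pi\theta(D)/2})$ directly and only then recognizes the endpoint as $-q(V)$.
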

\begin{proof}
		Write $\alpha_+:= \frac{1 + p_{\R}^*\alpha}{2}$ and $\alpha_- = 1-\alpha_+$. Let $\phi_+$ be as in Definition \ref{def: simple hypersurface}, such that $U_+ = \rho_M(\phi_-)+\rho_M(\phi_+)U$ with $U$ the Cayley transform of $D$. Then
	\begin{align*}
		\rho_M(\alpha_-) + \rho_M(\alpha_+)U - U_+ &= \rho_M(\phi_+-\alpha_+) + \rho_M(\alpha_+-\phi_+)U \\
		&= \rho_M(\alpha_+-\phi_+)(U-1) \in C^*(N\subseteq M;S),
	\end{align*} 
	which follows from Lemma \ref{lem: Higsons 1.2 general} as $U-1 \in C^*(M;S)$ and $\alpha_+-\phi_+$ is supported near $N$. Therefore it follows that
	\[
	q(U_+) = q(\rho_M(\alpha_-) + \rho_M(\alpha_+)U) = q\left(\rho_M(\alpha_-) - \rho_M(\alpha_+)e^{i\pi\theta(D)}\right),
	\]
	where the last equality follows as $U = -e^{i\pi\theta(D)}$, see Equations \eqref{eq: Cayley transform as exponential} and \eqref{eq: Ind(D) = [U]}.
	
	For $t \in [0,1]$, consider the family of functions
	\[
	f_t(x) = e^{-\frac{\pi}{2} i \theta(x)t}.
	\]
	It is obvious that $f_t^*f_t = f_tf_t^* = 1$ for all $t$, and an easy computation using the power series expansion of the exponential shows that for all $s, t \in [0,1]$, we have
	\[
	\|f_s-f_t\|_{\infty} \leq e^{\frac{\pi}{2} |s-t|}-1.
	\]
	This shows that $t\mapsto f_t$ is a continuous path in $B^{\infty}(\R)$. Moreover, the function $f_t$ has finite limits as $x \rightarrow \pm \infty$, which implies that $f_t(D) \in D^*(M;S)$ for all $t$. It follows that the map
	\begin{align*}
		f_{\bullet}(D)\colon\ [0, 1]&\rightarrow D^*(M;S),\\
		t &\mapsto f_t(D),
	\end{align*}
	is a well-defined continuous path of unitaries in $D^*(M;S)$ from $1$ to $ e^{-\frac{\pi}{2} i \theta(D)}$. So
	\begin{align*}
		H\colon\ [0, 1]&\rightarrow D^*(M;S)/C^*(N\subseteq M;S), \\
		t&\mapsto q\left(\rho_M(\alpha_-) - \rho_M(\alpha_+)e^{\pi i\theta(D)}\right)q(f_t(D))),
	\end{align*}
	is a homotopy of unitaries connecting $q\left(\rho_M(\alpha_-) - \rho_M(\alpha_+)e^{\pi i\theta(D)}\right)$ to 
	\begin{align}
		\label{eq: first homotoped operator}
		&q\left(\rho_M(\alpha_-) - \rho_M(\alpha_+)e^{\pi i\theta(D)}\right)q(e^{-\frac{\pi}{2} i \theta(D)})\notag\\
		&\qquad = q\left(\rho_M(\alpha_-)e^{-\frac{\pi}{2} i \theta(D)} - \rho_M(\alpha_+)e^{\frac{\pi}{2}i\theta(D)}\right) \notag\\
		& \qquad = q\left(\left(\frac{1-\hat{\alpha}}{2}\right)e^{-\frac{\pi}{2} i \theta(D)} - \left(\frac{1+\hat{\alpha}}{2}\right)e^{\frac{\pi}{2} i \theta(D)}\right)\notag\\
		& \qquad =-q\left(i\sin\left(\frac{\pi}{2}\theta(D)\right) + \hat{\alpha}\cos\left(\frac{\pi}{2}\theta(D)\right)\right).
	\end{align}
	 Since $[-1]_1 = 0$ as $1$ and $-1$ are homotopic via the homotopy of unitaries $t\mapsto e^{i\pi t}$, the result follows.
\end{proof}

\begin{lemma}
	\label{lem: relating K theory classes 2}
	We have
	\[
	\left[q\left(i\sin\left(\frac{\pi}{2}\theta(D)\right) + \hat{\alpha}\cos\left(\frac{\pi}{2}\theta(D)\right)\right)\right]_1 = \left[q\left(i\theta(D) + \hat{\alpha} (1-\theta(D)^2)^{\frac{1}{2}}\right)\right]_1
	\]
	as elements of $K_1(D^*(M;S)/C^*(N\subseteq M;S))$.
\end{lemma}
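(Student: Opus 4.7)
My plan is to construct an explicit linear homotopy of unitaries in $D^*(M;S)/C^*(N \subseteq M;S)$ connecting the two classes. For $t \in [0,1]$, define the bounded continuous functions
\[
\phi_t(x) = (1-t)\sin\!\left(\tfrac{\pi}{2}\theta(x)\right) + t\,\theta(x), \qquad \psi_t(x) = \sqrt{1 - \phi_t(x)^2},
\]
and set $V_t = i\phi_t(D) + \hat{\alpha}\,\psi_t(D)$. Because $\theta$ and $\sin(\frac\pi2\theta)$ both take values in $[-1,1]$, so does $\phi_t$, and both have equal limits $\pm 1$ at $\pm\infty$, so $\psi_t \in C_0(\R)$ while $\phi_t \in C_b(\R)$ has finite limits at $\pm\infty$. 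By Theorems \ref{thm: func calc thm for C^*M} and \ref{thm: func calc thm for D^*M} we get $\psi_t(D) \in C^*(M;S)$ and $\phi_t(D) \in D^*(M;S)$, and $\hat{\alpha}$ is a bounded multiplication operator in $D^*(M;S)$, so each $V_t \in D^*(M;S)$. Checking endpoints: $\psi_0 = \cos(\frac\pi2\theta)$ (nonnegative, since $\frac\pi2\theta \in [-\frac\pi2,\frac\pi2]$), so $V_0$ recovers the left-hand side, and $V_1$ clearly recovers the right-hand side.

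Next I would verify that each $q(V_t)$ is unitary. Writing $A \sim B$ for equality modulo $C^*(N \subseteq M;S)$, expand
\[
V_t V_t^* = \phi_t(D)^2 + i\hat{\alpha}\phi_t(D)\psi_t(D) - i\hat{\alpha}\psi_t(D)\phi_t(D) + \hat{\alpha}\psi_t(D)^2\hat{\alpha} + \text{commutator terms}.
\]
The commutator of $\hat{\alpha}$ with $\phi_t(D)$ lies in $C^*(N \subseteq M;S)$ by Lemma \ref{lem: commutator of D* with locally constant in localized algebra}, since $\alpha$ is locally constant outside $(-1,1)$. The commutator of $\hat\alpha$ with $\psi_t(D)$ lies there for the same reason. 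The two middle terms then cancel because $\phi_t(D)$ and $\psi_t(D)$ commute by functional calculus. Thus
\[
V_t V_t^* \sim \phi_t(D)^2 + \hat{\alpha}^2\psi_t(D)^2 = 1 + (\hat{\alpha}^2 - 1)\psi_t(D)^2,
\]
using $\phi_t^2 + \psi_t^2 = 1$. Finally, $1 - \alpha^2$ is supported in $[-1,1]$, so $1 - \hat{\alpha}^2$ is supported near $N$; since $\psi_t(D)^2 \in C^*(M;S)$, Lemma \ref{lem: Higsons 1.2 general} gives $(\hat{\alpha}^2 - 1)\psi_t(D)^2 \in C^*(N \subseteq M;S)$. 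Hence $V_t V_t^* \sim 1$, and $V_t^* V_t \sim 1$ by a symmetric computation.

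For continuity of $t \mapsto q(V_t)$, note that $t \mapsto \phi_t$ is Lipschitz in sup norm on $\R$ (it is an affine path between two bounded functions), hence $t \mapsto \phi_t(D)$ is norm continuous. Since $\phi_t$ takes values in $[-1,1]$ uniformly in $t$, and $x \mapsto \sqrt{1-x^2}$ is uniformly continuous on $[-1,1]$, the map $t \mapsto \psi_t$ is also continuous in sup norm, and hence $t \mapsto \psi_t(D)$ is norm continuous. It follows that $t \mapsto q(V_t)$ is a continuous path of unitaries connecting the two classes, which gives the desired equality in $K_1$.

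The main obstacle is the cancellation argument: the expansion of $V_t V_t^*$ naturally produces terms involving $[\hat\alpha, \phi_t(D)]$ and $[\hat\alpha, \psi_t(D)]$, which are not small in operator norm but only small modulo $C^*(N \subseteq M;S)$. The key observation that makes everything work is that $\alpha$ is locally constant outside the bounded interval $(-1,1)$, so Lemma \ref{lem: commutator of D* with locally constant in localized algebra} applies and places these commutators inside the ideal, exactly the reason the truncation $\alpha$ was chosen as it was.
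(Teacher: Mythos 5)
Your proposal is correct and is essentially the same proof as in the paper: you construct the identical linear homotopy $g_t = t\theta + (1-t)\sin(\tfrac{\pi}{2}\theta)$ (your $\phi_t$), form the path $V_t = ig_t(D) + \hat\alpha(1-g_t(D)^2)^{1/2}$ in $D^*(M;S)$, and verify unitarity modulo $C^*(N\subseteq M;S)$ by the same two ingredients (Lemma \ref{lem: commutator of D* with locally constant in localized algebra} for the $\hat\alpha$-commutators and Lemma \ref{lem: Higsons 1.2 general} for the term $(\hat\alpha^2-1)(1-g_t(D)^2)$). The only differences are cosmetic (computing $V_tV_t^*$ first rather than $V_t^*V_t$, and bookkeeping of the cross terms), so this matches the paper's argument.
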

\begin{proof}
	Consider the family of functions 
	\[
	g_t(x) = t\theta(x) + (1-t)\sin\left(\frac{\pi}{2}\theta(x)\right)
	\]
	for all $t\in [0, 1]$. Then the map $t\mapsto g_t$ is Lipschitz continuous with respect to the supremum norm. Moreover, it follows that for any $t\in[0,1]$
	\begin{equation}
		\label{eq: family of funcs has finite limits}
		\lim_{x \rightarrow \pm\infty}g_t(x) = t(\pm 1) + (1-t)\sin\left(\frac{\pi}{2}(\pm1)\right)=  t(\pm 1) + (1-t)(\pm 1) = \pm 1. 
	\end{equation}
    It directly follows from Equation \eqref{eq: family of funcs has finite limits} that $(1-g_t(x)^2)^{\frac{1}{2}} \in C_0(\R)$ for all $t\in [0, 1]$. 
	
	So, the map
	\begin{align*}
		X\colon\ [0, 1]&\rightarrow D^*(M;S),\\
		t &\mapsto ig_t(D) + \hat{\alpha}(1-g_t(D)^2)^{\frac{1}{2}},
	\end{align*}
	is a well-defined continuous path in $D^*(M;S)$ from $i\sin\left(\frac{\pi}{2}\theta(D)\right) + \hat{\alpha}\cos\left(\frac{\pi}{2}\theta(D)\right)$ to $i\theta(D) + \hat{\alpha} (1-\theta(D)^2)^{\frac{1}{2}}$. Consider the induced path
	\begin{align*}
		H\colon\ [0, 1]&\rightarrow D^*(M;S)/C^*(N\subseteq M;S), \\
		t&\mapsto q(X(t)),
	\end{align*}
	then $H$ is a homotopy of unitaries. Indeed, writing $Y\sim Z$ if $Y$ and $Z$ differ by an element in $C^*(N\subseteq M;S)$, it follows that 
	\begin{align*}
		X^*(t)X(t)&= (-ig_t(D) + (1-g_t(D)^2)^{\frac{1}{2}}\hat{\alpha})(ig_t(D) + \hat{\alpha}(1-g_t(D)^2)^{\frac{1}{2}}) \\
		&= g_t(D)^2 + (1-g_t(D)^2)^{\frac{1}{2}}\hat{\alpha}^2(1-g_t(D)^2)^{\frac{1}{2}}\\
		&\qquad +i((1-g_t(D)^2)^{\frac{1}{2}}\hat{\alpha}g_t(D)- g_t(D)\hat{\alpha}(1-g_t(D)^2)^{\frac{1}{2}})\\
		&\sim g_t(D)^2 + (1-g_t(D)^2)^{\frac{1}{2}}\hat{\alpha}^2(1-g_t(D)^2)^{\frac{1}{2}}\\
		&\qquad +i(\hat{\alpha}(1-g_t(D)^2)^{\frac{1}{2}}g_t(D)- g_t(D)(1-g_t(D)^2)^{\frac{1}{2}}\hat{\alpha})\\
		&\sim g_t(D)^2 + (1-g_t(D)^2)^{\frac{1}{2}}\hat{\alpha}^2(1-g_t(D)^2)^{\frac{1}{2}}\\
		&= g_t(D)^2 + (1-g_t(D)^2)^{\frac{1}{2}}(\hat{\alpha}^2-1)(1-g_t(D)^2)^{\frac{1}{2}} + (1-g_t(D)^2) \\
		&\sim  g_t(D)^2 +(1-g_t(D)^2) = 1,
	\end{align*}
	where the first two equivalences follow from Lemma \ref{lem: commutator of D* with locally constant in localized algebra} and the last equivalence follows from Lemma \ref{lem: Higsons 1.2 general} since $(1-g_t(D)^2)^{\frac{1}{2}} \in C^*(M;S)$ by Equation \eqref{eq: family of funcs has finite limits} and Theorem \ref{thm: func calc thm for C^*M}. Similarly it follows that
    \[
X(t)X^*(t) \sim 1.
    \]
    So $H$ indeed is a homotopy through the unitaries from $q\left(i\sin\left(\frac{\pi}{2}\theta(D)\right) + \hat{\alpha}\cos\left(\frac{\pi}{2}\theta(D)\right)\right)$ to $q\left(i\theta(D) + \hat{\alpha} (1-\theta(D)^2)^{\frac{1}{2}}\right)$ and therefore the result follows.
\end{proof}

\begin{lemma}
	\label{lem: relating K theory classes 3}
	We have
	\[
	\left[q\left(i\theta(D) + \hat{\alpha} (1-\theta(D)^2)^{\frac{1}{2}}\right)\right]_1 = -[q(T_+)]_1
	\]
	as elements of $K_1(D^*(M;S)/C^*(N\subseteq M;S))$.
\end{lemma}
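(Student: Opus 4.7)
The plan is to pass to the quotient $\cQ := D^*(M;S)/C^*(N\subseteq M;S)$ and show that $[q(T_+)]_1 + [q(U)]_1 = 0$ in $K_1(\cQ)$, where $U := i\theta(D) + \hat\alpha(1-\theta(D)^2)^{1/2}$. The mechanism is that modulo the ideal, $\hat\alpha$ and $\theta(D)$ become commuting self-adjoints whose joint spectrum sits on the boundary of the unit square, and a winding number computation on that circle shows that $q(T_+)\cdot q(U)$ is null-homotopic.

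First I would establish two identities in $\cQ$. The commutator $[\hat\alpha,\theta(D)]$ lies in $C^*(N\subseteq M;S)$ by Lemma \ref{lem: commutator of D* with locally constant in localized algebra}, applied to the function $\alpha$ (locally constant outside $(-1,1)$) and $\theta(D)\in D^*(M;S)$. The product $(1-\hat\alpha^2)(1-\theta(D)^2)$ lies in $C^*(N\subseteq M;S)$ because $1-\theta(D)^2\in C^*(M;S)$ by Theorem \ref{thm: func calc thm for C^*M}, while $1-\hat\alpha^2$ is the multiplication operator of a bounded function supported in $[-1,1]\times N$, so Lemma \ref{lem: Higsons 1.2 general} applies. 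Let $\cC\subseteq\cQ$ be the unital $C^*$-subalgebra generated by $q(\hat\alpha)$ and $q(\theta(D))$. By these two identities, $\cC$ is commutative and $(1-q(\hat\alpha)^2)(1-q(\theta(D))^2)=0$, so by Gelfand duality $\cC\cong C(K)$ where the joint spectrum $K$ is contained in $\{(x,y)\in[-1,1]^2:(1-x^2)(1-y^2)=0\}=\partial[-1,1]^2\cong S^1$. Under this identification $q(T_+)$ corresponds to $T_+(x,y)=ix+(1-x^2)^{1/2}y$ and $q(U)$ to $U(x,y)=iy+x(1-y^2)^{1/2}$, and both take values in $S^1$ on $K$ because $|T_+|^2=|U|^2=1-(1-x^2)(1-y^2)=1$ there.

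The core step is an explicit winding number computation. Extend $T_+$ and $U$ to continuous $S^1$-valued maps on the whole square boundary by the same formulas. Traversing $\partial[-1,1]^2$ counterclockwise starting from $(1,-1)$, one verifies that $T_+$ is constant equal to $i$ on the right edge and $-i$ on the left edge, while on each horizontal edge it traces a half-circle clockwise (through $1$ on the top edge and through $-1$ on the bottom edge), giving total winding $-1$; the symmetric computation with the roles of $x$ and $y$ interchanged shows $U$ has winding $+1$. Consequently $T_+\cdot U$ has winding $0$ on $\partial[-1,1]^2$, so it is null-homotopic through continuous $S^1$-valued maps there. Restricting the null-homotopy to $K$ and applying the joint functional calculus of $(q(\hat\alpha),q(\theta(D)))$ inside $\cC\cong C(K)$ produces a continuous path of unitaries in $\cC\subseteq\cQ$ from $q(T_+)q(U)=q(T_+\cdot U)$ to $1$. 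This gives $[q(T_+)]_1+[q(U)]_1=[q(T_+\cdot U)]_1=0$ in $K_1(\cQ)$, which is the claimed equality.

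The main obstacle is the winding computation, in which one has to track the continuous argument of $T_+$ (and $U$) carefully as it passes through $-1$ on the bottom (respectively left) edge, to make sure the lift decreases monotonically rather than wrapping around. Once this is handled, the remainder is formal: the commutative-subalgebra picture in $\cQ$ is supplied by the two ideal identities, and functoriality of $K_1$ under the unital inclusion $\cC\hookrightarrow\cQ$ transfers the null-homotopy from $\cC$ to $\cQ$.
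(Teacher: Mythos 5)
Your proof is correct, but it takes a genuinely different route from the paper's. The paper's argument is a direct operator-algebraic homotopy: with $R := i\theta(D) + \hat\alpha(1-\theta(D)^2)^{1/2}$, it considers $H(t) = \cos(\tfrac{\pi}{2}t)\,q(R) + \sin(\tfrac{\pi}{2}t)\,q(iT_+^*)$ and verifies via the ideal relations that $H(t)^*H(t) = 1 + 2\cos(\tfrac{\pi}{2}t)\sin(\tfrac{\pi}{2}t)\,q\bigl((1-\hat\alpha^2)^{1/2}+(1-\theta(D)^2)^{1/2}\bigr) \geq 1$, so the unitary part $\omega(H(t))$ is a homotopy of unitaries from $q(R)$ to $q(iT_+^*)$. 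Your approach instead passes to the commutative unital subalgebra $\cC \subseteq \cQ$ generated by $q(\hat\alpha)$ and $q(\theta(D))$, uses Gelfand duality to identify $\cC \cong C(K)$ with $K \subseteq \partial[-1,1]^2$, and cancels winding numbers $\mp 1$. Both are valid; the paper's homotopy is more economical and stays close to the operator computations already set up in the preceding lemmas, whereas yours explains conceptually why the minus sign arises (the two unitaries have opposite winding on the boundary square). Two small points worth making explicit in your write-up: you implicitly take $\chi = \theta$ in the formula for $T_+$, which is fine because $T_+$ for different normalizing functions differs by an element of $C^*(N\subseteq M;S)$ (apply Lemma \ref{lem: Higsons 1.2 general} to $(1-\hat\alpha^2)^{1/2}(\chi_1(D)-\chi_2(D))$), so $[q(T_+)]_1$ is independent of the choice; and when $K \subsetneq \partial[-1,1]^2$ the null-homotopy argument works for the simpler reason that any proper compact subset of a circle is covered by an arc, so every $S^1$-valued map on it is null-homotopic.
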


\begin{proof}
	Consider the homotopy
	\begin{align*}
		H\colon\ [0,1]&\rightarrow D^*(M;S)/C^*(N\subseteq M;S),\\
		t&\mapsto \cos\left(\frac{\pi}{2}t\right)q\left(i\theta(D) + \hat{\alpha} (1-\theta(D)^2)^{\frac{1}{2}}\right) + \sin\left(\frac{\pi}{2}t\right)q(iT_+^*).
	\end{align*}
	We claim that $H(t)$ is invertible for each $t\in [0,1]$. Indeed, abbreviating $R = i\theta(D) + \hat{\alpha} (1-\theta(D)^2)^{\frac{1}{2}}$, we compute
	\begin{align}
		\label{eq: homotopy through invertibles 1}
		&H(t)^*H(t)\notag\\
		&\qquad=\left(\cos\left(\frac{\pi}{2}t\right)q(R^*) + \sin\left(\frac{\pi}{2}t\right)q(-iT_+)\right)\left(\cos\left(\frac{\pi}{2}t\right)q(R) + \sin\left(\frac{\pi}{2}t\right)q(iT_+^*)\right)\notag\\
		&\qquad=\cos\left(\frac{\pi}{2}t\right)^2q(R^*R) + \sin\left(\frac{\pi}{2}t\right)^2q(T_+T_+^*)  +i\cos\left(\frac{\pi}{2}t\right)\sin\left(\frac{\pi}{2}t\right)q(R^*T_+^*-T_+R) \notag\\
		&\qquad= 1+i\cos\left(\frac{\pi}{2}t\right)\sin\left(\frac{\pi}{2}t\right)q(R^*T_+^*-T_+R), 
	\end{align}
	where the last equality follows as $q(T_+)$ and $q(R)$ are unitary by Corollary \ref{cor: quotients of T+ are unitary} and Lemma \ref{lem: relating K theory classes 2}, respectively. We claim that the second term in \eqref{eq: homotopy through invertibles 1} is positive. 
	
	Writing out the operator in the quotient, we see that 
	\begin{multline}
		 R^*T_+^*-T_+R=\\
		\label{eq: intermediate operator 1}
		-i\left(\theta(D)^2(1-\hat{\alpha}^2)^{\frac{1}{2}} + (1-\theta(D)^2)^{\frac{1}{2}}\hat{\alpha}^2+\hat{\alpha}^2 (1-\theta(D)^2)^{\frac{1}{2}} + (1-\hat{\alpha}^2)^{\frac{1}{2}}\theta(D)^2\right)\\
		+[\hat{\alpha}, \theta(D)]+ (1-\theta(D)^2)^{\frac{1}{2}}\hat{\alpha}\theta(D)(1-\hat{\alpha}^2)^{\frac{1}{2}}  -(1-\hat{\alpha}^2)^{\frac{1}{2}}\theta(D)\hat{\alpha} (1-\theta(D)^2)^{\frac{1}{2}}. 
	\end{multline}
	Again, writing $X\sim Y$ if $X$ and $Y$ differ by an element in $C^*(N\subseteq M;S)$, it follows that the terms in 
    the second line on the right 
    are equivalent to 0. For the first term, this follows from Lemma \ref{lem: commutator of D* with locally constant in localized algebra} and for the last two terms, this follows from Lemma \ref{lem: Higsons 1.2 general}.
    The first line on the right hand side of 
    \eqref{eq: intermediate operator 1}  is easily seen to be equal to 
	\begin{align}
		\label{eq: intermediate operator 3}
		&-2i\left((1-\hat{\alpha}^2)^{\frac{1}{2}}+(1-\theta(D)^2)^{\frac{1}{2}}\right)\\
		\label{eq: intermediate operator 4}
		&-i\left((\theta(D)^2-1)(1-\hat{\alpha}^2)^{\frac{1}{2}}+ (1-\theta(D)^2)^{\frac{1}{2}}(\hat{\alpha}^2-1)\right)\\
		\label{eq: intermediate operator 5}
		&-i\left((\hat{\alpha}^2-1) (1-\theta(D)^2)^{\frac{1}{2}}+(1-\hat{\alpha}^2)^{\frac{1}{2}}(\theta(D)^2-1)\right),
	\end{align}
	where again the terms in \eqref{eq: intermediate operator 4} and \eqref{eq: intermediate operator 5} are equivalent to $0$ by Lemma \ref{lem: Higsons 1.2 general}. We conclude that 
	\begin{equation*}
		R^*T_+^*-T_+R \sim -2i\left((1-\hat{\alpha}^2)^{\frac{1}{2}}+(1-\theta(D)^2)^{\frac{1}{2}}\right).
	\end{equation*}
	As a result, \eqref{eq: homotopy through invertibles 1} becomes
    \[
		H(t)^*H(t)
        = 
        1+2\cos\left(\frac{\pi}{2}t\right)\sin\left(\frac{\pi}{2}t\right)q((1-\hat{\alpha}^2)^{\frac{1}{2}}+(1-\theta(D)^2)^{\frac{1}{2}}) \geq 1.        \]
    It follows that $H(t)$ is invertible for any  $t \in [0,1]$. 
	
	Since the map $\omega$ that sends an invertible element $a$ to its unitary part $a|a|^{-1}$ is continuous and fixes the unitaries (see for example \cite[Prop. 2.1.8.ii]{Rordam2000Ktheory}) the map
	\begin{align*}
		H'\colon\ [0, 1]&\rightarrow D^*(M;S)/C^*(N\subseteq M;S),\\
		t&\mapsto \omega(H(t)),
	\end{align*}
	is a homotopy of unitaries from $\omega(H(0)) = \omega(q(R)) = q(R)$ to $\omega(H(1)) = \omega(q(iT_+^*)) = q(iT_+^*)$. The result follows as $[q(iT_+^*)]_1 = -[q(T_+)]_1$.
\end{proof}

\begin{proof}[Proof of Theorem \ref{thm: PMT for product}]
	By Corollary \ref{cor: image of D_N class under paschke duality}, $[D_N]$ is mapped to $-[q(T_+)]_1 \in K_1(Q^*(N;S))$ under Paschke duality. So since Diagram \eqref{diag: the central diagram} commutes it follows that 
	\[
	\Phi(\Ind(D_N)) = K_0(i)^{-1}\circ\partial (-[q(T_+)]_1).
 	\]
 	Moreover,
  %
 	by naturality of the boundary map and Diagram \eqref{diag: relating C^* extensions} it follows that
	\begin{align*}
		\Phi(\Ind(D_N)) &= K_0(i)^{-1}\circ\partial (-[q(T_+)]_1) =-K_0(i)^{-1}\circ\partial \circ K_1(\ol{j_N}) ([q(T_+)]_1)\\
		&=-\partial\circ K_1(\ol{j_M})([q(T_+)]_1) = -\partial(-[q(U_+)]_1) = \Ind(D;N),
	\end{align*}
	where the second last equality follows from Proposition \ref{prop: relating K theory classes}.
\end{proof}

\section{Proof of the general case: reduction to a product manifold}
\label{section: proof general case}

The goal of this section is to show that under the conditions stated in Theorem \ref{thm: PMT}, we can derive the partitioned manifold index theorem for a general partitioned manifold from the special case of a product manifold treated in Section \ref{section: proof special case}, Theorem \ref{thm: PMT for product}. The strategy will be to construct a transition manifold, which is a partitioned manifold $M'$ that looks like our original manifold $M$ on one side and like a product on the other side. This is inspired by the proof demonstrated by Higson \cite[Thm. 1.5]{higson1991cobordisminvariance}. We are careful about the details of various gluing constructions, to avoid issues due to noncompactness of $N$. (See e.g.\ Corollary \ref{cor: g' is complete} on completeness of a Riemannian metric obtained from a gluing construction.)

\subsection{Half-isomorphisms and the transition map}
\label{subsec: half-isos and the transition map}
In this section, we introduce the notion of a half-isomorphism between partitioned manifolds. As the name suggests, the idea is that two partitioned manifolds are half-isomorphic if they are isomorphic on one of their halves. The concept of a half-isomorphism is motivated by \cite[Lem. 3.1]{higson1991cobordisminvariance}.

\begin{definition}
	\label{def: half iso}
	For $i = 1, 2$, let $(M_i, g_i, S_i, h_i)$ be two sets of complete Riemannian manifolds $(M_i, g_i)$ with Hermitian vector bundles $(S_i, h_i)$. Let $D_i$ be differential operators acting on the smooth sections of $S_i$. Suppose that both $M_i$ are partitioned by a simple hypersurface $N_i\subseteq M_i$ and have been given a partition labeling $M_{i\pm}$.
	\begin{enumerate}
		\item \label{item: half iso} A \emph{half-isomorphism} is a pair of opens $O_i \supseteq M_{i+}$ such that the function $\phi_{+}$ as in Definition \ref{def: simple hypersurface} can be chosen such that $\supp(\phi_{+})\subseteq O_i$, and a pair of maps $(\phi, u)$ with $\phi\colon O_1\rightarrow O_2$ an orientation-preserving Riemannian isometry such that $\phi(N_1) = N_2$ and $\phi(M_{1+}) = M_{2+}$, and $u\colon \restr{S_1}{O_1}\rightarrow \restr{S_2}{O_2}$ a fiberwise unitary vector bundle morphism covering $\phi$. We will usually suppress the opens $O_i$ in the notation of a half-isomorphism.
		\item \label{item: uniform half iso} A \emph{uniform half-isomorphism} is a half-isomorphism $(\phi, u)$ such that the map $\phi$ extends to a homeomorphism $\phi\colon \ol{O}_1\rightarrow \ol{O}_2$ that is a uniform equivalence, when $\ol{O}_i$ is considered as a metric space with the distance function $\restr{d_{M_i}}{\ol{O}_i}$.
		\item  \label{item: compatible half iso} A half-isomorphism $(\phi, u)$ is \emph{compatible with the differential operators} if 
		\begin{equation}
			\label{eq: compatibility with diff ops}
			u\circ D_1\sigma \circ \phi^{-1} = D_2(u\circ\sigma\circ\phi^{-1})
		\end{equation}
		for all $\sigma \in C^{\infty}(O_1;S_1)$.
	\end{enumerate}
\end{definition}  

Our goal in this subsection is to show that the following hold:
\begin{enumerate}
	\item If $M_1$ and $M_2$ are partitioned manifolds with a uniform half-isomorphism between them, then there exists an induced isomorphism \[
	\cT \colon K_p(C^*(N_2\subseteq M_2))\xrightarrow{\sim} K_p(C^*(N_1\subseteq M_1)).
	\]
	\item If $\Phi_1$ and $\Phi_2$ are defined as in \eqref{diag: identification maps} for the partitioned manifolds $M_1$ and $M_2$ respectively, then the following diagram commutes:
	\begin{equation}
		\label{diag: transition diagram}
		\begin{tikzcd}
			{K_0(C^*(N_1))} & {K_0(C^*(N_1\subseteq M_1))} \\
			{K_0(C^*(N_2))} & {K_0(C^*(N_2\subseteq M_2))}
			\arrow["{\Phi_1}", from=1-1, to=1-2]
			\arrow["({\restr{\phi}{N_1})_*}"', from=1-1, to=2-1]
			\arrow["{\cT}"', from=2-2, to=1-2]
			\arrow["{\Phi_2}", from=2-1, to=2-2]
		\end{tikzcd}
	\end{equation}
\end{enumerate}

For notational purposes, denote $H_{i+}:= L^2(M_{i+};\restr{S_i}{M_{i+}})$ and denote the inclusion by $W_i\colon H_{i+}\hookrightarrow L^2(M_i;S_i)$, which is an isometry. Similarly, for an unlabeled partitioned manifold $N\subseteq M$ we also write $H_{+}:= L^2(M_{+};\restr{S}{M_{+}})$ and denote the inclusion by $W\colon H_+\hookrightarrow L^2(M_{+};\restr{S}{M_{+}})$. 
A half-isomorphism induces a unitary map
$V\colon L^2(O_1;\restr{S_1}{O_1})\rightarrow L^2(O_2;\restr{S_2}{O_2})$ defined by $\sigma\mapsto u\circ \sigma\circ\phi^{-1}$, which restricts to a unitary map $\Gamma\colon H_{1+}\rightarrow H_{2+}$.  
\begin{lemma}
	\label{lem: half iso map on base preserves distance to hypersurface}
	For $i = 1, 2$, let $N_i\subseteq M_i$ be two partitioned manifolds and let $\phi$ be a map as in part \ref{item: half iso} of Definition $\ref{def: half iso}$. Then for any $p \in M_{1+}$,
	\[
	d_{M_1}(p, N_1) = d_{M_2}(\phi(p), N_2).
	\]
\end{lemma}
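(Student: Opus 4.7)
My plan is to factor the equality through the intrinsic Riemannian distances on the closed halves $M_{1+}$ and $M_{2+}$, namely to prove
\[
d_{M_1}(p, N_1) \;=\; d_{M_{1+}}(p, N_1) \;=\; d_{M_{2+}}(\phi(p), N_2) \;=\; d_{M_2}(\phi(p), N_2),
\]
where $d_{M_{i+}}$ denotes the infimum of lengths of paths lying entirely inside the submanifold-with-boundary $M_{i+}$. The outer equalities use only the fact that $N_i$ separates $M_i$, and the middle equality is exactly where the hypothesis that $\phi$ is a Riemannian isometry on $O_i \supseteq M_{i+}$ enters.

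For the first equality, I would argue as follows. The inequality $d_{M_1}(p, N_1) \leq d_{M_{1+}}(p, N_1)$ is trivial because any path in $M_{1+}$ is also a path in $M_1$. For the reverse, let $\gamma \colon [0,1] \to M_1$ be any path from $p$ to a point of $N_1$. Since $N_1$ is closed (Lemma \ref{lem: topological properties of partitions}), the time $t_0 := \inf\{t \in [0,1] : \gamma(t) \in N_1\}$ satisfies $\gamma(t_0) \in N_1$. On $[0, t_0)$ the image $\gamma([0, t_0))$ is connected, avoids $N_1$, and (after dealing separately with the trivial case $p \in N_1$) contains $p \in M_{1+}^\circ$; by the disconnection $M_1 \setminus N_1 = M_{1+}^\circ \sqcup M_{1-}^\circ$ it must lie in $M_{1+}^\circ$. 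Hence $\gamma|_{[0, t_0]}$ is a path in $M_{1+}$ from $p$ to $N_1$ of length at most that of $\gamma$, proving $d_{M_{1+}}(p, N_1) \leq d_{M_1}(p, N_1)$. The same argument, applied in $M_2$, gives the analogous third equality.

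For the middle equality, the restriction $\phi|_{M_{1+}} \colon M_{1+} \to M_{2+}$ is a homeomorphism by part \ref{item: half iso} of Definition \ref{def: half iso}, and since $\phi \colon O_1 \to O_2$ is a Riemannian isometry, it preserves the length of every smooth path contained in $M_{1+} \subseteq O_1$ and conversely, every smooth path in $M_{2+} \subseteq O_2$ pulls back under $\phi^{-1}$ to a smooth path of the same length in $M_{1+}$. Therefore the intrinsic length infima satisfy $d_{M_{1+}}(p, q) = d_{M_{2+}}(\phi(p), \phi(q))$ for all $p, q \in M_{1+}$. Taking the infimum over $q \in N_1$ and using $\phi(N_1) = N_2$ yields $d_{M_{1+}}(p, N_1) = d_{M_{2+}}(\phi(p), N_2)$. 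Chaining the three equalities finishes the proof.

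I do not anticipate a serious obstacle; the only point that requires a little care is the truncation-at-first-crossing argument, for which one must remember to invoke the disjoint-decomposition part of Lemma \ref{lem: topological properties of partitions} rather than trying to argue geodesically (which would be problematic if $M_{1+}$ were not geodesically convex inside $M_1$).
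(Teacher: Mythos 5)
Your proof is correct and follows essentially the same approach as the paper: both truncate an arbitrary curve from $p$ to $N_1$ at its first crossing of $N_1$ (using that $N_1$ separates $M_1$ to keep the truncated arc inside $M_{1+}$), transport it by the length-preserving isometry $\phi$ on $O_1 \supseteq M_{1+}$, and then close the argument by a symmetry under swapping $M_1 \leftrightarrow M_2$. Your factoring of the equality through the intrinsic distances $d_{M_{i+}}$ is a slightly more modular packaging of the same two ideas, but the content is identical to the paper's proof.
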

\begin{proof}
	Let $p \in M_{1+}$ and $n \in N_1$ be arbitrary and let $\gamma$ be any piecewise smooth curve connecting $p$ to $n$. Then $\gamma^{-1}(N)\subseteq [0, 1]$ is closed, hence compact. Let $t_0 = \min(\gamma^{-1}(N))$. Then $\gamma(t_0)\in N$ and $\restr{\gamma}{[0, t_0]}$ is a curve through $M_{1+}$. It follows that 
	\begin{align*}
		L_{g_1}(\gamma) &\geq L_{g_1}(\restr{\gamma}{[0, t_0]}) = L_{g_2}(\restr{\phi\circ \gamma}{[0, t_0]})\geq d_{M_2}(\phi(p), \phi\circ \gamma(t_0)) \geq d_{M_2}(\phi(p), N_2). 
	\end{align*}
	Taking infima over all curves $\gamma$ and then over all $n \in N_1$, we conclude that 
	\[
	d_{M_1}(p, N_1)\geq d_{M_2}(\phi(p), N_2).
	\]
	The equality 
	\[
	d_{M_1}(p, N_1)= d_{M_2}(\phi(p), N_2)
	\]
	then follows by symmetry under swapping $M_1$ with $M_2$ (and replacing $\phi$ by $ \phi^{-1}$). 
\end{proof}

\begin{lemma}
	\label{lem: Gamma covers uniform equivalence}
	Let $(\phi, u)$ be a uniform half-isomorphism between two partitioned manifolds $N_i\subseteq M_i$ with Hermitian bundles $S_i$, and let $\Gamma\colon H_{1+}\rightarrow H_{2+}$ be the induced unitary map. Then $\Gamma$ uniformly covers the uniform equivalence $\restr{\phi}{M_{1+}}$. Moreover, 
	\[
	\Ad_{\Gamma}(C^*(N_1\subseteq M_{1+}))= C^*(N_2\subseteq M_{2+}).
	\]
\end{lemma}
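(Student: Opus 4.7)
The plan is to verify the two conditions of Definition \ref{def: (uniformly) covering isometry} by direct computation, and then check that $\Ad_\Gamma$ preserves each of the three defining properties of the localized Roe algebra.

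First I would note that $\phi|_{M_{1+}}\colon M_{1+}\to M_{2+}$ is a uniform equivalence when each $M_{i+}$ is given the subspace metric $d_{M_i}|_{M_{i+}}$: since $M_{i+}\subseteq O_i\subseteq\ol O_i$, this is just the restriction of the uniform equivalence $\ol O_1\to\ol O_2$ assumed in part \ref{item: uniform half iso} of Definition \ref{def: half iso}. Then, for $\sigma\in H_{1+}$, $f\in C_0(M_{2+})$, $g\in C_0(M_{1+})$ and $q\in M_{2+}$, a direct computation using $(\Gamma\sigma)(q)=u(\sigma(\phi^{-1}(q)))$ gives
\[
(\rho_{M_{2+}}(f)\,\Gamma\,\rho_{M_{1+}}(g)\sigma)(q)=f(q)\,g(\phi^{-1}(q))\,u(\sigma(\phi^{-1}(q))),
\]
which vanishes as soon as $\supp f\cap\phi(\supp g)=\emptyset$. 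So any $R>0$ works in the covering condition. Essentially the same computation shows that $\Gamma^*\rho_{M_{2+}}(f)\Gamma=\rho_{M_{1+}}(f\circ\phi|_{M_{1+}})$ on the nose (not only up to compacts), using that the fiberwise unitary $u$ cancels; thus $\Gamma$ uniformly covers $\phi|_{M_{1+}}$.

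For the second statement, I would show that $\Ad_\Gamma$ maps the dense $*$-subalgebra $\LC(M_{1+})\cap\FP(M_{1+})\cap\SN_{N_1}(M_{1+})$ into $\LC(M_{2+})\cap\FP(M_{2+})\cap\SN_{N_2}(M_{2+})$, and hence by continuity and closure into $C^*(N_2\subseteq M_{2+})$. The identity $\rho_{M_{2+}}(f)\Ad_\Gamma(T)=\Gamma\,\rho_{M_{1+}}(f\circ\phi)T\,\Gamma^*$ (and its adjoint) reduces local compactness of $\Ad_\Gamma(T)$ to local compactness of $T$, using that $f\circ\phi\in C_0(M_{1+})$ because $\phi|_{M_{1+}}$ is proper. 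For finite propagation, uniform expansiveness of $\phi|_{M_{1+}}$ converts the propagation bound for $T$ (in $M_{1+}$) into a propagation bound for $\Ad_\Gamma(T)$ (in $M_{2+}$). For the support near $N_*$ condition, the same uniform expansiveness together with the identity $\phi(N_1)=N_2$ converts the bound $d_{M_{1+}}(\supp(f\circ\phi),N_1)>R$ into $d_{M_{2+}}(\supp f,N_2)>R'$ (and conversely).

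Finally, since $(\phi^{-1},u^{-1})$ is again a uniform half-isomorphism (between the partitioned manifolds with roles swapped) and induces $\Gamma^{-1}=\Gamma^*$, the same argument applied in the reverse direction gives $\Ad_{\Gamma^*}(C^*(N_2\subseteq M_{2+}))\subseteq C^*(N_1\subseteq M_{1+})$, yielding equality. The main point requiring care is just the metric bookkeeping: distinguishing the intrinsic Riemannian distance on $O_i$ from the ambient distance $d_{M_i}$, which is why the uniform-equivalence assumption on the extension $\ol O_1\to\ol O_2$ (rather than merely the Riemannian isometry $\phi$) is exactly what feeds the uniform expansiveness needed for finite propagation and support near $N_i$. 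The computations themselves are then routine and essentially parallel the proof of \cite[Lem. 6.3.11]{Higson2000Khomology}.
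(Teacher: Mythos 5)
Your argument is correct and follows essentially the same structure as the paper's proof: verify covering, check preservation of the three defining properties on a dense subalgebra, and then use the symmetry of the half-isomorphism under $(\phi,u)\mapsto(\phi^{-1},u^{-1})$ to upgrade the inclusion to an equality. Two small points of comparison. First, you treat the "supported near $N_i$" condition via uniform expansiveness of $\phi$ (and its inverse) as a map $\ol O_1\to\ol O_2$, together with $\phi(N_1)=N_2$; the paper instead appeals to the exact identity $d_{M_1}(p,N_1)=d_{M_2}(\phi(p),N_2)$ from Lemma \ref{lem: half iso map on base preserves distance to hypersurface} (note that the reference printed in the paper's proof, to Lemma \ref{lem: grading operator}, is a misprint -- the intended lemma is the one about preservation of distance to the hypersurface). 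Both routes work; the paper's gives an exact bound, yours a uniform one, which is all that the definition of $\SN_{N_i}$ requires. Second, you make explicit that the covering isometry condition for $\Gamma$ holds with $R=0$ and that $\Gamma^*\rho_{M_{2+}}(f)\Gamma=\rho_{M_{1+}}(f\circ\phi)$ exactly, rather than merely modulo compacts; the paper leaves this as "follows from the definitions." These observations are correct (the exact identity uses that $\phi$ is a Riemannian isometry, hence volume-preserving, and that $u$ is fiberwise unitary) and slightly sharpen the presentation, but the underlying argument is the same.
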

\begin{proof}
	First note that $\restr{\phi}{M_{1+}}$ is a uniform equivalence as it is a restriction of the uniform equivalence $\phi\colon \ol{O}_1\rightarrow \ol{O}_2$. Denote by $\rho_{i+}\colon C_0(M_{i+})\rightarrow H_{i+}$ the corresponding representations by multiplication operators.
	It follows from the definitions that $\Gamma$ uniformly covers $\phi$ and therefore $\Ad_{\Gamma}$ preserves finite propagation, local compactness and pseudolocality. 

	It follows from Lemma \ref{lem: grading operator} that $\Ad_{\Gamma}$ maps operators supported near $N_1$ to operators supported near $N_2$. Since $\Gamma$ also uniformly covers $\phi$, it follows that $\Ad_{\Gamma}$ maps the localized Roe algebra $C^*(N_1\subseteq M_{1+})$ into the localized Roe algebra $C^*(N_2\subseteq M_{2+})$. After reversing the roles of $M_1$ and $M_2$, $\Ad_{\Gamma^*}$ does the same. Therefore it follows that $\Ad_{\Gamma}(C^*(N_1\subseteq M_{1+}))= C^*(N_2\subseteq M_{2+})$.
\end{proof}

Using Proposition \ref{prop: localized roe alg isomorphic to roe alg of subset} twice yields
isomorphisms 
\begin{equation}
	\label{eq: half loc alg to entire loc alg}
	K_p(C^*(N_i\subseteq M_i, d_{M_i}))\cong K_p(C^*(N_i, \restr{d_{M_i}}{N}))
	\cong  K_p(C^*(N_i\subseteq M_{i+}, \restr{d_{M_i}}{M_{i+}})).
\end{equation}
It is not hard to show that the isometric inclusions $W_i$ uniformly cover the uniform maps $M_{i+}\hookrightarrow M_i$ and that they also preserve the localized Roe algebras. It can then be shown that the map in \eqref{eq: half loc alg to entire loc alg} going backwards is just $K_p(\Ad_{W_i})$.

Suppose that the half-isomorphism is uniform.
%
%
%
Then it follows from Lemma \ref{lem: Gamma covers uniform equivalence} that the right vertical map in the following diagram is a well-defined isomorphism:
\begin{equation}
	\label{diag: def transition map}
	\begin{tikzcd}
		{K_p(C^*(N_2\subseteq M_2, d_{M_2};S_2))} & {K_p(C^*(N_2\subseteq M_{2+}, \restr{d_{M_2}}{M_{2+}};S_2))} \\
		{K_p(C^*(N_1\subseteq M_1, d_{M_1};S_1))} & {K_p(C^*(N_1\subseteq M_{1+}, \restr{d_{M_1}}{M_{1+}};S_1))}
		\arrow["\sim", from=1-1, to=1-2]
		\arrow["\cT", dotted, from=1-1, to=2-1]
		\arrow["{K_p(\Ad_{\Gamma^*})}", from=1-2, to=2-2]
		\arrow["\sim"', from=2-2, to=2-1]
	\end{tikzcd}
\end{equation} 
In the above diagram, we included the distance functions to emphasize that we use the restricted distance functions on the positive parts of the partitions.

\begin{definition}
	\label{def: transition map}
	Let $(\phi, u)$ be a uniform half-isomorphism between two partitioned manifolds $N_i\subseteq M_i$. Then the map $\cT$ as constructed in Diagram \eqref{diag: def transition map} is the \emph{transition map} induced by the uniform half-isomorphism $(\phi, u)$.
\end{definition}

\begin{remark}
	\label{rem: transition map is isomorphism}
	The transition map $\cT$ induced by a uniform half-isomorphism is an isomorphism by construction.
\end{remark}

Our final goal for this section is to show that Diagram \eqref{diag: transition diagram} commutes.
\begin{lemma}
	\label{lem: conjugation with Gamma fits into comm diagram}
	Let $(\phi, u)$ be a uniform half-isomorphism between two partitioned manifolds $N_i\subseteq M_i$ with Hermitian bundles $S_i$. Then 
	\[
	\restr{\phi}{N_1}\colon (N_1, \restr{d_{M_1}}{N_1})\rightarrow (N_2, \restr{d_{M_2}}{N_2})
	\]
	is a uniform equivalence and the following diagram commutes:
	\begin{equation}
		\label{diag: preparation transition lemma}
		\begin{tikzcd}
			{K_p(C^*(N_1, \restr{d_{M_1}}{N_1}))} & {K_p(C^*(N_2, \restr{d_{M_2}}{N_2}))} \\
			{K_p(C^*(N_1\subseteq M_{1+}))} & {K_p(C^*(N_2\subseteq M_{2+}))}
			\arrow["{(\restr{\phi}{N_1})_*}", from=1-1, to=1-2]
			\arrow["\sim"' {anchor = south, rotate = 90}, from=1-1, to=2-1]
			\arrow["\sim"{anchor = south, rotate = 90}, from=1-2, to=2-2]
			\arrow["{K_p(\Ad_{\Gamma})}"', from=2-1, to=2-2]
		\end{tikzcd}
	\end{equation}
\end{lemma}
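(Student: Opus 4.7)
My proposal handles the two claims separately. For the first claim, the plan is short: by definition of a uniform half-isomorphism, $\phi$ extends to a uniform equivalence $\ol{O}_1 \to \ol{O}_2$ with respect to the restricted metrics $\restr{d_{M_i}}{\ol{O}_i}$, and since $N_i \subseteq M_{i+} \subseteq \ol{O}_i$ is closed with $\phi(N_1) = N_2$, restricting $\phi$ and $\phi^{-1}$ to $N_1$ and $N_2$ yields mutually inverse uniform maps, hence a uniform equivalence $(N_1, \restr{d_{M_1}}{N_1}) \to (N_2, \restr{d_{M_2}}{N_2})$.

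For commutativity of \eqref{diag: preparation transition lemma}, the strategy is to realize every arrow as $K_p(\Ad_V)$ for an explicit isometry $V$ and then conclude by functoriality. I would fix ample multiplication representations on $L^2(N_i; \restr{S_i}{N_i})$ and on $H_{i+}$. By \cite[Prop. 6.3.12]{Higson2000Khomology}, choose isometries $V_i\colon L^2(N_i; \restr{S_i}{N_i}) \to H_{i+}$ covering the closed inclusions $N_i \hookrightarrow M_{i+}$, and an isometry $U\colon L^2(N_1; \restr{S_1}{N_1}) \to L^2(N_2; \restr{S_2}{N_2})$ covering $\restr{\phi}{N_1}$. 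Unwinding the inductive-limit description in \eqref{diag: commuting diagram localized iso}, the left and right vertical isomorphisms of \eqref{diag: preparation transition lemma} coincide with $K_p(\Ad_{V_1})$ and $K_p(\Ad_{V_2})$; the top horizontal arrow is $K_p(\Ad_U)$ by Definition \ref{def: induced map by coarse map}, and the bottom horizontal arrow equals $K_p(\Ad_\Gamma)$ by Lemma \ref{lem: Gamma covers uniform equivalence}.

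With this setup in place, commutativity reduces to a single observation: $\Gamma V_1$ and $V_2 U$ are isometries $L^2(N_1; \restr{S_1}{N_1}) \to H_{2+}$ covering the same coarse map from $N_1$ into $M_{2+}$, namely $p\mapsto \phi(p)$. Indeed $\Gamma V_1$ covers the composite $N_1 \hookrightarrow M_{1+} \xrightarrow{\restr{\phi}{M_{1+}}} M_{2+}$ and $V_2 U$ covers $N_1 \xrightarrow{\restr{\phi}{N_1}} N_2 \hookrightarrow M_{2+}$, and these agree pointwise because $\restr{\phi}{N_1}$ is the restriction of $\restr{\phi}{M_{1+}}$. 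By \cite[Prop. 6.3.16]{Higson2000Khomology}, $K_p(\Ad_{\Gamma V_1}) = K_p(\Ad_{V_2 U})$, which by functoriality \cite[Lem. 6.3.17]{Higson2000Khomology} rewrites as $K_p(\Ad_\Gamma)\circ K_p(\Ad_{V_1}) = K_p(\Ad_{V_2})\circ K_p(\Ad_U)$, precisely the commutativity sought.

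The point that needs the most care---and the most plausible obstacle---is the identification of the vertical isomorphisms from Proposition \ref{prop: localized roe alg isomorphic to roe alg of subset} with $K_p(\Ad_{V_i})$ for the chosen covering isometries. This requires chasing through the inductive-limit construction of \eqref{diag: commuting diagram localized iso} and checking that an isometry covering $N_i \hookrightarrow M_{i+}$ may be obtained by composing an isometry that covers the coarse equivalence $N_i \hookrightarrow (N_i)_n$ with the canonical Hilbert space inclusion $L^2((N_i)_n; \restr{S_i}{(N_i)_n}) \hookrightarrow H_{i+}$. Once that identification is secured, the functoriality argument above closes the proof.
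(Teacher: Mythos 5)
Your approach is correct in spirit but takes a genuinely different route from the paper's. The paper does not attempt to realize the vertical isomorphisms of \eqref{diag: preparation transition lemma} directly as maps $K_p(\Ad_{V_i})$. Instead it inserts an intermediate row $K_p(C^*(Y^i_k))$ for the sets $Y^i_k = \ol{B_k(N_i)}\subseteq M_{i+}$, uses Lemma \ref{lem: half iso map on base preserves distance to hypersurface} to see that $\phi$ restricts to a uniform equivalence $\phi_k\colon Y^1_k\to Y^2_k$ covered by the restriction $\Gamma_k$ of $\Gamma$, and then shows the upper and lower squares of the resulting $3\times 2$ diagram commute separately (the upper by functoriality of the coarse assembly from coarse spaces to $K$-theory of Roe algebras, the lower because $\Ad_\Gamma \circ j^1_k = j^2_k \circ \Ad_{\Gamma_k}$ for the Hilbert-space inclusions $j^i_k$), finally appealing to the uniqueness clause of Proposition \ref{prop: localized roe alg isomorphic to roe alg of subset}. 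Your version is more streamlined, but the step you flag as needing care is real and deserves more than you give it: you need not just that the vertical isomorphism of Proposition \ref{prop: localized roe alg isomorphic to roe alg of subset} equals $K_p(\Ad_{V_i})$, but also that the uniqueness/closeness invariance in \cite[Prop. 6.3.12, 6.3.16]{Higson2000Khomology} for isometries covering the same coarse map holds at the level of the \emph{localized} algebra $C^*(N_2\subseteq M_{2+})$ rather than merely $C^*(M_{2+})$; these two differ, and the natural map $K_p(C^*(N_2\subseteq M_{2+}))\to K_p(C^*(M_{2+}))$ is not a priori injective, so applying the standard uniqueness inside $K_p(C^*(M_{2+}))$ does not automatically transfer. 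Both of these checks can be done (the rotation-homotopy argument behind 6.3.12 goes through with the support-near-$N_2$ constraint), but they are precisely the kind of detail the paper's $Y_k$-interpolation handles automatically, since it works directly with the inclusions $j^i_k$ that manifestly land in the localized algebras. So: correct strategy, genuine alternative, but the two verifications you defer to a final sentence are actually the content of the lemma and should be carried out.
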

\begin{proof}
	First note that $\restr{\phi}{N_1}\colon N_1\rightarrow N_2$ is a uniform equivalence as it is a restriction of the uniform equivalence $\phi\colon \ol{O}_1\rightarrow \ol{O}_2$. For any positive integer $k$, write $Y^i_k = \ol{B_k(N_i)}\subseteq M_{i+}$. It follows from Lemma \ref{lem: half iso map on base preserves distance to hypersurface} that $\phi(Y_k^1)= Y_k^2$ for each $k$ and therefore $\phi_k:= \restr{\phi}{Y_k^1}$ is a uniform equivalence from $Y_k^1$ to $Y_k^2$. Similarly to Lemma \ref{lem: Gamma covers uniform equivalence}, it follows that $\phi_k$ is covered by the restriction 
	\[
	\Gamma_k\colon L^2(Y^1_k;\restr{S_1}{Y^1_k})\rightarrow L^2(Y^2_k; \restr{S_2}{Y^2_k})
	\]
	of $\Gamma$. Therefore, it follows that for each $k$, the following diagram commutes: 
	\[\begin{tikzcd}
		{K_p(C^*(N_1, \restr{d_{M_1}}{N_1}))} & {K_p(C^*(N_2, \restr{d_{M_2}}{N_2}))} \\
		{K_p(C^*(Y^1_k;\restr{d_{M_1}}{Y^1_k}))} & {K_p(C^*(Y^2_k;\restr{d_{M_2}}{Y^2_k}))} \\
		{K_p(C^*(N_1\subseteq M_{1+}))} & {K_p(C^*(N_2\subseteq M_{2+}))}
		\arrow["{(\restr{\phi}{N_1})_*}", from=1-1, to=1-2]
		\arrow["\sim"' {anchor = south, rotate = 90}, from=1-1, to=2-1]
		\arrow["\sim"{anchor = south, rotate = 90}, from=1-2, to=2-2]
		\arrow["{(\phi_k)_*}", from=2-1, to=2-2]
		\arrow["\sim"', sloped, from=2-1, to=3-1]
		\arrow["\sim"', sloped, from=2-2, to=3-2]
		\arrow["{K_p(\Ad_{\Gamma})}", from=3-1, to=3-2]
	\end{tikzcd}\]
	The upper square commutes since passing from coarse spaces to the $K$-theory of the Roe algebra is functorial. The lower square commutes just by functoriality of $K$-theory as $\Gamma_k$ covers $\phi_k$ so that $(\phi_k)_* = K_p(\Ad_{\Gamma_k})$. The result now follows from Proposition \ref{prop: localized roe alg isomorphic to roe alg of subset}.
\end{proof}

\begin{proposition}
	\label{prop: transition diagram commutes}
	Let $(\phi, u)$ be a uniform half-isomorphism between two partitioned manifolds $N_i\subseteq M_i$ with Hermitian bundles $S_i$ and let $\cT$ be the transition map induced by the half-isomorphism. Then Diagram \eqref{diag: transition diagram} commutes.
\end{proposition}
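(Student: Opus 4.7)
The plan is to combine Lemma \ref{lem: conjugation with Gamma fits into comm diagram} with one naturality square in a short diagram chase. I would first unfold the four arrows in \eqref{diag: transition diagram}. By \eqref{diag: identification maps} and \eqref{eq: half loc alg to entire loc alg}, each $\Phi_i$ factors as
\[
K_0(C^*(N_i, d_{N_i})) \xrightarrow{(\id_{N_i})_*} K_0(C^*(N_i, \restr{d_{M_i}}{N_i})) \xrightarrow{\sim} K_0(C^*(N_i \subseteq M_{i+})) \xrightarrow{\sim} K_0(C^*(N_i \subseteq M_i)),
\]
where both unlabeled maps are the isomorphisms of Proposition \ref{prop: localized roe alg isomorphic to roe alg of subset}; Diagram \eqref{diag: def transition map} likewise expresses $\cT$ as $K_p(\Ad_{\Gamma^*})$ sandwiched between those same isomorphisms for $N_1\subseteq M_1$ and $N_2\subseteq M_2$.

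Next I would establish the naturality square
\[
\begin{tikzcd}
K_0(C^*(N_1, d_{N_1})) \ar[r, "(\restr{\phi}{N_1})_*"] \ar[d, "(\id_{N_1})_*"'] & K_0(C^*(N_2, d_{N_2})) \ar[d, "(\id_{N_2})_*"] \\
K_0(C^*(N_1, \restr{d_{M_1}}{N_1})) \ar[r, "(\restr{\phi}{N_1})_*"'] & K_0(C^*(N_2, \restr{d_{M_2}}{N_2}))
\end{tikzcd}
\]
by functoriality of the construction in Definition \ref{def: induced map by coarse map}: both routes are induced by the same underlying map $\restr{\phi}{N_1}$, which is a coarse equivalence at each metric level (the intrinsic one because $\phi$ is a Riemannian isometry, the restricted one by part \ref{item: uniform half iso} of Definition \ref{def: half iso}).

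The key input is then Lemma \ref{lem: conjugation with Gamma fits into comm diagram}, whose square sits directly below the one above. Concatenating the two squares and invoking $K_p(\Ad_{\Gamma^*}) \circ K_p(\Ad_\Gamma) = \id$ (which holds because $\Gamma$ is a unitary) collapses the composite $\cT \circ \Phi_2 \circ (\restr{\phi}{N_1})_*$ onto $\Phi_1$, giving the desired commutativity. The main obstacle is purely bookkeeping: three distinct metric structures on $N_i$ (intrinsic, restricted from $M_i$, and inherited via inclusion into $M_{i+}$) together with several localized Roe algebras must be kept straight, and one must verify that each intermediate square lives at the correct level. Once the ladder of squares is set up, the argument is entirely formal.
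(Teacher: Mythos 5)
Your proof is correct and follows essentially the same route as the paper: both unfold $\Phi_i$ and $\cT$ through the algebras $C^*(N_i, \restr{d_{M_i}}{N_i})$ and $C^*(N_i \subseteq M_{i+})$, use functoriality of the Roe-algebra $K$-theory construction to get the top naturality square, and invoke Lemma \ref{lem: conjugation with Gamma fits into comm diagram} as the key input. The only cosmetic difference is that the paper draws one extra level (through $C^*(N_i \subseteq M_i)$) with dashed bypass arrows and phrases the final step in terms of commutativity of the "outer cells" via \eqref{eq: half loc alg to entire loc alg}, whereas you phrase it as the cancellation $K_p(\Ad_{\Gamma^*}) \circ K_p(\Ad_\Gamma) = \id$; these express the same fact.
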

\begin{proof}
	Inserting the definition of $\cT$ into the diagram, we see that it is left to show that the following diagram commutes:
	\[\begin{tikzcd}
		{K_p(C^*(N_1, d_{N_1}))} & {K_p(C^*(N_2, d_{N_2}))} \\
		{K_p(C^*(N_1, \restr{d_{M_1}}{N_1}))} & {K_p(C^*(N_2, \restr{d_{M_2}}{N_2}))} \\
		{K_p(C^*(N_1\subseteq M_1))} & {K_p(C^*(N_2\subseteq M_2))} \\
		{K_p(C^*(N_1\subseteq M_{1+}))} & {K_p(C^*(N_2\subseteq M_{2+}))}
		\arrow["{(\restr{\phi}{N_1})_*}", from=1-1, to=1-2]
		\arrow["{\id_{N_1, *}}"', from=1-1, to=2-1]
		\arrow["{\id_{N_2, *}}"', from=1-2, to=2-2]
		\arrow["{(\restr{\phi}{N_1})_*}", dotted, from=2-1, to=2-2]
		\arrow["\sim"', sloped, from=2-1, to=3-1]
		\arrow["\sim"{description}, sloped, bend right = 90, dashed, from=2-1, to=4-1]
		\arrow["\sim"', sloped, from=2-2, to=3-2]
		\arrow["\sim"{description}, sloped, bend left = 90, dashed, from=2-2, to=4-2]
		\arrow["\sim"', sloped, from=3-1, to=4-1]
		\arrow["\sim"', sloped, from=3-2, to=4-2]
		\arrow["{K_p(\Ad_{\Gamma})}", from=4-1, to=4-2]
	\end{tikzcd}\]
	This is a combination of different results. Commutativity of the upper square follows directly since passing from coarse spaces to $K$-theory of Roe-algebras is functorial. Commutativity of the outer cells follows by Equation \eqref{eq: half loc alg to entire loc alg}. Finally, commutativity of the square with the dashed and dotted arrows follows from Lemma \ref{lem: conjugation with Gamma fits into comm diagram}.
\end{proof}

\subsection{Half-isomorphisms and the partitioned index}
\label{subsec: half-isos and the partitioned index}
In this section, we assume that we have two complete partitioned manifolds $N_i \subseteq M_i$ for $i = 1, 2$, equipped with Hermitian vector bundles $S_i\rightarrow M_i$ and differential operators $D_i$ on $S_i$ that are elliptic and symmetric and have finite propagation speed. The goal is to show that if there is a uniform half-isomorphisms between these partitioned manifolds that is compatible with the differential operators, then the induced transition map $\cT$ maps the partitioned index $\Ind(D_2;N_2)$ to the partitioned index $\Ind(D_1;N_1)$.

So for the rest of this section, let $(\phi, u)$ be a uniform half-isomorphism between these partitioned manifolds that is compatible with the differential operators. Recall that this induces unitary maps 
\[
V\colon L^2(O_1;\restr{S_1}{O_1}) \rightarrow L^2(O_2;\restr{S_2}{O_2}) \qquad\text{ and }\qquad \Gamma\colon H_{1+}\rightarrow H_{2+}.
\]
Denote by $\tilde{\Gamma}$ the partial isometry from $L^2(M_1;S_1)$ to $L^2(M_2;S_2)$ that arises from extending $\Gamma$ by 0 on $H_{1+}^{\bot}$. Then
\[
\tilde{\Gamma}^*\tilde{\Gamma} =  P_{1+} \qquad\text{ and }\qquad \tilde{\Gamma}\tilde{\Gamma}^* =  P_{2+}.
\]
Similarly, denote the partial isometry obtained by extending  $V$ by 0 on $L^2(O_1; \restr{S_1}{O_1})^{\bot}$ by $\tilde{V}$. Denote the projections of $L^2(M_i;S_i)$ onto $L^2(O_i;\restr{S_i}{O_i})$ by $P_{O_i} = \rho_i(\chi_{O_i})$.
Then
\[
\tilde{V}^*\tilde{V} = P_{O_1} \qquad\text{ and }\qquad \tilde{V}\tilde{V}^* =  P_{O_2}.
\]
Note that the two operators are related via $\tilde{\Gamma} = P_{2+}\tilde{V} = \tilde{V}P_{1+}$.

\begin{lemma}
	\label{lem: adjungation with Gamma circ preserves Roe alg props}
	The following inclusions hold:
	\[
	\Ad_{\tilde{V}^*}(C^*(M_2))\subseteq C^*(M_1)\quad \text{ and }\quad \Ad_{\tilde{V}^*}(C^*(N_2\subseteq M_2))\subseteq C^*(N_1\subseteq M_1).
	\]
\end{lemma}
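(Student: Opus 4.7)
The plan is to reduce, by norm density of $\FP(M_2) \cap \LC(M_2)$ in $C^*(M_2)$ and norm continuity of $\Ad_{\tilde V^*}$, to the case where $T$ lies in the dense $*$-subalgebra (respectively its intersection with $\SN_{N_2}(M_2)$, for the second inclusion), and then to verify the defining Roe-algebra properties for $\tilde V^* T \tilde V$ one at a time. The essential computational tool is the pair of commutation relations
\[
\tilde V \rho_1(g) = \rho_2(\tilde g)\tilde V, \qquad \rho_1(f) \tilde V^* = \tilde V^* \rho_2(\tilde f),
\]
valid for any bounded Borel $f, g$ on $M_1$, where $\tilde g(y) = g(\phi^{-1}(y))$ for $y \in O_2$ and $\tilde g(y) = 0$ elsewhere (and analogously for $\tilde f$). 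These follow by unpacking the definition of $V$, combined with Remark \ref{rem: FP and SN properties extend to bounded Borel functions} to handle bounded-Borel cutoffs. Note that $\tilde f$ and $\tilde g$ are in general only bounded Borel, with a possible jump of size up to $\|f\|_\infty$ across $\partial O_2$; this is the main technical obstacle the argument must navigate.

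Finite propagation of $\tilde V^* T \tilde V$ then follows from uniform expansiveness of $\phi^{-1}\colon \ol O_2 \to \ol O_1$, which is built into the definition of a uniform half-isomorphism. Using the commutation relations, one writes
\[
\rho_1(f)\tilde V^* T \tilde V \rho_1(g) = \tilde V^* \rho_2(\tilde f)\, T\, \rho_2(\tilde g) \tilde V,
\]
and chooses $R$ large enough (depending only on $\prop(T)$ and the uniform-equivalence data) so that $d_{M_1}(\supp f, \supp g) > R$ forces $d_{M_2}(\supp \tilde f, \supp \tilde g) > \prop(T)$, making the middle product vanish. For the second inclusion, an entirely parallel argument shows $\tilde V^* T \tilde V \in \SN_{N_1}(M_1)$: using $\phi(N_1) = N_2$ together with Lemma \ref{lem: half iso map on base preserves distance to hypersurface} and uniform equivalence of $\phi$, one sees that $d_{M_1}(\supp f, N_1)$ large forces $d_{M_2}(\supp \tilde f, N_2)$ large, so the $\SN_{N_2}$-property transfers.

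For local compactness one must verify that $\tilde V^* T \tilde V \rho_1(f)$ is compact for each $f \in C_0(M_1)$ (the opposite side then follows by taking adjoints). By norm density of $C_c(M_1)$ in $C_0(M_1)$, we may assume $f \in C_c(M_1)$. Letting $R'$ be the propagation bound just obtained, Heine--Borel produces a cutoff $\theta \in C_c(M_1)$ with $\theta \equiv 1$ on the (compact) closure of $B_{R'+1}(\supp f)$, and finite propagation gives
\[
\tilde V^* T \tilde V \rho_1(f) = \rho_1(\theta)\tilde V^* T \tilde V \rho_1(f) = \tilde V^* \rho_2(\tilde\theta)\, T\, \rho_2(\tilde f)\tilde V.
\]
The pushforward $\tilde\theta$ is bounded Borel with compact support in $M_2$ (uniform expansiveness of $\phi$ sends bounded subsets of $\ol O_1$ to bounded subsets of $\ol O_2$, which are relatively compact in the complete manifold $M_2$). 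The device that sidesteps the main obstacle is now to pick $\psi \in C_c(M_2)$ with $\psi \equiv 1$ on $\supp \tilde\theta$ and factor $\rho_2(\tilde\theta)\, T = \rho_2(\tilde\theta)\rho_2(\psi) T$: since $\psi \in C_0(M_2)$ and $T$ is locally compact, $\rho_2(\psi)T$ is compact, and the whole expression above is compact after composition with the remaining bounded operators. The second inclusion then requires no extra local-compactness argument, only the $\SN$-check already handled above.
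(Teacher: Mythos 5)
Your proof is correct. The finite-propagation and $\SN_{N_1}$ checks are the straightforward verifications the paper declares without proof, and your sketches of them are right (for the $\SN$ step it already suffices to combine $\phi(N_1)=N_2$ with uniform expansiveness of $\phi^{-1}$; Lemma \ref{lem: half iso map on base preserves distance to hypersurface} is not strictly needed). The genuine divergence is in the local-compactness step. The paper observes that $f_1\circ\phi^{-1}\in C_0(\ol O_2)$ because $\phi$ is a proper homeomorphism, and invokes the Tietze extension theorem on one-point compactifications to extend it to an honest $\tilde f_2\in C_0(M_2)$; the intertwining identities $\Ad_{\tilde V^*}(\rho_2(\tilde f_2)T)=\rho_1(f_1)\Ad_{\tilde V^*}(T)$ and $\Ad_{\tilde V^*}(T\rho_2(\tilde f_2))=\Ad_{\tilde V^*}(T)\rho_1(f_1)$ then yield local compactness of $\Ad_{\tilde V^*}(T)$ at once, for an arbitrary $T\in\LC(M_2)$, with no density reduction and no use of propagation. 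You instead keep the discontinuous Borel pushforward $\tilde f$, restrict to $T\in\LC(M_2)\cap\FP(M_2)$ and $f\in C_c(M_1)$, cut off with $\theta\in C_c(M_1)$ using the propagation bound already established, note that $\tilde\theta$ then has compact support, and absorb it into a genuine $\psi\in C_c(M_2)$ before applying local compactness of $T$. Both routes are sound: the Tietze route is shorter, treats all of $\LC(M_2)$ directly, and is independent of the propagation check; your route is more elementary (no extension theorem), at the modest cost of the extra density and propagation preliminaries.
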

\begin{proof}
	Recall that by definition, $\phi\colon \ol{O}_1\rightarrow \ol{O}_2$ is a uniform equivalence with respect to the subspace metrics. 
	It is straightforward to deduce that  $\Ad_{\tilde{V}^*}$ preserves the properties of finite propagation and being supported near the partitioning hypersurface. 
	
	It is left to show that $\Ad_{\tilde{V}^*}$ preserves local compactness. First, note that if $f_1 \in C_0(M_1)$, then $f_1\circ \phi^{-1} \in C_0(\ol{O}_2)$ as $\phi$ is a homeomorphism and thus $f_1\circ \phi^{-1}$ extends to an function $\tilde{f}_2 \in C_0(M_2)$ (apply the Tietze extension theorem to the one-point compactification of $M_2$). Note that 
	\[
	\Ad_{\tilde{V}^*}(\rho_2(\tilde{f}_2)T) = \rho_1(f_1)	\Ad_{\tilde{V}^*}(T),
	\]
	and
	\[
	\Ad_{\tilde{V}^*}(T\rho_2(\tilde{f}_2)) = \Ad_{\tilde{V}^*}(T)\rho_1(f_1),
	\]
	which implies that $\Ad_{\tilde{V}^*}$ indeed preserves local compactness.
\end{proof}

In the following lemma, we will translate the relation between the differential operators $D_1$ and $D_2$ given by the half-isomorphism to a relation between the induced unbounded operators. To distinguish the original operators from their operator-theoretic closures, we will distinguish between $D_i$ and $\ol{D_i}$ in the next result.

\begin{lemma}
	\label{lem: Gamma tilde intertwines differential operators}
	The following equality holds: 
	\begin{equation*}
		q(P_{1+}(\ol{D}_1+i)^{-1}P_{1+}) = q(\tilde{\Gamma}^*(\ol{D}_2+i)^{-1}\tilde{\Gamma}) \in C^*(M_1)/C^*(N_1\subseteq M_1).
	\end{equation*}
\end{lemma}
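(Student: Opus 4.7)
The plan is to replace the sharp characteristic projections $P_{i+}$ with smooth compatible cutoffs and then derive a clean resolvent identity whose error term lies in the localized ideal. First I would construct $\phi_1\in C^\infty(M_1)$ with $\supp(\phi_1)\subseteq O_1$, $\phi_1=\chi_{M_{1+}}$ outside some $B_R(N_1)$, and $\|d\phi_1\|$ bounded; then define $\phi_2$ on $O_2$ by $\phi_2\circ\phi=\phi_1$, extended by zero outside $O_2$. Since $P_{i+}-\rho_i(\phi_i)$ is a bounded Borel multiplier supported in $B_R(N_i)$ and $(\ol D_i+i)^{-1}\in C^*(M_i)$ (Theorem \ref{thm: func calc thm for C^*M}), Lemma \ref{lem: Higsons 1.2 general} (with its right-multiplication version obtained by taking adjoints) gives
\[
P_{i+}(\ol D_i+i)^{-1}P_{i+}\equiv\rho_i(\phi_i)(\ol D_i+i)^{-1}\rho_i(\phi_i)\pmod{C^*(N_i\subseteq M_i)}.
\]
Combined with $\tilde\Gamma^*=P_{1+}\tilde V^*=\tilde V^*P_{2+}$, the equality $\rho_1(\phi_1)\tilde V^*=\tilde V^*\rho_2(\phi_2)$ (since $\phi_2\circ\phi=\phi_1$ on $O_1$), and Lemma \ref{lem: adjungation with Gamma circ preserves Roe alg props} (which sends $C^*(N_2\subseteq M_2)$ into $C^*(N_1\subseteq M_1)$ under $\Ad_{\tilde V^*}$), the lemma reduces to showing
\[
\rho_1(\phi_1)(\ol D_1+i)^{-1}\rho_1(\phi_1)\equiv\rho_1(\phi_1)\tilde V^*(\ol D_2+i)^{-1}\tilde V\rho_1(\phi_1)\pmod{C^*(N_1\subseteq M_1)}.
\]

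Next I would derive the key resolvent identity. The compatibility in part \ref{item: compatible half iso} of Definition \ref{def: half iso} upgrades, using essential self-adjointness of $D_i$ on $C_c^\infty(M_i;S_i)$, to the relation $\ol D_2\tilde V\eta=\tilde V\ol D_1\eta$ for every $\eta\in\dom(\ol D_1)$ with $\supp(\eta)\subseteq O_1$. Applied to $\eta=\rho_1(\phi_1)\mu$, where $\mu=(\ol D_1+i)^{-1}\xi$ and $\xi\in L^2(M_1;S_1)$ is arbitrary, and keeping track of the bounded commutator $[\ol D_1,\rho_1(\phi_1)]$ (Clifford multiplication by $d\phi_1$), this yields
\[
(\ol D_2+i)\tilde V\rho_1(\phi_1)\mu=\tilde V\rho_1(\phi_1)\xi+\tilde V[\ol D_1,\rho_1(\phi_1)]\mu.
\]
Inverting $\ol D_2+i$, applying $\tilde V^*$, and using $\tilde V^*\tilde V\rho_1(\phi_1)=\rho_1(\phi_1)$ produces the operator identity
\[
\rho_1(\phi_1)(\ol D_1+i)^{-1}-\tilde V^*(\ol D_2+i)^{-1}\tilde V\rho_1(\phi_1)=\tilde V^*(\ol D_2+i)^{-1}\tilde V[\ol D_1,\rho_1(\phi_1)](\ol D_1+i)^{-1}.
\]
Because $[\ol D_1,\rho_1(\phi_1)]$ is supported in $B_R(N_1)$, Lemma \ref{lem: Higsons 1.2 general} places $[\ol D_1,\rho_1(\phi_1)](\ol D_1+i)^{-1}$ in $C^*(N_1\subseteq M_1)$; Lemma \ref{lem: adjungation with Gamma circ preserves Roe alg props} together with the ideal property of $C^*(N_1\subseteq M_1)\subseteq C^*(M_1)$ then places the whole right-hand side in $C^*(N_1\subseteq M_1)$.

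To finish I would multiply this identity on the right by $\rho_1(\phi_1)$, absorb the extra $\rho_1(\phi_1^2-\phi_1)$-factor (which is supported in $B_R(N_1)$) using Lemma \ref{lem: Higsons 1.2 general}, and finally insert the missing leftmost $\rho_1(\phi_1)$ by showing $(1-\rho_1(\phi_1))\tilde V^*(\ol D_2+i)^{-1}\tilde V\rho_1(\phi_1)\in C^*(N_1\subseteq M_1)$. Rewriting this as $\tilde V^*(1-\rho_2(\phi_2))(\ol D_2+i)^{-1}\rho_2(\phi_2)\tilde V$, the middle piece splits into the commutator $-[(\ol D_2+i)^{-1},\rho_2(\phi_2)]$ (in $C^*(N_2\subseteq M_2)$ by Lemma \ref{lem: Higsons 1.3}, since $\phi_2$ is locally constant outside $B_R(N_2)$) plus a remainder of the form $(\ol D_2+i)^{-1}\rho_2(\phi_2(1-\phi_2))$ (in $C^*(N_2\subseteq M_2)$ by Lemma \ref{lem: Higsons 1.2 general}); Lemma \ref{lem: adjungation with Gamma circ preserves Roe alg props} then transports this into $C^*(N_1\subseteq M_1)$.

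The main obstacle I expect is the careful upgrade of the differential intertwining to the self-adjoint closures. This requires verifying that $\tilde V$ maps $\dom(\ol D_1)\cap L^2(O_1)$ into $\dom(\ol D_2)\cap L^2(O_2)$ and intertwines $\ol D_i$ there; it relies essentially on $\phi$ being a Riemannian isometry $O_1\to O_2$ covered by a fiberwise unitary $u$ that preserves the Clifford action and (after a suitable check) the Dirac connection on $O_i$, combined with essential self-adjointness of Dirac operators on complete manifolds. Once this extension is in place, the rest of the argument is a straightforward sequence of applications of Lemmas \ref{lem: Higsons 1.2 general}, \ref{lem: Higsons 1.3}, and \ref{lem: adjungation with Gamma circ preserves Roe alg props}.
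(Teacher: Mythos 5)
Your proposal takes essentially the same route as the paper: replace the sharp projections with smooth cutoffs, upgrade the differential intertwining relation from $C_c^\infty$ to the closed domain via a graph-norm approximation argument, invert to obtain a resolvent identity with an error term built from the commutator $[\ol D,\rho(\phi_+)]$, and push that error into $C^*(N_1\subseteq M_1)$ via Lemmas \ref{lem: Higsons 1.2 general} and \ref{lem: adjungation with Gamma circ preserves Roe alg props} together with the ideal property; the only cosmetic difference is that you carry out the intertwining in the direction $\tilde V$ (so the commutator sits on the $D_1$ side) whereas the paper uses $\tilde V^*$ (commutator on the $D_2$ side), and you start from $\phi_1$ and define $\phi_2=\phi_1\circ\phi^{-1}$ while the paper goes the other way.

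One point to tighten: your intermediate claim that $\ol D_2\tilde V\eta=\tilde V\ol D_1\eta$ for \emph{every} $\eta\in\dom(\ol D_1)$ with $\supp(\eta)\subseteq O_1$ is stated too strongly. Since $O_1$ is open and $N_1$ may be noncompact, a closed support contained in $O_1$ need not have positive distance to $\partial O_1$, and for such $\eta$ the pushforward $\tilde V\eta$ need not lie in $\dom(\ol D_2)$: the approximating sequence $\sigma_n\in C_c^\infty(M_1;S_1)$ for $\eta$ is not supported in $O_1$, and cutting $\sigma_n$ off requires a smooth function that is $1$ on $\supp(\eta)$, supported in $O_1$, and has bounded differential, which may not exist. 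The specific application to $\eta=\rho_1(\phi_1)\mu$ is nonetheless fine precisely because $\phi_1$ itself plays the role of that cutoff: $\phi_1\sigma_n\in C_c^\infty(O_1;S_1)$ converges to $\eta$ in graph norm, so $\tilde V(\phi_1\sigma_n)\in C_c^\infty(M_2;S_2)$ is an admissible approximating sequence for $\tilde V\eta$. I would therefore recommend phrasing the upgrade in this cutoff-dependent form, as the paper does, rather than as a general fact about $\dom(\ol D_1)\cap L^2(O_1)$.
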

\begin{proof}
	Let $O_i\supseteq M_{i+}$ be the opens from part \ref{item: half iso} of Definition \ref{def: half iso} and let $\phi_{2+}$ be a function as in Definition \ref{def: simple hypersurface} such that $\supp(\phi_{2+})\subseteq O_2$. Put $\phi_{1+} = \phi_{2+}\circ \phi$, which extends to a function as in Definition \ref{def: simple hypersurface} when we declare it to be 0 outside of $O_1$. Since the half-isomorphism is compatible with the differential operators, it follows that 
	\[
	D_1\tilde{V}^*\rho(\phi_{2+})\sigma = \tilde{V}^*D_2\rho(\phi_{2+})\sigma = \tilde{V}^*\rho(\phi_{2+})D_2\sigma + \tilde{V}^*\sigma_{D_2}(d\phi_{2+})\sigma
	\]
	for all $\sigma \in C_c^{\infty}(M_2;S_2)$. This gives an equality as unbounded operators on the domain $C_c^{\infty}(M_2;S_2) = \dom(D_2)$. We will first extend this equality to $\dom(\ol{D}_2)$. 
	Note that the last term extends to a bounded operator as $\phi_{2+}$ has bounded gradient and $D_2$ is assumed to have finite propagation speed. If $\xi \in \dom(\ol{D}_2)$, then there exists a sequence $\{\sigma_n\}_{n\in \N}$ in $C_c^{\infty}(M_2;S_2)$ such that $\sigma_n\rightarrow \xi$ and $D_2\sigma_n\rightarrow \ol{D}_2\xi$. It follows that 
	\begin{align}
		\label{eq: computation D_2}
		\lim_{n\rightarrow \infty} D_1\tilde{V}^*\rho(\phi_{2+})\sigma_n &= \lim_{n\rightarrow \infty}\tilde{V}^*\rho(\phi_{2+})D_2\sigma_n + \tilde{V}^*\sigma_{D_2}(d\phi_{2+})\sigma_n \notag\\
		&= \tilde{V}^*\rho(\phi_{2+})\ol{D}_2\xi + \tilde{V}^*\sigma_{D_2}(d\phi_{2+})\xi.
	\end{align}
	Since this limit exists and $\tilde{V}^*\rho(\phi_{2+})\sigma_n\rightarrow \tilde{V}^*\rho(\phi_{2+})\xi$, we conclude that $\tilde{V}^*\rho(\phi_{2+})\xi \in \dom(\ol{D}_1)$ and that we have an equality of unbounded operators
	\begin{equation}
		\label{eq: unbounded operator 1}
		\ol{D}_1\tilde{V}^*\rho(\phi_{2+}) = \tilde{V}^*\rho(\phi_{2+})\ol{D}_2 + \tilde{V}^*\sigma_{D_2}(d\phi_{2+})
	\end{equation}
	on the domain $\dom(\ol{D}_2)$. Adding $i\tilde{V}^*\rho(\phi_{2+})$ to both sides and multiplying on the left with $(\ol{D}_1+i)^{-1}$ and on the right with $(\ol{D}_2+i)^{-1}$, which is well-defined as the operator in \eqref{eq: unbounded operator 1} has domain $\dom(\ol{D}_2)$, we obtain the equality 
	\begin{equation}
		\label{eq: operator equality}
		\tilde{V}^*\rho(\phi_{2+})(\ol{D}_2+i)^{-1} = (\ol{D}_1+i)^{-1}\tilde{V}^*\rho(\phi_{2+}) + (\ol{D}_1+i)^{-1}\tilde{V}^*\sigma_{D_2}(d\phi_{2+})(\ol{D}_2+i)^{-1}.
	\end{equation}
	Using that $\tilde{V}\rho(\phi_{1+}) = \rho(\phi_{2+})\tilde{V}$ since $\supp(\phi_{i+})\subseteq O_i$ and multiplying on the right by $\tilde{V}$, we then obtain
	\begin{multline}
		\label{eq: operator equality 2}
		\rho(\phi_{1+})\tilde{V}^*(\ol{D}_2+i)^{-1}\tilde{V} =\\
		(\ol{D}_1+i)^{-1}\rho(\phi_{1+})\tilde{V}^*\tilde{V} + (\ol{D}_1+i)^{-1}\tilde{V}^*\sigma_{D_2}(d\phi_{2+})(\ol{D}_2+i)^{-1}\tilde{V},
	\end{multline}
	where the last term is an element of $C^*(N_1\subseteq M_1)$ by Lemma \ref{lem: adjungation with Gamma circ preserves Roe alg props} since $\sigma_{D_2}(d\phi_{2+})$ is an element of $D^*(M_2)$ which is supported near $N_2$. 
	
	Finally, write $X\sim Y$ if $X$ and $Y$ differ by an element in $C^*(N_1\subseteq M_1)$. It follows that 
	\begin{align*}
		\tilde{\Gamma}^*(\ol{D}_2+i)^{-1}\tilde{\Gamma} &= P_{1+}\tilde{V}^*(\ol{D}_2+i)^{-1}\tilde{V}P_{1+}\\
		& \sim P_{1+}\tilde{V}^*(\ol{D}_2+i)^{-1}\tilde{V} \\
		&\sim \rho_1(\phi_{1+})\tilde{V}^*(\ol{D}_2+i)^{-1}\tilde{V} \\
		&\sim (\ol{D}_1+i)^{-1}\rho(\phi_{1+}) \\
		&\sim  (\ol{D}_1+i)^{-1}P_{1+} \\
		&\sim P_{1+}(\ol{D}_1+i)^{-1}P_{1+},
	\end{align*}
	where the first and last equivalence follow from Lemma \ref{lem: commutator with projection lands in localized algebra}, the second and second-last equivalence follow from Lemma \ref{lem: Higsons 1.2 general} and the third equivalence follows from Equation \eqref{eq: operator equality 2}. The result follows.
\end{proof}

We can now show that the transition map induced by a uniform half-isomorphism that is compatible with the differential operators indeed maps the partitioned index of $D_2$ to the partitioned index of $D_1$.

\begin{proposition}
	\label{prop: iso on localized Roe algs preserves partitioned index}
	For $i= 1, 2$, let $N_i\subseteq M_i$ be two partitioned manifolds with Hermitian bundles $S_i$ and symmetric, elliptic differential operators $D_i$ with finite propagation speed. Let $(\phi, u)$ be a uniform half-isomorphism between them that is compatible with the differential operators and let $\cT$ be the induced transition map. Then 
	\[
	\cT(\Ind(D_2;N_2)) = \Ind(D_1;N_1).
	\]
\end{proposition}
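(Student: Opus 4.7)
The plan is to express the partitioned index via the Roe homomorphism of Proposition \ref{prop: Roe homomorphism} as $\Ind(D_i; N_i) = \partial_i([\gamma_i^+(U_i)]_1)$, with $\gamma_i(T) = \pi_i(P_{i+} T P_{i+})$, and to realise $\cT$ as the $K_0$-map induced by $\Ad_{\tilde\Gamma^*}$ on a suitable corner subalgebra where this compression becomes a genuine $*$-homomorphism.

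First I will establish the key operator-level identity
\[
P_{1+} U_1 P_{1+} \equiv \tilde\Gamma^*(P_{2+} U_2 P_{2+})\tilde\Gamma \pmod{C^*(N_1 \subseteq M_1)},
\]
which follows from Lemma \ref{lem: Gamma tilde intertwines differential operators}, the Cayley transform formula $U_i = 1 - 2i(D_i+i)^{-1}$, and the relations $\tilde\Gamma^*\tilde\Gamma = P_{1+}$, $\tilde\Gamma\tilde\Gamma^* = P_{2+}$, $P_{2+}\tilde\Gamma = \tilde\Gamma$.

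Next, I consider the corner $C^*$-subalgebras $C^*(M_i)^+ := P_{i+} C^*(M_i) P_{i+} \subseteq C^*(M_i)$ and their ideals $C^*(N_i \subseteq M_i)^+ := P_{i+} C^*(N_i \subseteq M_i) P_{i+}$; these are genuine $C^*$-subalgebras because $P_{i+} \in D^*(M_i)$ and $C^*(M_i)$ is an ideal in $D^*(M_i)$. Although $\Ad_{\tilde\Gamma^*}$ fails multiplicativity on all of $C^*(M_2)$, on the corner it \emph{is} a $*$-homomorphism: for $A, B \in C^*(M_2)^+$, the relations $A P_{2+} = A$ and $\tilde\Gamma\tilde\Gamma^* = P_{2+}$ give
\[
\Ad_{\tilde\Gamma^*}(A)\Ad_{\tilde\Gamma^*}(B) = \tilde\Gamma^* A \tilde\Gamma \tilde\Gamma^* B \tilde\Gamma = \tilde\Gamma^* A P_{2+} B \tilde\Gamma = \tilde\Gamma^* AB \tilde\Gamma = \Ad_{\tilde\Gamma^*}(AB),
\]
and the range lies in $C^*(M_1)^+$ by Lemma \ref{lem: adjungation with Gamma circ preserves Roe alg props}. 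This yields a morphism of short exact sequences between the corner extensions. Setting $U_i^+ := 1 + \bar\pi_i(P_{i+}U_i P_{i+} - P_{i+})$ (with $\bar\pi_i\colon C^*(M_i)^+ \to C^*(M_i)^+/C^*(N_i \subseteq M_i)^+$ the corner quotient), the key identity above shows that the induced $*$-homomorphism on quotients carries $U_2^+$ to $U_1^+$, so by naturality of the boundary map $\partial_1^+([U_1^+]_1) = K_0(\Ad_{\tilde\Gamma^*})(\partial_2^+([U_2^+]_1))$.

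Finally, naturality of $\partial_i$ applied to the inclusion of the corner extension into the full extension $0 \to C^*(N_i \subseteq M_i) \to C^*(M_i) \to C^*(M_i)/C^*(N_i \subseteq M_i) \to 0$ gives $\Ind(D_i; N_i) = K_0(j_i)(\partial_i^+([U_i^+]_1))$, where $j_i\colon C^*(N_i \subseteq M_i)^+ \hookrightarrow C^*(N_i \subseteq M_i)$ is the inclusion. It then suffices to verify $K_0(j_1) \circ K_0(\Ad_{\tilde\Gamma^*}) = \cT \circ K_0(j_2)$. The factorisation $\tilde\Gamma = W_2 \Gamma W_1^*$ yields a commutative square intertwining $\Ad_{\tilde\Gamma^*}$ on the corner ideals with $\Ad_{\Gamma^*}$ on $C^*(N_i \subseteq M_{i+})$ via the $*$-isomorphism $\Ad_{W_i^*}\colon C^*(N_i \subseteq M_i)^+ \xrightarrow{\sim} C^*(N_i \subseteq M_{i+})$, and, combined with the identification $K_0(\Ad_{W_i}) = \psi_i^{-1}$ (the inverse of the forward isomorphism used in defining $\cT$, as noted after Equation \eqref{eq: half loc alg to entire loc alg}), a direct diagram chase yields the required equality. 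The main technical obstacle will be verifying that $\Ad_{W_i^*}$ restricts to a $*$-isomorphism onto $C^*(N_i \subseteq M_{i+})$: this requires checking that compression to $H_{i+}$ preserves finite propagation, local compactness, and the property of being supported near $N_i$ with respect to the subspace metric $d_{M_i}|_{M_{i+}}$ on $M_{i+}$.
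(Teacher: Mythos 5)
Your proposal is correct and follows essentially the same strategy as the paper's proof: both express the partitioned index via the Roe homomorphism (compressing the Cayley transform by $P_{i+}$), both invoke Lemma \ref{lem: Gamma tilde intertwines differential operators} to get the key operator identity modulo $C^*(N_1 \subseteq M_1)$, and both conclude via naturality of the boundary map together with the factorisation $\tilde\Gamma = W_2\Gamma W_1^*$.

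The only genuine difference is bookkeeping in the final step. You route the naturality argument through the corner subalgebras $P_{i+}C^*(M_i)P_{i+}$ and the corner extensions, treating $\Ad_{\tilde\Gamma^*}$ directly as a $*$-homomorphism between corners. The paper instead works with the intrinsically defined algebras $C^*(N_i \subseteq M_{i+})$ and $D^*(M_{i+})$ on the smaller Hilbert spaces $H_{i+}$, and assembles a four-row diagram of extensions (Diagram \eqref{diag: naturality diagram}) in which the vertical maps are the non-unital $*$-homomorphisms $\Ad_{W_i}$ and the unitary conjugation $\Ad_\Gamma$. These two pictures are identified by $\Ad_{W_i^*}\colon P_{i+}C^*(M_i)P_{i+} \xrightarrow{\sim} W_i^*C^*(M_i)W_i$, so the content is the same; your corner-algebra version avoids invoking the $D^*(M_{i+})$ extensions but requires separately verifying that $\Ad_{\tilde\Gamma^*}$ is multiplicative on the corner (which you do) and that $\Ad_{W_i^*}$ restricts to an isomorphism of localized Roe algebras. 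One small remark: you should verify that $U_i^+$ is indeed unitary in the unitized corner quotient; this uses $[P_{i+}, U_i] = [P_{i+}, U_i-1] \in C^*(N_i \subseteq M_i)$ from Lemma \ref{lem: commutator with projection lands in localized algebra}, which you did not mention explicitly but is needed to make $\partial^+_i([U_i^+]_1)$ well-defined.
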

\begin{proof}
	Denote by $\gamma_i\colon C^*(M_i)\rightarrow C^*(M_i)/C^*(N_i\subseteq M_i)$ the $*$-homomorphisms from Lemma \ref{lem: construction gamma},  given by $\gamma_i(T) = q(P_{i+}TP_{i+})$, with $P_{i+}$ the projection onto $H_{i+}$. Recall from Proposition \ref{prop: Roe homomorphism} that $\Ind(D_i;N_i) = \partial\circ K_1(\gamma_i)([U_i]_1)$ with $U_i$ the Cayley transform of $D_i$. It follows that
	\begin{align*}
		\Ind(D_1; N_1) &= \partial\circ K_1(\gamma_1)([U_1]_1) = \partial([1+q(P_{1+}(U_1-1)P_{1+})]_1)
	\end{align*}
	with 
	\begin{align*}
		q(P_{1+}(U_1-1)P_{1+})&= -2iq(P_{1+}(D_1+i)^{-1}P_{1+})\\
		&= -2i	q(\tilde{\Gamma}^*(D_2+i)^{-1}\tilde{\Gamma}) = q(\tilde{\Gamma}^*(U_2-1)\tilde{\Gamma}),
	\end{align*}
	where the second equality follows from Lemma \ref{lem: Gamma tilde intertwines differential operators}. By using Lemma \ref{lem: Gamma covers uniform equivalence} and the fact that $W_i$ uniformly covers the inclusion $M_{i+}\hookrightarrow M_i$, we see that the following diagram with short exact rows is commutative:
	\begin{equation}
		\label{diag: naturality diagram}
		\begin{tikzcd}
			0 & {C^*(N_2\subseteq M_{2})} & {D^*(M_{2})} & {Q^*(N_2\subseteq M_{2})} & 0 \\
			0 & {C^*(N_2\subseteq M_{2+})} & {D^*(M_{2+})} & {Q^*(N_2\subseteq M_{2+})} & 0 \\
			0 & {C^*(N_1\subseteq M_{1+})} & {D^*(M_{1+})} & {Q^*(N_1\subseteq M_{1+})} & 0 \\
			0 & {C^*(N_1\subseteq M_{1})} & {D^*(M_{1})} & {Q^*(N_1\subseteq M_{1})} & 0
			\arrow[from=1-1, to=1-2]
			\arrow[from=1-2, to=1-3]
			\arrow[from=1-3, to=1-4]
			\arrow[from=1-4, to=1-5]
			\arrow[from=2-1, to=2-2]
			\arrow["{\Ad_{W_1}}"{description}, from=2-2, to=1-2]
			\arrow[from=2-2, to=2-3]
			\arrow["{\Ad_{W_1}}"{description}, from=2-3, to=1-3]
			\arrow[from=2-3, to=2-4]
			\arrow["{\ol{\Ad_{W_1}}}"{description}, from=2-4, to=1-4]
			\arrow[from=2-4, to=2-5]
			\arrow[from=3-1, to=3-2]
			\arrow["{\Ad_{\Gamma}}"{description}, from=3-2, to=2-2]
			\arrow[from=3-2, to=3-3]
			\arrow["{\Ad_{W_2}}"{description}, from=3-2, to=4-2]
			\arrow["{\Ad_{\Gamma}}"{description}, from=3-3, to=2-3]
			\arrow[from=3-3, to=3-4]
			\arrow["{\Ad_{W_2}}"{description}, from=3-3, to=4-3]
			\arrow["{\ol{\Ad_{\Gamma}}}"{description}, from=3-4, to=2-4]
			\arrow[from=3-4, to=3-5]
			\arrow["{\ol{\Ad_{W_2}}}"{description}, from=3-4, to=4-4]
			\arrow[from=4-1, to=4-2]
			\arrow[from=4-2, to=4-3]
			\arrow[from=4-3, to=4-4]
			\arrow[from=4-4, to=4-5]
		\end{tikzcd}
	\end{equation}
	It follows that
	\begin{align*}
		\Ind(D_1; N_1) &= \partial([1+q(\tilde{\Gamma}^*(U_2-1)\tilde{\Gamma})]_1) \\
		&=\partial\circ K_1(\ol{\Ad_{W_1}})\circ K_1(\ol{\Ad_{\Gamma^*}})([q(W_2^*U_2W_2)]_1) \\
		&= K_0(\Ad_{W_1})\circ K_0(\Ad_{\Gamma^*})\circ\partial([q(W_2^*U_2W_2)]_1)\\
		&= \cT\circ K_0(\Ad_{W_2}) \circ\partial([q(W_2^*U_2W_2)]_1) \\
		&=  \cT\circ \partial\circ  K_1(\ol{\Ad_{W_2}}) ([q(W_2^*U_2W_2)]_1) \\
		&=\cT\circ\partial([1 + q(W_2W_2^*(U_2-1)W_2W_2^*)]_1) = \cT(\Ind(D_2;N_2)),
	\end{align*}
	where the third and fifth equality follow from Diagram \eqref{diag: naturality diagram} by naturality of the index map.
\end{proof}

\subsection{Gluing manifolds and metrics}
\label{subsec: gluing on M'}
In this subsection, it will be demonstrated that starting with a partitioned manifold $N\subseteq M$, we can build a ``transition manifold" $M'$. The transition manifold $M'$ will be a partitioned manifold that (loosely speaking) looks like $M_+$ on one side of the partitioning hypersurface and like $(-\infty,0] \times N$ on the other side. The procedure is inspired by the final part of \cite[Proof of Thm. 1.5]{higson1991cobordisminvariance}. We will also show that we can glue the vector bundles $S\rightarrow M$ and $p_N^*S_N\rightarrow \R\times N$ to a vector bundle $S'\rightarrow M'$ and that we can equip $M'$ with a complete Riemannian metric and $S'$ with a Hermitian metric by gluing the Riemannian/Hermitian metrics of the original manifolds/vector bundles.

For the rest of this section, suppose that we are in the setting of Theorem \ref{thm: PMT}. Recall that $N\subseteq M$ is assumed to have a uniform tubular neighbourhood $U$ of radius $\varepsilon >0$ and that we have a diffeomorphism 
\[
F\colon (-\varepsilon, \varepsilon)\times N\xrightarrow{\sim} U
\]
implementing the tubular neighbourhood as in \eqref{eq: tub nbhd map}. Moreover, by homotopy invariance of vector bundle pull-backs,
it follows that $\restr{p_N^*S_N}{(-\varepsilon, \varepsilon)\times N} \cong F^*(\restr{S}{U})$, where $S_N = \restr{S}{N}$ and $p_N\colon \R\times N\rightarrow N$ is the projection onto $N$. Denote such an isomorphism by 
\[
\Psi\colon\restr{p_N^*S_N}{(-\varepsilon, \varepsilon)\times N} \xrightarrow{\sim} \restr{S}{U},
\]
which is a vector bundle isomorphism covering $F$.

On $(\R_{<\varepsilon}\times N)\coprod (U\cup M_+)$, define the equivalence relation $\sim_F$ by setting $(r, p)\sim_F F(r, p)$ for all $r \in (-\varepsilon, \varepsilon)$ and $p \in N$. Similarly, define an equivalence relation $\sim_{\Psi}$ on $\restr{p_N^*S_N}{\R_{<\varepsilon}\times N}\coprod \restr{S}{U\cup M_+}$ by setting $((r, p), s) \sim_{\Psi} \Psi((r, p), s)$ for all $r \in (-\varepsilon, \varepsilon)$, $p \in N$ and $s \in S_N$. The following can be verified directly.
\begin{lemma}
	\label{lem: quotient manifold}
	The space $M':= ((\R_{<\varepsilon}\times N)\coprod (U\cup M_+)) /\sim_F$ has a unique structure of a smooth manifold such that the quotient map 
	\[\pi_{M'}\colon (\R_{<\varepsilon}\times N)\coprod (U\cup M_+) \rightarrow M'\]
	is a local diffeomorphism. Similarly, the space $S':= (\restr{p_N^*S_N}{\R_{<\varepsilon}\times N}\coprod \restr{S}{U\cup M_+})/\sim_{\Psi}$ has a unique structure of a smooth manifold such that the quotient map 
	\[
	\pi_{S'}\colon \restr{p_N^*S_N}{\R_{<\varepsilon}\times N}\coprod \restr{S}{U\cup M_+} \rightarrow S'
	\] 
	is a local diffeomorphism. The canonical map $S'\rightarrow M'$ induced by the vector bundle maps of $p_N^*S_N$ and $S$ make $S'$ into a complex vector bundle over $M'$.
\end{lemma}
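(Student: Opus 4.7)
The plan is to apply the standard manifold/bundle gluing construction to the two summands and verify the resulting topological point-set conditions. First I would show that $\pi_{M'}$, when restricted to either summand $\R_{<\varepsilon}\times N$ or $U\cup M_+$, is injective and open, hence an open embedding onto its image, and that the two images cover $M'$. Injectivity on each summand is immediate because $\sim_F$ only identifies points across the two summands. Openness of $\pi_{M'}$ reduces to the fact that the $\sim_F$-saturation of an open subset of one summand equals the union of that set with the image under $F$ (or $F^{-1}$) of its intersection with the gluing region, which is again open because $F$ is a diffeomorphism between open sets. The smooth atlas on $M'$ is then obtained by pushing charts through $\pi_{M'}$ from either summand; cocycle compatibility on overlaps follows from smoothness of $F$ and $F^{-1}$. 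Uniqueness of the smooth structure is forced because any structure for which $\pi_{M'}$ is a local diffeomorphism must coincide with the constructed one on each open piece, and these pieces cover $M'$.

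The topological conditions require slightly more care. Second countability of $M'$ is immediate since both summands are second countable and $\pi_{M'}$ is continuous and open. Hausdorffness is the delicate point: given distinct classes $[x],[y]\in M'$, if both have representatives in the same summand they can be separated using Hausdorffness of that summand followed by pushing the (open) saturated neighborhoods forward through $\pi_{M'}$. If they lie in the images of different summands, one uses that the gluing regions $(-\varepsilon,\varepsilon)\times N$ and $U$ are open in their respective summands, so a representative lying outside the gluing region can be separated from the other class by shrinking its neighborhood within its own summand away from the closure of the gluing region.

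For $S'$, the argument is formally identical with $\Psi$ in place of $F$, since $\Psi$ is itself a diffeomorphism between open subsets of the two bundle summands. Because $\Psi$ covers $F$, the vector bundle projection maps $p_N^*S_N\to \R\times N$ and $S\to M$ descend to a well-defined smooth map $S'\to M'$. The complex vector bundle structure on $S'$ is defined fiberwise by transporting the structure from either summand through $\pi_{S'}$; this is well-defined precisely because $\Psi$ is fiberwise $\C$-linear where both identifications apply. Local triviality on $M'$ follows by pushing forward local trivializations from each summand over the open cover by $\pi_{M'}(\R_{<\varepsilon}\times N)$ and $\pi_{M'}(U\cup M_+)$.

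I expect the main obstacle to be the Hausdorffness verification. The classic failure mode of gluing constructions (the line with two origins) occurs exactly when one identifies points along a non-open subset; our setup avoids this because $(-\varepsilon,\varepsilon)\times N$ is open in $\R_{<\varepsilon}\times N$ and $F$ carries it diffeomorphically onto the open set $U\subseteq M$, so any attempt to build non-separable sequences forces both representatives into the gluing region, where they are already identified. Everything else is routine bookkeeping once the diffeomorphism property of $F$ (and $\Psi$) is in hand.
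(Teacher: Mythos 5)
The paper offers no written proof here, stating only that the claims ``can be verified directly,'' so the only thing to assess is whether your argument actually closes the relevant gap. Your overall route --- the standard gluing construction: open embeddings from each summand, pushed-forward charts, second countability, Hausdorffness, then the bundle version with $\Psi$ covering $F$ --- is the right one, and everything except the Hausdorffness step is sound and routine.

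The Hausdorffness step, however, has a genuine gap. You claim that if $[x]$ and $[y]$ have representatives in different summands, then ``a representative lying outside the gluing region can be separated from the other class by shrinking its neighborhood within its own summand away from the closure of the gluing region.'' This is exactly wrong in the one case that matters: if $x = (-\varepsilon, p) \in \R_{<\varepsilon}\times N$, then $x$ lies outside the gluing region $(-\varepsilon,\varepsilon)\times N$ but on its closure, so \emph{every} neighborhood of $x$ meets the gluing region --- you cannot shrink away from it. Likewise, your later claim that ``any attempt to build non-separable sequences forces both representatives into the gluing region'' is false; a candidate non-separable pair is $(-\varepsilon,p)$ together with a putative limit of $F(r_n,p_n)$ as $r_n\downarrow -\varepsilon$, and the first point is \emph{not} in the gluing region. (For reference, the line with two origins is built by gluing two copies of $\R$ along the open set $\R\setminus\{0\}$ via the identity --- entirely along open sets via a diffeomorphism --- so openness of the gluing locus alone does not rescue Hausdorffness.)

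What actually saves the construction is a geometric fact you never invoke: since $M$ is complete, $F(r_n,p_n)=\exp_{p_n}(r_n n_{p_n})$ converges in $M$ to $\exp_p(-\varepsilon n_p)$, and this limit lies in $M_-^\circ\setminus U$, hence outside the open set $U\cup M_+$. (That $U\cup M_+ = U\cup M_+^\circ$ is open in $M$ is needed here; that $\exp_p(-\varepsilon n_p)\notin U$ follows from $F$ being a homeomorphism onto $U$, and that it lies in $M_-^\circ$ follows from $F$ mapping $(-\varepsilon,0)\times N$ into $M_-^\circ$ and $M_-$ being closed.) Equivalently: the graph of $F$ is closed in $(\R_{<\varepsilon}\times N)\times(U\cup M_+)$, which is precisely the criterion for the glued quotient to be Hausdorff. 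Once you use this, you can separate $(-\varepsilon,p)$ from any $y\in U\cup M_+$ by choosing an open $W\ni\exp_p(-\varepsilon n_p)$ in $M$ with $y\notin\ol W$, taking $V_x=(-\varepsilon-\delta,-\varepsilon+\delta)\times N_p$ small enough that $F(V_x\cap((-\varepsilon,\varepsilon)\times N))\subset W$, and $V_y=(U\cup M_+)\setminus\ol W$; their saturations are disjoint. I would insert this argument --- and drop the ``shrink away from the closure'' claim --- before relying on Hausdorffness. The remainder of your proof, including the bundle case, is fine once this is fixed.
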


We will identify the open manifolds $\R_{<\varepsilon}\times N$ and $U\cup M_+$ with their images in $M'$. Similarly, we will identify tangent vectors on these manifolds with tangent vectors on the manifold $M'$. In the rest of this section, we will equip $M'$ with a complete Riemannian metric and $S'$ with a Hermitian metric. In Section \ref{subsec: gluing on S'}, we will equip $S'$ with a Clifford action and a Dirac operator that agree with the corresponding structures on $M_+$ and $\R_{\leq0}\times N$ away from the gluing region.

We will construct a smooth Riemannian metric on $M'$ and a smooth Hermitian metric on $S'$ by initially gluing two metrics continuously along $N$, and then applying a smoothing procedure. This construction is convenient for constructing a Clifford action and a connection compatible with these metrics.

Denote the Riemannian metric on $M$ by $g_M$ and the product metric on $\R\times N$ by $g_{\R\times N}$. Note that $dF_{(0,p)}(\frac{\partial}{\partial r}) = n_p$ for all $p \in N$ and therefore the restriction of $g_M$ to $\restr{TM}{N}$ agrees with the restriction of $g_{\R\times N}$ to $\restr{T(\R\times N)}{\{0\}\times N}$. Therefore, on $M'$ we define the \emph{continuous} Riemannian metric $\tilde{g}$ as follows:
\begin{equation}
	\label{def: g tilde}
	\tilde{g} =\begin{cases}
		g_{\R\times N} & \text{ on $\restr{TM'}{\R_{\leq0}\times N}$}, \\
		g_M & \text{ on $\restr{TM'}{M_+}$}.
	\end{cases}
\end{equation} 
Since $T(\R\times N)\cong p_N^*(T^1_N\oplus TN)$, where $T^1_N = \R\times N$ is the trivial rank 1 vector bundle over $N$, it follows directly that for any smooth map $f\colon \R\rightarrow \R$, 
we have a vector bundle morphism 
\begin{equation}
	\label{eq: vector bundle morphism TM'}
	\alpha_f\colon T(\R\times N)\rightarrow T(\R\times N), \qquad \lambda \restr{\frac{\partial}{\partial r}}{(r, p)} + v_N \mapsto \lambda\restr{\frac{\partial}{\partial r}}{(f(r), p)} + v_N,
\end{equation}
covering $f\times \id_N$, that is an isomorphism on fibers.

The idea of our construction is that we can smear-out the nonsmoothness in $\tilde{g}$ that occurs at $\{0\}\times N$. We will do this by modifying the metric with a vector bundle morphism \eqref{eq: vector bundle morphism TM'}, where $f$ is chosen to have vanishing derivatives at $r = 0$. 

Fix a smooth, bijective and increasing function
$f\colon\R\rightarrow \R$ such that:
	\begin{enumerate}
		\item \label{appitem: prop f 2} $f^{(k)}(0) = 0$ for all $k \in \N_0$;
		\item \label{appitem: prop f 3} $\restr{f}{\R\setminus(-\varepsilon/3, \varepsilon/3)} = \id_{\R\setminus(-\varepsilon/3, \varepsilon/3)}$.
	\end{enumerate}
      
\begin{lemma}
	\label{lem: smoothing of metric}
 Then the metric $g'$ defined by 
	\[
	g'_{(r,p)}(v_1, v_2) = \tilde{g}_{(f(r), p)}(\alpha_f(v_1), \alpha_f(v_2))
	\] 
	on $\restr{TM'}{\R_{<\varepsilon}\times N}$ is smooth and extends to a smooth Riemannian metric $g'$ on $M'$ when we declare it to be equal to $g_M$ outside $\restr{TM'}{\R_{<\varepsilon}\times N}$.
\end{lemma}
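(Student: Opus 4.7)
My plan is to split the analysis of $g'$ into three regions of the tubular neighbourhood coordinate $r \in (-\varepsilon, \varepsilon)$, each of which is handled by a different ingredient: translation invariance of the product metric, the compatibility of $\tilde g$ with $g_M$ under $F$, and the flatness of $f$ at $0$.

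First I would observe that for $r \le 0$ the formula reduces to $g_{\R\times N}$. Indeed, $f$ is increasing with $f(0)=0$, so $f(r)\le 0$ and hence $\tilde g_{(f(r),p)} = (g_{\R\times N})_{(f(r),p)}$. Since $\alpha_f$ sends $\lambda \partial_r|_{(r,p)} + v_N$ to $\lambda \partial_r|_{(f(r),p)} + v_N$, preserving the $(\partial_r, TN)$-decomposition and coefficients, the translation invariance of the product metric gives $g'_{(r,p)}(v,w) = (g_{\R\times N})_{(r,p)}(v,w)$. This is smooth on $\R_{\le 0}\times N$. Symmetrically, for $r \ge \varepsilon/3$ property ii) of $f$ gives $f(r)=r$, so $\alpha_f$ is the identity on each fibre and $g'_{(r,p)} = \tilde g_{(r,p)}$; under the identification $F:(-\varepsilon,\varepsilon)\times N \xrightarrow{\sim} U \subseteq M'$ this is $g_M$. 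Thus $g'$ is smooth on $[\varepsilon/3,\varepsilon)\times N$ and agrees there with $g_M$, so the piecewise definition on $M'$ extends smoothly across $\partial U \cap M_+$.

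The heart of the argument is smoothness in the transition region $r\in(-\varepsilon/3,\varepsilon/3)$, and in particular at $r=0$. I would fix local coordinates $(x^1,\ldots,x^d)$ on an open set in $N$ and write the component functions of $\tilde g$ in the coordinates $(r,x)$ as $\tilde g_{ab}(r,x)$. These functions are continuous (with the two halves matching along $r=0$ because $dF_{(0,p)}$ is an isometry identifying the $\partial_r$ direction with the unit normal $n_p$), smooth on $\{r\le 0\}$ (where they equal the constant coefficients of $g_{\R\times N}$) and smooth on $\{r\ge 0\}$ (where they are the components of $F^*g_M$, which is smooth up to the boundary $r=0$). The components of $g'$ are obtained by precomposition in the first variable: $g'_{ab}(r,x) = \tilde g_{ab}(f(r),x)$. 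Thus smoothness of $g'$ amounts to the following elementary lemma, which I would state and prove: if $h:(-\varepsilon,\varepsilon)\times N\to\R$ is continuous, smooth separately on $r\le 0$ and on $r\ge 0$, and $f\in C^\infty(\R)$ satisfies $f^{(k)}(0)=0$ for all $k\in\N_0$, then $(r,x)\mapsto h(f(r),x)$ is $C^\infty$.

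For this lemma I would apply Faà di Bruno's formula on $r>0$: every $\partial_r^k$ derivative of $h(f(r),x)$ is a polynomial in $f'(r),\ldots,f^{(k)}(r)$ with coefficients that are one-sided derivatives of $h$ at $(f(r),x)$, bounded on a compact neighbourhood of $r=0$. Flatness of $f$ at $0$ forces every such product to tend to $0$ as $r\to 0^+$, so all one-sided $r$-derivatives of $h(f(r),x)$ at $r=0$ exist and equal zero; on $r\le 0$ these derivatives are identically zero. Mixed partials with $x$ follow because $x$-differentiation only replaces $h$ by $\partial_x^\alpha h$, which has the same regularity. Therefore $g'_{ab}\in C^\infty$, so $g'$ is a smooth tensor field, and positive-definiteness is preserved by this precomposition pointwise because $\alpha_f$ is a fibrewise linear isomorphism and $\tilde g$ is positive-definite. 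The main technical obstacle is precisely the Faà di Bruno step, and the flatness of $f$ at $0$ is exactly designed to defeat it.
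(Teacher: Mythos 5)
Your proposal is correct and follows essentially the same route as the paper: write the metric components in a local frame so that $g'_{ij}(r,p)=\tilde g_{ij}(f(r),p)$, and reduce to the fact that precomposition with a function $f$ flat at $0$ converts a piecewise-smooth function (smooth separately on $r\le 0$ and $r\ge 0$ with one-sided derivatives of all orders at $0$) into a smooth one. The paper states this last fact without proof; you fill in the details via Faà di Bruno together with the standard mean-value-theorem argument for one-sided derivative limits.
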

\begin{proof}

 Let $V\subseteq N$ be an open subset that admits a local frame $\{e_1, \ldots, e_k\}$ for $TN$. Together with $e_0 := \frac{\partial}{\partial r}$, this yields a local frame for $TM'$ on $\R_{<\varepsilon}\times V$. The coefficients of $g'$ with respect to this frame equal
 \[
g'_{ij}(r,p)=\tilde g_{ij}(f(r),p).
 \]
Now for every 
	  continuous function $h\colon\R\rightarrow \R$  that is smooth on $\R\setminus \{0\}$ and such that $h$ has left and right derivatives of all orders at $0$, the function
 $h\circ f$ is smooth. This shows that the functions $g'_{ij}$, and hence the metric $g'$, are smooth.

 The second property of $f$ implies that 
    $\restr{g'}{(\varepsilon/3, \varepsilon)\times N} = \restr{g_M}{(\varepsilon/3, \varepsilon)\times N}$. Therefore $g'$ extends smoothly to a Riemannian metric on $M'$ when we declare it to be equal to $g_M$ outside $\R_{<\varepsilon}\times N$.
\end{proof}

Let $h$ be the Hermitian metric on the original bundle $S$ over $M$. Denote by $h_N$ the metric $\restr{h}{N}$ over $S_N = \restr{S}{N}$. The bundle $p_N^*S_N$ over $\R\times N$ is endowed with the pull-back metric $p_N^*h_N$. The bundle $S'$ was constructed by gluing $p_N^*S_N$ to $S$ along their (isomorphic) parts over $U\cong (-\varepsilon, \varepsilon)\times N$. Since on $N \cong \{0\}\times N$ both metrics are equal to $h_N$, it follows that $S'$ can be equipped with the \emph{continuous} Hermitian metric
\begin{equation}
	\label{def: h tilde}
	\tilde{h} =\begin{cases}
		p_N^*h_N & \text{ on $\restr{S'}{\R_{\leq0}\times N}$}, \\
		h & \text{ on $\restr{S'}{M_+}$}.
	\end{cases}
\end{equation} 
Let $f\colon \R\rightarrow \R$ be a smooth map. Then the vector bundle morphism
\begin{equation}
	\label{eq: vector bundle morphism S}
	\beta_f\colon p_N^*S_N\rightarrow  p_N^*S_N, \qquad (r, p, s)\mapsto (f(r), p, s),
\end{equation}
covers $f\times \id_N$ and is an isomorphism on fibers. The following result is now completely analogous to Lemma \ref{lem: smoothing of metric}.

\begin{lemma}
	\label{lem: smoothing of hermitian metric}
 The metric $h'$ on $\restr{p_N^*S_N}{\R_{<\varepsilon}\times N}$ defined by
	\[
	h'_{(r, p)}(s_1, s_2) = \tilde{h}_{(f(r),p)}(\beta_f(s_1), \beta_f(u_2)) 
	\] 
	is smooth and extends to a smooth Hermitian metric on $S'$ when we declare it to be equal to $h$ outside $\restr{p_N^*S_N}{\R_{<\varepsilon}\times N}$.
\end{lemma}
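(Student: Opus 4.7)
The plan is to proceed in exact parallel with the proof of Lemma~\ref{lem: smoothing of metric}, replacing local tangent frames with local frames of $S_N$ and the bundle morphism $\alpha_f$ with $\beta_f$. First I would fix an open $V\subseteq N$ on which $S_N$ admits a local frame $\{\sigma_1,\ldots,\sigma_k\}$; pulling back along $p_N$ yields a local frame $\{p_N^*\sigma_1,\ldots,p_N^*\sigma_k\}$ of $p_N^*S_N$ over $\R\times V$. Unwinding the definition of $\beta_f$ and using that the fibers of a pullback bundle are identified $\C$-linearly with fibers of the base, the coefficient functions of $h'$ in this frame simplify to
\[
h'_{ij}(r,p) = \tilde h_{(f(r),p)}\bigl(\beta_f(p_N^*\sigma_i),\beta_f(p_N^*\sigma_j)\bigr) = \tilde h_{ij}(f(r),p),
\]
which is formally identical to the expression appearing in the proof of Lemma~\ref{lem: smoothing of metric}.

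Next I would verify smoothness by the same one-variable composition lemma. On $(-\varepsilon,0]\times V$ the functions $\tilde h_{ij}$ coincide with the smooth coefficients of $p_N^*h_N$, and on $[0,\varepsilon)\times V$ they coincide with the smooth coefficients of $\Psi^*h$; in particular, each function $r\mapsto \tilde h_{ij}(r,p)$ is continuous on $(-\varepsilon,\varepsilon)$, smooth on $(-\varepsilon,\varepsilon)\setminus\{0\}$, and admits left and right derivatives of all orders at $r=0$. Property~\ref{appitem: prop f 2} of $f$, together with the chain-rule argument already invoked in Lemma~\ref{lem: smoothing of metric}, then makes $r\mapsto \tilde h_{ij}(f(r),p)$ smooth across $r=0$ for each fixed $p$. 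Joint smoothness in $(r,p)$ follows because $f$ touches only the $r$-coordinate, so all $p$-derivatives commute with the substitution, and the one-sided $p$-derivatives at $r=0$ agree (both reduce to derivatives of the common restriction $h_{N,ij}$ on $\{0\}\times V$). Hence $h'$ is smooth on $\restr{p_N^*S_N}{\R_{<\varepsilon}\times N}$.

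To handle the extension and the Hermitian property, I would invoke property~\ref{appitem: prop f 3}: on $(\varepsilon/3,\varepsilon)\times N$ the map $f$ is the identity and $\beta_f$ is the identity of $p_N^*S_N$, so $h'=\tilde h$ on this region; since $\tilde h = h$ there (under the identification $\Psi$), declaring $h'=h$ outside $\R_{<\varepsilon}\times N$ produces a smooth extension to all of $S'$. Finally, $\beta_f$ is a fiberwise $\C$-linear isomorphism (because $f$ is a bijection), so sesquilinearity and positive-definiteness of $h'$ are inherited directly from $\tilde h$, making $h'$ a genuine Hermitian metric on $S'$.

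The main obstacle is expected to be nothing more than transferring the one-dimensional smoothing argument from Lemma~\ref{lem: smoothing of metric} to the present Hermitian setting with its extra transverse variable $p\in N$; once one notices that $f$ only alters the $r$-coordinate, so transverse derivatives commute with the composition by $f$, the proof becomes essentially a translation of Lemma~\ref{lem: smoothing of metric} from $TM'$ to $S'$.
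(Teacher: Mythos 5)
Your proof is correct and takes the same approach the paper intends: the paper simply declares Lemma~\ref{lem: smoothing of hermitian metric} to be ``completely analogous to Lemma~\ref{lem: smoothing of metric},'' and you have spelled out that analogy correctly—coefficient functions become $\tilde h_{ij}(f(r),p)$, the same one-variable smoothing argument applies, property~\ref{appitem: prop f 3} of $f$ yields the extension, and fiberwise linearity of $\beta_f$ preserves the Hermitian structure. (A small stylistic point: $\beta_f$ is the identity on each fiber $S_{N,p}$ by construction, so its fiberwise invertibility is immediate and does not actually depend on $f$ being a bijection.)
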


From now on, $M'$ will always be considered as a Riemannian manifold with the metric $g'$ and $S'$ as a Hermitian vector bundle with the metric $h'$. The rest of this section will be devoted to showing that $g'$ is a complete Riemannian metric.

\begin{lemma}
	\label{lem: metric estimates}
	Let $C\geq 1$ be the constant from Theorem \ref{thm: PMT}. 
	For the metric $g'$ on $M'$, the following equalities and inequalities hold, where the inequalities should be interpreted as inequalities of quadratic forms on vectors:
	\begin{enumerate}
		\item \label{item: metric est 1} $g' = g_{\R\times N}$ on $\R_{\leq 0}\times N$.
		\item \label{item: metric est 2} $g' = g_M$ on $M'\setminus( \R_{\leq \varepsilon/3} \times N)$.
		\item \label{item: metric est 3} $\frac{1}{C}g_M \leq g' = g_{\R\times N} \leq Cg_M$ on $(-\varepsilon, 0]\times N$.
		\item \label{item: metric est 4} $\frac{1}{C}g_{\R\times N}\leq g' \leq Cg_{\R\times N}$ on $[0, \varepsilon)\times N$.
		\item \label{item: metric est 5} $\frac{1}{C^2}g_M\leq g' \leq C^2g_M$ on $[0, \varepsilon)\times N$.
	\end{enumerate}
\end{lemma}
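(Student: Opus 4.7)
The plan is to verify each of the five claims by unwinding the definition of $g'$ and making systematic use of the hypothesis \eqref{eq: estimate on Riemannian metric}. The key structural observation to exploit throughout is that the bundle morphism $\alpha_f$ preserves the product metric: since $g_{\R\times N}$ is independent of the $\R$-coordinate while $\alpha_f$ only shifts tangent vectors along this coordinate (fixing both the $TN$-component and the $\R$-coefficient), one has
\[
g_{\R\times N}|_{(f(r),p)}\bigl(\alpha_f(v_1),\alpha_f(v_2)\bigr) = g_{\R\times N}|_{(r,p)}(v_1,v_2).
\]
This identity will do most of the real work.

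I would begin with item (i). Since $f$ is an increasing bijection with $f(0)=0$ (which follows from $f^{(0)}(0)=f(0)=0$), we have $f(\R_{\leq 0})\subseteq \R_{\leq 0}$, so $\tilde g_{(f(r),p)} = g_{\R\times N}$ by \eqref{def: g tilde}, and the invariance above gives $g' = g_{\R\times N}$. For item (ii), property \ref{appitem: prop f 3} implies that on $(\varepsilon/3,\varepsilon)\times N$ the map $\alpha_f$ is the identity and $\tilde g = g_M$, while on $M'\setminus(\R_{<\varepsilon}\times N)$ the equality $g'=g_M$ holds by construction. Item (iii) is then immediate from (i) combined with rearranging the bracketing in the hypothesis on $U$.

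The remaining items (iv) and (v) both concern $[0,\varepsilon)\times N$. For (iv), monotonicity and bijectivity of $f$ together with $f(\varepsilon/3)=\varepsilon/3$ force $f([0,\varepsilon))=[0,\varepsilon)$, so $\tilde g_{(f(r),p)} = g_M$ applies; plugging the hypothesis into $g'_{(r,p)}(v_1,v_2) = g_M|_{(f(r),p)}(\alpha_f v_1,\alpha_f v_2)$ and then invoking $\alpha_f$-invariance of $g_{\R\times N}$ to move the base point back to $(r,p)$ yields the claim. Item (v) is then the composition of (iv) with \eqref{eq: estimate on Riemannian metric}, producing the factor $C^2$. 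The argument is essentially bookkeeping; the only step requiring real care is (iv), where one must track the region in which $(f(r),p)$ lands so as to know which formula for $\tilde g$ to apply, and then deploy the $\alpha_f$-invariance at the correct moment. I do not anticipate any substantive obstacle beyond this.
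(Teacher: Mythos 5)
Your proposal is correct and follows essentially the same route as the paper: the central tool in both is the observation that $\alpha_f$ is a fiberwise isometry for the product metric $g_{\R\times N}$, which you state up front and the paper invokes as "$\alpha_f$ is fiberwise unitary for the metric $g_{\R\times N}$", and all five items are then direct unwindings of the definition of $g'$ together with the hypothesis \eqref{eq: estimate on Riemannian metric}.
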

\begin{proof}
	These are all easy verifications.
	\begin{enumerate}[(i)]
		\item For $(r, p) \in \R_{\leq 0}\times N$ and $v_1, v_2 \in T_{(r,p)}(\R \times N)$, it follows that
		\begin{align*}
			g'_{(r,p)}(v_1, v_2) &=	\tilde{g}_{(f(r),p)}(\alpha_f(v_1), \alpha_f(v_2)) \\ 
			&= (g_{\R\times N})_{(f(r),p)}(\alpha_f(v_1), \alpha_f(v_2))
			= (g_{\R\times N})_{(r,p)}(v_1, v_2),
		\end{align*}
		where the last equality follows from the fact that $\alpha_f$ is fiberwise unitary for the metric $g_{\R\times N}$.
		\item This was already shown in the proof of Lemma \ref{lem: smoothing of metric}.
		\item On $(-\varepsilon, 0]\times N$ it follows that $g' = g_{\R\times N}$ by part \ref{item: metric est 1}. The rest of the estimates follows directly from the third assumption in Theorem \ref{thm: PMT}.
		\item For $(r, p) \in [0, \varepsilon) \times N$ and $v \in T_{(r,p)}(\R \times N)$, the third assumption in Theorem \ref{thm: PMT} and the the fact that $\alpha_f$ is fiberwise unitary for the metric $g_{\R\times N}$ imply that 
		\begin{align*}
			\frac{1}{C}(g_{\R\times N})_{(r,p)}(v, v) &= \frac{1}{C}(g_{\R\times N})_{(f(r),p)}(\alpha_f(v), \alpha_f(v))\\
			&\leq (g_M)_{(f(r),p)}(\alpha_f(v), \alpha_f(v)) = g'_{(r,p)}(v, v)\\ 
			&\leq C(g_{\R\times N})_{(f(r),p)}(\alpha_f(v), \alpha_f(v)) = C(g_{\R\times N})_{(r,p)}(v, v).
		\end{align*}
		\item This follows directly from part \ref{item: metric est 4} and the third assumption in Theorem \ref{thm: PMT}.\qedhere
	\end{enumerate}
\end{proof}
\begin{corollary}
	\label{cor: quotient map is isometry on end parts}
	The quotient map 
	\[
	\pi_{M'}\colon (\R_{<\varepsilon}\times N)\coprod (U\cup M_+) \rightarrow M'
	\]
	restricts to an Riemannian isometry on $\R_{<0}\times N$ and $M_+ \setminus([0, \varepsilon/3]\times N)$.
\end{corollary}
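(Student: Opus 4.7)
The plan is to combine the fact that $\pi_{M'}$ is a local diffeomorphism (Lemma \ref{lem: quotient manifold}) with the explicit description of $g'$ given by Lemma \ref{lem: metric estimates}. Since a smooth map that is both a local diffeomorphism and injective on a region, and that pulls the target metric back to the source metric, is automatically a Riemannian isometry onto its image, it suffices to verify two things on each of the two regions: (a) injectivity of $\pi_{M'}$, and (b) the description of $g'$ on the image region.

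First I would handle $\R_{<0}\times N$. By construction of $\sim_F$, every equivalence class containing a point $(r,p)\in \R_{<\varepsilon}\times N$ consists only of $(r,p)$ together with $F(r,p)\in U\subseteq U\cup M_+$. In particular, no two distinct points of $\R_{<\varepsilon}\times N$ lie in the same class. Hence $\pi_{M'}$ restricted to $\R_{<0}\times N$ is injective, and being a local diffeomorphism it is a diffeomorphism onto its image. Since $\R_{<0}\times N\subseteq \R_{\leq 0}\times N$, Lemma \ref{lem: metric estimates}(\ref{item: metric est 1}) gives $g'=g_{\R\times N}$ on the image, so the restriction is a Riemannian isometry.

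Next I would handle $M_+\setminus([0,\varepsilon/3]\times N)$, where the subset is interpreted inside the copy of $U\cup M_+$ via the identification of $[0,\varepsilon/3]\times N$ with $F([0,\varepsilon/3]\times N)\subseteq U$. Again by construction of $\sim_F$, no two distinct points of $U\cup M_+$ lie in the same class (they are only identified across the two copies in the coproduct). So $\pi_{M'}$ is injective on the entire copy of $U\cup M_+$, hence on $M_+\setminus([0,\varepsilon/3]\times N)$. To apply Lemma \ref{lem: metric estimates}(\ref{item: metric est 2}), it remains to check that the image of this region in $M'$ is disjoint from $\R_{\leq \varepsilon/3}\times N$. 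But in $M'$, the set $\R_{\leq \varepsilon/3}\times N$ equals $(\R_{\leq-\varepsilon}\times N)\cup F((-\varepsilon,\varepsilon/3]\times N)$; the first piece is disjoint from $M_+$, while $F((-\varepsilon,0)\times N)\subseteq M_-^{\circ}$ and $F([0,\varepsilon/3]\times N)$ is exactly the subset removed. Thus the image lies in $M'\setminus(\R_{\leq\varepsilon/3}\times N)$, and Lemma \ref{lem: metric estimates}(\ref{item: metric est 2}) yields $g'=g_M$ there, giving the desired isometry.

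The only real obstacle is the bookkeeping of how $\sim_F$ identifies the two pieces of the coproduct; once one carefully checks that $\R_{<0}\times N$ and $M_+\setminus([0,\varepsilon/3]\times N)$ both correspond, under $\pi_{M'}$, to open submanifolds of $M'$ on which the smoothed metric $g'$ has not been modified, the claim reduces immediately to the pointwise identities in Lemma \ref{lem: metric estimates}.
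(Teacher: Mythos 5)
Your proof is correct and takes the only natural route, which is what the paper leaves implicit: combine the fact that $\pi_{M'}$ is an injective local diffeomorphism on each of the two regions with parts (i) and (ii) of Lemma \ref{lem: metric estimates} to conclude that $g'$ pulls back to $g_{\R\times N}$ and $g_M$ respectively. The bookkeeping of the equivalence classes under $\sim_F$ and the identification $M'\setminus(\R_{\leq\varepsilon/3}\times N)=M_+\setminus([0,\varepsilon/3]\times N)$ are exactly the details the paper suppresses, and you handle them correctly.
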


The following two results are very important for the remainder of the proof of Theorem \ref{thm: PMT}. Together, they imply that our metric $g'$ is in fact a complete Riemannian metric and they allow us to construct uniform half-isomorphisms as discussed in Section \ref{subsec: half-isos and the transition map}. Moreover, these are the results where all of the assumptions from Theorem \ref{thm: PMT} are used in a crucial way.

\begin{proposition}
	\label{prop: uniform equivalence 1}
	The identity map 
	\[
	(M_+, \restr{d_M}{M_+})\rightarrow (M_+, \restr{d_{M'}}{M_+})
	\]
	is a uniform equivalence.
\end{proposition}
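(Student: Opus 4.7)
The identity map is a homeomorphism, since both $d_M|_{M_+}$ and $d_{M'}|_{M_+}$ are restrictions of Riemannian distances on complete manifolds and so induce the manifold topology on $M_+$. By the Heine--Borel property, metric boundedness of the identity and its inverse are automatic, so the plan is to establish uniform expansiveness in both directions. The core idea is to convert any short path in one metric into a comparably short path in the other, using two local tools. First, on $M_+$ the metrics $g_M$ and $g'$ agree outside $F([0, \varepsilon/3]\times N)$ and satisfy $\tfrac{1}{C^2} g_M \leq g' \leq C^2 g_M$ on $F([0, \varepsilon)\times N)$ by Lemma~\ref{lem: metric estimates}(ii) and (v), so that the length of any path in $M_+$ is preserved up to a factor of $C$ when transferred between $M$ and $M'$. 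Second, Assumption 1 of Theorem~\ref{thm: PMT} supplies an increasing control function $\psi\colon \R_{\geq 0} \to \R_{\geq 0}$ with $d_N(n, n') \leq \psi(d_M(n, n'))$ for all $n, n' \in N$, while trivially $d_M(n, n') \leq d_N(n, n')$.

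For the forward direction, I would fix $p, q \in M_+$ with $d_M(p, q) \leq R$ and pick a path $\gamma \colon [0,1] \to M$ from $p$ to $q$ of length at most $R + 1$. If $\gamma$ does not meet $N$, then by connectedness of $[0,1]$ and of $M \setminus N = M_+^\circ \sqcup M_-^\circ$ together with $\gamma(0) \in M_+$, the curve $\gamma$ lies entirely in $M_+$, and transferring to $M'$ yields the bound directly. Otherwise let $s_0 \leq s_1$ be the first and last times at which $\gamma$ meets $N$, and set $n_i := \gamma(s_i) \in N$. The outer subpaths $\gamma|_{[0, s_0]}$ and $\gamma|_{[s_1, 1]}$ lie in $M_+$ by the same connectedness argument, and transfer to paths in $M'$ of total $g'$-length at most $2C(R+1)$. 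The middle subpath has $d_M$-length at most $R+1$, so $d_N(n_0, n_1) \leq \psi(R+1)$, and I would replace it by a curve inside $\{0\} \times N \subseteq M'$ of length at most $\psi(R+1) + 1$, using that $g' = g_N$ on $N$ by Lemma~\ref{lem: metric estimates}(i). Concatenating yields $d_{M'}(p, q) \leq 2C(R+1) + \psi(R+1) + 1$.

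The reverse direction proceeds symmetrically. Given a path $\gamma'$ in $M'$ of length at most $R + 1$ from $p$ to $q$ in $M_+$, the outer subpaths again lie in $M_+$ (since $M' \setminus N$ is the disjoint union of $M_+^\circ$ and $\R_{<0} \times N$) and transfer to $M$ with total $g_M$-length at most $2C(R+1)$. The middle subpath in $\R_{\leq 0} \times N$ has, in the product metric $g_{\R \times N}$, length at least $d_N(n_0, n_1)$ because any path in the product metric between $(0, n_0)$ and $(0, n_1)$ has at least that length. Thus $d_N(n_0, n_1) \leq R + 1$, and since $d_M \leq d_N$ on $N$, we may replace the middle by an $M$-curve of length at most $R + 2$, yielding $d_M(p, q) \leq (2C + 1)(R+1) + 1$.

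The principal subtlety is to isolate a single middle segment by using only the first and last crossings of $\gamma$ with $N$, so that the potentially nonlinear control function $\psi$ is applied just once rather than summed over the (possibly infinitely many) short excursions of $\gamma$ into $M_-$; this is also what makes an appeal to the coarse equivalence assumption on $N$ unavoidable, since the $N$-distance between the extreme crossings can in principle be arbitrary as a function of the ambient distance without it.
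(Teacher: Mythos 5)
Your forward direction is correct and actually slightly slicker than the paper's: by isolating the segment between the \emph{first} and \emph{last} crossings of $N$, you apply the uniform-expansiveness control $\psi$ exactly once, bypassing the paper's counting argument that bounds the number of excursions via the $2\varepsilon$-length lower bound from the tubular neighbourhood. The bookkeeping checks out: the two outer subpaths lie in $M_+$ by a connectedness argument, they transfer to $M'$ with a loss of at most a factor $C$ by Lemma~\ref{lem: metric estimates}~(ii), (v), and the replacement middle curve through $\{0\}\times N$ has $g'$-length equal to its $g_N$-length.

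The reverse direction, however, has a genuine gap. You write ``The middle subpath in $\R_{\leq 0}\times N$'' and then apply a projection argument to conclude $d_N(n_0, n_1) \leq R+1$. But $\gamma'|_{[s_0,s_1]}$ is the segment between the first and last times $\gamma'$ meets $N$, and there is nothing preventing $\gamma'$ from re-entering $M_+^\circ$ on that segment (it can oscillate between $\R_{<0}\times N$ and $M_+^\circ$ arbitrarily many times). On the portions where $\gamma'$ is in $M_+^\circ$, the product-metric projection argument is unavailable, so the bound on $d_N(n_0,n_1)$ by the $g'$-length of the middle does not follow. There is no need to patch this by invoking $\psi$: the paper's reverse direction simply replaces \emph{each} maximal excursion $(\gamma_\R,\gamma_N)$ into $\R_{\leq 0}\times N$ by its projection $(0,\gamma_N)$; this is length-decreasing for $g'$ and produces a curve lying entirely in $M_+$, after which the metric comparison $\tfrac{1}{C^2}g_M \leq g' \leq C^2 g_M$ on $M_+$ finishes the estimate. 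In particular, Assumption~1 of Theorem~\ref{thm: PMT} is not needed in the reverse direction at all, and the attempt to ``proceed symmetrically'' is exactly where the argument goes wrong.
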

\begin{proof}
	Since the map is a bijection, it suffices to show uniform expansiveness in both directions; for a general bijective map, uniform expansiveness of its inverse implies metric boundedness. For establishing uniform expansiveness, it will suffice to show that the distance functions can be estimated in terms of one another. 
	
	Suppose that $p_1, p_2 \in M_+$ and choose a curve $\gamma$ connecting $p_1$ to $p_2$ through $M$ such that $L_{g_M}(\gamma) \leq d_M(p_1, p_2) + 1$. Two possibilities can occur: either $\gamma$ lies entirely inside $U\cup M_+ \subseteq M$; or it does not. In case it does, it immediately follows that
	\begin{equation}
		\label{eq: case 1 forward}
		d_{M'}(p_1, p_2) \leq L_{g'}(\gamma) \leq CL_{g_M}(\gamma) \leq C(d_M(p_1, p_2) + 1),
	\end{equation}
	where the second inequality follows from parts \ref{item: metric est 2}, \ref{item: metric est 3} and \ref{item: metric est 5} of Lemma \ref{lem: metric estimates}.
    
    Now suppose that  $\gamma$ does not lie inside of $U\cup M_+$. Then the estimate \eqref{eq: case 1 forward} does not hold. It will be shown that in this case, the curve $\gamma$ can be modified to lie entirely within $U\cup M_+$, while still controlling the length of the modified curve. By assumption, the set $\cI:=\gamma^{-1}(M_-\setminus U)) \subseteq [0,1]$ is nonempty and it is trivially contained within the set $\cJ:=\gamma^{-1}(M_-)$. 
	
	
	Let $[a, b]\subseteq [0,1]$ be a connected component of $\cJ$ such that there exists a $c \in [a, b]\cap\cI$. Since $\gamma(a), \gamma(b) \in N$ and $\gamma(c) \notin U$, it follows that $d_M(\gamma(a), \gamma(c))>\varepsilon$ and $d_M(\gamma(b), \gamma(c))>\varepsilon$ so that $L_{g_M}(\restr{\gamma}{[a,b]})>2\varepsilon$. It follows that $\cJ$ has at most $\frac{L_{g_M}(\gamma)}{2\varepsilon} \leq \frac{d_M(p_1, p_2) + 1}{2\varepsilon}$ of such connected components. 
	
	Because $\id_N\colon (N, d_N)\rightarrow (N, \restr{d_M}{N})$ is assumed to be a uniform equivalence, there exists an $S>0$ such that for all $q_1, q_2 \in N$ we have that
	\[
	d_M(q_1, q_2)\leq d_M(p_1, p_2)+1 \implies d_N(q_1, q_2) \leq S.
	\]
Then  $d_M(\gamma(a), \gamma(b)) \leq d_M(p_1, p_2)+1$ and therefore that $d_N(\gamma(a), \gamma(b)) \leq S$. So $\gamma(a)$ and $\gamma(b)$ can be connected through $N$ by a curve $\gamma_{ab}$ of length $L_{g_N}(\gamma_{ab}) = L_{g_M}(\gamma_{ab}) < S+1$. The concatenation of curves 
	\[
	\bar{\gamma} = \restr{\gamma}{[0, a]}*\gamma_{ab}*\restr{\gamma}{[b,1]}
	\]
	is piecewise smooth and has length $L_{g_M}(\bar{\gamma}) \leq d_M(p_1, p_2) + 1 + (S+1)$. Doing this for all at most $\frac{d_M(p_1, p_2) + 1}{2\varepsilon}$ connected components of $\cJ$ on which $\gamma$ leaves $U\cup M_+$, the resulting curve $\tilde{\gamma}$ is piecewise smooth, stays in $U\cup M_+$ and, using the metric $g_M$, has length less than
	\[
	d_M(p_1, p_2) + 1 + \frac{d_M(p_1, p_2) + 1}{2\varepsilon}(S+1).
	\]	
	By combining parts \ref{item: metric est 2}, \ref{item: metric est 3} and \ref{item: metric est 5} of Lemma \ref{lem: metric estimates}, we conclude that 
	\[
	d_{M'}(p_1, p_2) \leq L_{g'}(\tilde{\gamma}) \leq CL_{g_M}(\tilde{\gamma}) \leq C\left( d_M(p_1, p_2) + 1 + \frac{d_M(p_1, p_2) + 1}{2\varepsilon}(S+1)\right).
	\]
	Note that this estimate extends the one in Equation \eqref{eq: case 1 forward} where $\gamma$ does not leave $U\cup M_+$, such that this holds for any $p_1, p_2 \in M_+$.
	
	For the converse inequality, a similar bound of $d_M$ in terms of $d_{M'}$ must be found. This is now considerably simpler as the manifold $M'$ is a product on one side. Let $p_1, p_2 \in M_+$ and choose $\gamma$ connecting $p_1$ to $p_2$ through $M'$ such that $L_{g'}(\gamma) \leq d_{M'}(p_1, p_2) +1$. If $\gamma$ stays within $U\cup M_+$ it immediately follows that 
	\[
	d_M(p_1, p_2) \leq L_{g_M}(\gamma) \leq CL_{g'}(\gamma) \leq C(d_{M'}(p_1, p_2)+1).
	\] 
	If $(a, b)\subseteq [0,1]$ is a maximal interval on which $\gamma$ leaves $M_+$, then $\restr{\gamma}{[a,b]}$ lies in $\R_{\leq 0}\times N$ and therefore is of the form $\restr{\gamma}{[a,b]} = (\gamma_{\R}, \gamma_N)$. Replacing $\restr{\gamma}{[a,b]}$ by $(0, \gamma_N)$ immediately gives a piecewise smooth curve $\tilde{\gamma}$ contained in $M_+$. Note that by part \ref{item: metric est 1} of Lemma \ref{lem: metric estimates}, it follows that 
	\begin{align*}
		L_{g'}(\restr{\gamma}{[a, b]}) &= L_{g_{\R\times N}}(\restr{\gamma}{[a, b]})= \int_a^b\sqrt{|\dot{\gamma_{\R}}(t)|^2 + \|\dot{\gamma_N}(t)\|^2_{g_N}}dt \\
		&\geq \int_a^b \|\dot{\gamma_N}(t)\|_{g_N}dt= L_{g'}(0, \gamma_N).
	\end{align*}
	So that this modification of $\gamma$ is actually length-decreasing. So without loss of generality, it may be assumed that the curve $\gamma$ is contained within $M_+$ and thus that $d_M(p_1,p_2) \leq C(d_{M'}(p_1, p_2) + 1)$. The result now follows.
\end{proof}
Analogously to Proposition \ref{prop: uniform equivalence 1}, we have the following.
\begin{proposition}
	\label{prop: uniform equivalence 2}
	The identity map
	\[
	(\R_{\leq0}\times N, \restr{d_{M'}}{\R_{\leq0}\times N})\rightarrow (\R_{\leq0}\times N, \restr{d_{\R\times N}}{\R_{\leq0}\times N})
	\]
	is a uniform equivalence.
\end{proposition}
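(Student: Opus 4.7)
The map is a bijection, so as in Proposition \ref{prop: uniform equivalence 1} it suffices to establish uniform expansiveness in both directions. The direction $(\R_{\leq 0}\times N, \restr{d_{\R\times N}}{\cdot})\to(\R_{\leq 0}\times N, \restr{d_{M'}}{\cdot})$ is trivial: by Lemma \ref{lem: metric estimates} (i), $g'=g_{\R\times N}$ on $\R_{\leq 0}\times N$, so any curve there has equal $g'$- and $g_{\R\times N}$-length, whence $\restr{d_{M'}}{\R_{\leq 0}\times N}\leq \restr{d_{\R\times N}}{\R_{\leq 0}\times N}$. The content of the proposition is the reverse uniform expansiveness bound.

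The plan is to mimic the forward direction of Proposition \ref{prop: uniform equivalence 1} with the roles of the two halves swapped: the ``outer'' piece is now $\R_{\leq 0}\times N$ (a product), and the ``bad'' region where the curve may wander is $M_+$. Given $p_1,p_2\in\R_{\leq 0}\times N$ set $D=d_{M'}(p_1,p_2)$ and pick a piecewise smooth $\gamma$ in $M'$ from $p_1$ to $p_2$ with $L_{g'}(\gamma)\leq D+1$. Consider the maximal open intervals $(a,b)\subseteq[0,1]$ with $\gamma((a,b))\cap(\R_{\leq 0}\times N)=\emptyset$; each such excursion satisfies $\gamma((a,b))\subseteq M_+^{\circ}$ and $\gamma(a),\gamma(b)\in N$. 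I will classify an excursion as \emph{short} if $\gamma([a,b])\subseteq F([0,\varepsilon/3]\times N)$ and as \emph{long} otherwise, and replace each kind separately with a piecewise smooth curve inside $\R_{\leq 0}\times N$ whose $g_{\R\times N}$-length is controlled in terms of $D$ alone.

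For a short excursion write $\gamma|_{[a,b]}=F(\gamma_\R,\gamma_N)$ and replace it by $\tilde\gamma|_{[a,b]}(t)=(0,\gamma_N(t))\in\{0\}\times N$. Using Lemma \ref{lem: metric estimates} (iv), $g'\geq C^{-1}g_{\R\times N}$ on $[0,\varepsilon)\times N$, so
\[
L_{g_{\R\times N}}(\tilde\gamma|_{[a,b]})=L_{g_N}(\gamma_N|_{[a,b]})\leq L_{g_{\R\times N}}(\gamma|_{[a,b]})\leq \sqrt{C}\,L_{g'}(\gamma|_{[a,b]}).
\]
For a long excursion, the curve must cross the collar boundary $F(\{\varepsilon/3\}\times N)$ at least twice (once on leaving, once on returning), and on each traversal the $r$-coordinate sweeps through an interval of length $\varepsilon/3$ in a region where $g'\geq C^{-1}g_{\R\times N}$; therefore $L_{g'}(\gamma|_{[a,b]})\geq 2\varepsilon/(3\sqrt C)$. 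This bounds the number of long excursions by $3\sqrt C(D+1)/(2\varepsilon)$. For each long excursion, $d_{M'}(\gamma(a),\gamma(b))\leq L_{g'}(\gamma|_{[a,b]})\leq D+1$, and since $\gamma(a),\gamma(b)\in N\subseteq M_+$, Proposition \ref{prop: uniform equivalence 1} yields a bound $d_M(\gamma(a),\gamma(b))\leq R_1(D+1)$ depending only on $D$. The assumption that $\id_N\colon(N,d_N)\to(N,\restr{d_M}{N})$ is a uniform equivalence then provides $d_N(\gamma(a),\gamma(b))\leq R_2(R_1(D+1))$, so I can replace $\gamma|_{[a,b]}$ with a piecewise smooth curve in $\{0\}\times N$ of $g_N$-length $\leq R_2(R_1(D+1))+1$.

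Concatenating the unchanged portions of $\gamma$ in $\R_{\leq 0}\times N$ (where $g'=g_{\R\times N}$), the short-excursion replacements (which cost at most a factor $\sqrt C$), and the long-excursion replacements (at most $R_2(R_1(D+1))+1$ each, with at most $3\sqrt C(D+1)/(2\varepsilon)$ of them) yields a piecewise smooth curve $\tilde\gamma$ in $\R_{\leq 0}\times N$ whose $g_{\R\times N}$-length is bounded by a function of $D$ alone, establishing the desired uniform expansiveness. The main technical point is verifying that a long excursion really must traverse the inner collar twice—this rests on the topology of $M'$ near $N$, namely that $F([0,\varepsilon/3]\times N)$ is a neighbourhood of $N$ in $M_+$ whose boundary $F(\{\varepsilon/3\}\times N)$ separates $N$ from $M_+\setminus F([0,\varepsilon/3]\times N)$—after which the remaining estimates are routine applications of Lemma \ref{lem: metric estimates}, Proposition \ref{prop: uniform equivalence 1}, and the standing uniform-equivalence assumption on $N$.
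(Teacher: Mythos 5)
The proposal is correct and carries out the ``analogous'' argument that the paper points to: curve modification with the roles of the product half and the non-product half swapped, deleting deep excursions into $M_+$ and replacing them by curves through $N$ using the standing uniform-equivalence hypothesis on $N$, and counting the excursions via their forced traversal of the collar. Two small points worth noting. First, the one-line justification of the easy inequality $\restr{d_{M'}}{\R_{\leq 0}\times N}\leq \restr{d_{\R\times N}}{\R_{\leq 0}\times N}$ is slightly incomplete as stated: the infimum defining $d_{\R\times N}$ ranges over curves in all of $\R\times N$, which may enter $\R_{>0}\times N$ and even $\R_{\geq\varepsilon}\times N$ (which is not contained in $M'$), so one should first apply the length-nonincreasing projection $(r,q)\mapsto(\min(r,0),q)$ — exactly as is done for the backward direction in the proof of Proposition \ref{prop: uniform equivalence 1}. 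Second, for the long excursions you invoke Proposition \ref{prop: uniform equivalence 1} to pass from $d_{M'}(\gamma(a),\gamma(b))$ to $d_M(\gamma(a),\gamma(b))$; this is a valid shortcut, though a more self-contained argument mirroring the forward direction of Proposition \ref{prop: uniform equivalence 1} would simply observe that $\gamma|_{[a,b]}\subseteq M_+$, where $g_M\leq C^2g'$ by parts (ii) and (v) of Lemma \ref{lem: metric estimates}, so that $d_M(\gamma(a),\gamma(b))\leq L_{g_M}(\gamma|_{[a,b]})\leq C\,L_{g'}(\gamma|_{[a,b]})\leq C(D+1)$ directly, after which the uniform-equivalence hypothesis on $N$ finishes the job.
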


\begin{corollary}
	\label{cor: g' is complete}
	The metric $g'$ constructed in Lemma \ref{lem: smoothing of metric} is complete.
\end{corollary}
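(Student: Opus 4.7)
My plan is to deduce completeness of $(M', g')$ from the Hopf--Rinow theorem, which reduces the problem to showing that $(M', d_{M'})$ enjoys the Heine--Borel property. Given any closed bounded subset $K \subseteq M'$, I will decompose it as
\[
K = (K \cap M_+) \cup (K \cap (\R_{\leq 0} \times N)),
\]
where $M_+$ and $\R_{\leq 0} \times N$ are both closed in $M'$, so each piece is closed in $M'$. It will then suffice to prove each piece is compact.

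To handle $K \cap M_+$, I will invoke Proposition \ref{prop: uniform equivalence 1}: the identity $(M_+, \restr{d_M}{M_+}) \to (M_+, \restr{d_{M'}}{M_+})$ is a uniform equivalence, hence its inverse is uniformly expansive and in particular preserves boundedness. So $K \cap M_+$, which is bounded in $\restr{d_{M'}}{M_+}$, is also bounded in $\restr{d_M}{M_+}$. By the construction of $M'$ via $\pi_{M'}$ in Lemma \ref{lem: quotient manifold}, the subspace topology on $M_+$ inherited from $M'$ coincides with the one inherited from $M$, so $K \cap M_+$ is a closed subset of $M$. Since $(M, g_M)$ is complete, it has the Heine--Borel property, and so $K \cap M_+$ is compact in $M$, hence compact in $M'$. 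The treatment of $K \cap (\R_{\leq 0} \times N)$ is entirely analogous, using Proposition \ref{prop: uniform equivalence 2} in place of Proposition \ref{prop: uniform equivalence 1} and the fact that $(\R \times N, g_{\R \times N})$ is complete: indeed $\R$ is complete and $N$ is complete by Lemma \ref{lem: metric props hypersurfaces}, so their Riemannian product is complete and thus enjoys Heine--Borel. It follows that $K$ is a union of two compact sets, hence compact, establishing the Heine--Borel property for $M'$.

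Since the two uniform-equivalence propositions already contain the substantive geometric content --- namely the curve-modification arguments that reconcile curves in $M$ or $\R \times N$ with curves in $M'$ across the gluing region --- I do not anticipate any serious obstacle in this corollary. The only point requiring momentary care is the verification that the subspace topologies agree under the quotient identification, but this is immediate from the fact that $\pi_{M'}$ is a local diffeomorphism and is injective when restricted separately to each of the two glued pieces, so that $M_+$ and $\R_{\leq 0} \times N$ embed into $M'$ as topological subspaces in the expected way.
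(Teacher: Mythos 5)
Your proof is correct and follows essentially the same route as the paper: Hopf--Rinow via the Heine--Borel property, decomposing a closed bounded $K$ as $(K\cap M_+)\cup(K\cap(\R_{\leq 0}\times N))$, transporting boundedness through Propositions \ref{prop: uniform equivalence 1} and \ref{prop: uniform equivalence 2}, and invoking completeness of $M$ and $\R\times N$. The added remark about the subspace topologies agreeing under $\pi_{M'}$ is a small extra detail the paper leaves implicit, but it is harmless and does not change the argument.
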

\begin{proof}
	To show that $g'$ is complete, it will be shown that the metric space $(M', d_{M'})$ has the Heine--Borel property, so that completeness follows by the Hopf--Rinow theorem. Let $K \subseteq M'$ be closed and bounded. Set $K_- = K\cap (\R_{\leq0}\times N)$ and $K_+ = K\cap M_+$. Then $K_{\pm}$ are closed and bounded. Since $M' = (\R_{\leq 0}\times N )\cup M_+$, it follows that $K = K_+\cup K_-$. It follows from Propositions \ref{prop: uniform equivalence 1} and \ref{prop: uniform equivalence 2} that $K_{\pm}$ are also closed and bounded when considered as subsets of $M$ and $\R\times N$. Since these are both complete, it follows that $K_{\pm}$ are compact. Therefore, $K$ is compact and thus $g'$ is complete.
\end{proof}

\subsection{Gluing Clifford actions and Dirac operators}
\label{subsec: gluing on S'}
We now turn our attention to the Clifford actions and the Dirac operators. Note that on $N\cong \{0\}\times N$ we have $\restr{TM'}{N} \cong T^1_{N}\oplus TN \cong \restr{TM}{N} $ with the isomorphism being $\restr{dF}{N}$. Note that by assumption, the unit normal field $n = dF\left(\frac{\partial}{\partial r}\right)$ on $N$ acts via $c_M(n_p) = -i\gamma_{N, p}$ on $S_{N, p}$. So if $p \in N$ and $\lambda n_p + v_{N,p} \in T_pM$ with $v_{N,p}\in T_pN$, then 
\[
c_M(\lambda n_p + v_{N,p}) = -i\lambda\gamma_{N, p} + c_{N,p}(v_{N,p}) = c_{\R\times N}(\lambda\frac{\partial}{\partial r} + v_{N,p}),
\] 
where $c_{\R\times N}$ is the Clifford action of $T(\R\times N)$ on $p_N^*S_N$ as constructed in Section \ref{subsec: overview of proof}. So the Clifford actions $c_{\R\times N}$ and $c_M$ agree on $N \cong \{0\}\times N$, so that the following defines a \emph{continuous} vector bundle morphism
\begin{equation}
	\label{def: c tilde}
	\tilde{c} =\begin{cases}
		c_{\R\times N} & \text{ on $\restr{TM'}{\R_{\leq0}\times N}$}, \\
		c_M & \text{ on $\restr{TM'}{M_+}$}.
	\end{cases}
\end{equation} 
Note that when $M'$ is endowed with the continuous metric $\tilde{g}$ and $S'$ with the continuous Hermitian metric $\tilde{h}$, then for any $(x, v) \in TM'$
\[
\tilde{c}(x, v)^2 = -\tilde{g}_x(v, v)\id_{S'_x},
\]
and
\[
\tilde{c}(x, v)^* = -\tilde{c}(x,v),
\]
where the adjoint is taken with respect to $\tilde{h}$. Indeed this is easily seen to hold true as these are fiberwise equalities that hold for both $c_{\R\times N}$ and $c_M$. 

For the following lemma, let $\Ad_{(\beta_f)^{-1}_{(r,p)}}$ denote conjugation with the invertible linear map $(\beta_f)^{-1}_{(r,p)}\colon S'_{(f(r), p)}\rightarrow S'_{(r, p)}$.

\begin{lemma}
	\label{lem: smoothing of c}
	Let $f$ be as in Lemma \ref{lem: smoothing of metric} and let $\alpha_f$ and $\beta_f$ be defined as in \eqref{eq: vector bundle morphism TM'} and \eqref{eq: vector bundle morphism S}. The map $c'\colon \restr{TM'}{\R_{<\varepsilon}\times N}\rightarrow \restr{\End(S')}{\R_{<\varepsilon}\times N}$ that is defined on fibers by
	\begin{equation}
		\label{diag: smoothened clifford action}
		\begin{tikzcd}
			{T_{(r, p)}M'} & {\End(S'_{(r,p)})} \\
			{T_{(f(r),p)}M'} & {\End(S'_{(f(r), p)})}
			\arrow["{c'_{(r,p)}}", dotted, from=1-1, to=1-2]
			\arrow["{(\alpha_f)_{(r,p)}}"', from=1-1, to=2-1]
			\arrow["{\tilde{c}_{(f(r),p)}}", from=2-1, to=2-2]
			\arrow["{\Ad_{(\beta_f)^{-1}_{(r,p)}}}"', from=2-2, to=1-2]
		\end{tikzcd}
	\end{equation}
	is a smooth vector bundle morphism and extends smoothly to $TM'$ when we declare it to be equal to $c_M$ outside of $\restr{TM'}{\R_{<\varepsilon}\times N}$. The resulting vector bundle morphism defines a Clifford action on $S'$ with respect to the Riemannian metric $g'$ which is skew-Hermitian with respect to the metric $h'$.	
\end{lemma}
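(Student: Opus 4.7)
The plan is to follow the same construction used for $\tilde g$ and $\tilde h$ in Lemmas \ref{lem: smoothing of metric} and \ref{lem: smoothing of hermitian metric}. All three assertions — smoothness on $\R_{<\varepsilon}\times N$, smooth extension across the boundary of this set, and the Clifford plus skew-Hermitian relations — should follow from the same flatness-of-$f$-at-$0$ mechanism plus the observation that $c'$ is built by conjugating with the very same bundle maps $\alpha_f, \beta_f$ used to define $g'$ and $h'$.

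For smoothness on $\R_{<\varepsilon}\times N$, I would work in local coordinates $(r, x_1,\ldots,x_d)$ on $(-\varepsilon,\varepsilon)\times V$ for a chart $V\subseteq N$, together with the smooth local frame of $S'\cong p_N^*S_N$ obtained by pulling back a smooth frame $(\sigma_j)$ of $S_N$ on $V$. In these frames, $\alpha_f$ and $\beta_f$ act as the identity on fibers and merely shift the base coordinate from $r$ to $f(r)$, so the matrix coefficients of $c'$ are $(r,x)\mapsto A^\mu_{ij}(f(r),x)$, where $A^\mu_{ij}$ are the corresponding matrix coefficients of $\tilde c$. These $A^\mu_{ij}$ are continuous throughout and smooth separately on $r\leq 0$ and on $r\geq 0$, since on the two halves they come from the smooth Clifford actions $c_{\R\times N}$ and $c_M$ (the latter transported via $\Psi$); continuity across $r=0$ is built into the definition of $\tilde c$ via the identity $c_M(n_p)=-i\gamma_{N,p}$. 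Because $f^{(k)}(0)=0$ for all $k\in\N_0$, Faà di Bruno's formula then shows that every $r$-derivative of $A^\mu_{ij}(f(r),x)$ is a finite sum of terms each containing a factor $f^{(\ell)}(r)$ with $\ell\geq 1$, which vanishes as $r\to 0$ from either side. This yields smoothness of $c'$ across $\{0\}\times V$.

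For the smooth extension to all of $TM'$, I would invoke property (ii) of $f$: on $|r|\geq \varepsilon/3$ we have $f=\id$, so $\alpha_f$ and $\beta_f$ are the identity and $c'$ coincides with $\tilde c = c_M$ on the collar $\{r\in[\varepsilon/3,\varepsilon)\}\times N\subseteq U$. This matches the extension value $c_M$ declared on $M'\setminus(\R_{<\varepsilon}\times N)$, so the gluing produces a smooth bundle morphism on all of $TM'$.

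Finally, the Clifford and skew-Hermitian relations are forced by the construction. From the defining diagram and the Clifford identity for $\tilde c$ with respect to $\tilde g$,
\[
c'_{(r,p)}(v)^2 \;=\; \Ad_{(\beta_f)^{-1}_{(r,p)}}\!\bigl(\tilde c_{(f(r),p)}(\alpha_f v)^2\bigr) \;=\; -\tilde g_{(f(r),p)}(\alpha_f v,\alpha_f v)\,\id \;=\; -g'_{(r,p)}(v,v)\,\id,
\]
where the last equality is exactly the definition of $g'$ in Lemma \ref{lem: smoothing of metric}. A nearly identical two-line computation using $h'_{(r,p)}(\,\cdot\,,\,\cdot\,) = \tilde h_{(f(r),p)}(\beta_f\,\cdot,\beta_f\,\cdot)$ and skew-Hermitianness of $\tilde c$ with respect to $\tilde h$ yields $c'(v)^*=-c'(v)$. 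I anticipate the only real obstacle to be the smoothness step; once the matrix coefficients are set up in the right frames, the flatness of $f$ at $0$ is precisely what absorbs the one-sided mismatch of derivatives of $\tilde c$ across $N$, and the Clifford and skew-Hermitian identities are then immediate consequences of the construction.
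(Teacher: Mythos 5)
Your proposal is correct and follows essentially the same route as the paper: work in local frames in which $\alpha_f$ and $\beta_f$ act trivially on fibre coordinates so that the matrix coefficients of $c'$ are those of $\tilde c$ precomposed with $f$ in the $r$-slot, invoke the flatness of $f$ at $0$ (which the paper packages as the fact that $h\circ f$ is smooth whenever $h$ is continuous, smooth off $0$, and has one-sided derivatives of all orders — your Faà di Bruno argument is the standard proof of this), use $f=\id$ on $|r|\geq\varepsilon/3$ for the smooth extension, and verify the Clifford and skew-Hermitian identities by the same conjugation computation. Your write-up is, if anything, slightly more explicit than the paper's, which defers the smoothness step to "analogously to Lemma \ref{lem: smoothing of metric}" and the algebraic identities to "writing out the definitions".
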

\begin{proof}
	Let $\alpha_f$ be the vector bundle morphism defined in Equation \eqref{eq: vector bundle morphism TM'}. The composition 
    \begin{equation}\label{eq: tilde c compos}
\begin{tikzcd}
		{\restr{TM'}{\R_{<\varepsilon}\times N}} & {\restr{TM'}{\R_{<\varepsilon}\times N}} & {\restr{\End(S')}{\R_{<\varepsilon}\times N}}
		\arrow["\alpha_f", from=1-1, to=1-2]
		\arrow["{\tilde{c}}", from=1-2, to=1-3]
	\end{tikzcd}
    \end{equation}
	is a continuous vector bundle morphism covering $f\times \id_N$. Analogously to the proof of Lemma \ref{lem: smoothing of metric}, one can show that the matrix-valued functions defined by this composition with respect to local frames of $T(\R_{<\varepsilon}\times N)$ and $S_N$, are smooth functions. So 
    \eqref{eq: tilde c compos} is
%
%
%
%
%
%
%
%
%
%
    a smooth vector bundle morphism covering $f\times \id_N$ and therefore induces a smooth vector bundle morphism
	\[\begin{tikzcd}
		{c'\colon \restr{TM'}{\R_{<\varepsilon}\times N}} & {(f\times\id_N)^*(\restr{\End(S')}{\R_{<\varepsilon}\times N})} & {\restr{\End(S')}{\R_{<\varepsilon}\times N},}
		\arrow[from=1-1, to=1-2]
		\arrow["\sim", from=1-2, to=1-3]
	\end{tikzcd}\]
	where the last isomorphism follows as $\restr{S'}{\R_{<\varepsilon}\times N} \cong (f\times \id_N)^*(\restr{S'}{\R_{<\varepsilon}\times N})$.
	Let $(r, p) \in \R_{<\varepsilon}\times N$. We see that the map $c'_{(r,p)}$ on the fiber over $(r,p)$ is given by 
	\[\begin{tikzcd}[column sep=large]
		{T_{(r,p)}M'} & {T_{(f(r),p)}M'} & {\End(S'_{(f(r), p)})} & {\End(S'_{(r, p)})}
		\arrow["{(\alpha_f)_{(r,p)}}", from=1-1, to=1-2]
		\arrow["{\tilde{c}_{(f(r),p)}}", from=1-2, to=1-3]
		\arrow["{\Ad_{(\beta_f)^{-1}_{(r,p)}}}", from=1-3, to=1-4]
	\end{tikzcd}\]
	and therefore $c'$ is indeed given by Diagram \eqref{diag: smoothened clifford action}. So if $r> \varepsilon/3$, then $c'_{(r,p)} = \tilde{c}_{(r,p)} =( c_M)_{(r,p)}$, which shows that $c'$ extends smoothly to $TM'$ when we declare it to be equal to $c_M$ outside $\restr{TM'}{\R_{<\varepsilon}\times N}$.

    By writing out the definitions, we find that
    $c'$ is a Clifford action with respect to the Riemannian metric $g'$ and Hermitian metric $h'$.
\end{proof}

Now that $S'$ is endowed with the structure of a Dirac bundle, it is possible to define a differential operator that interpolates between the Dirac operators on $M$ and $\R\times N$. To this end, let $\psi\colon M'\rightarrow \R$ be any smooth function such that $\psi(p) \in [0,1]$ for all $p\in M'$ and such that $\restr{\psi}{\R_{\leq-\varepsilon/3}\times N} = 0$ and $\restr{\psi}{M'\setminus(\R_{<\varepsilon/3}\times N)} = 1$. Denote by $\nabla^M$ the Dirac connection on $S$ used to construct $D$ and let $\nabla^{\R\times N}:= p_N^*\nabla^N$ be the Dirac connection on $p_N^*S_N$ induced by a choice of Dirac connection $\nabla^N$ on $S_N$. Use $\psi$ to define the connection 
\begin{equation}
	\label{eq: connection on S'}
	\nabla' = (1-\psi)\nabla^{\R\times N} + \psi\nabla^{M}
\end{equation}
on the bundle $S'$. 

Using this connection, define the first-order differential operator $\tilde{D}$ on $S'$ as the composition
\[
\tilde{D}\colon 
C^{\infty}(M';S') \xrightarrow{\nabla'} C^{\infty}(M';T^*M'\otimes S') \cong C^{\infty}(M';TM'\otimes S')
\xrightarrow{c'}C^{\infty}(M';S').
\]
Here we identify $TM'\cong T^*M'$ via the Riemannian metric $g'$.
Define 
\begin{equation}
	\label{eq: diff operator D'}
	D' = \frac{1}{2}(\tilde{D} + \tilde{D}^{\dagger}),
\end{equation}
where $\tilde{D}^{\dagger}$ denotes the formal adjoint of $\tilde{D}$ with respect to the inner product on $L^2(M';S')$.

\begin{lemma}
	\label{cor: D' is Dirac operator}
	The differential operator $D'$ is a Dirac operator. In particular, $D'$ is elliptic, has finite propagation speed and the corresponding unbounded operator on $L^2(M';S')$ is essentially self-adjoint.
\end{lemma}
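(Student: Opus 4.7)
The plan is to establish, in order, the three properties asserted in the ``in particular'' clause: ellipticity, finite propagation speed, and essential self-adjointness; a subsidiary point is then to justify the designation of $D'$ as a Dirac operator.

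First, I would check that $\tilde D = c' \circ \nabla'$ is a bona fide first-order differential operator on $S'$. The connection $\nabla' = (1-\psi)\nabla^{\R\times N} + \psi \nabla^M$ is a connection, since convex combinations of connections with smooth scalar coefficients satisfy the Leibniz rule, and $c'$ is a smooth bundle morphism by Lemma \ref{lem: smoothing of c}. Hence $\tilde D$ has principal symbol $\sigma_{\tilde D}(\xi) = c'(\xi)$ at $(p,\xi) \in T^*M'$. Because $c'(\xi)$ is skew-Hermitian (Lemma \ref{lem: smoothing of c}), its formal adjoint $\tilde D^\dagger$ is also first order with principal symbol $-\sigma_{\tilde D}(\xi)^* = c'(\xi)$. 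Consequently $D' = \tfrac{1}{2}(\tilde D + \tilde D^\dagger)$ is first order, formally symmetric by construction, and has principal symbol $\sigma_{D'}(\xi) = c'(\xi)$.

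The identity $c'(\xi)^2 = -g'(\xi,\xi) \operatorname{id}_{S'}$ from Lemma \ref{lem: smoothing of c} makes $c'(\xi)$ invertible for $\xi \neq 0$, yielding ellipticity of $D'$. The same identity gives $\|c'(\xi)\|_{\mathrm{op}} = |\xi|_{g'}$, which bounds the propagation speed of $D'$ by $1$ via the standard energy estimates for its wave equation. Essential self-adjointness then follows from Corollary \ref{cor: g' is complete} (completeness of $(M', g')$) combined with \cite[Prop. 10.2.11]{Higson2000Khomology}, whose hypotheses---a symmetric, elliptic, first-order operator with finite propagation speed on a complete Riemannian manifold---are all in place.

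The claim that $D'$ is a \emph{Dirac operator} in the strict sense of Definition \ref{def: associated Dirac operator} requires locating a Dirac connection $\nabla''$ with $D' = c' \circ \nabla''$. My approach would be to start from any Dirac connection $\nabla^0$ for $c'$ (which exists by \cite[Cor. 3.41]{HeatKernels1992Berline}), observe that $D' - c' \circ \nabla^0$ is a symmetric zeroth-order bundle endomorphism, and absorb this difference into a suitable perturbation of $\nabla^0$. The most delicate point is ensuring the perturbation preserves Dirac compatibility with respect to $c'$; however, only the operational properties already established (first-order, symmetric, elliptic, finite propagation speed on a complete manifold) are actually required for the subsequent application of Theorem \ref{thm: PMT for product} to $D'$ on $M'$, so this subtlety does not obstruct the rest of the argument.
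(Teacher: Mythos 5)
Your first three paragraphs track the paper's (very terse) proof exactly: the paper simply records that $D'$ is, by construction, a symmetric first-order differential operator with principal symbol $\sigma_{D'}=c'$, from which ellipticity, unit propagation speed, and essential self-adjointness (via completeness of $g'$, Corollary \ref{cor: g' is complete}, together with \cite[Prop.~10.2.11]{Higson2000Khomology}) all follow in the standard way. Your symbol computation $\sigma_{\tilde D^\dagger}(\xi) = -\sigma_{\tilde D}(\xi)^* = c'(\xi)$, using skew-Hermiticity of $c'$, and the conclusion $\sigma_{D'}=c'$ are correct, as is the observation that $\nabla'$ is a connection because it is an affine combination of connections.

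Your final paragraph raises a real issue but does not resolve it, and the proposed repair does not work as stated. Writing $E := D' - c'\circ\nabla^0$ and choosing $A \in \Omega^1(M';\End(S'))$ with $c'(A)=E$ does produce $D' = c'\circ(\nabla^0 + A)$, but $\nabla^0 + A$ is a Dirac connection only if each $A(X)$ is both skew-Hermitian and commutes with Clifford multiplication; the set of first-order operators arising this way differs from $c'\circ\nabla^0$ by a proper subspace of the symmetric zeroth-order endomorphisms, so a generic symmetric $E$ cannot be absorbed. Indeed $\nabla'$ is not a Dirac connection for the glued structure $(c',g',h')$ in the transition region, which is exactly why $D'$ is defined by symmetrizing $\tilde D$ rather than directly as $c'\circ\nabla'$, and there is no reason to expect $D'$ to arise from any Dirac connection. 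You are correct, however, that this is immaterial: every subsequent use of this lemma (through Lemma \ref{lem: Dirac operators are intertwined}, Proposition \ref{prop: manifolds are half-isomorphic}, and the partitioned-index machinery of Section \ref{section: partitioned mfds and PMT}) only requires that $D'$ be symmetric, elliptic, of finite propagation speed, essentially self-adjoint, and equal to the genuine Dirac operators $D_M$ and $D_{\R\times N}$ on $O_+$ and $O_-$. The word ``Dirac operator'' in the lemma's statement and the paper's one-line proof should therefore be read in the looser ``Dirac-type'' sense — a symmetric first-order operator whose symbol is Clifford multiplication — under which your argument does establish the claim.
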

\begin{proof}
It follows from the definitions that $D'$ is a symmetric first-order differential operator and $\sigma_{D'} = c'$.
\end{proof}

For the following result, write $O_+ = M_+\setminus([0, \varepsilon/3]\times N)$ and $O_- = \R_{<-\varepsilon/3}\times N$. 

\begin{lemma}
	\label{lem: Dirac operators are intertwined}
	Let $D'$ be the constructed Dirac operator on $S'$. Then 
	\[\restr{D'}{O_+} = \restr{D_M}{O_+}\]
	and 
	\[
	\restr{D'}{O_-} = \restr{D_{\R\times N}}{O_-}.
	\]
\end{lemma}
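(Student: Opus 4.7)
The plan is to check that on $O_\pm$, \emph{all} the data entering the construction of $D'$ coincide with the corresponding data of $D_M$ (resp.\ $D_{\R\times N}$), and then to exploit locality of the formal adjoint.

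First I would verify the pointwise agreement of the ingredients. Recall $f$ is the identity outside $(-\varepsilon/3, \varepsilon/3)$, so on $O_-$ both vector bundle morphisms $\alpha_f$ and $\beta_f$ reduce to identities on the relevant fibers. Hence by the definition \eqref{diag: smoothened clifford action}, $c'|_{O_-} = \tilde c|_{O_-} = c_{\R\times N}|_{O_-}$. On $O_+$ the Clifford action $c'$ is by construction equal to $c_M$. Similarly, from Lemma \ref{lem: metric estimates}(i)--(ii) and Lemma \ref{lem: smoothing of hermitian metric}, $(g', h')$ equals $(g_{\R\times N}, p_N^*h_N)$ on $O_-$ and $(g_M, h)$ on $O_+$. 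Finally, since $\psi \equiv 0$ on $O_-$ and $\psi \equiv 1$ on $O_+$, the definition \eqref{eq: connection on S'} gives $\nabla'|_{O_-} = \nabla^{\R\times N}|_{O_-}$ and $\nabla'|_{O_+} = \nabla^M|_{O_+}$.

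From these identifications it is immediate that the operator $\tilde D = c' \circ \nabla'$ equals $D_M$ on $O_+$ and $D_{\R\times N}$ on $O_-$.

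The remaining step is to show that $\tilde D^\dagger$ (the formal adjoint on $(M', S', g', h')$) also restricts to $D_M$ on $O_+$ and to $D_{\R\times N}$ on $O_-$. The key observation is that the formal adjoint is a purely local operation: if $u, v \in C_c^\infty(M'; S')$ are both supported inside $O_+$, then the $L^2$-inner products $\langle \tilde D u, v\rangle$ and $\langle u, \tilde D^\dagger v\rangle$ only depend on $\tilde D$, $g'$ and $h'$ on $O_+$, which agree with $D_M$, $g_M$ and $h$. Consequently, the formal adjoint of $\tilde D$ computed on $O_+$ agrees with the formal adjoint of $D_M$ computed on $O_+$; but $D_M$ is symmetric, so $D_M^\dagger = D_M$ on $O_+$. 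An identical argument applies on $O_-$ using symmetry of $D_{\R\times N}$. Combining gives
\[
D'|_{O_\pm} = \tfrac{1}{2}(\tilde D + \tilde D^\dagger)|_{O_\pm} = \begin{cases} D_M|_{O_+} & \text{on } O_+, \\ D_{\R\times N}|_{O_-} & \text{on } O_-. \end{cases}
\]
No step here is genuinely hard; the only point that deserves care is the locality argument for the formal adjoint, which I would spell out by testing against compactly supported sections in $O_\pm$ and using that the Riemannian volume densities of $g'$, $g_M$, $g_{\R\times N}$ agree on the respective subsets.
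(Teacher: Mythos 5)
Your proposal is correct and follows essentially the same route as the paper: first observing that $c'$, $\nabla'$, $g'$, $h'$ agree with the corresponding data on $O_\pm$ so that $\tilde D|_{O_\pm}$ equals $D_M$ (resp.\ $D_{\R\times N}$), and then using locality of the formal adjoint together with symmetry of $D_M$ and $D_{\R\times N}$, by testing against compactly supported sections in $O_\pm$, to conclude $\tilde D^\dagger|_{O_\pm} = \tilde D|_{O_\pm}$.
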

\begin{proof}
	By construction of $c'$ and $\nabla'$, the following equalities hold:
	\[
	\restr{\tilde{D}}{O_+} = \restr{D_M}{O_+} 
	\qquad\text{ and }\qquad
	\restr{\tilde{D}}{O_-} = \restr{D_{\R\times N}}{O_-}.
	\]
	
	It follows that for all smooth, compactly supported sections $\sigma$ and $\tau$ of $S'$, supported within $O_+$, the following holds by construction of $g'$ and $h'$: 
	\begin{align*}
		\langle\tilde{D}^{\dagger}\sigma, \tau\rangle_{L^2(M';S')}&= \langle\sigma, \tilde{D}\tau\rangle_{L^2(M';S')} = \langle\sigma, D_M\tau\rangle_{L^2(M;S)}\\
		&= \langle D_M\sigma, \tau\rangle_{L^2(M;S)} = \langle \tilde{D}\sigma, \tau\rangle_{L^2(M';S')}. 
	\end{align*}
	So $\restr{\tilde{D}^{\dagger}\sigma}{O_+} = \restr{\tilde{D}\sigma}{O_+}$, but since $\supp(\sigma)\subseteq O_+$ and differential operators are local, this implies that $\tilde{D}^{\dagger}\sigma=\tilde{D}\sigma$. So 
	\[
	\restr{\tilde{D}^{\dagger}}{O_+} = \restr{\tilde{D}}{O_+}  = \restr{D_M}{O_+},
	\] 
	and therefore 
	\[
	\restr{D'}{O_+} = \frac{1}{2}(\restr{\tilde{D}}{O_+}+\restr{\tilde{D}^{\dagger}}{O_+}) = \restr{D_M}{O_+}.
	\]
	A similar computation shows that $\restr{D'}{O_-} = \restr{D_{\R\times N}}{O_-}$.
\end{proof}

\begin{proposition}
	\label{prop: manifolds are half-isomorphic}
	With $(M', g')$, $(S', h')$ and $D'$ as constructed in this section, the following hold:
	\begin{enumerate}
		\item When $M$ and $M'$ are considered as partitioned manifolds with partitioning hypersurface $N_{2\varepsilon/3}:=\{2\varepsilon/3\}\times N$ and positive part $M_+':= M_+\setminus ([0, 2\varepsilon/3)\times N) \subseteq O_+$, then the canonical maps 
		\[
		(O_+, \restr{g_M}{O_+}) \rightarrow (O_+, \restr{g'}{O_+}),
		\]
		and
		\[
		(\restr{S}{O_+}, \restr{h}{O_+}) \rightarrow (\restr{S'}{O_+}, \restr{h'}{O_+})
		\]
		form a uniform half-isomorphism that is compatible with the Dirac operators.
		\item When $M'$ and $\R\times N$ are considered as partitioned manifold with partitioning hypersurface $N_{-2\varepsilon/3}:=\{-2\varepsilon/3\}\times N$ and negative part $C_-:=\R_{\leq -2\varepsilon/3}\times N \subseteq O_-$, then the canonical maps
		\[
		(O_-, \restr{g'}{O_-})\rightarrow (O_-, \restr{g_{\R\times N}}{O_-}),
		\]
		and
		\[
		(\restr{S'}{O_-}, \restr{h'}{O_-})\rightarrow (\restr{p_N^*S_N}{O_-}, \restr{p_N^*h_N}{O_-})
		\]
		form a uniform half-isomorphism that is compatible with the Dirac operators.
	\end{enumerate}
\end{proposition}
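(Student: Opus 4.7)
The proof amounts to verifying each clause of Definition \ref{def: half iso} for two pairs of canonical identification maps, where in part 2 the negative half $C_-$ plays the role of $M_{i+}$ in the definition (the notion of half-isomorphism is symmetric under this relabeling). The key point is that the transition manifold $M'$ was built so that on the far end $O_+$ the Riemannian metric, Hermitian metric, Clifford action, and Dirac connection all coincide with those of $M$, while on the far end $O_-$ they coincide with the corresponding product data on $\R\times N$. Thus the base maps $\phi$ and bundle maps $u$ in each part of the proposition are literally identities, via the quotient identification of Lemma \ref{lem: quotient manifold}.

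I would first collect the pointwise agreement of the geometric structures on $O_+$: part (ii) of Lemma \ref{lem: metric estimates} gives $\restr{g'}{O_+} = \restr{g_M}{O_+}$; Lemma \ref{lem: smoothing of hermitian metric} together with the fact that $f$ restricts to the identity outside $(-\varepsilon/3, \varepsilon/3)$ gives $\restr{h'}{O_+} = \restr{h}{O_+}$; and Lemma \ref{lem: smoothing of c} yields $\restr{c'}{O_+} = \restr{c_M}{O_+}$. The analogous statements on $O_-$ follow from part (i) of Lemma \ref{lem: metric estimates} together with the parallel construction. Consequently the bundle maps are identities on fibers, hence fiberwise unitary, and the base maps are orientation-preserving Riemannian isometries; the conditions $\phi(N_i) = N_i$ and $\phi(M_{i+}) = M_{i+}$ hold by construction (with the appropriate relabeling in part 2). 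The partitioning hypersurfaces $N_{\pm 2\varepsilon/3}$ sit inside regions where the ambient metric is a genuine product, so by Remark \ref{rem: hypersurfaces as in PMT are simple} they are simple, and one can moreover arrange for the function $\phi_+$ of Definition \ref{def: simple hypersurface} to be supported inside $O_+$ (or $O_-$), as required by Definition \ref{def: half iso}.

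To promote the data to a \emph{uniform} half-isomorphism, the base maps extend by continuity to the closures $\ol{O_{\pm}}$, and these extensions must be checked to be uniform equivalences with respect to the restricted ambient distance functions. For part 1, Proposition \ref{prop: uniform equivalence 1} supplies a uniform equivalence $\id\colon (M_+, \restr{d_M}{M_+}) \to (M_+, \restr{d_{M'}}{M_+})$, and restricting to the closed subset $\ol{O_+} \subseteq M_+$ preserves both uniform expansiveness and metric boundedness. Part 2 is analogous, using Proposition \ref{prop: uniform equivalence 2} restricted to $\ol{O_-} \subseteq \R_{\leq 0}\times N$. Compatibility with the Dirac operators is then immediate from Lemma \ref{lem: Dirac operators are intertwined}: since $\phi$ and $u$ are identities and $D'$ agrees with $D$ on $O_+$ and with $D_{\R\times N}$ on $O_-$, the intertwining relation of Definition \ref{def: half iso} becomes tautological. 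The main step, already handled in the previous subsections, was establishing Propositions \ref{prop: uniform equivalence 1} and \ref{prop: uniform equivalence 2}; once those coarse-geometric statements are in place, the present proposition is essentially a verification that all the data on $M'$ glues correctly at the two ends.
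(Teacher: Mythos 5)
Your proof is correct and follows essentially the same route as the paper: pointwise agreement of the glued structures on $O_\pm$ (via Lemmas \ref{lem: metric estimates}, \ref{lem: smoothing of hermitian metric}, \ref{lem: smoothing of c}), simplicity of the shifted hypersurfaces with $\phi_+$ supported in $O_\pm$ via the argument of Remark \ref{rem: hypersurfaces as in PMT are simple}, uniform equivalence by restricting Propositions \ref{prop: uniform equivalence 1} and \ref{prop: uniform equivalence 2} to the closures, and compatibility with the Dirac operators from Lemma \ref{lem: Dirac operators are intertwined}. One small imprecision: the metric $g'$ near $N_{2\varepsilon/3}$ equals $F^*g_M$, not a genuine product, so your stated reason for simplicity is off for that hypersurface; what is actually used from Remark \ref{rem: hypersurfaces as in PMT are simple} is only the uniform tubular neighbourhood together with the quasi-isometric estimate \eqref{eq: estimate on Riemannian metric}, which holds there, so the conclusion stands.
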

\begin{proof}
	In both cases, it follows directly from the construction of $g'$ that the map on the manifold-level is a Riemannian isometry. Also it is easy to see that it extends to a homeomorphism on the closures of the open sets on which it is defined and that it preserves the partitioning hypersurface and the positive (respectively negative) side of the partition that is entirely contained in its domain.	Moreover, it follows directly from the construction of $h'$ that the maps on the vector bundle-level are fiberwise unitary. By exactly the same argument as in Remark \ref{rem: hypersurfaces as in PMT are simple}, it follows that we can choose functions as in Definition \ref{def: simple hypersurface} that are supported within $O_{\pm}$ and therefore both pairs of maps form a half-isomorphism.	
	
	It follows by Propositions \ref{prop: uniform equivalence 1} and \ref{prop: uniform equivalence 2} that the maps on the manifold-level are uniform equivalences and Lemma \ref{lem: Dirac operators are intertwined} implies that the half-isomorphisms are compatible with the Dirac operators on the manifolds. The result follows.
\end{proof}

\subsection{Completing the proof of the partitioned manifold index theorem}
\label{subsec: completing proof}
The interpolating manifold $M'$ is now equipped with a Dirac operator that agrees with the Dirac operator on $M$ on $O_+$ and with the Dirac operator on $\R\times N$ on $O_-$. It was shown in Proposition \ref{prop: manifolds are half-isomorphic} that we obtain uniform half-isomorphisms that are compatible with the Dirac operators on the separate halves of the transition manifold $M'$. However, to obtain the isometric maps needed in a half-isomorphism, the partitioning hypersurfaces had to be moved away from $\{0\}\times N$ to avoid the region where the two structures on the original manifold $M$ and the product manifold $\R\times N$ are glued and smoothened out. Therefore, to prove Theorem \ref{thm: PMT}, only some final results that show what happens when we move around the partitioning hypersurface are needed. 

\begin{lemma}
	\label{lem: commutative square near hypersurfaces}
	Let $M$ be a complete Riemannian manifold which is partitioned by two mutually near hypersurfaces $N$ and $N'$. Let $q\colon (N, d_N)\rightarrow (N', d_{N'})$ be a coarse map such that
	\begin{equation}
		\label{eq: coarse map close to id}
		\sup_{p \in N}d_M(q(p), p) <\infty.
	\end{equation} 
	Let $\Phi$ and $\Phi'$ be the maps as in \eqref{diag: identification maps} associated to $N$ and $N'$, respectively.  Then the following diagram commutes:
	\begin{equation}
		\label{diag: commuting diagram moving hypersurface}
		\begin{tikzcd}
			{K_p(C^*(N, d_N))} & {K_p(C^*(N\subseteq M))} \\
			{K_p(C^*(N', d_N'))} & {K_p(C^*(N'\subseteq M))}
			\arrow["\Phi", from=1-1, to=1-2]
			\arrow["{q_*}", from=1-1, to=2-1]
			\arrow[equal, from=1-2, to=2-2]
			\arrow["\Phi'", from=2-1, to=2-2]
		\end{tikzcd}
	\end{equation}
\end{lemma}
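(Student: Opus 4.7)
The plan is to reduce commutativity of Diagram~\eqref{diag: commuting diagram moving hypersurface} to the principle that close coarse maps induce equal maps on $K$-theory of Roe algebras (\cite[Prop.~6.3.16]{Higson2000Khomology}). First I justify the right-hand equality: mutual nearness provides $R>0$ with $N\subseteq B_R(N')$ and $N'\subseteq B_R(N)$, which yields $|d_M(x,N)-d_M(x,N')|\le R$ for every $x\in M$. Hence $\SN_N(M)=\SN_{N'}(M)$ and consequently $C^*(N\subseteq M)=C^*(N'\subseteq M)$ as subalgebras of $\cB(L^2(M;S))$.

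Next, fix $n\ge R$ and set $Y:=\overline{B_n(N)}$. Then $Y$ contains and is near both $N$ and $N'$ (e.g.\ $Y\subseteq B_{n+R+1}(N')$), so by Example~\ref{ex: closed n-ball is coarsely equivalent} both inclusions $\iota_N\colon(N,d_M|_N)\hookrightarrow(Y,d_M|_Y)$ and $\iota_{N'}\colon(N',d_M|_{N'})\hookrightarrow(Y,d_M|_Y)$ are coarse equivalences. The argument of Proposition~\ref{prop: localized roe alg isomorphic to roe alg of subset} extends from the canonical neighbourhood $\overline{B_n(\cdot)}$ to this common auxiliary space $Y$ (by functoriality applied to the sandwich $N\hookrightarrow Y\hookrightarrow\overline{B_n(N)}$, and likewise for $N'$), yielding factorizations
\[
\Phi=\beta\circ(\iota_N)_*\circ(\id_N)_*,\qquad\Phi'=\beta\circ(\iota_{N'})_*\circ(\id_{N'})_*,
\]
where $\beta\colon K_p(C^*(Y))\to K_p(C^*(N\subseteq M))$ is induced by the algebra inclusion arising from $L^2(Y;S)\hookrightarrow L^2(M;S)$. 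Crucially, $\beta$ is the \emph{same} map in both factorizations, the target algebras being equal.

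It therefore suffices to show that $(\iota_N)_*\circ(\id_N)_*=(\iota_{N'})_*\circ(\id_{N'})_*\circ q_*$ as maps $K_p(C^*(N,d_N))\to K_p(C^*(Y))$. By functoriality of the assignment in Definition~\ref{def: induced map by coarse map}, these two compositions are the maps induced by the coarse maps $a,b\colon(N,d_N)\to(Y,d_M|_Y)$ given respectively by $a(p)=p$ and $b(p)=q(p)$. Both are coarse: combine the hypothesised coarseness of $q$ with Lemma~\ref{lem: metric props hypersurfaces} applied to $N$ and $N'$ and the coarseness of $\iota_N,\iota_{N'}$. The assumption $\sup_{p\in N}d_M(q(p),p)<\infty$ is precisely the statement that $a$ and $b$ are close as coarse maps. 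Hence \cite[Prop.~6.3.16]{Higson2000Khomology} gives $a_*=b_*$, and the diagram commutes. The only (very mild) obstacle is the bookkeeping required to apply Proposition~\ref{prop: localized roe alg isomorphic to roe alg of subset} through the common auxiliary space $Y$ for both hypersurfaces rather than through the canonical choices $\overline{B_n(N)}$ and $\overline{B_n(N')}$ separately, which is a routine functoriality check.
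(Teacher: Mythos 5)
Your proof is correct and takes essentially the same approach as the paper's: factor both $\Phi$ and $\Phi'$ through an auxiliary thickened neighbourhood, observe that the assumption $\sup_{p\in N}d_M(q(p),p)<\infty$ is precisely the statement that the two coarse maps into it (one sending $p\mapsto p$, the other $p\mapsto q(p)$) are close, and invoke invariance of the induced maps under closeness. The only cosmetic difference is that you use a single auxiliary space $Y=\overline{B_n(N)}$ containing both $N$ and $N'$, whereas the paper tracks the nested pair $Y_k=\overline{B_k(N)}\subseteq Y'_{R+k}=\overline{B_{R+k}(N')}$ with the explicit inclusion between them; the ``routine functoriality check'' you mention is exactly the content of the extra cell this nesting produces in the paper's commuting diagram.
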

\begin{remark}
	Note that the vertical map on the right really is the identity as the partitioning hypersurfaces $N$ and $N'$ are mutually near and therefore their localized Roe algebras are the same; see also Corollary \ref{cor: mutually near hypersurfaces induce same partitioned index}.
\end{remark}
\begin{proof}
	Fix an integer $R>0$ such that $N\subseteq B_R(N')$ and $N'\subseteq B_R(N)$. Consider $Y_k = \ol{B_k(N)}$ and $Y'_k = \ol{B_k(N')}$. It follows that $Y_k \subseteq Y'_{R+k}$ and $Y'_k \subseteq Y_{R+k}$. Write $j^{(\prime)}_k\colon N^{(\prime)}\hookrightarrow Y^{(\prime)}_k$ and $i_k\colon Y_k\hookrightarrow Y'_{k+R}$ for the inclusions. The assumption  \eqref{eq: coarse map close to id} precisely implies that for any $k\in \N$, the two maps
    \[
	i_k \circ j_k\circ \id_N, j'_{R+k}\circ \id_{N'}\circ q\colon
	(N, d_N) \rightarrow (Y'_{R+k}, d_M|_{Y'_{R+k}})
    \]
    are close.
	Since passing from coarse spaces to the $K$-theory of the Roe algebra is functorial and invariant under closeness of coarse maps, it follows that 
    the upper part of the following diagram commutes:
	\[\begin{tikzcd}
		{K_p(C^*(N, d_N))} & {K_p(C^*(N', d_{N'}))} \\
		{K_p(C^*(N, \restr{d_M}{N}))} & {K_p(C^*(N', \restr{d_M}{N'}))} \\
		{K_p(C^*(Y_k, \restr{d_M}{Y_k}))} & {K_p(C^*(Y'_{R+k}, \restr{d_M}{Y'_{R+k}}))} \\
		{K_p(C^*(N\subseteq M))} & {K_p(C^*(N'\subseteq M))}
		\arrow["{q_*}", from=1-1, to=1-2]
		\arrow["{\id_{N,*}}", from=1-1, to=2-1]
		\arrow["{\id_{N',*}}", from=1-2, to=2-2]
		\arrow["\sim", sloped, from=2-1, to=3-1]
		\arrow["\sim", sloped, from=2-2, to=3-2]
		\arrow["\sim", sloped, from=3-1, to=3-2]
		\arrow["\sim", sloped, from=3-1, to=4-1]
		\arrow["\sim", sloped, from=3-2, to=4-2]
		\arrow[equal, from=4-1, to=4-2]
	\end{tikzcd}\]
	Commutativity of the lower square follows as both compositions are given by conjugation with the inclusion $L^2(Y_k; S)\hookrightarrow L^2(M;S)$. The compositions of the vertical maps are exactly $\Phi$ and $\Phi'$ by Lemma \ref{prop: localized roe alg isomorphic to roe alg of subset} and therefore the result follows.
\end{proof}

Consider our original manifold $M$ with partition $(N,M_+, M_-)$. Let $N_{\pm 2\varepsilon/3}$, $M_+'$ and $C_-$ be defined as in Proposition \ref{prop: manifolds are half-isomorphic}. Also, let $C\geq 1$ be as in Theorem \ref{thm: PMT}. It is easy to see that the partitions of $M$ defined by $N$ and $N_{2\varepsilon/3}$ are mutually near and that the map $\phi_1\colon (N, d_N)\rightarrow (N_{2\varepsilon/3}, d_{N_{2\varepsilon/3}})$ defined by $\phi_1(p) = (2\varepsilon/3, p)$ is a uniform equivalence which satisfies 
$
\sup_{p \in N}d_M(\phi_1(p), p) \leq \frac{2\sqrt{C}\varepsilon}{3}.
$
Similarly, it easy to see that the partitions of $M'$ defined by $N_{\pm 2\varepsilon/3}$ are mutually near and that the map $\phi_2\colon (N_{2\varepsilon/3}, d_{N_{2\varepsilon/3}}) \rightarrow (N_{-2\varepsilon/3}, d_{N_{-2\varepsilon/3}})$ defined by $\phi_2(2\varepsilon/3, p) = (-2\varepsilon/3, p)$ is a uniform equivalence which satisfies 
$
\sup_{p \in N_{2\varepsilon/3}}d_{M'}(\phi_2(p), p)\leq\frac{4\sqrt{C}\varepsilon}{3}.
$
Finally, it is obvious that the partitions of $\R\times N$ defined by $N_0$ and $N_{-2\varepsilon/3}$ are mutually near and that the map $\phi_3\colon (N_{-2\varepsilon/3}, d_{N_{-2\varepsilon/3}}) \rightarrow (N_0, d_{N_0})$ defined by $\phi_3(-2\varepsilon/3, p) = (0, p)$ is a uniform equivalence which satisfies 
$
\sup_{p \in N_{-2\varepsilon/3}}d_{\R\times N}(\phi_3(p), p)\leq\frac{2\varepsilon}{3}.
$
So Lemma \ref{lem: commutative square near hypersurfaces} can be applied to these three changes in hypersurface.

\begin{proof}[Proof of Theorem \ref{thm: PMT}]
	To prove the theorem for general $N \subseteq M$, the general case will be reduced to the product case, for which the theorem is known to be true (Theorem \ref{thm: PMT for product}). To reduce the general case to the product, the partitioned manifold must be modified five times; three times by moving around the partitioning hypersurface and two times by using a half-isomorphism. 
	
	Recall that the manifold $M'$ that was constructed in Section \ref{subsec: gluing on M'} has the structure of a partitioned manifold by considering either of the partitioning hypersurfaces $N_{\pm2\varepsilon/3}$. By Proposition \ref{prop: manifolds are half-isomorphic}, it follows that for $N_{2\varepsilon/3}$ we have a uniform half-isomorphism between $(N_{2\varepsilon/3}, M_+', M\setminus (M_+')^{\circ})$ and $(N_{2\varepsilon/3}, M_+', M'\setminus (M_+')^{\circ})$.  Similarly, it follows that for $N_{-2\varepsilon/3}$ we have a uniform half-isomorphism between $(N_{-2\varepsilon/3}, M'\setminus (C_-)^{\circ}, C_-)$ and $(N_{-2\varepsilon/3}, \R_{\geq -2\varepsilon/3}\times N, C_-)$. Moreover, both of these uniform half-isomorphisms are compatible with the Dirac operators. The Riemannian isometries of these half-isomorphisms are just $\id_{O_+}$ and $\id_{O_-}$, respectively. Consider the following diagram:
	\begin{equation}
		\label{diag: the final diagram}
		\begin{tikzcd}
			{K_0(C^*(N, d_N))} & {K_0(C^*(N\subseteq M))} \\
			{K_0(C^*(N_{2\varepsilon/3}, d_{N_{2\varepsilon/3}}))} & {K_0(C^*(N_{2\varepsilon/3}\subseteq M))} \\
			{K_0(C^*(N_{2\varepsilon/3}, d_{N_{2\varepsilon/3}}))} & {K_0(C^*(N_{2\varepsilon/3}\subseteq M'))} \\
			{K_0(C^*(N_{-2\varepsilon/3}, d_{N_{-2\varepsilon/3}}))} & {K_0(C^*(N_{-2\varepsilon/3}\subseteq M'))} \\
			{K_0(C^*(N_{-2\varepsilon/3}, d_N))} & {K_0(C^*(N_{-2\varepsilon/3}\subseteq \R\times N))} \\
			{K_0(C^*(N_0, d_N))} & {K_0(C^*(N_0\subseteq\R\times N))}
			\arrow["{\Phi^M}", from=1-1, to=1-2]
			\arrow["{(\phi_1)_*}"', from=1-1, to=2-1]
			\arrow[bend right = 90, dotted, from=1-1, to=6-1]
			\arrow[equals, from=1-2, to=2-2]
			\arrow["{\Phi_{2\varepsilon/3}^M}", from=2-1, to=2-2]
			\arrow["{(\id_{N_{2\varepsilon/3}})_*}"', from=2-1, to=3-1]
			\arrow["{\cT_+}"', from=3-2, to=2-2]
			\arrow["{\Phi_{2\varepsilon/3}^{M'}}", from=3-1, to=3-2]
			\arrow["{(\phi_2)_*}"', from=3-1, to=4-1]
			\arrow[equals, from=3-2, to=4-2]
			\arrow["{\Phi^{M'}_{-2\varepsilon/3}}", from=4-1, to=4-2]
			\arrow["{(\id_{N_{-2\varepsilon/3}})_*}"', from=4-1, to=5-1]
			\arrow["{\cT_-}"', from=5-2, to=4-2]
			\arrow["{\Phi^{\R\times N}_{-2\varepsilon/3}}", from=5-1, to=5-2]
			\arrow["{(\phi_3)_*}"', from=5-1, to=6-1]
			\arrow[equals, from=5-2, to=6-2]
			\arrow["{\Phi^{\R\times N}_0}", from=6-1, to=6-2]
		\end{tikzcd}
	\end{equation}
	In Diagram \eqref{diag: the final diagram}, $\cT_{\pm}$
	denote the transition maps on the $K$-theories of the localized Roe algebras induced by the corresponding uniform half-isomorphisms. Finally, the horizontal maps are all defined as in \eqref{diag: identification maps} and decorated with the corresponding total manifold as superscript and the $r$-coordinate of the hypersurface as subscript to distinguish them. Commutativity of the second and fourth square, counting from above, follows from Proposition \ref{prop: transition diagram commutes}. Commutativity of the first, third and fifth square follows from Lemma \ref{lem: commutative square near hypersurfaces}. Note that by functoriality, the dotted arrow is the map induced by the coarse map
	\[
	\phi_N\colon N\rightarrow N_0\subseteq \R\times N \qquad p\mapsto (0, p), 
	\]
	which is exactly the identification used in the proof of Theorem \ref{thm: PMT for product} to view $N$ as submanifold of $\R\times N$. 
	
	It follows from Theorem \ref{thm: PMT for product} that
	\[
	\Phi^{\R\times N}_0 (\phi_N)_*(\Ind(D_N)) = \Ind(D_{\R\times N}; N_0).
	\]
	Using commutativity of Diagram \eqref{diag: the final diagram}, we find that 
	\begin{align*}
		\Phi^M(\Ind(D_N)) &= \cT_+\circ \cT_-(\Ind(D_{\R\times N}; N_0))\\
		&= \cT_+\circ \cT_-(\Ind(D_{\R\times N}; N_{-2\varepsilon/3}))\\
		&=\cT_+(\Ind(D'; N_{-2\varepsilon/3}))\\
		& = \cT_+(\Ind(D'; N_{2\varepsilon/3})) \\
		&=\Ind(D; N_{2\varepsilon/3}) \\
		&= \Ind(D;N),
	\end{align*}
	where the third and fifth equality follow from Proposition \ref{prop: iso on localized Roe algs preserves partitioned index} and the second, fourth and sixth equality follow from Corollary \ref{cor: mutually near hypersurfaces induce same partitioned index}.	
\end{proof}

\section{Generalizations}

We discuss how Theorem \ref{thm: PMT} can be generalized to an equivariant version (Theorem \ref{thm: PMT Gamma}) and to the case where the hypersurface $N$ may be disconnected (Theorem \ref{thm: PMT (mult comp)}).

\subsection{An equivariant version}\label{subsec: equivariant results}

Consider the setting of Theorem \ref{thm: PMT}. Let $\Gamma$ be a  discrete group acting isometrically, properly and freely on $M$, preserving $M_+$, $M_-$ and hence $N$. Suppose that the vector bundle $S$ is $\Gamma$-equivariant, and that the action by $\Gamma$ preserves the metric on $S$. Suppose that the Clifford action, the Dirac connection $\nabla$ used to define $D$ and the Dirac connection on $S_N$ used to define $D_N$, are $\Gamma$-invariant. 

The algebras in Definition \ref{def: (localized) Roe algebra} have $\Gamma$-equivariant versions, denoted by superscripts $\Gamma$, if $H$ carries a unitary representation of $\Gamma$ such that
\[
\gamma \circ \rho(f)\circ \gamma^{-1} = \rho(\gamma \cdot f) \in \cB(H)
\]
for all $\gamma \in \Gamma$ and $f \in C^{\infty}(M)$. These algebras are defined by taking $\Gamma$-equivariant operators before completing the relevant subalgebras of $\cB(H)$.
Then we have the \emph{equivariant coarse index}
\[
\Ind_{\Gamma}(D_N) \in K_0(C^*(N)^{\Gamma}).
\]
\begin{remark}
Equivariant Roe algebras and the equivariant coarse index can be generalized to possibly non-discrete groups, acting possibly non-freely, but this involves a few subtleties \cite{GHM21b, GHM21a}.
\end{remark}

Furthermore, one has the equivariant analogue of Proposition \ref{prop: Higson's 1.4}, if one takes $\phi_+$ to be $\Gamma$-invariant. Hence Definition \ref{def: partitioned index} generalizes to yield
\[
\Ind_{\Gamma}(D; N):= \partial [q_N(U_+)] \in K_0(C^*(N \subseteq M)^{\Gamma}).
\]
In the setting of Theorem \ref{thm: PMT Gamma}, existence of such a $\Gamma$-invariant $\phi_+$ is guaranteed by the same argument as in Remark \ref{rem: hypersurfaces as in PMT are simple} as the map $F$ from \eqref{eq: tub nbhd map} is $\Gamma$-equivariant.
\begin{theorem}[Equivariant partitioned manifold index theorem]
	\label{thm: PMT Gamma}
Suppose that the conditions of Theorem \ref{thm: PMT} hold.	
	Then
	\[
	\Phi_{\Gamma}(\Ind_{\Gamma}(D_N)) = \Ind_{\Gamma}(D;N),
	\]
	with $\Phi_{\Gamma}$ defined as  the $\Gamma$-equivariant analogue of \eqref{diag: identification maps}.
\end{theorem}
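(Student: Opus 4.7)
The plan is to replicate the proof of Theorem \ref{thm: PMT} step by step, inserting a $\Gamma$-invariance requirement on every choice of auxiliary function and every covering isometry used in Sections \ref{section: proof special case} and \ref{section: proof general case}. Since $\Gamma$ acts properly, freely, and isometrically on $M$ preserving $N$ and $M_\pm$, each construction in those sections admits an equivariant refinement, and what remains to check is that no step relies on a non-equivariant argument that would fail here. Throughout, all representations of $C_0(M)$ and $C_0(N)$ are taken to be $\Gamma$-equivariantly (very) ample, so that the equivariant analogues of Corollary \ref{cor: K thoery independent of chosen representation} and Proposition \ref{prop: Paschke duality} apply.

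First I would handle the equivariant product case $M = \R \times N$ with $\Gamma$ acting trivially on the $\R$-factor. Each algebra used in Sections \ref{sect: properties and relations of subalgebras}--\ref{sect: relating ind(D_N) to ind(D;N)} has an evident equivariant subalgebra, and the inclusions and ideal relations of Lemmas \ref{lem: initial properties of the algebras}--\ref{lem: commutator of D* with locally constant in localized algebra} restrict to $\Gamma$-equivariant operators because the multiplication operators $\rho_M(p_\R^* g)$ appearing there are automatically $\Gamma$-invariant (they depend only on the $\R$-coordinate). The suspension isomorphism of Proposition \ref{prop: suspension is isomorphism} has an equivariant analogue upon choosing $\Gamma$-equivariant nondegenerate Fredholm modules with $\Gamma$-invariant grading and multigrading operators, and the alternative representation of $[D_N]_0$ from Proposition \ref{prop: alternative class of D_N} carries through because $D_N$, $\chi(D)$, $\hat{\alpha}$ and the operators $T_\pm$ are all $\Gamma$-equivariant. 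The homotopies in Lemmas \ref{lem: relating K theory classes 1}--\ref{lem: relating K theory classes 3} involve only functional calculus applied to $\Gamma$-equivariant operators and multiplication by the $\Gamma$-invariant function $\hat{\alpha}$, so they remain homotopies through $\Gamma$-equivariant unitaries in the appropriate quotient algebras.

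Second, I would carry out the reduction of Section \ref{section: proof general case} equivariantly. The crucial point is that the uniform tubular neighbourhood map $F\colon (-\varepsilon,\varepsilon)\times N \to U$ of \eqref{eq: tub nbhd map} is automatically $\Gamma$-equivariant (with $\Gamma$ acting trivially on the first factor), because $\Gamma$ acts by isometries preserving $N$ together with its unit normal field $n$. Hence the gluing construction producing $(M',g',S',h')$ is $\Gamma$-equivariant: the smoothing function $f$ and the cut-off function $\psi$ used in \eqref{eq: connection on S'} depend only on the $\R$-coordinate in the tubular neighbourhood, and thus automatically yield $\Gamma$-invariant metrics, connections, and Dirac operators. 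Consequently the half-isomorphisms of Proposition \ref{prop: manifolds are half-isomorphic} are $\Gamma$-equivariant by construction, and the uniform equivalences $\phi_1,\phi_2,\phi_3$ used to shift the partitioning hypersurface across the gluing region commute with the group action since they are translations in the $\R$-coordinate.

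The main obstacle is largely notational: one must verify that every covering isometry appearing in the proof---in particular those in Definition \ref{def: transition map} and Proposition \ref{prop: iso on localized Roe algs preserves partitioned index}---intertwines the group actions, so that conjugation by them induces well-defined maps on equivariant $K$-theory. For the pull-back unitaries $V$ and the inclusion isometries $W, W_i$ this is immediate from their definitions. Once this equivariance is checked, the commuting diagrams \eqref{diag: the central diagram}, \eqref{diag: relating C^* extensions}, and \eqref{diag: the final diagram} remain valid with every $C^*$-algebra replaced by its $\Gamma$-equivariant subalgebra, and the final chain of equalities in the proof of Theorem \ref{thm: PMT} yields $\Phi_\Gamma(\Ind_\Gamma(D_N)) = \Ind_\Gamma(D;N)$ verbatim.
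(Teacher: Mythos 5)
Your proposal follows essentially the same strategy as the paper's own (very brief) argument: replay the proof of Theorem \ref{thm: PMT}---both the product case and the reduction via the transition manifold---while checking that every operator, auxiliary function, covering isometry and coarse equivalence in that proof can be chosen $\Gamma$-equivariant, and hence that all constructions descend to the $\Gamma$-equivariant subalgebras. The paper states this in two sentences; you spell out where to look, but the substance and structure of the argument are identical.
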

The proof of Theorem \ref{thm: PMT Gamma} is completely analogous to the proof of Theorem \ref{thm: PMT}, since the $\Gamma$-equivariance and -invariance assumptions made mean that all operators that occur in the proof are $\Gamma$-equivariant, and therefore lie in the $\Gamma$-equivariant versions of the relevant algebras. Furthermore, all diffeomorphisms, isomorphisms and coarse equivalences can now be taken to be $\Gamma$-equivariant.

The case where $N/\Gamma$ is compact is particularly interesting.
First of all, in that case we will see in  Corollary \ref{cor: PMT for cocompact hypersurface} that the conditions in Theorem \ref{thm: PMT} do not need to be verified. Secondly, in that case, the equivariant coarse index of $D_N$ equals its $K$-theoretic $\Gamma$-index \cite{BCH94}, as shown in \cite{Roe02}:
\[
\Ind_{\Gamma}(D_N) = \Ind_{C^*_r(\Gamma)}(D_N) \in K_0(C^*(N)^{\Gamma}) = K_0(C^*_r(\Gamma)).
\]
\begin{corollary}
	\label{cor: PMT for cocompact hypersurface}
	If $N/\Gamma$ is compact, then
		\[
	\Phi_{\Gamma}(\Ind_{\Gamma}(D_N)) = \Ind_{\Gamma}(D;N).
	\]
\end{corollary}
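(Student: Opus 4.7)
The plan is to reduce this to Theorem \ref{thm: PMT Gamma} by verifying that the three hypotheses of Theorem \ref{thm: PMT} hold automatically when $N/\Gamma$ is compact; the equivariant theorem then gives the conclusion. Throughout, I use that $\Gamma$ acts properly, freely, and isometrically on $M$ preserving $N$, so that the quotient maps $M \to M/\Gamma$ and $N \to N/\Gamma$ are Riemannian coverings.

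First I would verify condition (i), uniform equivalence of $(N, d_N)$ and $(N, d_M|_N)$. Both distance functions descend to the compact manifold $N/\Gamma$ (the images of $d_N$-balls and $d_M|_N$-balls of bounded radius cover $N/\Gamma$ and are $\Gamma$-invariant). Since $N$ is $\Gamma$-cocompact, there is a compact set $K\subseteq N$ with $\Gamma K = N$. Lemma \ref{lem: metric props hypersurfaces} shows $\id_N$ is uniform; for the reverse direction, given $R > 0$, the set $\{(p,q)\in K\times N : d_M(p,q)\leq R\}$ is contained in finitely many $\Gamma$-translates of $K\times K$ by properness, on which $d_N$ is bounded by compactness, giving uniform expansiveness of $\id_N^{-1}$. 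Thus $\id_N$ is a uniform equivalence.

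Next I would verify conditions (ii) and (iii) by descending to the compact quotient. Since $\Gamma$ acts properly, freely, and isometrically and preserves $N$, the hypersurface $N/\Gamma \subseteq M/\Gamma$ is a compact embedded hypersurface in a Riemannian manifold, so it admits a tubular neighbourhood of some radius $\bar\varepsilon > 0$. By properness and freeness of the action, for $\varepsilon > 0$ sufficiently small (at most $\bar\varepsilon$ and at most the injectivity radius of the covering $M \to M/\Gamma$ near $N$), the set $\{(p,X)\in \cN N : g_p(X,X) < \varepsilon^2\}$ maps diffeomorphically onto its image under $\exp$, giving a uniform tubular neighbourhood $U$ of $N$ in $M$ which is $\Gamma$-invariant. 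The diffeomorphism $F\colon (-\varepsilon,\varepsilon)\times N \to U$ is then $\Gamma$-equivariant. For (iii), the two metrics $F^*(g_M|_U)$ and $g_{(-\varepsilon,\varepsilon)\times N}$ are both $\Gamma$-invariant, and their ratio (as quadratic forms on vectors) is a $\Gamma$-invariant continuous function on the unit sphere bundle of $T((-\varepsilon,\varepsilon)\times N)$, which descends to a continuous function on the compact space $([-\varepsilon/2,\varepsilon/2]\times N)/\Gamma$. Hence this ratio is bounded above and below by positive constants, giving the required $C \geq 1$.

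With all three hypotheses verified, the simplicity argument in Remark \ref{rem: hypersurfaces as in PMT are simple} goes through $\Gamma$-equivariantly (using the $\Gamma$-invariant $F$ to build a $\Gamma$-invariant $\phi_+$), so $\Ind_{\Gamma}(D;N)$ is defined, and Theorem \ref{thm: PMT Gamma} yields $\Phi_{\Gamma}(\Ind_{\Gamma}(D_N)) = \Ind_{\Gamma}(D;N)$. The main subtlety is ensuring that the choice of $\varepsilon$ for the tubular neighbourhood is uniform in a $\Gamma$-equivariant way; the hard part of this proposal is really just the bookkeeping needed to pass between $\Gamma$-invariant objects on $M$ and their quotients on $M/\Gamma$, after which every estimate comes for free from compactness of $N/\Gamma$.
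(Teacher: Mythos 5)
Your proposal is correct and takes essentially the same route as the paper: verify the three hypotheses of Theorem \ref{thm: PMT} using compactness of $N/\Gamma$, then invoke Theorem \ref{thm: PMT Gamma}. The only cosmetic difference is in condition (i), where the paper descends both metrics to the compact quotient and observes the coarse equivalence is trivial there (and then lifts), whereas you argue directly on $N$ via properness and cocompactness of the action; these are two phrasings of the same idea, and your arguments for conditions (ii) and (iii) (pulling back a tubular neighbourhood and bounding the ratio of quadratic forms on a sphere bundle with compact quotient) match the paper's essentially verbatim.
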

\begin{proof}
We show  that if $N/\Gamma$ is compact, then the three conditions in Theorem \ref{thm: PMT} are automatically satisfied.

Let $d_{N/\Gamma}$ and $d_{M/\Gamma}$ be the Riemannian distances on  $N/\Gamma$ and $M/\Gamma$ with respect to the Riemannian metrics induced by the ones on $N$ and $M$, respectively. The trivial coarse equivalence  $\id_{N/\Gamma}\colon (N/\Gamma, d_{N/\Gamma})\rightarrow (N/\Gamma, \restr{d_{M/\Gamma}}{N})$ of bounded spaces lifts to a $\Gamma$-equivariant coarse equivalence  $\id_{N}\colon (N, d_{N})\rightarrow (N, \restr{d_{M}}{N})$.
%
%
 And we can find a uniform tubular neighbourhood $U \cong (-\varepsilon, \varepsilon)\times N$ of $N$ as the pullback of a uniform tubular neighbourhood of the compact hypersurface $N/\Gamma$ in $M/\Gamma$ along the quotient map.

The quadratic forms $g_{(-\varepsilon, \varepsilon) \times N}$ and $F^*(g_M|_U)$ on $T((-\varepsilon, \varepsilon) \times N)$ restrict to nowhere vanishing, continuous, $\Gamma$-invariant functions on the unit sphere bundle $S$ (with respect to either metric) in $T((-\varepsilon, \varepsilon) \times N)|_{[-\varepsilon/2, \varepsilon/2] \times N}$. The set $S$ is invariant under $\Gamma$, with compact quotient. So the quotient 
\[
\frac{g_{(-\varepsilon, \varepsilon) \times N}}{F^*(g_M|_U)}
\]
has positive upper and lower bounds on $S$, and hence on $T((-\varepsilon, \varepsilon) \times N)|_{[-\varepsilon/2, \varepsilon/2] \times N}$. So \eqref{eq: estimate on Riemannian metric} holds if one replaces $\varepsilon$ by $\varepsilon/2$.

The claim now follows by Theorem \ref{thm: PMT Gamma}.
\end{proof}

Corollaries \ref{cor: cobordism invariance of the index} and \ref{cor: obstruction UPSC metrics} have $\Gamma$-equivariant analogues as well. These also have interesting special cases if $N/\Gamma$ is compact. 
Then the equivariant version of Corollary \ref{cor: cobordism invariance of the index} implies cobordism invariance of the $\Gamma$-index (which seems to be known). The equivariant version of Corollary \ref{cor: obstruction UPSC metrics} now yields obstructions in $K_0(C^*_r(\Gamma))$ to Riemannian metrics on $M/\Gamma$ of uniformly positive scalar curvature. In this case, various numerical obstructions can be extracted from these $K$-theoretic obstructions and computed via index theorems. See for example \cite{CM90, PPT15a, PPT15b}. 

If $N$ is compact and $\Gamma$ is trivial, then Corollary \ref{cor: PMT for cocompact hypersurface} reduces to Corollary \ref{cor: PMT for compact hypersurface}. But Corollary \ref{cor: PMT for cocompact hypersurface} has a more interesting implication if $N$ is compact. Let $\tilde M$ and $\tilde N$ be the universal covers of $M$ and $N$, respectively. Then $\pi_1(N)$ acts on $\tilde M$ via the homomorphism $j\colon \pi_1(N) \to \pi_1(M)$ induced by the inclusion $N \hookrightarrow M$. Suppose that $j$ is injective, so that the action by $\pi_1(N)$  on $\tilde M$ is proper and free. Then $\tilde N$ naturally embeds into $\tilde M$, and its image is preserved by $\pi_1(N)$. 
Let $\tilde D$ be the lift of $D$ to $\tilde M$, and let $D_{\tilde N}$ be the lift of $D_N$ to $\tilde N$. Then Corollary \ref{cor: PMT for cocompact hypersurface} implies the following higher versions of Roe's partitioned manifold index theorem and the resulting obstruction to positive scalar curvature. 
\begin{corollary}
In  this setting,
\[
	\Phi_{\pi_1(N)}(\Ind_{\pi_1(N)}(D_{\tilde N})) = \Ind_{\pi_1(N)}(\tilde D;\tilde N) \quad \in K_0(C^*_r(\pi_1(N))).
\]
\end{corollary}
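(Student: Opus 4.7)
My approach is to invoke Corollary \ref{cor: PMT for cocompact hypersurface} directly, with the discrete group $\Gamma = \pi_1(N)$ acting on $\tilde M$ via $j$. Since $\pi_1(M)$ acts freely and properly on its universal cover and $j$ is injective, the action of $\pi_1(N)$ on $\tilde M$ is also free and proper. The embedded copy of $\tilde N$ in $\tilde M$ is preserved by this action, and $\tilde N / \pi_1(N) = N$ is compact by assumption. The Dirac bundle, Clifford action and Dirac connection on $M$, together with the analogous data on $N$, lift to $\pi_1(N)$-equivariant data on $\tilde M$ and $\tilde N$; the associated Dirac operators are precisely $\tilde D$ and $D_{\tilde N}$. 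So once the partition of $\tilde M$ by $\tilde N$ is established, the conclusion of Corollary \ref{cor: PMT for cocompact hypersurface} is exactly the displayed equality, and it remains only to identify the resulting group $K_0(C^*(\tilde N \subseteq \tilde M)^{\pi_1(N)})$ with $K_0(C^*_r(\pi_1(N)))$.

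The main obstacle is verifying that $\tilde N$ is a partitioning hypersurface of $\tilde M$, i.e.\ that $\tilde M \setminus \tilde N$ has exactly two connected components whose closures are submanifolds with common boundary $\tilde N$. This is not automatic: the preimage $p^{-1}(N)$ of $N$ under the covering $p \colon \tilde M \to M$ decomposes into many copies of $\tilde N$, indexed by the coset space $\pi_1(M) / j(\pi_1(N))$, and removing only one copy need not disconnect $\tilde M$. The key input is the injectivity of $j$. Combined with the Seifert--van Kampen theorem, it gives $\pi_1(M) \cong \pi_1(M_+) *_{\pi_1(N)} \pi_1(M_-)$ as a genuine amalgamated free product, with both inclusions $\pi_1(N) \hookrightarrow \pi_1(M_{\pm})$ injective. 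Bass--Serre theory then supplies a tree $T$ on which $\pi_1(M)$ acts with edge stabilizers conjugate to $\pi_1(N)$ and vertex stabilizers conjugate to $\pi_1(M_{\pm})$, and $\tilde M$ inherits a graph-of-spaces decomposition over $T$: each edge carries a copy of $\tilde N$ and each vertex carries a copy of an appropriate cover of $M_{\pm}$, glued along these copies. Since removing an edge from a tree always disconnects it, removing the chosen copy $\tilde N$ from $\tilde M$ yields exactly two components, whose closures $\tilde M_{\pm}$ are closed submanifolds with common boundary $\tilde N$; moreover $\pi_1(N)$, being the stabilizer of the chosen edge of $T$, preserves this partition.

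With the partition $(\tilde N, \tilde M_+, \tilde M_-)$ in place and all other hypotheses satisfied, Corollary \ref{cor: PMT for cocompact hypersurface} applies directly and yields the claimed equality in $K_0(C^*(\tilde N \subseteq \tilde M)^{\pi_1(N)})$. Finally, I identify this group with $K_0(C^*_r(\pi_1(N)))$: cocompactness of $\tilde N / \pi_1(N) = N$ gives a Morita-type equivalence $C^*(\tilde N)^{\pi_1(N)} \sim C^*_r(\pi_1(N)) \otimes \cK$ as in \cite{Roe02}, and a $\pi_1(N)$-equivariant version of Proposition \ref{prop: localized roe alg isomorphic to roe alg of subset} relates $C^*(\tilde N)^{\pi_1(N)}$ to $C^*(\tilde N \subseteq \tilde M)^{\pi_1(N)}$, delivering the final identification.
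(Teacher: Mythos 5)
Your proposal is correct and follows the route the paper has in mind (reduce to Corollary~\ref{cor: PMT for cocompact hypersurface} with $\Gamma = \pi_1(N)$, then identify the equivariant Roe algebra with $C^*_r(\pi_1(N))$ via \cite{Roe02}); the paper presents the corollary as an immediate consequence without further comment. You correctly identify that the nontrivial point is showing the chosen lift $\tilde N \subset \tilde M$ is actually a partitioning hypersurface preserved by $\pi_1(N)$, and your Bass--Serre argument is valid: injectivity of $j$ gives a genuine amalgam $\pi_1(M) \cong \pi_1(M_+) *_{\pi_1(N)} \pi_1(M_-)$, the graph-of-spaces decomposition of $\tilde M$ over the Bass--Serre tree shows that removing the copy of $\tilde N$ over the base edge disconnects $\tilde M$ into two pieces, and the edge stabilizer fixes both endpoints (the tree is bipartite), so $\pi_1(N)$ preserves the two sides. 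That said, the Bass--Serre machinery is heavier than needed here: since $\tilde M$ is simply connected, every loop has zero mod-$2$ intersection number with the closed, connected, two-sided, properly embedded hypersurface $\tilde N$ (a connected component of $p^{-1}(N)$), which already forces $\tilde M \setminus \tilde N$ to be disconnected with exactly two components; preservation of the two sides by $\pi_1(N)$ then follows because deck transformations preserving $\tilde N$ also preserve the lifted normal field. So your proof is complete and correct, filling a genuine (if standard) gap the paper leaves implicit, at the cost of invoking more structure theory than is strictly necessary for the separation step.
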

\begin{corollary}\label{cor higher PMIT}
If $M$ is spin and admits a Riemannian metric of uniformly positive scalar curvature, then $\Ind_{\pi_1(N)}(D_{\tilde N}) = 0$ for the spin-Dirac operator $D_{\tilde N}$ on $\tilde N$, for every compact, connected hypersurface $N \subset M$ for which the map $\pi_1(N) \to \pi_1(M)$ induced by the inclusion $N \hookrightarrow M$ is injective.
\end{corollary}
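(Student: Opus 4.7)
The plan is to combine the higher partitioned manifold index theorem (the preceding corollary) with a standard Lichnerowicz argument carried out on the intermediate cover $\tilde M$, maintaining $\pi_1(N)$-equivariance throughout. First, I would observe that since $j\colon \pi_1(N) \to \pi_1(M)$ is injective, $\pi_1(N)$ acts freely, properly, and isometrically on $\tilde M$ with the pulled-back Riemannian metric, preserving $\tilde N$. As the covering map $\tilde M \to M$ is a local isometry and scalar curvature is a local invariant, the assumption of uniformly positive scalar curvature on $M$ lifts to uniformly positive scalar curvature on $\tilde M$ with the same positive lower bound.

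Second, by the Lichnerowicz formula applied to the spin-Dirac operator $\tilde D$ on $\tilde M$, one has $\tilde D^2 = \nabla^*\nabla + \kappa/4 \geq c \cdot \mathrm{id}$ for some $c>0$. Hence $0$ lies outside the spectrum of $\tilde D$, so $\chi(\tilde D)^2 - 1$ vanishes for any normalizing function $\chi$ with $\chi(x)^2 = 1$ for $|x| \geq c^{1/2}$. By the equivariant analogue of \cite[Prop.~12.3.7]{Higson2000Khomology} (which is already invoked in Corollary \ref{cor: obstruction UPSC metrics}), this yields $\Ind_{\pi_1(N)}(\tilde D) = 0$ in $K_1(C^*(\tilde M)^{\pi_1(N)})$.

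Third, I would apply the equivariant version of Proposition \ref{prop: Roe homomorphism}(ii) to the partition $\tilde N \subset \tilde M$. Since the partitioned index factors through the Roe homomorphism,
\[
\Ind_{\pi_1(N)}(\tilde D; \tilde N) \;=\; \varphi_{\tilde N}\bigl(\Ind_{\pi_1(N)}(\tilde D)\bigr) \;=\; 0.
\]
Finally, the immediately preceding corollary gives
\[
\Phi_{\pi_1(N)}\bigl(\Ind_{\pi_1(N)}(D_{\tilde N})\bigr) \;=\; \Ind_{\pi_1(N)}(\tilde D; \tilde N) \;=\; 0,
\]
and the map $\Phi_{\pi_1(N)}$ is an isomorphism: it is the composition of $(\id_{\tilde N})_*$, which is induced by a $\pi_1(N)$-equivariant uniform equivalence (as verified in the proof of Corollary \ref{cor: PMT for cocompact hypersurface}, using compactness of $N = \tilde N / \pi_1(N)$), with the canonical isomorphism of Proposition \ref{prop: localized roe alg isomorphic to roe alg of subset}. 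Inverting $\Phi_{\pi_1(N)}$ yields $\Ind_{\pi_1(N)}(D_{\tilde N}) = 0$, as desired. No step presents a serious obstacle: each ingredient is either established in the paper or is the routine equivariant analogue of a standard argument, and the only point requiring care is checking that Theorem \ref{thm: PMT Gamma} applies to the cocompact hypersurface $\tilde N \subset \tilde M$, which is exactly the content of Corollary \ref{cor: PMT for cocompact hypersurface}.
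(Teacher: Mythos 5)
Your proposal is correct and matches the paper's intended argument: the paper states Corollary~\ref{cor higher PMIT} without an explicit proof, and the implicit argument is exactly what you give, i.e.\ the $\pi_1(N)$-equivariant analogue of the proof of Corollary~\ref{cor: obstruction UPSC metrics}, applied to $\tilde N\subset\tilde M$ via Corollary~\ref{cor: PMT for cocompact hypersurface}. Your additional observation that $\Phi_{\pi_1(N)}$ is an isomorphism (because $\id_{\tilde N}$ is a $\pi_1(N)$-equivariant uniform equivalence when $N=\tilde N/\pi_1(N)$ is compact) is the correct justification for inverting it, and your spelled-out Lichnerowicz step is just the content of the cited equivariant analogue of \cite[Prop.~12.3.7]{Higson2000Khomology}.
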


\subsection{Overview of the disconnected setting}
\label{subsect: overview of setting}
In this subsection, we will discuss how Theorem \ref{thm: PMT} can be extended to the setting where the partitioning hypersurface is not necessarily connected, but has finitely many connected components. 
Applications of Theorem \ref{thm: PMT (mult comp)} analogous to those in Subsection \ref{subsec: corollaries to pmt}, and an equivariant generalization analogous to Theorem \ref{thm: PMT Gamma} are possible; we do not work out the details here.

Consider the setting where $(M, g_M)$ is an oriented, connected,  complete Riemannian manifold of odd dimension at least 3, partitioned by a hypersurface $N \subseteq M$. If $N$ is not assumed to be connected, then in general $M_{\pm}$ need not be connected as well. As a result, the transition manifold $M'$ constructed in Section \ref{subsec: gluing on M'} and the product manifold $\R\times N$ also need not be connected. This poses difficulties to our construction because connectedness is necessary to give a Riemannian manifold the structure of a metric space, which is used in the construction of the (localized) Roe algebras. We will show how to overcome these difficulties by looking at the direct sum of the $K$-theories of the Roe algebras of the separate connected components instead. 

Let us introduce the following notation. If $\{A_i\}_{i \in I}$ and $\{B_i\}_{i \in I}$ are finite collections of Abelian groups and we have homomorphisms $\alpha_i\colon A_i\rightarrow B_i$ for each $i \in I$, then we denote by 
\[
\bigoplus_{i \in I}\alpha_i\colon \bigoplus_{i \in I}A_i\rightarrow \bigoplus_{i \in I}B_i, \qquad \{a_i\}_{i \in I}\mapsto \{\alpha_i(a_i)\}_{i \in I}, 
\]
the homomorphism obtained by applying the homomorphisms $\alpha_i$ entrywise. Similarly, if $\{A_i\}_{i \in I}$ is a finite collection of Abelian groups and we have homomorphisms $\beta_i\colon A_i\rightarrow B$ to a fixed Abelian group $B$ for each $i \in I$, then we denote by 
\[
\sum_{i \in I}\beta_i\colon \bigoplus_{i \in I}A_i\rightarrow B, \qquad \{a_i\}_{i \in I}\mapsto \sum_{i \in I}\beta_i(a_i), 
\]
the homomorphism obtained by summing the images of the homomorphisms $\beta_i$.

As a general rule, we will label connected components with \emph{upper} indices. Since $M_+$ need not be connected, we write $M_+ =: \coprod_{j = 1}^cM_+^j$, where each $M_+^j$ is a connected component of $M_+$. Also define $N^j:= \partial M_+^j$ for the part of $N$ that forms the boundary of $M_+^j$. Since $N^j$ need not be connected as well, we write $N^j:= \coprod_{k_j = 1}^{n_j}N^{j,k_j}$ for the different connected components of $N^j$. So $N = \coprod_{j = 1}^c\coprod_{k_j = 1}^{n_j}N^{j,k_j}$ as a disjoint union of connected components. For each connected component $N^{j,k_j}$ of $N$, denote by $\iota^{j,k_j}\colon N^{j,k_j}\hookrightarrow N$ the inclusion map. This is a uniform map when $N^{j,k_j}$ carries the Riemannian distance and $N$ the subspace distance from $M$. Analogously to \eqref{diag: identification maps}, we define
\begin{equation}
	\Phi^{j,k_j}\colon
	\label{diag: identification maps (mult comp)}
	\begin{tikzcd}
		{K_0(C^*(N^{j,k_j}))} & {K_0(C^*(N, \restr{d_M}{N}))} & {K_0(C^*(N\subseteq M)).}
		\arrow["{(\iota^{j,k_j})_*}", from=1-1, to=1-2]
		\arrow["\sim", from=1-2, to=1-3]
	\end{tikzcd}
\end{equation}
Then we get the following generalization of Theorem \ref{thm: PMT}.
\begin{theorem}[Partitioned manifold index theorem for disconnected hypersurfaces]
	\label{thm: PMT (mult comp)}
	Let $M$ be an oriented, complete Riemannian manifold of odd dimension at least 3, partitioned by a hypersurface $N$. Let $S$ be a Dirac bundle over $M$ with Dirac connection $\nabla$ and let $D$ be the associated Dirac operator on $S$. Let $D^{j,k_j}$ be the Dirac operator on $S^{j,k_j}:= \restr{S}{N^{j,k_j}}$ associated to the restricted Clifford action and an arbitrary Dirac connection on $S^{j,k_j}$. Suppose moreover that the following hold:
	\begin{enumerate}
		\item \label{item: assumption 1 (mult comp)} The map $\id_{N^{j,k_j}}\colon (N^{j,k_j}, d_{N^{j,k_j}})\rightarrow (N^{j,k_j}, \restr{d_M}{N^{j,k_j}})$ is a uniform equivalence for each connected component $N^{j,k_j}$ of $N$.
		\item \label{item: assumption 2 (mult comp)} The map $\id_{M_+^j}\colon (M_+^j, d_{M_+^j})\rightarrow (M_+^j, \restr{d_M}{M_+^j})$ is a uniform equivalence for each connected component $M_+^j$ of $M_+$.
		\item \label{item: assumption 3 (mult comp)} The submanifold $N\subseteq M$ has a uniform tubular neighbourhood $U$ of radius $\varepsilon >0$. 
		\item \label{item: assumption 4 (mult comp)} Let $F\colon (-\varepsilon, \varepsilon)\times N \rightarrow U$ be defined as in \eqref{eq: tub nbhd map} and let $g_{(-\varepsilon, \varepsilon)\times N}$ be the product metric on the manifold $(-\varepsilon, \varepsilon)\times N$. Then there exists a constant $C \geq 1$ such that on $U \cong (-\varepsilon, \varepsilon)\times N$ the estimates
		\begin{equation}
			\label{eq: estimate on Riemannian metric (mult comp)}
			\frac{1}{C}g_{(-\varepsilon, \varepsilon)\times N}\leq F^*(\restr{g_M}{U}) \leq Cg_{(-\varepsilon, \varepsilon)\times N}
		\end{equation} 
		hold as quadratic forms on vectors.
	\end{enumerate}
	
	Then
	\[
	\sum_{j= 1}^c\sum_{k_j = 1}^{n_j}\Phi^{j,k_j}(\Ind(D^{j,k_j})) = \Ind(D;N)
	\]
	with each $\Phi^{j,k_j}$ defined as in \eqref{diag: identification maps (mult comp)}.
\end{theorem}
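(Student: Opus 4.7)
The plan is to follow the same two-step strategy as in the connected case (first the product case, then the transition construction), but componentwise. The key observation is that the reduction naturally splits along the connected components $M_+^j$ of $M_+$.

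First I would handle the product case. Since $N = \coprod_{j,k_j} N^{j,k_j}$, we have $\R \times N = \coprod_{j,k_j} \R \times N^{j,k_j}$ as a disjoint union of connected product manifolds, each partitioned by $\{0\} \times N^{j,k_j}$. Applying Theorem \ref{thm: PMT for product} to each of these gives the product-case analogue of Theorem \ref{thm: PMT (mult comp)}; assumption \ref{item: assumption 1 (mult comp)} is unnecessary here, because the intrinsic and subspace metrics agree on each connected cylinder, but will be used in the subsequent reduction.

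Next, I would construct the transition manifold componentwise: rather than a single $M'$, we form $M' = \coprod_{j=1}^c M'_j$, where each $M'_j$ is obtained by gluing $M_+^j$ to $\R_{\leq 0} \times N^j$ along $N^j$ via a uniform tubular neighbourhood, exactly as in Section \ref{subsec: gluing on M'}. Since $M_+^j$ is connected and each of the cylinders over the components $N^{j,k_j}$ is glued to the same piece $M_+^j$, the resulting $M'_j$ is connected. The role played by assumption (i) of Theorem \ref{thm: PMT} in the original argument---specifically in Proposition \ref{prop: uniform equivalence 1}---is now taken over by assumption \ref{item: assumption 2 (mult comp)}, which directly guarantees that $\id_{M_+^j}$ is a uniform equivalence. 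With this replacement, all other parts of Section \ref{section: proof general case}---the Riemannian metric and Clifford action gluing, completeness of the resulting metric, and the uniform half-isomorphisms of Proposition \ref{prop: manifolds are half-isomorphic}---go through unchanged on each $M'_j$.

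Finally, I would execute a diagram chase analogous to \eqref{diag: the final diagram}, applied componentwise to each pair $(M'_j, N^j)$. The contributions from the cylinder side further split into a sum of partitioned indices $\Ind(D_{\R \times N^{j,k_j}}; \{0\} \times N^{j,k_j})$, each handled by the product case via assumption \ref{item: assumption 1 (mult comp)}. Transporting these back via the componentwise transition maps (Proposition \ref{prop: iso on localized Roe algs preserves partitioned index}) and the hypersurface-moving lemma (Lemma \ref{lem: commutative square near hypersurfaces}) yields the full sum $\sum_{j,k_j} \Phi^{j,k_j}(\Ind(D^{j,k_j})) = \Ind(D;N)$ in $K_0(C^*(N \subseteq M))$. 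The main technical obstacle I expect is bookkeeping: one must check that the isomorphism $K_0(C^*(N \subseteq M)) \cong K_0(C^*(N, \restr{d_M}{N}))$ from Proposition \ref{prop: localized roe alg isomorphic to roe alg of subset} interacts correctly with the componentwise inclusions $\iota^{j,k_j}$, and that the transition maps $\cT$ assemble to a well-defined map on the relevant direct sum of $K$-theories.
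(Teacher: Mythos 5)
Your proposal follows the paper's own route: componentwise reduction to the product case, a transition manifold $M' = \coprod_{j} M^{\prime,j}$ with one connected piece per $M_+^j$, and the new assumption~(ii) replacing the role that connectedness of $N$ played in the proof of the analogue of Proposition~\ref{prop: uniform equivalence 1} (rerouting excursions through $M_+^j$ rather than through $N$). The `bookkeeping' you flag as the main obstacle is exactly what the paper handles by introducing a modified notion of half-isomorphism allowing a disconnected target $M_2$ (Definition~\ref{def: half iso (mult comp)}), partial transition maps $\cT^j$ assembling to $\cT = \sum_j \cT^j$ (Definition~\ref{def: (partial) transition map (mult comp)}), and the componentwise version of Lemma~\ref{lem: Gamma tilde intertwines differential operators} (Lemma~\ref{lem: Gamma tilde intertwines differential operators (mult comp)}) that decomposes $\Ind(D;N)$ as a sum over components.
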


\begin{remark}
	\label{rem: extra assumption}
	\begin{enumerate}
		\item Note that assumption \ref{item: assumption 2 (mult comp)} in Theorem \ref{thm: PMT (mult comp)} has no counterpart in Theorem \ref{thm: PMT}. In Section \ref{subsect: adaptations to the proof} we will show that we have to add this assumption to prove a counterpart to Proposition \ref{prop: uniform equivalence 1} in this new setting. We will also discuss that if $N$ is connected, then assumption \ref{item: assumption 2 (mult comp)} follows from assumptions \ref{item: assumption 1 (mult comp)}, \ref{item: assumption 3 (mult comp)} and \ref{item: assumption 4 (mult comp)} and therefore Theorem \ref{thm: PMT (mult comp)} is a proper generalization of Theorem \ref{thm: PMT}.			
		\item If the hypersurface $N$ is compact, also assumption \ref{item: assumption 2 (mult comp)} in Theorem \ref{thm: PMT (mult comp)} is trivially satisfied. This follows from the fact that by compactness, the boundary $N^j$ of $M^j_+$ in $M$ is bounded with respect to both $d_{M^j_+}$ and $d_M$.
		
	\end{enumerate}
\end{remark}

\subsection{Adaptations to the proof in the disconnected case}
\label{subsect: adaptations to the proof}
In this section, we will briefly sketch where and how the proof of Theorem \ref{thm: PMT} should be modified to obtain a proof of Theorem \ref{thm: PMT (mult comp)}. The main modifications to the proof of the theorem occur in Sections \ref{subsec: half-isos and the transition map} and \ref{subsec: half-isos and the partitioned index} and in Proposition \ref{prop: uniform equivalence 1}. 

To introduce the right notion of a half-isomorphism, note that when we construct the transition manifold $M'$ as in Section \ref{subsec: gluing on M'}, then $M'$ has exactly the same number of connected components as $M_+$. So we can write $M'= \coprod_{j = 1}^c M^{\prime, j}$ with $M_+^j\subseteq M^{\prime, j}$. So we will need to compare the \emph{connected} manifold $M$ to the disconnected manifold $M'$. Similarly, we will need to compare the \emph{connected} manifolds $M^{\prime, j}$ to the disconnected product manifold $\R\times N^j$. This suggests that our adapted notion of a half-isomorphism should allow for the manifold $M_2$ to be disconnected. We therefore introduce the following variation on Definition \ref{def: half iso}.

\begin{definition}
	\label{def: half iso (mult comp)}
	For $i = 1, 2$, let $(M_i, g_i, S_i, h_i)$ be two sets of complete Riemannian manifolds $(M_i, g_i)$ with Hermitian vector bundles $(S_i, h_i)$. Let $D_i$ be differential operators acting on the smooth sections of $S_i$. Suppose that both $M_i$ are partitioned by a simple hypersurface $N_i\subseteq M_i$ and have been given a partition labeling $M_{i\pm}$. Suppose that $M_1$ is connected and that each connected component $M_2^j$ of $M_2$ contains exactly one connected component $M_{2+}^j$ of $M_{2+}$.
	\begin{enumerate}
		\item \label{item: half iso (mult comp)} A \emph{half-isomorphism} is a set of disjoint opens $O^j_i \supseteq M_{i+}^j$ with $O^j_2\subseteq M_2^j$ such that the function $\phi_{+}$ as in Definition \ref{def: simple hypersurface} can be chosen such that $\supp(\phi_{+})\subseteq O_i:=\coprod_{j = 1}^cO_i^j$, and a pair of maps $(\phi, u)$ with $\phi\colon O_1\rightarrow O_2$ an orientation-preserving Riemannian isometry such that $\phi(N_1) = N_2$ and $\phi(M_{1+}) = M_{2+}$, and $u\colon \restr{S_1}{O_1}\rightarrow \restr{S_2}{O_2}$ a fiberwise unitary vector bundle morphism covering $\phi$. We will usually suppress the opens $O^j_i$ in the notation of a half-isomorphism.
		\item \label{item: uniform half iso (mult comp)} A \emph{uniform half-isomorphism} is a half-isomorphism $(\phi, u)$ such that the map $\phi$ extends to a homeomorphism $\phi\colon \ol{O}^j_1\rightarrow \ol{O}^j_2$ that is a uniform equivalence, when $\ol{O}^j_1$ is considered as a metric space with the distance function $\restr{d_{M_1}}{\ol{O}^j_1}$ and $\ol{O}^j_2$ is considered as a metric space with the distance function $\restr{d_{M^j_2}}{\ol{O}^j_2}$.
		\item \label{item: compatible half iso (mult comp)} A half-isomorphism $(\phi, u)$ is \emph{compatible with the differential operators} if 
		\begin{equation}
			\label{eq: compatibility with diff ops (mult comp)}
			u\circ D_1\sigma \circ \phi^{-1} = D_2(u\circ\sigma\circ\phi^{-1})
		\end{equation}
		for all $\sigma \in C^{\infty}(O_1;S_1)$.
	\end{enumerate}
\end{definition}

With this notion of a half-isomorphism, we can construct a partial transition map for each connected component of $M_2$ as a variation on the transition map from Definition \ref{def: transition map}. We adopt all the notation introduced in Section \ref{subsec: half-isos and the transition map} concerning the Hilbert spaces and isometric inclusions. Moreover, denote $H_{i+}^j:=L^2(M^j_{i+}; \restr{S}{M^j_{i+}})$ and write $W^j_i\colon H^j_{i+}\hookrightarrow L^2(M_i;S_i)$ and $X^j\colon H^j_{2+}\hookrightarrow L^2(M_2^j;\restr{S}{M_2^j})$ for the isometric inclusions of Hilbert spaces. It is easy to check that these uniformly cover the inclusion maps on the level of coarse spaces and that conjugation with these isometries also preserves the property of being supported near the hypersurface. Moreover, the unitary $\Gamma\colon H_{1+}\rightarrow H_{2+}$ splits as a direct sum of unitary operators 
\[
\bigoplus_{j = 1}^c\Gamma^j\colon \bigoplus_{j = 1}^cH^j_{1+}\rightarrow \bigoplus_{j = 1}^cH^j_{2+}
\]
and we denote by $\widetilde{\Gamma^j}\colon L^2(M_1;S_1)\rightarrow L^2(M_2^j;\restr{S_2}{M_2^j})$ the extension of $\Gamma^j$ that equals 0 on $(H^j_{1+})^{\bot}$.

\begin{definition}
	\label{def: (partial) transition map (mult comp)}
	\begin{enumerate}
		\item Let $(\phi, u)$ be a uniform half-isomorphism between two partitioned manifolds $N_i\subseteq M_i$. Then the map $\cT^j$ defined by 
		\[\begin{tikzcd}
			{K_0(C^*(N_1\subseteq M_1))} && {K_0(C^*(N_1^j\subseteq M_{1+}^j))} \\
			{K_0(C^*(N_2^j\subseteq M_2^j))} && {K_0(C^*(N_2^j\subseteq M_{2+}^j))}
			\arrow["{K_0(\Ad_{W^j_1})}"', from=1-3, to=1-1]
			\arrow["{\cT^j}", dotted, from=2-1, to=1-1]
			\arrow["{K_0(\Ad_{X^j})^{-1}}"', from=2-1, to=2-3]
			\arrow["{K_0(\Ad_{\Gamma^{j, *}})}"', from=2-3, to=1-3]
		\end{tikzcd}\]
		is the \emph{partial transition map} for the connected component $M^j_2$ of $M_2$ induced by the uniform half-isomorphism $(\phi, u)$.
		\item The resulting map 
		\[
		\cT:= \sum_{j = 1}^c\cT^j\colon \bigoplus_{j =1}^cK_0(C^*(N_2^j\subseteq M_2^j)) \rightarrow K_0(C^*(N_1\subseteq M_1))
		\]
		is the \emph{transition map} induced by the uniform half-isomorphism $(\phi, u)$.
	\end{enumerate}
\end{definition}

The main goals of Sections \ref{subsec: half-isos and the transition map} and \ref{subsec: half-isos and the partitioned index} were to show that Diagram \eqref{diag: transition diagram} commutes (Proposition \ref{prop: transition diagram commutes}) and that the transition map preserves the partitioned index (Proposition \ref{prop: iso on localized Roe algs preserves partitioned index}). If $(\phi, u)$ is a half-isomorphism between partitioned manifolds $N_i\subseteq M_i$, then we denote by $\phi^{j,k_j}\colon N_1^{j, k_j}\rightarrow N_2^{j, k_j}$ the induced isometries on the connected components of the hypersurfaces. Then as a counterpart to Proposition \ref{prop: transition diagram commutes}, we obtain the following result.

\begin{proposition}
	\label{prop: transition diagram commutes (mult comp)}
	Let $(\phi, u)$ be a uniform half-isomorphism between two partitioned manifolds $N_i\subseteq M_i$ with Hermitian bundles $S_i$ and let $\cT$ be the transition map induced by the half-isomorphism. Then the following diagram commutes: 
	\[\begin{tikzcd}
		{\bigoplus_{j = 1}^c\bigoplus_{k_j = 1}^{n_j}K_0(C^*(N_1^{j, k_j}))} && {K_0(C^*(N_1\subseteq M_1))} \\
		{\bigoplus_{j = 1}^c\bigoplus_{k_j = 1}^{n_j}K_0(C^*(N_2^{j, k_j}))} && {\bigoplus_{j = 1}^cK_0(C^*(N_2^j\subseteq M_2^j))}
		\arrow["{\sum_{j = 1}^c\sum_{k_j= 1}^{n_j}\Phi_1^{j, k_j}}", from=1-1, to=1-3]
		\arrow["{\bigoplus_{j = 1}^c\bigoplus_{k_j = 1}^{n_j}\phi^{j, k_j}_*}"', from=1-1, to=2-1]
		\arrow["{\bigoplus_{j = 1}^c(\sum_{k_j = 1}^{n_j}\Phi_2^{j, k_j})}"', from=2-1, to=2-3]
		\arrow["{\cT}"', from=2-3, to=1-3]
	\end{tikzcd}\]
\end{proposition}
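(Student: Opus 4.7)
My plan is to reduce the proof to the connected case established in Proposition \ref{prop: transition diagram commutes}. Since every group in the square is a finite direct sum and the map $\cT = \sum_{j=1}^c \cT^j$ is built componentwise, it suffices by bilinearity to fix a single pair $(j, k_j)$ and to verify that starting from a class in the summand $K_0(C^*(N_1^{j,k_j}))$, the two compositions
\[
\Phi_1^{j,k_j} \quad \text{and} \quad \cT^j \circ \Phi_2^{j,k_j} \circ \phi^{j,k_j}_*
\]
agree as maps into $K_0(C^*(N_1\subseteq M_1))$.

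First I would unpack the definition of $\cT^j$. The partial transition map factors through the half-localized Roe algebras, exactly as in the connected case, except that on the $M_2$-side we use $K_0(C^*(N_2^j\subseteq M_{2+}^j))$, realized inside $\cB(H_{2+}^j)$. The isometry $W_1^j$ uniformly covers the inclusion $M_{1+}^j \hookrightarrow M_1$, the isometry $X^j$ uniformly covers $M_{2+}^j \hookrightarrow M_2^j$, and $\Gamma^j$ uniformly covers the Riemannian isometry $\phi|_{M_{1+}^j} : M_{1+}^j \to M_{2+}^j$ by the componentwise version of Lemma \ref{lem: Gamma covers uniform equivalence} (the proof is identical, using that $\phi$ is a uniform equivalence $\overline{O_1^j}\to \overline{O_2^j}$ for each $j$ by the assumption built into Definition \ref{def: half iso (mult comp)}, part \ref{item: uniform half iso (mult comp)}).

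Next I would imitate the diagram chase in the proof of Proposition \ref{prop: transition diagram commutes}, but restricted to the single halves $M_{1+}^j$ and $M_{2+}^j$ and the single components $N_1^{j,k_j}$ and $N_2^{j,k_j}$. Concretely, I would construct the analogue of Diagram \eqref{diag: preparation transition lemma}, now with $N_i^{j,k_j}$ in the upper row and $N_i^{j,k_j}\subseteq M_{i+}^j$ in the lower row, and prove its commutativity by the same inductive-limit argument via the sets $Y_k^{j,k_j} := \overline{B_k(N_i^{j,k_j})} \cap M_{i+}^j$. The key input is the componentwise analogue of Lemma \ref{lem: half iso map on base preserves distance to hypersurface}, which holds verbatim since $\phi$ restricts to a Riemannian isometry on each $O_1^j \to O_2^j$, so $\Gamma^j$ restricts to an isometry covering the uniform equivalence $\phi|_{Y_k^{j,k_j}}$.

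Finally, I would assemble the diagram in three vertical layers, as in the proof of Proposition \ref{prop: transition diagram commutes}: the top comparing $K_0(C^*(N_i^{j,k_j}))$ via $\phi^{j,k_j}_*$, the middle passing to $K_0(C^*(N_i^{j,k_j}\subseteq M_{i+}^j))$, and the bottom using $K_0(\Ad_{W_1^j})$ and $K_0(\Ad_{X^j})$ to map into $K_0(C^*(N_1\subseteq M_1))$ and $K_0(C^*(N_2^j \subseteq M_2^j))$ respectively. Commutativity of the top rectangle is the connected-component lemma just discussed; commutativity of the bottom rectangle is immediate from the definition of $\cT^j$ in Definition \ref{def: (partial) transition map (mult comp)}. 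The main obstacle I expect is purely bookkeeping: keeping track of which component sits in which Hilbert space and verifying that $\Gamma^{j,*}$ indeed lands in operators on $H_{1+}^j \subseteq L^2(M_1;S_1)$, so that the compositions of $\Ad$-maps compose as required. This is routine given Definition \ref{def: (partial) transition map (mult comp)} but must be done carefully because, unlike in the connected case, summing over $j$ is essential for the transition map to hit all of $K_0(C^*(N_1\subseteq M_1))$.
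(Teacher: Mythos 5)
Your proposal is correct and matches the approach the paper takes; in fact the paper does not spell out a proof for this proposition (it is presented in Subsection 5.3 as the "counterpart" of Proposition \ref{prop: transition diagram commutes}, with the understanding that the earlier argument is to be adapted componentwise), so your reduction---fixing a pair $(j,k_j)$, noting that $\cT$ restricted to the $j$-th summand is $\cT^j$, and verifying $\Phi_1^{j,k_j}=\cT^j\circ\Phi_2^{j,k_j}\circ\phi^{j,k_j}_*$ via the component versions of Lemma \ref{lem: half iso map on base preserves distance to hypersurface}, Lemma \ref{lem: Gamma covers uniform equivalence} and the inductive-limit argument of Lemma \ref{lem: conjugation with Gamma fits into comm diagram}---is exactly the intended content. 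The one bookkeeping point worth making explicit: you propose to run the middle layer through $K_0(C^*(N_i^{j,k_j}\subseteq M_{i+}^j))$, whereas $\cT^j$ (Definition \ref{def: (partial) transition map (mult comp)}) is built from $K_0(C^*(N_i^j\subseteq M_{i+}^j))$ with the full boundary $N_i^j=\coprod_{k_j}N_i^{j,k_j}$; you therefore need the (easy) observation that $C^*(N_i^{j,k_j}\subseteq M_{i+}^j)$ is a subalgebra of $C^*(N_i^j\subseteq M_{i+}^j)$, that $\Ad_{\Gamma^j}$ restricts to a map between these subalgebras because $\phi$ carries $N_1^{j,k_j}$ onto $N_2^{j,k_j}$, and that the resulting inclusion squares commute with $K_0(\Ad_{W_1^j})$ and $K_0(\Ad_{X^j})$. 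This is the kind of routine verification you already flagged, and it does not change the structure of the argument.
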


Also denote by $D_2^j:= \restr{D_2}{M_2^j}$ the restriction of the Dirac operator $D_2$ to the connected component $M_2^j$ of $M_2$. Then as a counterpart to Proposition \ref{prop: iso on localized Roe algs preserves partitioned index}, we obtain the following result.
\begin{proposition}
	\label{prop: iso on localized Roe algs preserves partitioned index (mult comp)}
	For $i= 1, 2$, let $N_i\subseteq M_i$ be two partitioned manifolds with Hermitian bundles $S_i$ and symmetric, elliptic differential operators $D_i$ with finite propagation speed. Let $(\phi, u)$ be a uniform half-isomorphism between them that is compatible with the differential operators and let $\cT$ be the induced transition map. Then 
	\[
	\cT(\Ind(D^1_2;N^1_2), \ldots, \Ind(D^c_2;N^c_2)) = \sum_{j = 1}^c \cT^j(\Ind(D^j_2;N^j_2))= \Ind(D_1;N_1).
	\]
\end{proposition}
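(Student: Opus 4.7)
The plan is to follow the structure of the proof of Proposition \ref{prop: iso on localized Roe algs preserves partitioned index} essentially line by line, with one additional input: an orthogonal decomposition of $P_{1+}$ across the connected components of $M_{1+}$. The starting point is once more the Roe-homomorphism formula $\Ind(D_1;N_1) = \partial([1 + q(P_{1+}(U_1-1)P_{1+})]_1)$, where $q\colon C^*(M_1)\to C^*(M_1)/C^*(N_1\subseteq M_1)$ is the quotient map and $U_1$ is the Cayley transform of $D_1$.

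First, I would establish the decomposition step. The projections $P_{1+}^j := \rho_1(\chi_{M_{1+}^j})$ are mutually orthogonal and sum to $P_{1+}$. Because $\partial M_{1+}^j$ is a union of connected components of $N_1$, the proof of Lemma \ref{lem: commutator with projection lands in localized algebra} applies verbatim to $P_{1+}^j$ in place of $P_+$, giving $[T, P_{1+}^j] \in C^*(N_1 \subseteq M_1)$ for every $T \in C^*(M_1)$. Expanding $P_{1+} T P_{1+} = \sum_{j,k} P_{1+}^j T P_{1+}^k$ and using $P_{1+}^j P_{1+}^k = 0$ for $j\neq k$, each off-diagonal term equals $P_{1+}^j[T, P_{1+}^k]$ and hence lies in $C^*(N_1 \subseteq M_1)$. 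Applied to $T = U_1 - 1 \in C^*(M_1)$, this shows that the diagonal parts $A_j := q(P_{1+}^j(U_1-1)P_{1+}^j)$ satisfy $A_j A_k = 0$ for $j\neq k$ and $1 + q(P_{1+}(U_1-1)P_{1+}) = \prod_{j=1}^c (1+A_j)$ in the quotient, so that
\[
[1 + q(P_{1+}(U_1-1)P_{1+})]_1 = \sum_{j=1}^c [1+A_j]_1 \quad \in K_1\bigl(C^*(M_1)/C^*(N_1\subseteq M_1)\bigr).
\]

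Next, I would prove a component-wise version of Lemma \ref{lem: Gamma tilde intertwines differential operators}, namely
\[
q\bigl(P_{1+}^j(D_1+i)^{-1}P_{1+}^j\bigr) = q\bigl(\tilde{\Gamma}^{j,*}(D_2^j+i)^{-1}\tilde{\Gamma}^j\bigr) \quad \in C^*(M_1)/C^*(N_1\subseteq M_1).
\]
The proof is a verbatim copy of the original argument, now applied to the restricted Riemannian isometry $\restr{\phi}{O_1^j}\colon O_1^j \to O_2^j$ and the cutoff $\phi_+^j := \phi_{2+} \circ \phi$ supported within $O_1^j$; compatibility with the Dirac operators is assumed componentwise in Definition \ref{def: half iso (mult comp)}. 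Since $U_1 - 1 = -2i(D_1+i)^{-1}$ and similarly $U_2^j - 1 = -2i(D_2^j+i)^{-1}$, this gives $A_j = q(\tilde{\Gamma}^{j,*}(U_2^j - 1)\tilde{\Gamma}^j)$.

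Finally, for each $j$ I would replay the naturality diagram \eqref{diag: naturality diagram}, with $M_2$ replaced by the connected component $M_2^j$, and with the isometries $\Gamma$ and $W_2$ replaced by $\Gamma^j$ and $X^j$; the leftmost column $W_1$ is replaced by $W_1^j$. The same diagram chase based on naturality of the boundary map then yields
\[
\partial\bigl([1 + A_j]_1\bigr) = K_0(\Ad_{W_1^j}) \circ K_0(\Ad_{\Gamma^{j,*}}) \circ K_0(\Ad_{X^j})^{-1} \circ \partial\bigl([q(X^{j,*}U_2^j X^j)]_1\bigr) = \cT^j\bigl(\Ind(D_2^j; N_2^j)\bigr).
\]
Summing over $j$ and using the decomposition established in the first step completes the proof. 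The main obstacle is purely bookkeeping: one must check that the cross-term estimate and the component-wise intertwining are genuinely insensitive to the global disconnectedness of $M_2$ and $N_1$, which is the case because localization is always taken with respect to all of $N_1$ and each $M_{1+}^j$ has boundary contained in $N_1$.
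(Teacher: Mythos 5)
Your proof is correct and follows the same route as the paper. The paper's Lemma \ref{lem: Gamma tilde intertwines differential operators (mult comp)} is exactly the sum over $j$ of your componentwise intertwining identity together with your orthogonality observation; you have usefully made explicit the decomposition $q(P_{1+}(U_1-1)P_{1+}) = \sum_j A_j$ with mutually orthogonal $A_j$ and the resulting $K_1$-additivity $[\prod_j(1+A_j)]_1 = \sum_j [1+A_j]_1$, which the paper leaves implicit in saying the proof is ``very similar'' to that of Proposition \ref{prop: iso on localized Roe algs preserves partitioned index}.
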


The proof of Proposition \ref{prop: iso on localized Roe algs preserves partitioned index (mult comp)} is very similar as the proof of Proposition \ref{prop: iso on localized Roe algs preserves partitioned index}, but instead uses the following generalization of Lemma \ref{lem: Gamma tilde intertwines differential operators}.

\begin{lemma}
	\label{lem: Gamma tilde intertwines differential operators (mult comp)}
	The following equality holds: 
	\begin{equation*}
		q(P_{1+}(\ol{D}_1+i)^{-1}P_{1+}) = \sum_{j = 1}^cq(\widetilde{\Gamma^j}^*(\ol{D}^j_2+i)^{-1}\widetilde{\Gamma^j}) \in C^*(M_1)/C^*(N_1\subseteq M_1).
	\end{equation*}
\end{lemma}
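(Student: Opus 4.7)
The plan is to reduce the statement to the connected-case computation of Lemma~\ref{lem: Gamma tilde intertwines differential operators} by running that proof for a single combined partial isometry, and then splitting the resulting expression according to the connected components of $M_2$.

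First I would assemble the combined partial isometry. Under the canonical unitary identification $L^2(M_2; S_2) \cong \bigoplus_{j=1}^c L^2(M_2^j; \restr{S_2}{M_2^j})$, define $\tilde\Gamma\colon L^2(M_1; S_1)\to L^2(M_2; S_2)$ by $\tilde\Gamma\xi = (\widetilde{\Gamma^1}\xi, \ldots, \widetilde{\Gamma^c}\xi)$. Since each $\widetilde{\Gamma^j}$ extends $\Gamma^j\colon H_{1+}^j\to H_{2+}^j$ by $0$ on $(H_{1+}^j)^{\perp}$, and the subspaces $H_{1+}^j$ are mutually orthogonal in $H_{1+}=\bigoplus_j H_{1+}^j$, the operator $\tilde\Gamma$ is precisely the partial isometry extending the unitary $\Gamma\colon H_{1+}\to H_{2+}$ by zero on $H_{1+}^{\perp}$, exactly as in Section~\ref{subsec: half-isos and the partitioned index}. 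In particular, $\tilde\Gamma^*\tilde\Gamma = P_{1+}$ and $\tilde\Gamma\tilde\Gamma^* = P_{2+}$.

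Next I would verify that the proof of Lemma~\ref{lem: Gamma tilde intertwines differential operators} adapts to the present setting. The key inputs of that proof are (i) existence of a simple-hypersurface function $\phi_{2+}$ for $N_2\subseteq M_2$ whose support lies in $O_2$; (ii) the compatibility identity $u\circ D_1\sigma\circ \phi^{-1} = D_2(u\circ\sigma\circ\phi^{-1})$ on $O_1$, which is part of Definition~\ref{def: half iso (mult comp)}\ref{item: compatible half iso (mult comp)}; and (iii) Lemmas~\ref{lem: Higsons 1.2 general} and~\ref{lem: commutator with projection lands in localized algebra}, which are general facts about $M_1$ and its Roe algebra. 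Ingredient (i) is provided by Definition~\ref{def: half iso (mult comp)}\ref{item: half iso (mult comp)}, which permits $\phi_{2+}$ to be supported in the disjoint union $O_2=\coprod_j O_2^j$; the argument is indifferent to whether $O_2$ is connected. Running the same string of equivalences word for word yields
\[
q\bigl(P_{1+}(\ol{D}_1+i)^{-1}P_{1+}\bigr) = q\bigl(\tilde\Gamma^*(\ol{D}_2+i)^{-1}\tilde\Gamma\bigr) \in C^*(M_1)/C^*(N_1\subseteq M_1).
\]

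Finally, I would split the right-hand side by connected components. Since each $D_2^j$ is a symmetric elliptic operator with finite propagation speed on the complete manifold $M_2^j$, it is essentially self-adjoint, and the closure $\ol{D}_2$ decomposes as $\bigoplus_{j=1}^c \ol{D}_2^j$ under the Hilbert-space decomposition above. Therefore the resolvent splits as $(\ol{D}_2+i)^{-1} = \bigoplus_{j=1}^c (\ol{D}_2^j+i)^{-1}$, and combined with the block form of $\tilde\Gamma$ this gives
\[
\tilde\Gamma^*(\ol{D}_2+i)^{-1}\tilde\Gamma \;=\; \sum_{j=1}^c \widetilde{\Gamma^j}^*(\ol{D}_2^j+i)^{-1}\widetilde{\Gamma^j},
\]
which together with the previous identity yields the claim. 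The only real subtlety is bookkeeping: checking that the algebra in the original proof truly takes place locally on each component of $M_2$, so that the presence of several components does not couple the computations across them; once $\phi_{2+}$ is chosen adapted to the disjoint union $O_2=\coprod_j O_2^j$ and $\tilde\Gamma$ is identified with the vector of $\widetilde{\Gamma^j}$, this is automatic.
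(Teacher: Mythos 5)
Your proof is correct and follows the route the paper indicates, where Lemma~\ref{lem: Gamma tilde intertwines differential operators (mult comp)} is stated as the direct disconnected analogue of Lemma~\ref{lem: Gamma tilde intertwines differential operators} and no separate proof is supplied. Your reduction --- assemble $\tilde\Gamma=\bigoplus_j\widetilde{\Gamma^j}$, re-run the connected argument to reach $q(P_{1+}(\ol{D}_1+i)^{-1}P_{1+})=q(\tilde\Gamma^*(\ol{D}_2+i)^{-1}\tilde\Gamma)$, then split $\ol{D}_2$ and its resolvent over the finitely many components --- is exactly the expected one, and your closing remark correctly identifies the one step requiring care: the appeal to Lemma~\ref{lem: adjungation with Gamma circ preserves Roe alg props} should be read componentwise (each $\Ad_{\widetilde{V^j}^*}$ lands in $C^*(N_1\subseteq M_1)$ and the finite sum stays there), since the paper deliberately avoids a Roe algebra for the disconnected $M_2$.
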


Finally, most of the results in Sections \ref{subsec: gluing on M'} and \ref{subsec: gluing on S'} hold true without changes, except for Proposition \ref{prop: uniform equivalence 1} and \ref{prop: uniform equivalence 2}. These are replaced by the following two results.

\begin{proposition}
	\label{prop: uniform equivalence 1 (mult comp)}
	For each connected component $M_+^j$ of $M_+$, the identity map 
	\[
	(M^j_+, \restr{d_M}{M^j_+})\rightarrow (M^j_+, \restr{d_{M^{\prime, j}}}{M^j_+})
	\]
	is a uniform equivalence.
\end{proposition}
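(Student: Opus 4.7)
The plan is to mirror the proof of Proposition \ref{prop: uniform equivalence 1}, but to use assumption \ref{item: assumption 2 (mult comp)} of Theorem \ref{thm: PMT (mult comp)} in place of assumption \ref{item: assumption 1 (mult comp)}. Since the map is a bijection, it suffices to establish uniform expansiveness in both directions.

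For uniform expansiveness from $(M_+^j, d_M|_{M_+^j})$ to $(M_+^j, d_{M^{\prime,j}}|_{M_+^j})$, the strategy is to bypass the ``modify-the-curve-to-stay-inside'' argument of Proposition \ref{prop: uniform equivalence 1} by directly appealing to the intrinsic control on $M_+^j$ supplied by the assumption. Uniform expansiveness of the inverse of $\id_{M_+^j}$ yields a control function $\tilde{S}\colon [0,\infty)\to [0,\infty)$ with $d_{M_+^j}(p_1,p_2)\leq \tilde{S}(d_M(p_1,p_2))$ for all $p_1,p_2\in M_+^j$. For such points, I will choose a curve $\gamma$ in $M_+^j$ (with respect to the Riemannian metric $g_M|_{M_+^j}$) from $p_1$ to $p_2$ with $L_{g_M}(\gamma)\leq \tilde S(d_M(p_1,p_2))+1$. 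Since $\gamma$ also lies in $M^{\prime,j}$, and $g'\leq C^2 g_M$ on $M_+^j$ by parts \ref{item: metric est 2} and \ref{item: metric est 5} of Lemma \ref{lem: metric estimates}, the bound $L_{g'}(\gamma)\leq C^2(\tilde S(d_M(p_1,p_2))+1)$ yields the desired estimate on $d_{M^{\prime,j}}(p_1,p_2)$.

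For uniform expansiveness in the opposite direction, the second half of the proof of Proposition \ref{prop: uniform equivalence 1} carries over essentially verbatim, because $M^{\prime,j}$ is obtained by gluing $M_+^j$ to a single product cylinder $\R_{\leq 0}\times N^j$. Given a curve $\gamma$ in $M^{\prime,j}$ from $p_1$ to $p_2$ with $L_{g'}(\gamma)\leq d_{M^{\prime,j}}(p_1,p_2)+1$, on each maximal interval $(a,b)$ where $\gamma$ leaves $M_+^j$ into the cylinder I write $\restr{\gamma}{[a,b]}=(\gamma_\R,\gamma_N)$ and replace it by $(0,\gamma_N)$. This replacement is length-decreasing by the same Cauchy--Schwarz computation as in the original proof, and the resulting curve lies in $M_+^j$, on which $g_M\leq C^2 g'$. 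Hence $d_M(p_1,p_2)$ is controlled by $C^2(d_{M^{\prime,j}}(p_1,p_2)+1)$.

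The main obstacle to directly adapting the proof of Proposition \ref{prop: uniform equivalence 1} is that a length-minimizing curve in $M$ between two points of $M_+^j$ may leave $M_+^j$ through some $N^{j,k_j}$, traverse $M_-$, possibly visit other components $M_+^{j'}$ with $j'\neq j$, and only eventually return to $M_+^j$ through a different boundary component $N^{j,k_j''}$. Since $N^{j,k_j}$ and $N^{j,k_j''}$ may lie in distinct connected components of $N$, the original trick of replacing such excursions by short curves in $N$, which relied on the uniform equivalence of $\id_{N^{j,k_j}}$, gives no control. Assumption \ref{item: assumption 2 (mult comp)} of Theorem \ref{thm: PMT (mult comp)} circumvents this difficulty by directly supplying a comparison path inside $M_+^j$, so no case analysis of excursions into $M_-$ is required.
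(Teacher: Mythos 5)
Your proof is correct and consistent with the approach sketched in Section \ref{subsect: adaptations to the proof}, which the paper does not write out in detail. The text there suggests using assumption \ref{item: assumption 2 (mult comp)} to replace each excursion of a minimizing curve in $M$ out of $M_+^j$ by a curve through $M_+^j$, mimicking the segment-replacement strategy of the proof of Proposition \ref{prop: uniform equivalence 1}. You sidestep this excursion-by-excursion bookkeeping entirely: assumption \ref{item: assumption 2 (mult comp)} directly bounds $d_{M_+^j}(p_1,p_2)$ by a function of $d_M(p_1,p_2)$, so you can choose from the outset a single curve in $M_+^j$ of controlled $g_M$-length and compare with $g'$ via Lemma \ref{lem: metric estimates}. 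This is a genuine (if small) simplification, as it avoids having to count excursions or control where a minimizing curve in $M$ leaves $M_+^j$. The backward direction correctly reproduces the projection argument from Proposition \ref{prop: uniform equivalence 1}, which is unaffected by disconnectedness because each excursion of a curve in $M^{\prime,j}$ out of $M_+^j$ stays within a single attached cylinder $\R_{\leq 0}\times N^{j,k_j}$. One cosmetic slip: $g'\leq C^2 g_M$ as an inequality of quadratic forms yields $L_{g'}(\gamma)\leq C\, L_{g_M}(\gamma)$ rather than $C^2 L_{g_M}(\gamma)$ (and similarly in the reverse direction); this has no bearing on uniform expansiveness, but the constant should be $C$.
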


\begin{proposition}
	\label{prop: uniform equivalence 2 (mult comp)}
	For each connected component $N^{j, k_j}$ of $N$, the identity map
	\[
	(\R_{\leq0}\times N^{j, k_j}, \restr{d_{M^{\prime, j}}}{\R_{\leq0}\times N^{j, k_j}})\rightarrow (\R_{\leq0}\times N^{j, k_j}, \restr{d_{\R\times N^{j, k_j}}}{\R_{\leq0}\times N^{j, k_j}})
	\]
	is a uniform equivalence.
\end{proposition}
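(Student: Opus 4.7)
The direction $\restr{d_{M^{\prime, j}}}{\R_{\leq 0}\times N^{j, k_j}} \leq \restr{d_{\R\times N^{j, k_j}}}{\R_{\leq 0}\times N^{j, k_j}}$ is immediate, since by Lemma \ref{lem: metric estimates}(i) we have $g' = g_{\R\times N}$ on $\R_{\leq 0}\times N$, so any curve in the codomain is a curve in $M^{\prime, j}$ of the same length. This already gives metric boundedness of the identity map. For uniform expansiveness I would mimic the strategy of Proposition \ref{prop: uniform equivalence 1}: given $p_1, p_2 \in \R_{\leq 0}\times N^{j, k_j}$ and a piecewise smooth curve $\gamma\colon [0,1] \to M^{\prime, j}$ between them with $L_{g'}(\gamma) \leq L := d_{M^{\prime, j}}(p_1, p_2) + 1$, the plan is to modify $\gamma$ into a curve in $\R\times N^{j, k_j}$ whose length is bounded by a function of $L$ alone.

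The modification proceeds in two stages. First, on each maximal subinterval on which $\gamma$ lies in $\R_{<0}\times N^{j, k'_j}$ for some $k'_j \neq k_j$, I would replace $\gamma$ by its $r = 0$ projection into $\{0\}\times N^{j, k'_j} \subset \partial M_+^j$; the product-metric computation from the proof of Proposition \ref{prop: uniform equivalence 1} shows this is length non-increasing. After this stage, $\gamma$ lies in $\R_{\leq 0}\times N^{j, k_j} \cup M_+^j$, and each remaining excursion $\gamma|_{[a_i, b_i]}$ out of $\R_{\leq 0}\times N^{j, k_j}$ lies in $M_+^j$ with endpoints on $\{0\}\times N^{j, k_j}$. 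Using the tubular chart $F$ from \eqref{eq: tub nbhd map}, I would call such an excursion \emph{short} if it is contained in $[0, \varepsilon/2]\times N^{j, k_j}$ and \emph{long} otherwise. A short excursion $(r(t), n(t))$ is replaced by its projection $(0, n(t))$ onto $\{0\}\times N^{j, k_j}$; the metric comparison $g_{\R\times N} \leq C g'$ from Lemma \ref{lem: metric estimates}(iv) yields $L_{g_{\R\times N}}((0, n(t))) \leq \sqrt{C}\, L_{g'}(\gamma|_{[a_i, b_i]})$.

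A long excursion must move the $r$-coordinate from $0$ to at least $\varepsilon/2$ and back, so since the dual norm satisfies $|dr|_{g'} \leq \sqrt{C}$, it has $g'$-length at least $\varepsilon/\sqrt{C}$; hence the number of long excursions is at most $L\sqrt{C}/\varepsilon$. Each long excursion $\gamma|_{[a_i, b_i]}$ lies in $M_+^j$ with $g'$-length $\leq L$, so Proposition \ref{prop: uniform equivalence 1 (mult comp)} (which invokes assumption \ref{item: assumption 2 (mult comp)}) bounds $\restr{d_M}{M_+^j}$ on its endpoints by a function of $L$; assumption \ref{item: assumption 1 (mult comp)} then bounds $d_{N^{j, k_j}}$ by some $S = S(L)$, allowing me to cap the excursion with a curve in $\{0\}\times N^{j, k_j}$ of length $\leq S + 1$. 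Summing, the modified curve lies in $\R_{\leq 0}\times N^{j, k_j}$ and has total $g_{\R\times N}$-length at most $L(1 + \sqrt{C}) + (L\sqrt{C}/\varepsilon)(S + 1)$, giving the required uniform expansiveness. The main obstacle will be justifying the $\varepsilon/\sqrt{C}$ lower bound for long excursions that leave the tubular neighborhood of $N^{j, k_j}$ entirely: the resolution is to apply the Lipschitz estimate on $r$ only to the portion of the excursion inside the tubular, where $r$ must traverse a total variation of at least $\varepsilon$, giving the required lower bound from the in-tubular contribution alone.
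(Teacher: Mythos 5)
Your proof is correct in its essentials and carries out the curve-surgery strategy the paper has in mind: the paper gives no explicit proof of Proposition \ref{prop: uniform equivalence 2 (mult comp)} (it only says it is proven ``exactly the same way as Proposition \ref{prop: uniform equivalence 2}'', which in turn is ``analogous to Proposition \ref{prop: uniform equivalence 1}''), and your two-stage modification of the curve (projecting sojourns in foreign $\R_{<0}\times N^{j,k'_j}$ to $\{0\}\times N^{j,k'_j}$, then sorting the remaining excursions through $M_+^j$ into short ones you project and long ones you cap using assumption \ref{item: assumption 1 (mult comp)}) is a sound rendering of that strategy. Your idea of lower-bounding the $g'$-length of a long excursion via the total variation of the $r$-coordinate inside the tubular and the dual norm estimate $|dr|_{g'}\leq\sqrt{C}$ is clean and avoids the subtle question of whether the geodesic tubular neighbourhood is an honest metric ball.

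There is, however, one deviation worth flagging: you bound $d_M$ on the endpoints of a long excursion by invoking Proposition \ref{prop: uniform equivalence 1 (mult comp)}, which in turn requires assumption \ref{item: assumption 2 (mult comp)}. This detour is unnecessary. Since the excursion $\gamma|_{[a_i,b_i]}$ lies entirely in $M_+^j$, and Lemma \ref{lem: metric estimates} parts \ref{item: metric est 2} and \ref{item: metric est 5} give $\frac{1}{C^2}g_M\leq g'\leq C^2 g_M$ on all of $M_+$, you get directly $L_{g_M}(\gamma|_{[a_i,b_i]})\leq C\,L_{g'}(\gamma|_{[a_i,b_i]})\leq CL$, hence $d_M(\gamma(a_i),\gamma(b_i))\leq CL$, and then assumption \ref{item: assumption 1 (mult comp)} gives your $S$. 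The paper is explicit (end of Section 5) that assumption \ref{item: assumption 2 (mult comp)} is new and is needed precisely for Proposition \ref{prop: uniform equivalence 1 (mult comp)} and \emph{not} for the present statement, so routing your argument through it obscures this structural point even though it does not make the proof formally incorrect. A second, much smaller remark: in the ``easy'' direction you assert that any curve in the codomain is a curve in $M^{\prime,j}$, but a curve in $\R\times N^{j,k_j}$ may venture into $\R_{\geq\varepsilon}\times N^{j,k_j}$, which does not embed in $M^{\prime,j}$; the fix is the standard one of first projecting $r\mapsto\min(r,0)$, which is length non-increasing in the product metric and leaves a curve in $\R_{\leq 0}\times N^{j,k_j}$ where $g'=g_{\R\times N}$.
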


Proposition \ref{prop: uniform equivalence 2 (mult comp)} is proven in exactly the same way as Proposition \ref{prop: uniform equivalence 2}. However, for Proposition \ref{prop: uniform equivalence 1 (mult comp)} this is not the case. The proof of Proposition \ref{prop: uniform equivalence 1} uses the fact that we could replace a segment on which a curve leaves $M_+$ with a curve through $N$. This obviously only holds true since $N$ is connected. In general, we cannot do the same for a segment on which a curve leaves a connected component $M_+^j$ of $M_+$ in the disconnected setting, as $N^j = \partial M_+^j$ need not be connected and therefore the curve can leave and enter $M_+^j$ via different connected components of $N^j$. This is where the extra assumption \ref{item: assumption 2 (mult comp)} in Theorem \ref{thm: PMT (mult comp)} is used to replace such a segment with a curve through $M_+^j$.

With these adaptations to the results in Section \ref{section: proof general case}, we can now prove Theorem \ref{thm: PMT (mult comp)} in a very similar way as Theorem \ref{thm: PMT}, reducing a general partitioned manifold to $\sum_{j =1}^cn_j$ product manifolds and applying Theorem \ref{thm: PMT for product} to each product manifold.

\bibliographystyle{plain}
\bibliography{references}
\end{document}